\numberwithin{equation}{section}
\newtheorem{theorem}{Theorem}[section]
\newtheorem{lem}{Lemma}[section]
\newtheorem{Def}{Definition}[section]
\newtheorem{rem}{Remark}[section]
\newtheorem{prop}{Proposition}[section]
\newtheorem{cor}{Corollary}[section]
\def\u{u^\varepsilon}
\def\r{r^\varepsilon}
\def\q{q^\varepsilon}
\def\d{\partial}
\def\R{\Bbb R}
\def\N{\Bbb N}
\def\Nx{\nabla}
\def\Dt{\partial_t}
\def\({\left(}
\def\){\right)}
\def\eb{\varepsilon}
\def\Cal{\mathcal}
\def\L{\Lambda}
\def\E{\mathcal{E}}
\def\A{\mathcal{A}}
\def\B{\mathcal{B}}
\def\di{\operatorname{div}}
\DeclareMathOperator{\dist}{dist}
\begin{document}
\title[]{Homogenisation with error estimates of attractors for
damped semi-linear anisotropic  wave equations}
\author[] {Shane Cooper, Anton Savostianov}

%

\subjclass[2000]{35B40, 35B45, 35L70}

\keywords{damped wave equation, global attractor, exponential attractor, homogenisation, homogenization, error estimates}
\address{Durham University, Department of Mathematical Sciences, Durham, DH1 3LE, United Kingdom.}
\email{shane.a.cooper@durham.ac.uk, anton.savostianov@durham.ac.uk}

\maketitle

\begin{abstract}
Homogenisation of global $\A^\eb$ and exponential $\Cal M^\eb$ attractors for the damped semi-linear anisotropic wave equation $\Dt^2\u +\gamma\Dt \u-\di \left(a\left( \tfrac{x}{\eb} \right)\Nx \u \right)+f(\u)=g$, on a bounded domain $\Omega \subset \R^3$, is performed. Order-sharp estimates between trajectories $u^\eb(t)$ and their homogenised trajectories $u^0(t)$ are established. These estimates are given in terms of the operator-norm  difference between resolvents of the elliptic operator $\di\left(a\left( \tfrac{x}{\eb} \right)\Nx \right)$ and its homogenised limit $\di\left(a^h\Nx \right)$. Consequently, norm-resolvent estimates on the Hausdorff distance between the anisotropic attractors and their homogenised counter-parts $\A^0$ and $\Cal M^0$ are established.  These results imply error estimates of the form $\dist_X(\A^\eb, \A^0) \le C \eb^\varkappa$ and $\dist^s_X(\Cal M^\eb, \Cal M^0) \le C \eb^\varkappa$ in the spaces $X =L^2(\Omega)\times H^{-1}(\Omega)$ and $X =(C^\beta(\overline{\Omega}))^2$. In the natural energy space $\E : = H^1_0(\Omega) \times L^2(\Omega)$, error estimates $\dist_\E(\A^\eb, \textsc{T}_\eb \A^0) \le C \sqrt{\eb}^\varkappa$ and $\dist^s_\E(\Cal M^\eb, \textsc{T}_\eb \Cal M^0) \le C \sqrt{\eb}^\varkappa$  are established where $\textsc{T}_\eb$ is first-order correction for the homogenised attractors suggested by asymptotic expansions. Our results are applied to Dirchlet, Neumann and periodic boundary conditions.
\end{abstract}

\section*{Introduction}
\label{s.i}
In this article we consider the following damped  semi-linear wave equation in a bounded smooth domain $\Omega\subset\R^3$ with rapidly oscillating coefficients:
\begin{equation}\label{eq.dw}
\begin{cases}
\Dt^2\u +\gamma\Dt \u-\di \left(a\left( \tfrac{x}{\eb} \right)\Nx \u \right)+f(\u)=g(x),\quad x\in\Omega,\ t\geq 0,\\
(\u,\Dt \u)|_{t=0}=\xi, \qquad
\u|_{\partial\Omega}=0.
\end{cases}
\end{equation} 
Such equations appear, for example, in the context of non-linear ascoustic oscillations in periodic composite media (see for example \cite{SaPa80}). 

For fixed $\eb>0$, the long-time behaviour of $u^\eb$ has been intensively studied in many works under various assumptions on the non-linearity $f$ and force $g$. In the context of dissipative PDEs the long-time dynamics can be studied in terms of global attractors. Intuitively speaking, the global attractor is a compact subset of the infinite-dimensional phase space which attracts all trajectories that originate from bounded regions of phase space. Therefore, the global attractor is in some sense a `much smaller' subset of phase space that characterises the long-time dynamics of the system  (see for example \cite{BV,CV,Ha88,MZ.hbk2008,SeYo02,Te}).

It is well-known that for suitable assumptions on the non-linearity (cf. \cite{BV,Te}) that problem \eqref{eq.dw} possesses a global attractor $\A^\eb$ and an important question to ask, from the point of view of applications, is about the asymptotic structure, with respect to $\eb$, of the global attractor $\A^\eb$ in the limit of small $\eb$. Asymptotics for global attractors have been studied, in  the context of reaction diffusion equations and the damped wave equation, with respect to  `lower-order' rapid spatial oscillations in the dampening, non-linearity and/or forces $g$ (see \cite{CCPan18}, \cite{CCPat}, \cite{FV01}, \cite{FV02}). Yet surprisingly, to the best knowledge of the authors, little or no work has been performed on the asymptotics of attractors for hyperbolic dissipative systems with `higher-order' rapid spatial oscillations such as in \eqref{eq.dw}. We mention here the works \cite{EZ02ho} that perform a quantitative analyse of the asymptotics of global attractors in the context of reaction diffusion equations. We also mention the works \cite{PaChu1,PaChu2} that determine the limit-behaviour of global attractors, in the context of reaction-diffusion and hyperbolic equations, for a particular choice of rapidly oscillating coefficients that degenerate in the limit of small period. Aside from the very limited amount of work done on the asymptotics of global attractors for dissipative PDEs with rapid oscillations, no work has been done on the asymptotics of exponential attractors. This article is dedicated to performing these studies for problems of the form \eqref{eq.dw}.

\vspace{10pt} In this article we aim to study the long-time behaviour of trajectories $u^\eb$ to \eqref{eq.dw}, for small parameter $\eb$, from the point of view of homogenisation theory.  In homogenisation theory,  the mapping 
\[
A_\eb u : = - \di \left(a(\tfrac{\cdot}{\eb})\Nx u \right),
\]
 for periodic uniformly elliptic and bounded coefficients $a(\cdot)$, is well-known to converge (in an appropriate sense)  in the limit of small $\eb$ to 
\[
A_0 u : = - \di \big(a^h\Nx u \big),
\]
where $a^h$ is the `effective' or `homogenised' constant-coefficient matrix associated to  $a(\cdot)$ (see for example \cite{JKO94} and references therein). As such, it is  natural to compare the long-time dynamics of $\u$ to the long-time dynamics of $u^0$ the solution to homogenised problem
\begin{equation}\label{eq.dwh}
\begin{cases}
\Dt^2u^0 +\gamma\Dt u^0-\di \left(a^h\Nx u^0 \right)+f(u^0)=g(x),\quad x\in\Omega,\ t\geq 0,\\
(u^0,\Dt u^0)|_{t=0}=\xi,
\qquad u^0|_{\partial\Omega}=0.
\end{cases}
\end{equation}

 Homogenisation theory has been studied intensively since the 1970's and amongst the extensive works we focus on works related to quantitative estimates of the form
	\begin{equation}\label{int.e1}
	 \| A^{-1}_\eb - A^{-1}_0 \|_{\Cal L(L^2(\Omega))} \le C \eb,
	\end{equation}
where the mappings have been equipped with appropriate boundary conditions. Such (sharp) order-$\eb$ results, that are now standard, has been proved by various authors using various techniques (see the monograph \cite{ZhPas16} for a review of some of these techniques). We mention here the results of particular interest to our article; in the case of bounded domain with Dirichlet or Neumann boundary conditions the order-sharp estimates were proved for the first time  in \cite{Su17bd,Su17Ne} and utilised the (order-sharp) estimate proved in \cite{BiSu04} for the whole space (and periodic torus).

While some work has been done to provide order-sharp operator estimates for individual trajectories in the parabolic (cf. \cite{JKO94,ZhPas16}) or hyperbolic settings (for smooth enough initial data) (cf. \cite{BSu08,DSu16, Su17, Me18}),  no work is done on providing order-sharp operator estimates for attractors in dissipative PDEs. 

Our first main result is the following estimate\footnote{Here $\dist_X(A,B)$ denotes the one-sided Hausdorff metric between sets $A$ and $B$ in the strong topology of $X$.} between the global attractors $\A^\eb$ and $\A^0$, associated to problem \eqref{eq.dw} and \eqref{eq.dwh} respectively, in the energy spaces $\E^{-1} : =L^2(\Omega) \times H^{-1}(\Omega)$ and $(C^\beta(\overline{\Omega}))^2$ (see Theorem \ref{th.d(Ae,Ah)<e1/2.w} and Corollary \ref{co.d(Ae,A0)}):
\begin{equation}
\label{int.e2}
\begin{aligned}
& \dist_{\E^{-1}} \big( \A^\eb , \A^0 \big) \le C  \| A^{-1}_\eb - A^{-1}_0 \|_{\Cal L(L^2(\Omega))}^\varkappa, \\
& \dist_{(C^\beta(\overline{\Omega}))^2} \big( \A^\eb , \A^0 \big) \le C  \| A^{-1}_\eb - A^{-1}_0 \|_{\Cal L(L^2(\Omega))}^{\theta\varkappa},
\end{aligned}
\end{equation}
for some $\varkappa, \theta \in(0,1)$. Upon combining this result with the operator estimate \eqref{int.e1} gives the desired error estimates between global attractors.  

The above inequality is new in the homogenisation theory of attractors. Moreover, this result is important from the general perspective as it establishes the upper semi-continuity of global attractors of the damped wave equation in terms of the elliptic part of the PDE. Indeed, in the proof of this result  we do not use the asymptotic structure in $\eb$ of $S_\eb(t)$ in terms of $S_0(t)$.  The arguments are purely operator-theoretic in nature and only require that the elliptic operator is self-adjoint and boundedly invertible (see Section \ref{s.Er}). In particular, if  $A_\eb$ and $A_0$ were positive elliptic operators  $A = \di (a \nabla)$ and $B = \di(b\nabla)$ for two different matrices $a$ and $b$, the above continuity result still holds. Additionally, the same can be said for different boundary conditions: can replace Dirichlet boundary conditions with other types of boundary  conditions under the sole requirement that $A = \di (a \nabla)$ defines a self-adjoint  operator in $L^2(\Omega)$ (see Section \ref{s.difbc} for details).

Let us say a few words on the method of proof of \eqref{int.e2}.
This result is essentially proved by establishing the following (sharp) estimate between trajectories $u^\eb(t)$ and $u^0(t)$ for initial data in $\A^\eb$ (Theorem \ref{th.|ue-uh|E-1}):
\begin{equation}
\label{int.e1.1}\|  u^\eb(t) -  u^0(t) \|_{L^2(\Omega)} + \| \Dt u^\eb(t) - \Dt u^0(t) \|_{H^{-1}(\Omega)} \le M e^{Kt} \| A^{-1}_\eb - A^{-1}_0 \|_{\Cal L(L^2(\Omega))}, \quad t\ge 0.
\end{equation}	 Then, to prove \eqref{int.e2}, we combine this novel estimate with the exponential attraction property of $\A^0$ which is known to hold `generically' on an open dense subset of forces $g$:
	\begin{equation*}
\left\{ \ \begin{aligned}
&\textit{ $\exists\, \sigma>0$ such that for every bounded set $B \subset \E$ the following estimate holds:} \\[2pt]
&\dist_{\E}(S_0(t)B,\A^0)\leq M(\|B\|_\E)e^{-\sigma t},\quad t\geq 0.
\end{aligned} \right.
\end{equation*}
 Notice that estimate \eqref{int.e1.1}  is optimal; indeed, upon substituting the right-hand side with $\eb$ we arrive at the expected order-sharp estimates in $\eb$ (just as in the elliptic case \eqref{int.e1}).

Aside from \eqref{int.e2}, a natural question to ask is if we can compare the global attractors in the energy space $\E : =H^1_0(\Omega) \times L^2(\Omega)$. In general estimates of the form \eqref{int.e2} are not to be expected in $\E$ and this is due to the fact that, on the level of asymptotic expansions, the trajectories $\Nx u^\eb(t)$ are not close to $\Nx u^0(t)$ but instead are close to 
\[
\Cal T_\eb u^0(t,x) : = u^0(t,x) + \eb \sum_{i=1}^3 N_i \big( \tfrac{x}{\eb} \big) \partial_{x_i} u^0(t,x).
\]
Here $N_i$ are the solutions to the so-called auxiliary cell problem (see Section \ref{s.pre}). Indeed, in Homogenisation theory it is known that \eqref{int.e1} does not generally hold in $H^1(\Omega)$ but rather the following `corrector' estimate
\[
\begin{aligned}
\|  A^{-1}_\eb g - \Cal T_\eb A^{-1}_0 g \|_{H^1(\Omega)} \le C \sqrt{\eb} \| g \|_{L^2(\Omega)},
\end{aligned}
\]
holds (cf. the above citations on error estimates in homogenisation of elliptic systems). 
For this reason, we introduce the notion of correction to attractors: 
\[
\textsc{T}_\eb \xi : = (\Cal T_\eb \xi^1, \xi^2), \qquad \xi=(\xi^1,\xi^2) \in  \A^0,
\]
and our next main result is the following corrector estimate (Theorem \ref{th.gEest}):
\begin{equation}
\label{int.e3}\dist_{\E } \big( \A^\eb , \textsc{T}_\eb \A^0 \big) \le C  \sqrt{\eb}^\varkappa.
\end{equation}
To the best of our knowledge, in all previous works, no corrector estimates  were provided in the homogenisation of attractors. To prove this result we naturally aim to establish an inequality of the form:
\begin{equation}
\label{int.e3.1}
\| u^\eb(t) - \Cal T_\eb u^0(t) \|_{H^1_0(\Omega)} \le M e^{Kt} \sqrt{\eb},  \quad t\ge 0,
\end{equation}
for initial data $\xi \in \A^\eb$. It turns out that for such initial data the trajectory $u^0(t)$ does not contain enough regularity for such a result to hold. This issue is due to the hyperbolic nature of the problem and does not appear, for example, in the context  of parabolic equations. To overcome this issue we introduce specially prepared initial data $\xi_0$ for the trajectory $u^0$ as follows: $\xi_0^1 \in H^1_0(\Omega)$ is the solution to
\[
\di ( a^h \Nx \xi_0^1 ) = \di (a (\tfrac{\cdot}{\eb}) \Nx \xi^1) \quad \text{ in $\Omega$}.
\]
Then, for such a choice of initial data, we readily establish inequality \eqref{int.e3.1} (Theorem \ref{th.Dterror} and Corollary \eqref{co.|ue-uh|E})  and consequently prove \eqref{int.e3}.

An important question from the point of view of applications is whether or not the estimates \eqref{int.e2}, \eqref{int.e3} hold in the \emph{symmetric} Hausdorff distance 
\[
\dist^s ( \A^\eb , \A^0 ) = \max \big\{ \dist (\A^\eb, \A^0) , \dist( \A^0, \A^\eb) \big\}.
\]
To prove this one would need to show that for sufficiently small $\eb$ the global attractor $\A^\eb$ is in fact (generically) an exponential attractor with exponent, and set of generic forces, independent of $\eb$. Such a result seems reasonable from the perspective of considering $\A^\eb$ to be an `appropriate' perturbation of the global attractor $\A^0$ and applying the theory of regular attractors, see for example \cite{BV,VZC2013}. Such a result has yet to be established and we intend to carry out this study in future work. 

That being said, it is known that, in general,  global attractors are not continuous (in the \emph{symmetric} Hausdorff distance) under perturbations and that the rate of attraction  can be arbitrarily slow. For this reason the theory of exponential attractors was developed; such exponential attractors are known to be stable under perturbations and attract bounded sets \emph{exponentially} fast in time. Importantly,  exponential attractors also occupy `small' subsets of  phase space in the sense that they have finite fractal dimension, cf. \cite{EFNT, EMZ2000, EMZ2005, FG2004}. 

Motivated by the above discussion, and the desire for estimates in the symmetric Hausdorff distance, we also study the relationship between exponential attractors associated to problems \eqref{eq.dw} and  \eqref{eq.dwh}. In fact we construct exponential attractors $\Cal M^\eb$ and $\Cal M^0$ whose (finite) fractal dimension and exponents of attraction are independent of $\eb$, and we determine the following analogues of \eqref{int.e2} and \eqref{int.e3} in the symmetric distance (Theorem \ref{th.exAe} and Corollary \ref{co.expat}):
\begin{equation}
\label{int.e4}
\left\{ \begin{aligned}
&\dist^s_{\E^{-1}} \big( \Cal M^\eb , \Cal M^0 \big)   \le C  \| A^{-1}_\eb - A^{-1}_0 \|_{\Cal L(L^2(\Omega))}^\varkappa, \\
& \dist^s_{(C^\beta(\overline\Omega))^2} \big( \Cal M^\eb , \Cal M^0 \big) \le C  \| A^{-1}_\eb - A^{-1}_0 \|_{\Cal L(L^2(\Omega))}^{\theta\varkappa}, \\
&\dist^s_{\E} \big( \Cal M^\eb , \textsc{T}_\eb\Cal M^0 \big)   \le C  \sqrt{\eb}^\varkappa.
\end{aligned} \right.
\end{equation}
To establish the last inequality above we developed further (in Theorem \ref{th.expAttr.E}) the known abstract construction of  exponential attractors of semi-groups to include the case of  semi-groups that admit asymptotic expansions (i.e. `corrections' such as $\Cal T_\eb$).

\vspace{10pt}

{
We end the introduction with some words on the structure of this article.  In Section \ref{s.pre}, we formulate precise assumptions on the non-linearity $f$ and the elliptic part of  \eqref{eq.dw}, \eqref{eq.dwh}. Also, we recall relevant known well-posedness results as well as results on the existence of global attractors associated with \eqref{eq.dw}, \eqref{eq.dwh}. For the reader's convenience, details on the corresponding attractor theory is provided in Appendix \ref{s.sma}. In Section \ref{s.smth}, for the dynamical systems generated by problems \eqref{eq.dw}, \eqref{eq.dwh}, we establish  existence  and smoothness results for  an attracting set (which contains the global attractors). These results will be crucial in justifying  error estimates between anisotropic and homogenised attractors. In Section \ref{s.conv}, we establish the convergence, in the limit of $\eb \rightarrow 0$,  of the anisotropic global attractor $\Cal A^\eb$ to the homogenised attractor $\Cal A^0$ in the spaces $\E^{-1}$ and $(C^\beta(\overline{\Omega}))^2$. In Section \ref{s.Er}, we derive the central (order-sharp) estimate \eqref{int.e1.1} on the difference between trajectories $u^\eb(t)$ and $u^0(t)$ of the corresponding anisotropic and homogenised problems. Then, based on this, we demonstrate the quantitative  estimates \eqref{int.e2} on the distance between global attractors $\Cal A^\eb$ and $\Cal A^0$. Estimate \eqref{int.e3} between the global attractor $\A^\eb$ and first-order correction $\textsc{T}_\eb\Cal A^0$ in the energy space $\E$ is proved in Section \ref{s.gaE}. Section \ref{s.ea} is devoted to exponential attractors $\Cal M^\eb$, $\Cal M^0$ associated with problems \eqref{eq.dw}, \eqref{eq.dwh} and consists of two parts. In Subsection \ref{s.ExE-1}, existence of the exponential attractors is proved and estimates \eqref{int.e4} in $\E^{-1}$ and $(C^\beta(\overline{\Omega}))^2$ are obtained. The results in this section rely on a variant of a standard abstract result on the construction of exponential attractors; this construction is included in Appendix \ref{app.E-1}. In Subsection \ref{s.dE(Me,M0)}, we compare the distance between the exponential attractor $\Cal M^\eb$ and the first-order correction $\textsc{T}_\eb\Cal M^0$ in the energy space $\E$. Subsection \ref{s.dE(Me,M0)} rests on a new abstract theorem, presented in Appendix \ref{app:estinE}, which compares the distance between exponential attractors which admit correction. We discuss, and prove the corresponding results for the cases of Neumann and periodic boundary conditions in Section \ref{s.difbc}. Some refinements of the results obtained in Sections \ref{s.smth}-\ref{s.Er} related to boundary corrections in homogenisation theory  are the subject of Appendix \ref{app.improvement}.             
}

\section*{ Notations}
We document here notations frequently used throughout the article. The $L^2(\Omega)$ inner product is given by $(u,v):=\int_\Omega u(x)v(x)\,dx$, with norm denoted by $\|u\|:=(u,u)^{1/2}$ for $u, v \in L^2(\Omega)$. We frequently consider initial data in the energy spaces $\E^{-1}:=L^2(\Omega)\times H^{-1}(\Omega)$, and $\E:=H^1_0(\Omega)\times L^2(\Omega).$ These spaces are equipped with norms whose squares are given as $\|\xi\|^2_{\E^{-1}}:=\|\xi^1\|^2+\|\xi^2\|^2_{H^{-1}(\Omega)}$ and $\|\xi\|^2_\E:=\|\Nx \xi^1\|^2+\|\xi^2\|^2$ for admissible pairs\footnote{Here we adopt the common clash of notation for $(\cdot,\cdot)$ to mean both an inner product and represent a pair in a product space. It will be clear from the context which meaning is appropriate.} $\xi=(\xi^1,\xi^2)$. For any function $z(t)$ we set $\xi_z(t)$ to be the pair $(z(t),\Dt z(t))$  where $\Dt z$ denotes the distributional (time) derivative. For a Banach space $E$, $B_E(0,r)$ denotes the ball centered at $0$ of radius $r$ in $E$; the symbol $[\ \cdot\ ]_E$ denotes the closure in  $E$; the one-sided and symmetric Hausdorff distances between two sets $A, B \subset E$ are respectively defined as  $\dist_{E}(A,B):=\sup_{a\in A}\inf_{b\in B}\|a-b\|_{E}$ and	$\dist^s_E (A, B): = \max \big\{ \dist_E (A,B) , \dist_E(B,A) \big\} $. The standard Euclidean basis is denoted by $\{e_k\}_{k=1}^3$. 

\section{Preliminaries}\label{s.pre}
Throughout the article, unless stated otherwise, we adopt the convention that $M$ and $K$ denote generic constants whose precise value may vary from line to line.

For a given matrix $a(\cdot)=\{a_{ij}(\cdot)\}_{i,j=1}^3$ we denote by $a^h=\{a^h_{ij}\}_{i,j=1}^3$  the  homogenised matrix corresponding to $a(\cdot)$ whose constant coefficients are given by the formula
\begin{equation*}
a^h_{ij}:=\int_Q\Big(a_{ij}(y)+\sum_{k=1}^3a_{ik}(y)\partial_{y_k}N_j(y)\Big)\,dy.
\end{equation*}
Here 
$N_i$, $i\in \{1,2,3\}$, is the solution to the so-called cell problem:
\begin{equation}
\label{Ni}
\begin{cases}
-\di_y\big(a(y) \nabla_yN_i(y) \big)=\di_y\big(a(y)e_i\big),\quad \ y\in Q=[0,1)^3,\\[2pt]
\int_QN_i(y)\,dy=0, \qquad \ N_i(\cdot+e_j) = N_i (\cdot) \qquad   j \in\{1,2,3\}.
\end{cases}
\end{equation}

It is well-known that if $a(\cdot)$ is symmetric, bounded and uniformly elliptic, then so is $a^h$ with the  \emph{exact}  same bounds  (see for example \cite[Section 1]{JKO94}). Furthermore,  as  $a^h$ is constant it is clearly periodic.  Consequently, both problem \eqref{eq.dw} and \eqref{eq.dwh} are problems of the form 
\begin{equation}\label{eq.generic}
\begin{cases}
\Dt^2 u +\gamma\Dt u-\di \left( a \Nx u \right)+f(u)=g(x),\quad x\in\Omega,\ t\geq 0,\\
(u,\Dt u)|_{t=0}=\xi, \qquad u|_{\partial\Omega}=0,
\end{cases}
\end{equation} 
with the same generic assumptions on coefficients, forces and non-linearity; we collect these assumptions together here:
\begin{equation}\label{mainassumptions}\tag{H1}
\left\{ \quad \begin{aligned}
&\textit{Let $\Omega\subset\R^3$ be a bounded smooth domain, $g\in L^2(\Omega)$, $a(\cdot)= \{a_{ij}(\cdot)\}_{i,j=1}^3$ satisfying }\\
& a_{ij}\in L^\infty(\R^3),\hspace{1cm} a_{ij}=a_{ji},\hspace{1cm} a_{ij}(\cdot +e_k)=a_{ij}(\cdot),\quad \ i,j,k\in\{1,2,3\},\\ 
& \& \quad 	\nu |\eta|^2 \leq a(y)\eta.\eta\leq \nu^{-1}|\eta|^2,\ \nu>0,\qquad \forall y\in\R^3,\ \forall \eta \in \R^3; \\
&\textit{and $f\in C^2(\R)$   satisfying  } \\
&f(s)s\geq-K_1, \hspace{1cm}
  f'(s)\geq -K_2,\hspace{1cm}
   |f''(s)|\leq K_3(1+|s|), \hspace{1cm}
    f(0)=0, \quad  s \in \R,
\end{aligned} \right.
\end{equation}
for some positive constants $\nu, K_i$.

\begin{rem}
\label{rem.nonlinearity} 
\begin{enumerate}[{a.}] \noindent We note that above assumptions on $f$ imply the following bounds which are important in obtaining dissipative estimates.
	\item{There exists $K_4 > 0$ and $K_5>0$ such that $	\hspace{10pt}	| f'(s) | \le K_4 (1+ |s|^2), \quad | f(s) | \le K_5 (1+ |s|^3), \quad  s \in \mathbb{R}.$
	}\label{fcubic}
\item{\vspace{-5pt}
		The anti-derivative $ F(s)=\int_0^sf(\tau)\,d\tau$ satisfies $		-\frac{K_2}{2}s^2\leq F(s)\leq f(s)s+\frac{K_2}{2}s^2, \quad s \in \R.$
}
\item{ \vspace{5pt}	For all $\mu>0$ there exists $K_\mu>0$ such that
$\hspace{10pt}
		F(s) \ge - K_\mu-\mu s^2, \qquad s\in \mathbb{R}.
$
	}
\end{enumerate}
	Also note that the assumption $f(0)=0$ is, in fact, not a restriction since $f(0)$ always can be included into the forcing term $g$. 
\end{rem}
We begin with some basic existence, continuity and dissipative estimate results. Particular attention is paid to the dependence of these results on the matrix $a$, assuming that the other variables ($\Omega$ and  $f$) are fixed. As these results are standard we shall omit the proofs, commenting here that they are easily argued by the techniques employed in Appendix \ref{s.sma}.
\begin{theorem}
\label{th.dis}
Assume \eqref{mainassumptions}.  Then, for any initial data $\xi\in\E$, problem \eqref{eq.dw} possesses a unique energy solution $u$ with $\xi_{u}\in C(\R_+;\E)$. Moreover, the following dissipative estimate is valid:
\begin{equation}
\label{est.dis}
\|\xi_{u}(t)\|^2_\E+\int_t^\infty\|\Dt u(\tau)\|^2\,d\tau\leq M(\|\xi\|_\E)e^{-\beta t}+M(\|g\|),\quad t\geq 0,
\end{equation}
for some non-decreasing function $M$ and constant $\beta>0$ that depend only on $\nu$.
\end{theorem}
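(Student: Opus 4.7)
The plan is to proceed along the classical template for semi-linear damped wave equations, splitting the argument into existence/uniqueness and the exponential dissipative bound, while tracking carefully how all constants depend on the ellipticity parameter $\nu$ (and not on the specific matrix $a(\cdot/\eb)$). First I would construct an energy solution via a Faedo--Galerkin scheme based on the eigenfunctions of the self-adjoint operator $A_\eb u := -\di(a(\tfrac{\cdot}{\eb}) \Nx u)$ on $H^1_0(\Omega)$ (which is well-defined by (H1)). Testing the approximate equation with $\Dt u_n$ gives the standard energy identity
\[
\frac{d}{dt}\Big( \tfrac{1}{2}\|\Dt u_n\|^2 + \tfrac{1}{2}(a(\tfrac{\cdot}{\eb}) \Nx u_n, \Nx u_n) + (F(u_n),1) - (g,u_n)\Big) = -\gamma\|\Dt u_n\|^2,
\]
and combining the lower bound $F(s)\ge -K_\mu-\mu s^2$ from Remark 1.1 (c) with coercivity $(a\eta,\eta)\ge \nu|\eta|^2$ gives a global a priori bound of $\xi_{u_n}$ in $L^\infty(\R_+;\E)$, depending only on $\nu$, $\|g\|$ and $\|\xi\|_\E$. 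Passing to the limit in the nonlinearity is then routine because $|f(s)|\le K_5(1+|s|^3)$ (Remark 1.1 (a)) is subcritical in dimension three, so $f(u_n)\to f(u)$ strongly via the compact embedding $H^1_0\hookrightarrow L^p$ for $p<6$; continuity $\xi_u\in C(\R_+;\E)$ then follows from weak continuity together with the energy equality.

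Uniqueness I would handle by taking $w=u_1-u_2$ for two solutions with the same data. Then $w$ solves the linear damped wave equation with source $-(f(u_1)-f(u_2))$ and trivial initial data. Writing $f(u_1)-f(u_2) = w\int_0^1 f'(\theta u_1+(1-\theta)u_2)\,d\theta$, using $|f'(s)|\le K_4(1+|s|^2)$ from Remark 1.1 (a), and then applying H\"older together with $H^1_0\hookrightarrow L^6$ and the already-established a priori $\E$-bound, testing with $\Dt w$ produces a Gronwall inequality for $\|\xi_w\|_\E^2$ which forces $w\equiv 0$.

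The dissipative estimate is the main quantitative step, and here I would use the classical trick of modifying the energy. Introduce the Lyapunov functional
\[
L(t) := \tfrac{1}{2}\|\Dt u\|^2 + \tfrac{1}{2}(a(\tfrac{\cdot}{\eb})\Nx u,\Nx u) + (F(u),1) - (g,u) + \mu(\Dt u, u) + \tfrac{\mu\gamma}{2}\|u\|^2,
\]
where $\mu>0$ is chosen small enough (in terms only of $\nu$, $\gamma$ and the Poincar\'e constant of $\Omega$) that $L(t)$ is equivalent, up to additive constants depending on $\|g\|$, to $\|\xi_u(t)\|^2_\E$; the two-sided control uses coercivity and Remark 1.1 (b)--(c). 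Differentiating $L$ along a trajectory and substituting the equation yields
\[
L'(t) = -(\gamma-\mu)\|\Dt u\|^2 - \mu(a(\tfrac{\cdot}{\eb})\Nx u,\Nx u) - \mu(f(u),u) + \mu(g,u),
\]
and using the sign condition $f(u)u\ge -K_1$ pointwise, coercivity in $\nu$, and Young's inequality on the forcing term gives $L'(t) + \beta L(t) \le M(\|g\|)$ for some small $\beta>0$ depending only on $\nu$. Gronwall then yields exponential decay of $\|\xi_u(t)\|^2_\E$ to a ball of radius $M(\|g\|)$, and the $L^2_t$-bound on $\Dt u$ on $[t,\infty)$ follows by integrating the unmodified energy identity from $t$ to $\infty$ and inserting the just-obtained uniform bound on the energy.

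The hard part will be the calibration of $\mu$ and $\beta$ in the last step: one needs $\mu$ small enough for $L$ to dominate the full $\E$-norm from below, yet large enough to extract a strictly positive damping rate from both $\|\Dt u\|^2$ and $(a\Nx u,\Nx u)$. The key structural point, and the reason for writing the proof in this form, is that every constant produced depends on $a$ only through $\nu$, so the very same estimate applies to both $a(\cdot/\eb)$ and the homogenised matrix $a^h$ (which shares the same bounds), which is precisely the uniformity needed for the attractor analysis in the later sections.
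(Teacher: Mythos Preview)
Your proposal is correct and follows the standard route that the paper itself invokes: the paper omits the proof of this theorem as standard, pointing to the techniques of Appendix~\ref{s.sma}, where the key device is precisely multiplication by $\Dt u + \kappa u$ for small $\kappa$, which is equivalent to your modified Lyapunov functional $L(t)$. The only minor point to make explicit in your final write-up is that closing the inequality $L'+\beta L\le M(\|g\|)$ requires absorbing $\beta(F(u),1)$ via Remark~\ref{rem.nonlinearity}(b), $F(s)\le f(s)s+\tfrac{K_2}{2}s^2$, together with $f(s)s\ge -K_1$, which forces $\beta\le\mu$; otherwise the argument is complete.
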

A consequence of the dissipative estimate \eqref{est.dis}, growth restrictions on $f$, and uniform ellipticity of $a(\cdot)$ we have the following continuous dependence on initial data.
\begin{cor}\label{co.St.inC}
Let $u_1$ and $u_2$ be two energy solutions to problem \eqref{eq.generic} with initial data $\xi_{1}$, $\xi_{2}\in\E$ respectively. Then the following estimate
\begin{equation*}
\|\xi_{u_1}(t)-\xi_{u_2}(t)\|_\E\leq M e^{Kt} \|\xi_{1}-\xi_{2}\|_\E,\quad t\geq 0,
\end{equation*}
holds for some constant $M>0$ and $K=K(\| \xi_1 \|_{\E}, \| \xi_2 \|_{\E}, \|g \|, \nu)$. 
\end{cor}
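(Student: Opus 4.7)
The plan is to derive a Gronwall-type inequality for the difference $w:=u_1-u_2$ which, by hypothesis, solves the linear equation
\[
\Dt^2 w+\gamma \Dt w-\di(a\Nx w)+f(u_1)-f(u_2)=0,\qquad \xi_w(0)=\xi_1-\xi_2.
\]
Working with the coefficient-weighted energy $E_a(t):=\tfrac12\bigl( a\Nx w,\Nx w\bigr)+\tfrac12\|\Dt w\|^2$ (which is equivalent to $\tfrac12\|\xi_w\|_\E^2$ thanks to the uniform ellipticity bound $\nu|\eta|^2\le a\eta\cdot\eta\le\nu^{-1}|\eta|^2$ in \eqref{mainassumptions}), I would test the equation with $\Dt w$ to obtain the standard identity
\[
\frac{d}{dt}E_a(t)+\gamma\|\Dt w\|^2=-\bigl(f(u_1)-f(u_2),\Dt w\bigr).
\]

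The only non-trivial step is controlling the nonlinear term. By the mean value theorem together with the cubic growth bound $|f'(s)|\le K_4(1+|s|^2)$ from Remark \ref{rem.nonlinearity}\ref{fcubic}, we have $|f(u_1)-f(u_2)|\le K_4\bigl(1+|u_1|^2+|u_2|^2\bigr)|w|$. Applying Hölder in the exponents $(3,6,2)$ and the Sobolev embedding $H^1_0(\Omega)\hookrightarrow L^6(\Omega)$ (valid since $\Omega\subset\R^3$), this yields
\[
\bigl|(f(u_1)-f(u_2),\Dt w)\bigr|\le C\bigl(1+\|\Nx u_1\|^2+\|\Nx u_2\|^2\bigr)\|\Nx w\|\,\|\Dt w\|.
\]
The dissipative estimate \eqref{est.dis} from Theorem \ref{th.dis} furnishes a uniform-in-$t$ bound $\|\Nx u_i(t)\|\le M(\|\xi_i\|_\E,\|g\|)$ depending only on $\nu$, the initial data and the force. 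Absorbing $\|\Dt w\|^2$ into the dissipative term $\gamma\|\Dt w\|^2$ via Young's inequality, we arrive at
\[
\frac{d}{dt}E_a(t)\le K\,E_a(t),\qquad K=K(\|\xi_1\|_\E,\|\xi_2\|_\E,\|g\|,\nu).
\]

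Gronwall's lemma then gives $E_a(t)\le e^{Kt}E_a(0)$, and returning to the standard energy norm via the ellipticity bounds produces the stated estimate with an absolute constant $M$ (depending only on $\nu$) and exponent $K$ as above. The main (minor) obstacle is keeping careful track of the dependence of constants on $\nu$ versus the initial data and $\|g\|$; this is where the uniformity of the ellipticity bounds and the explicit structure of the dissipative estimate in Theorem \ref{th.dis} are essential, since the latter is what allows $K$ to be chosen independently of $t\ge 0$.
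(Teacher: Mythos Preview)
Your proof is correct and follows precisely the standard argument the paper has in mind: the paper omits the proof, stating only that it is a consequence of the dissipative estimate \eqref{est.dis}, the growth restrictions on $f$, and the uniform ellipticity of $a(\cdot)$, which are exactly the three ingredients you invoke. The handling of the nonlinear term via H\"older with exponents $(3,6,2)$ and the Sobolev embedding $H^1_0(\Omega)\hookrightarrow L^6(\Omega)$ is the same device the paper uses elsewhere (cf.\ \eqref{[f(ue)-f(uh)].Ae-1re'} and the estimates in Appendix~\ref{s.sma}).
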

Additionally, we have the following continuous dependence in $\E^{-1}$.
\begin{cor}\label{cor.contE-1}
	Let $u_1$ and $u_2$ be two energy solutions to problem \eqref{eq.generic} with initial data $\xi_{1}$, $\xi_{2}\in\E$ respectively. Then the following estimate
	\begin{equation*}
	\|\xi_{u_1}(t)-\xi_{u_2}(t)\|_{\E^{-1}}\leq M e^{Kt} \|\xi_{1}-\xi_{2}\|_{\E^{-1}},\quad t\geq 0,
	\end{equation*}
holds for some constant $M>0$ and $K=K(\| \xi_1 \|_{\E}, \| \xi_2 \|_{\E}, \|g \|, \nu)$.
\end{cor}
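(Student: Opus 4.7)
My plan is to imitate the usual wave-equation energy estimate, but carried out in the weaker $\E^{-1}$-norm. Set $w:=u_1-u_2$, which solves
\begin{equation*}
\Dt^2 w+\gamma\Dt w-\di(a\Nx w) = -\bigl(f(u_1)-f(u_2)\bigr),\qquad w|_{\d\Omega}=0,\quad \xi_w(0)=\xi_1-\xi_2.
\end{equation*}
Let $A:=-\di(a\Nx)$ with Dirichlet boundary condition on $L^2(\Omega)$; by \eqref{mainassumptions}, $A$ is positive self-adjoint, its square root $A^{1/2}$ has domain $H^1_0(\Omega)$ with $\|A^{1/2}u\|^2=(a\Nx u,\Nx u)$ equivalent to $\|\Nx u\|^2$, and consequently $\|A^{-1/2}h\|$ is equivalent to $\|h\|_{H^{-1}(\Omega)}$, uniformly over coefficients $a$ satisfying \eqref{mainassumptions}. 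I therefore work with the inverse-energy functional
\begin{equation*}
\Phi(t):=\tfrac12\|w(t)\|^2+\tfrac12\|A^{-1/2}\Dt w(t)\|^2,\qquad \Phi(t)\sim \|\xi_w(t)\|^2_{\E^{-1}}.
\end{equation*}

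The key step is to test the equation for $w$ against $A^{-1}\Dt w$ (justified by a smoothing/density argument on approximate solutions, in the spirit of Appendix~\ref{s.sma}), yielding
\begin{equation*}
\frac{d}{dt}\Phi(t)+\gamma\|A^{-1/2}\Dt w(t)\|^2 = -\bigl(A^{-1/2}(f(u_1)-f(u_2)),\,A^{-1/2}\Dt w\bigr),
\end{equation*}
whence by Cauchy--Schwarz $\frac{d}{dt}\Phi(t)\le C\|f(u_1)-f(u_2)\|_{H^{-1}(\Omega)}\,\|A^{-1/2}\Dt w\|$. The problem thus reduces to controlling the nonlinear increment in $H^{-1}$. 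Writing $f(u_1)-f(u_2)=\phi\,w$ with $\phi:=\int_0^1 f'(u_2+s(u_1-u_2))\,ds$, the growth bound $|f'(s)|\le K_4(1+|s|^2)$ from Remark~\ref{rem.nonlinearity}(\ref{fcubic}), the dissipative estimate \eqref{est.dis}, and the Sobolev embedding $H^1_0\hookrightarrow L^6$ together yield $\|\phi(t)\|_{L^3(\Omega)}\le M$, with $M$ depending only on $\|\xi_1\|_\E,\|\xi_2\|_\E,\|g\|,\nu$.

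H\"older's inequality (with $\tfrac13+\tfrac12=\tfrac56$) and the dual Sobolev embedding $L^{6/5}(\Omega)\hookrightarrow H^{-1}(\Omega)$ then give $\|f(u_1)-f(u_2)\|_{H^{-1}}\le C\|\phi\|_{L^3}\|w\|\le M\|w\|$. Inserting this and applying Young's inequality produces $\frac{d}{dt}\Phi(t)\le K\Phi(t)$ with $K$ of the asserted form, and Gronwall's lemma closes the estimate. The main subtlety is the rigorous justification of testing against $A^{-1}\Dt w$ for energy solutions (which is handled by smoothing the initial data, deriving the identity at the strong-solution level, and passing to the limit); the only other care required is that the constant $K$ in the final bound depends on $\|\xi_i\|_\E$ rather than just $\|\xi_i\|_{\E^{-1}}$, which is unavoidable given the cubic growth of $f$ and the need to invoke \eqref{est.dis} to bound $\|\phi\|_{L^3}$ uniformly in $t$.
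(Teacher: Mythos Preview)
Your proof is correct and follows exactly the approach the paper intends. The paper omits the proof of this corollary as ``standard,'' but the identical technique---testing the difference equation against $A^{-1}\Dt w$ to obtain the $\E^{-1}$-energy identity, then bounding the nonlinear term via H\"older with exponents $(3,2,6)$ and the Sobolev embedding $H^1_0\hookrightarrow L^6$---is carried out in full in the proof of Theorem~\ref{th.|ue-uh|E-1} (see in particular \eqref{drE-1}, \eqref{errgron1}, and \eqref{[f(ue)-f(uh)].Ae-1re'}); your treatment of the nonlinearity via the dual embedding $L^{6/5}\hookrightarrow H^{-1}$ is simply the adjoint formulation of the same estimate.
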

We now proceed to study the long-time behaviour of solutions $u$ from the point of view of infinite-dimensional dynamical systems. In particular the problem \eqref{eq.generic} defines a dynamical system $(\E,S(t))$ by
\begin{equation}
\label{ds.e}
S(t):\E\to\E,\quad S(t)\xi=\xi_{u}(t),
\end{equation}
where $u$ is a solution to the problem \eqref{eq.generic} with initial data $\xi$. The limit behaviour of a dissipative dynamical system as time goes to $+\infty$ can be described in terms of a so-called global attractor. Let us briefly recall its definition (see \cite{BV,CV,MZ.hbk2008,Te}).

\begin{Def}\label{de.A} Let $S(t):\E\to\E$ be a semi-group acting on a Banach space $\E$. Then a set $\A$ is called a global attractor for the dynamical system $(\E,S(t))$ if it possesses the following properties:
\par
1. The set $\A$ is compact in $\E$;
\par
2. The set $\A$ is strictly invariant:
\[
S(t) \A = \A, \quad \forall t \ge0;
\]
\par
3. The set $\A$ uniformly attracts every bounded set $B$ of $\E$, that is 
\begin{equation*}
\lim_{t\to+\infty}\dist_{\E}\big(S(t)B,\Cal A\big)=0.
\end{equation*}
\end{Def}

One can show that if a global attractor exists then it is unique. Also, the following description of the global attractor in terms of bounded trajectories is known (see e. g. \cite{BV,CV}):
\begin{equation}
\label{bddedsolns}
\Cal A=\{\xi_0\in\E: \exists\, \xi(t)\in L^\infty(\R;\E),\ \xi(0)=\xi_0,\ S(t)\xi(s)=\xi(t+s),\ s\in\R,\ t\geq 0 \}.
\end{equation}

Now, the dissipative estimate \eqref{est.dis} implies the existence of a bounded positively invariant absorbing set $\B\subset\E$ (which depends only on $\nu$):
	\begin{equation}
	\label{StB C B}
	S(t)\B\subset\B,\quad \forall t\geq0.
	\end{equation}
To prove that a global attractor exists for problem \eqref{eq.generic} we utilise the following classical result  (\cite{BV,CV,MZ.hbk2008,Te}).
\begin{theorem}\label{th.Ag}
A dynamical system $(\E,S(t))$ possesses a global attractor $\Cal A$ in $\E$ if the following conditions hold:
\par
1. The dynamical system $(\E,S(t))$ is asymptotically compact: there exists a compact set $\mathscr{K}\subset \E$ such that
\[
\lim_{t\to+\infty}\dist_{\E}(S(t)B,\mathscr{K})=0,\qquad for\ all\ bounded\ sets\ B\subset\E;
\]
\par
2. For each $t\geq 0$ the operators $S(t):\E\to\E$ are continuous.

\hspace{10pt}Under such conditions, it follows that $\A$ not only exists but also  $\Cal A \subset \mathscr{K}$.
\end{theorem}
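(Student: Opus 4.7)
The plan is to take as candidate global attractor the $\omega$-limit set of the given attracting set,
\[
\A := \omega(\mathscr{K}) = \bigcap_{s \ge 0} \overline{\textstyle\bigcup_{t \ge s} S(t) \mathscr{K}},
\]
and to verify the three defining properties one at a time. For the inclusion $\A \subset \mathscr{K}$ (which, together with the fact that $\A$ is an intersection of closed sets, yields compactness), I would represent any $x \in \A$ by a standard diagonal argument as $x = \lim_n S(t_n) x_n$ with $x_n \in \mathscr{K}$ and $t_n \to \infty$; asymptotic compactness applied to the bounded set $\mathscr{K}$ forces $\dist_\E(S(t_n) x_n, \mathscr{K}) \to 0$, and closedness of $\mathscr{K}$ then pins $x$ inside $\mathscr{K}$. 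Non-emptiness follows by picking any $x_0 \in \mathscr{K}$ and extracting a convergent subsequence from $\{S(n) x_0\}$ using asymptotic compactness.

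For strict invariance $S(t)\A = \A$, the forward inclusion is immediate from continuity of $S(t)$: if $x = \lim S(t_n)x_n$ then $S(t) x = \lim S(t + t_n) x_n \in \A$. The reverse inclusion $\A \subset S(t) \A$ is the delicate step: given $x \in \A$ written as $x = \lim S(t_n) x_n$, I would consider the shifted trajectory $\{S(t_n - t) x_n\}$ (for $n$ large so that $t_n \ge t$), extract a convergent subsequence $S(t_n - t) x_n \to z$ via asymptotic compactness, note $z \in \mathscr{K}$ by the preceding paragraph, and then conclude $S(t) z = x$ from continuity of $S(t)$. A ``sweep over arbitrary $T \ge 0$'' argument (realising $z$ itself as $S(T)$ of an element of $\mathscr{K}$ for every $T$) lifts $z$ into $\A$, delivering $x \in S(t) \A$.

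Finally, uniform attraction $\dist_\E(S(t) B, \A) \to 0$ for bounded $B \subset \E$ is proved by contradiction: assume $\varepsilon > 0$, $t_n \to \infty$, and $x_n \in B$ with $\dist_\E(S(t_n) x_n, \A) \ge \varepsilon$. Asymptotic compactness, applied now to $B$, yields (along a subsequence) a limit $y = \lim S(t_n) x_n$; the same shifted-trajectory template as above realises $y = S(T) z_T$ with $z_T \in \mathscr{K}$ for every $T \ge 0$, whence $y \in \bigcap_{T \ge 0} \overline{\bigcup_{t \ge T} S(t) \mathscr{K}} = \A$, contradicting the separation. The main technical obstacle is precisely this preimage-by-backwards-shift manoeuvre, which is what converts a one-sided, forward-in-time attracting property of $\mathscr{K}$ into the two-sided invariance of $\A$ and into uniform attraction; once it is in place, everything else is diagonal-subsequence bookkeeping, and uniqueness of the attractor follows in the standard way from the three verified properties.
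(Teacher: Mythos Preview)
The paper does not prove this theorem: it is stated as a classical result with references to \cite{BV,CV,MZ.hbk2008,Te}, so there is no in-paper argument to compare against. Your proposal is correct and follows precisely the standard textbook route (define $\A=\omega(\mathscr K)$, extract limits via the compact attracting set, and verify compactness, strict invariance, and attraction by the usual shifted-trajectory and contradiction arguments); this is essentially what one finds in the cited references.
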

Note that Corollary \ref{co.St.inC} implies that the evolution operator $S(t)$, given by \eqref{ds.e}, has continuous dependence on the initial data. Let us focus on the existence of a compact attracting set. 

Introducing the space
\begin{equation}
\label{E1space}
\left\{
\begin{aligned}
& \E^1 : = \{ \xi = (\xi^1, \xi^2)\in \E \, | \, \di (a \Nx \xi^1 ) \in L^2(\Omega),\, \xi^2 \in H^1_0(\Omega)\}, \\ 
& \Vert \xi \Vert_{\E^1}^2 : =  \Vert \di (a \Nx \xi^1 ) \Vert^2 + \| \nabla \xi^2 \|^2,
\end{aligned} \right.
\end{equation}
we have the following known result that states there exists an attracting ball in $\E^1$.
\begin{theorem}\label{cor.smooth1}
	Assume \eqref{mainassumptions}, and let $S$ be the semi-group defined by \eqref{ds.e}. Then, there exists a ball in $\E^1$ that  attracts the set $\Cal B$, from \eqref{StB C B}, in $\E$. More precisely,  the inequality
	\[
	\dist_{\E} \big( S(t) \Cal B , B_{\E^1}(0,R) \big) \le M e^{- \beta t}, \quad t\ge 0,
	\]
	holds	for some positive constants $R$, $M$ and $\beta$ that depend only on $\nu$.
\end{theorem}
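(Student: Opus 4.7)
The plan is to exhibit the attracting $\E^1$-ball via the standard decomposition of the solution semigroup into a transient linear part and a bounded smoother part. Given $\xi \in \Cal B$ with trajectory $u$, split $u = v + w$, where $v$ solves the homogeneous linear damped wave equation
\begin{equation*}
\partial_t^2 v + \gamma \partial_t v - \di(a\nabla v) = 0,\qquad \xi_v(0) = \xi,
\end{equation*}
and $w := u - v$ satisfies, with zero initial data,
\begin{equation*}
\partial_t^2 w + \gamma \partial_t w - \di(a\nabla w) = g - f(u).
\end{equation*}
A standard Lyapunov functional of the form $\tfrac12\|\partial_t v\|^2 + \tfrac12(a\nabla v, \nabla v) + \delta(\partial_t v, v)$ with $\delta>0$ small immediately yields $\|\xi_v(t)\|_\E \le M e^{-\beta t}\|\xi\|_\E$, with $M,\beta$ depending only on $\nu$. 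Hence the task reduces to producing a uniform bound $\xi_w(t) \in B_{\E^1}(0,R)$ for $\xi \in \Cal B$ and $t\ge 0$.

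For this, the key step is to differentiate the $w$-equation in time. Writing $\theta := \partial_t w$, one obtains
\begin{equation*}
\partial_t^2 \theta + \gamma\partial_t\theta - \di(a\nabla\theta) = -f'(u)\partial_t u,\qquad \theta(0) = 0,\ \partial_t\theta(0) = g - f(\xi^1),
\end{equation*}
whose initial data $(\theta(0), \partial_t\theta(0))$ lies in a bounded subset of $\E$ uniformly on $\Cal B$ (using $\|f(\xi^1)\|\le C(1+\|\xi^1\|^3_{L^6})$ and the embedding $H^1_0(\Omega)\hookrightarrow L^6(\Omega)$). Once a uniform bound $\|\xi_\theta(t)\|_\E \le R_1$ is established, the required $\E^1$-bound on $\xi_w(t)$ follows by reading the $w$-equation as $-\di(a\nabla w) = g - f(u) - \gamma\partial_t w - \partial_t^2 w$: each summand on the right is uniformly controlled in $L^2(\Omega)$, using the dissipative estimate \eqref{est.dis} for $\|f(u)\|\le C(1+\|u\|^3_{H^1_0})$ and the identifications $\partial_t w = \theta$, $\partial_t^2 w = \partial_t \theta$.

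The main obstacle, and the step that requires real care, is closing the $\E$-energy estimate for $\theta$: because of the critical cubic growth of $f$, the forcing $f'(u)\partial_t u$ only belongs to $L^{6/5}(\Omega)\subset H^{-1}(\Omega)$, so the natural pairing $(f'(u)\partial_t u, \partial_t\theta)$ with $\partial_t\theta\in L^2(\Omega)$ is not directly controlled by the dissipation $\|\partial_t\theta\|^2$. The standard remedy is a combined Lyapunov argument, testing with $\partial_t\theta + \delta\theta$ and treating the two resulting contributions separately: the term $(f'(u)\partial_t u, \theta)$ is dispatched via $|(f'(u)\partial_t u, \theta)| \le \|f'(u)\|_{L^3}\|\partial_t u\|\|\nabla \theta\|$ and absorbed into the dissipation $\delta\nu\|\nabla\theta\|^2$, using $H^1_0\hookrightarrow L^6$ together with the uniform $H^1_0$-bound on $u$; the stubborn term $(f'(u)\partial_t u, \partial_t\theta)$ is handled by rewriting $f'(u)\partial_t u = \partial_t f(u)$, integrating by parts in time, and substituting the $\theta$-equation for $\partial_t^2\theta$ in the resulting boundary and bulk expressions, trading the dangerous contribution for manageable quantities involving $\|f(u)\|_{L^2}$, $\|\nabla\theta\|$ and integrated-in-time expressions controlled by the finite dissipation $\int_0^\infty \|\partial_t u\|^2\,d\tau \le C$ from \eqref{est.dis}. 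A Gr\"onwall argument on the resulting combined functional then delivers $\|\xi_\theta(t)\|_\E\le R_1$ uniformly.

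Combining the exponential decay of $\xi_v(t)$ in $\E$ with the uniform $\E^1$-bound on $\xi_w(t)$ yields the asserted estimate $\dist_\E(S(t)\Cal B, B_{\E^1}(0,R))\le Me^{-\beta t}$ with $R,M,\beta$ depending only on $\nu$.
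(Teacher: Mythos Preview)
Your approach has a genuine gap at exactly the point you flag as ``the step that requires real care.'' The integration-by-parts-in-time manoeuvre you propose does \emph{not} close: after writing $(f'(u)\partial_t u,\partial_t\theta)=\tfrac{d}{dt}(f(u),\partial_t\theta)-(f(u),\partial_t^2\theta)$ and substituting the $\theta$-equation, one of the resulting bulk terms is
\[
(f(u),\di(a\nabla\theta))=-(a\,f'(u)\nabla u,\nabla\theta).
\]
With critical cubic growth, $f'(u)\in L^3(\Omega)$ and $\nabla u\in L^2(\Omega)$, so $f'(u)\nabla u\in L^{6/5}(\Omega)$ only; pairing with $\nabla\theta\in L^2(\Omega)$ is not controlled (you would need $\nabla\theta\in L^6(\Omega)$, i.e.\ $\theta\in H^2(\Omega)$, which is precisely what you are trying to prove). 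No H\"older splitting with the available regularity $u\in H^1_0(\Omega)$ and $\theta\in H^1_0(\Omega)$ rescues this. In other words, integration by parts in time just exchanges one critical term for another of the same strength. This is a well-known obstruction: the naive linear/nonlinear splitting you use works in the subcritical range but fails for the critical exponent in three dimensions unless one has extra tools (e.g.\ Strichartz estimates, which are not available here for merely $L^\infty$ coefficients).

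The paper avoids this by using the Pata--Zelik splitting instead: $u=v+w$ with
\[
\partial_t^2 v+\gamma\partial_t v-\di(a\nabla v)+Lv+f(u)-f(w)=0,\qquad \xi_v(0)=\xi,
\]
\[
\partial_t^2 w+\gamma\partial_t w-\di(a\nabla w)+Lw+f(w)=Lu+g,\qquad \xi_w(0)=0,
\]
for large $L>0$. The point is twofold. First, the $v$-equation now carries the difference $f(u)-f(w)$, which by the mean value theorem is of order $|v|$ times a factor bounded below by $-K_2$; testing with $\partial_t v+\kappa v$ produces residual terms like $(f''(u)\partial_t u,|v|^2)$ and $(f'(u)-f'(w),\partial_t w\,v)$ that are controlled by $(\|\partial_t u\|^2+\|\partial_t w\|^2)\|\nabla v\|^2$, and the time-integrability of $\|\partial_t u\|^2$ (from the dissipative estimate) plus an analogous bound for $\|\partial_t w\|^2$ feed a \emph{modified} Gronwall lemma to give exponential decay of $\|\xi_v\|_\E$. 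Second, the $w$-equation now has the \emph{semilinear} nonlinearity $f(w)$ rather than $f(u)$ as a forcing; differentiating in time yields $-f'(w)\partial_t w$, and since the critical factor $f'(w)$ and the time derivative $\partial_t w$ both refer to the same unknown, the energy estimate for $q=\partial_t w$ closes via the same modified Gronwall mechanism. This structural rearrangement is precisely what makes the $\E^1$-bound on $\xi_w$ go through in the critical case.
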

The proof of Theorem \ref{cor.smooth1} is presented for the reader's convenience in Appendix \ref{s.sma} and is based on a splitting  of trajectory $u$, into the smooth and contractive parts, that was developed in \cite{PataZel}.

Consequently, as $\E^1$ is compact in $\E$ we see from Theorem \ref{cor.smooth1} that $\mathscr K = B_{\E^1}(0,R)$ is a compact attracting set and, by Theorem \ref{th.Ag}, there exists a global attractor. That is, the following result holds.
\begin{theorem}
	\label{th.ExAttrinE}Assume \eqref{mainassumptions}. Then, the dynamical system $(\E,S(t))$ given by \eqref{ds.e} possesses a global attractor $\Cal A \subset \Cal E^1$ such that:
	\begin{equation}
	\label{Ae=Ke(0)}
	\|\A \|_{\Cal E^1 }\leq M(\|g\|),\qquad \Cal A=\Cal K|_{t=0},
	\end{equation}
	where $\Cal K$ is the set of bounded energy solutions to problem \eqref{eq.generic} defined \emph{for all} $t\in\R$, cf. \eqref{bddedsolns}. 
\end{theorem}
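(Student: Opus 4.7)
The plan is to verify the two hypotheses of Theorem \ref{th.Ag} with compact attracting set $\mathscr{K}:=B_{\E^1}(0,R)$, and then to read off the description $\Cal A=\Cal K|_{t=0}$ from the abstract characterization \eqref{bddedsolns}.

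First, continuity of each $S(t):\E\to\E$ is a direct consequence of Corollary \ref{co.St.inC}. For asymptotic compactness, I observe that \eqref{StB C B} together with the dissipative estimate \eqref{est.dis} implies that the absorbing set $\Cal B$ absorbs every bounded set $B\subset\E$ in some finite time $T(B)$. Combining this with the smoothing estimate from Theorem \ref{cor.smooth1} via the semigroup property $S(t+T(B))B=S(t)S(T(B))B\subset S(t)\Cal B$ gives
\[
\dist_{\E}\bigl(S(t)B,B_{\E^1}(0,R)\bigr)\le M e^{-\beta(t-T(B))},\qquad t\ge T(B),
\]
so $B_{\E^1}(0,R)$ attracts every bounded set $B\subset\E$ in the $\E$-metric.

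It remains to check that $\mathscr{K}=B_{\E^1}(0,R)$ is compact in $\E$, i.e.\ that $\E^1\hookrightarrow\E$ is a compact embedding. For the second component this is just the Rellich embedding $H^1_0(\Omega)\hookrightarrow L^2(\Omega)$. For the first component, uniform ellipticity and boundedness of $a(\cdot)$ together with boundedness and smoothness of $\Omega$ ensure that the Dirichlet realisation of $-\di(a\Nx\,\cdot)$ is a positive self-adjoint operator on $L^2(\Omega)$ with compact resolvent; in particular, every set on which $\di(a\Nx\xi^1)$ is bounded in $L^2(\Omega)$ is relatively compact in $H^1_0(\Omega)$. Hence $\mathscr{K}$ is compact in $\E$ and Theorem \ref{th.Ag} yields a global attractor $\Cal A\subset\mathscr{K}\subset\E^1$.

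The quantitative bound $\|\Cal A\|_{\E^1}\le M(\|g\|)$ then follows by tracking the $\|g\|$-dependence of the radius of $\Cal B$ through \eqref{est.dis} and consequently of the radius $R$ produced by Theorem \ref{cor.smooth1}, while the identification $\Cal A=\Cal K|_{t=0}$ is an immediate specialisation of \eqref{bddedsolns}. The only potentially subtle point in the entire argument is the compact embedding $\E^1\hookrightarrow\E$ with merely $L^\infty$ coefficients, which is why the reasoning is routed through the compact-resolvent property of the Dirichlet elliptic operator rather than through $H^2$-elliptic regularity, which is unavailable here.
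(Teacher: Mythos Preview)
Your proof is correct and follows essentially the same route as the paper: verify the hypotheses of Theorem~\ref{th.Ag} using continuity from Corollary~\ref{co.St.inC} and asymptotic compactness from Theorem~\ref{cor.smooth1}, with $\mathscr{K}=B_{\E^1}(0,R)$ as the compact attracting set. You supply slightly more detail than the paper does---in particular, you make explicit the extension of attraction from the absorbing set $\Cal B$ to arbitrary bounded sets, and you justify the compact embedding $\E^1\hookrightarrow\E$ via the compact-resolvent property rather than $H^2$-regularity (which, as you note, is the right way to argue given only $L^\infty$ coefficients)---but the underlying argument is the same.
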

\section{Smoothness of the global attractor}
\label{s.smth}
Above we demonstrated that the global attractor $\A$ is a bounded subset of $\E^1$. We shall now establish some additional regularity of $\A$. These results will be used  later on  to  derive homogenisation error estimates.

We are going to show that $\A$ is contained in the more regular set
\begin{equation*}
\left\{\begin{aligned}
& \Cal E^2 : = \big\{ \xi \in \E^1 \, | \, \big(\di(a\Nx \xi^1) + g \big) \in H^1_0(\Omega) \text{ and } \di (a \Nx \xi^2 ) \in L^2(\Omega)  \big\}, \\
& \Vert \xi \Vert_{\Cal E^2}^2 : =  \Vert \di (a \Nx \xi^1 ) + g \Vert^2_{H^1_0(\Omega)} +\|\di(a\Nx \xi^1)\|^2 + \Vert \di (a \Nx \xi^2 ) \Vert^2,
\end{aligned} \right.
\end{equation*}
and that $\A$ is bounded in the following sense: $\| \A \|_{\Cal E^2} \le M$.

To this end,  we shall show that $ B_{\E^1}(0,R)$ is exponentially attracted, in $\E$, to some `ball' \footnote{Note that the convex functional $\Vert \cdot \Vert_{\Cal E^2}$ is \emph{not} a norm and the set $\Cal E^2$ is an affine  subset of $\E^1$.}
\[
B_{\Cal E^2}(0,R_1) : = \{ \xi \in \Cal E^2 \, | \, \Vert \xi \Vert_{\Cal E^2} \le R_1 \}.
\]
Then by 
 utilising the so-called transitivity property of exponential attraction we establish that $\Cal B$ (from \eqref{StB C B}) is attracted to $ B_{\Cal E^2}(0,R_1) $ exponentially in $\E$ and, therefore,  we will show that $\A$ is bounded in  $\Cal E^2$.

Let us begin with the following theorem which provides a useful dissipative estimate for problem \eqref{eq.dwh} with initial data in $\E^1$ (see \eqref{E1space}). 
\begin{theorem}
	\label{th.disE2}
Assume \eqref{mainassumptions}. Then for any initial data $\xi\in \E^1$ the energy solution $u$ to problem \eqref{eq.generic} is such that $\xi_{u}\in L^\infty(\R_+;\E^1)$, and the following dissipative estimate is valid:
	\begin{equation*}
	\|\Dt^2u(t)\|+\|\xi_{u}(t)\|_{\E^1}\leq M(\|\xi\|_{\E^1})e^{-\beta t}+M(\|g\|),\ t\geq 0,
	\end{equation*}
	for some non-decreasing function $M$ and constant $\beta>0$ that depend only on $\nu$.
\end{theorem}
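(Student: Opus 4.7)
The plan is to derive the $\Cal E^1$-bound by applying the energy method to the time-derivative $v:=\Dt u$. Formally differentiating the equation in $t$ shows that $v$ satisfies the linear hyperbolic equation
\[
\Dt^2 v + \gamma\Dt v -\di(a\Nx v) + f'(u)\, v = 0,
\]
with the time-dependent zeroth-order coefficient $f'(u(t,\cdot))$ supplied by the given solution $u$. I will treat this as a perturbed linear wave equation for $v$ and apply the standard hyperbolic multiplier $\Dt v + \alpha v$, exploiting the sign condition $f'(s) \ge -K_2$ and the quadratic growth $|f'(s)| \le K_4(1+|s|^2)$ from Remark~\ref{rem.nonlinearity}(a).

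The initial conditions for $v$, obtained by reading the original equation at $t = 0$, are $v(0) = \xi^2$ and $\Dt v(0) = \di(a\Nx \xi^1) - \gamma \xi^2 - f(\xi^1) + g$. Since $\xi\in\Cal E^1$, we have $\xi^2\in H^1_0(\Omega)$ and $\di(a\Nx \xi^1)\in L^2(\Omega)$; using the Sobolev embedding $H^1_0\hookrightarrow L^6$ together with the cubic bound $|f(s)| \le K_5(1+|s|^3)$ gives $f(\xi^1)\in L^2(\Omega)$. Altogether $(v(0), \Dt v(0))\in \E$ with
\[
\|v(0)\|_{H^1_0(\Omega)} + \|\Dt v(0)\| \le M(\|\xi\|_{\Cal E^1}) + M(\|g\|).
\]

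For the energy estimate, I would introduce the modified Lyapunov functional
\[
\Phi(t) := \tfrac12\|\Dt v\|^2 + \tfrac12 (a\Nx v,\Nx v) + \alpha (\Dt v, v) + \tfrac{\alpha\gamma}{2}\|v\|^2,
\]
which for sufficiently small $\alpha>0$ (depending only on $\nu$ and $\gamma$) is equivalent to $\|\Dt v\|^2 + \|\Nx v\|^2$. A direct calculation using the two multipliers $\Dt v$ and $\alpha v$ yields the identity
\[
\tfrac{d}{dt}\Phi + (\gamma-\alpha)\|\Dt v\|^2 + \alpha(a\Nx v,\Nx v) + \alpha (f'(u)v,v) = -(f'(u)v,\Dt v).
\]
The term $\alpha(f'(u)v,v)$ is handled by $f'(u)\ge -K_2$; for $(f'(u)v,\Dt v)$, H\"older's inequality combined with Sobolev embedding and the dissipative bound $\|u(t)\|_\E \le M(\|g\|) + M(\|\xi\|_\E)e^{-\beta t}$ from Theorem~\ref{th.dis} gives the uniform-in-$t$ operator bound
\[
\|f'(u(t))\, w\| \le C\bigl(1+\|u(t)\|_{L^6}^2\bigr)\|w\|_{L^6} \le \Lambda\bigl(\|\xi\|_\E,\|g\|\bigr) \|\Nx w\|, \qquad w \in H^1_0(\Omega).
\]
Young's inequality then absorbs both source terms into the dissipation, producing a differential inequality of the form $\tfrac{d}{dt}\Phi + \beta \Phi \le 0$, and Gronwall delivers exponential decay of $\|\Dt v(t)\|^2 + \|\Nx v(t)\|^2$.

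Finally, reading the original equation backwards, $\di(a\Nx u) = \Dt^2 u + \gamma\Dt u + f(u) - g$, so that combining the freshly-obtained $L^2$-bound on $\Dt^2 u = \Dt v$ with the $L^2$-bounds on $\Dt u$, $f(u)$ and $g$ controls $\|\di(a\Nx u(t))\|$, establishing the stated $\Cal E^1$-dissipative estimate. The main obstacle is the nonlinear term $(f'(u)v, \Dt v+\alpha v)$: the constant $\Lambda$ above depends on initial data through $\|\xi\|_\E$, so to recover the clean form $M(\|\xi\|_{\Cal E^1}) e^{-\beta t} + M(\|g\|)$ with $\beta$ depending only on $\nu$ I would split the analysis at a time $T=T(\|\xi\|_\E)$ after which $u(t)$ has entered the absorbing ball $\Cal B$: on $[T,\infty)$ the constant $\Lambda$ depends only on $\|g\|$ and the dissipative inequality has the advertised decay rate, while on the finite interval $[0,T]$ a crude Gronwall argument gives growth absorbed into the prefactor $M(\|\xi\|_{\Cal E^1})$. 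All formal manipulations (time differentiation, multiplication by $\Dt v$, etc.) are justified by the standard Galerkin approximation scheme.
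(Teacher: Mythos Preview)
Your overall strategy---differentiate the equation in $t$, run the energy method on $v=\Dt u$, and then recover $\|\di(a\Nx u)\|$ from the original equation---is precisely what the paper indicates (the proof there is in fact omitted, with only this one-sentence remark). There is, however, a gap in the step ``Young's inequality then absorbs both source terms into the dissipation, producing $\tfrac{d}{dt}\Phi+\beta\Phi\le 0$.'' After Young's inequality on $|(f'(u)v,\Dt v)|\le\Lambda\|\Nx v\|\,\|\Dt v\|$ and the bound $\alpha(f'(u)v,v)\ge -\alpha K_2\|v\|^2$, the coefficient in front of $\|\Nx v\|^2$ on the dissipation side is at best $\alpha\nu-\alpha K_2 C_P^2-\tfrac{\Lambda^2}{2\eta}$. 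Since $\alpha$ must remain small (for the norm-equivalence of $\Phi$ and for $\gamma-\alpha>0$) while $\Lambda$ can be arbitrarily large (it depends on $\|g\|$ even after the absorbing time), no admissible choice of $\alpha,\eta$ makes this positive in general; at best you would get $\beta=\beta(\nu,\|g\|)$, and if $K_2C_P^2\ge\nu$ the inequality fails outright.

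The standard remedy, which the paper uses elsewhere (see Propositions~\ref{pr.vto0} and~\ref{pr.w'sm} in Appendix~\ref{s.sma}), is to include the potential term $\tfrac12(f'(u)v,v)$ in the Lyapunov functional. Its time derivative exactly cancels the troublesome $(f'(u)v,\Dt v)$ and produces instead $\tfrac12(f''(u)\Dt u,v^2)$ on the right, which by H\"older and Sobolev is bounded by $C(1+\|\Nx u\|)\,\|\Dt u\|\,\|\Nx v\|^2$. Now the right-hand side has the form $h(t)\Phi_1$ with $h(t)\sim\|\Dt u(t)\|^2$, and since $\int_t^\infty\|\Dt u\|^2$ is uniformly bounded by the basic dissipative estimate~\eqref{est.dis}, the modified Gronwall lemma (Lemma~\ref{le.Gr}) yields a decay rate $\beta$ depending only on $\nu$, with all $\|g\|$-dependence confined to the multiplicative prefactor $e^m$. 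Your splitting at the absorbing time $T$ is still needed, but for this refined functional rather than for the direct absorption you describe.
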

Since this result is standard we omit the proof. We only remark here that, by differentiating the first equation of \eqref{eq.generic} in time, one first obtains a dissipative estimate for $\|\xi_{\Dt u}(t)\|_\E$ which readily implies the uniform bound on $\|  \di (a \Nx  u)(t)\|$.

\begin{rem}
\label{rem.holder}
Note that by elliptic regularity we have the inequality
\begin{equation}
\label{we.Ca}
\|u \|_{C^\alpha(\overline{\Omega})}\leq C \Vert \di (a \Nx u) \Vert, \quad C=C(\nu)>0,
\end{equation}
for sufficiently small $\alpha = \alpha(\nu)$ and admissible $u$. Here $C^\alpha(\overline{\Omega})$ is the H\"older space of order $\alpha$: 
\begin{align*}
& C^\alpha(\overline{\Omega})=\bigg\{u\in C(\overline{\Omega}):\sup_{\substack{x,y\in\overline{\Omega},\\ 
		x\neq y}}\tfrac{|u(x)-u(y)|}{|x-y|^\alpha}<\infty\bigg\}, \qquad  \|u\|_{C^\alpha(\overline{\Omega})}:=\max_{x\in\overline{\Omega}}|u(x)|+\sup_{\substack{x,y\in\overline{\Omega},\\ 
		x\neq y}}\tfrac{|u(x)-u(y)|}{|x-y|^\alpha}.
\end{align*}
Thus, we have a dissipative estimate for $u$, given by Theorem \ref{th.disE2}, in the $C^\alpha(\overline{\Omega})$ norm.
\end{rem}
Consider $G \in H^1_0(\Omega)$ such that $
-\di (a \Nx G)=g \in L^2(\Omega),$
and, for initial data $\xi \in  B_{\E^1}(0,R)$, the decomposition of  the solution $u$ to \eqref{eq.generic} as follows:  $u=v+w$ where
\begin{equation}
\label{eq.ve2}
\begin{cases}
\Dt^2 v+\gamma\Dt v-\di (a \Nx v)=0,\quad x\in\Omega,\ t\geq 0,\\
\xi_{v}|_{t=0}=(\xi^1 - G, \xi^2),\qquad v|_{\partial\Omega}=0,\\
\end{cases}
\end{equation}
and
\begin{equation}
\label{eq.we2}
\begin{cases}
\Dt^2 w+\gamma\Dt w-\di(a \Nx w)=-f(u) + g,\quad x\in\Omega,\ t\geq 0,\\
\xi_{w}|_{t=0}=(G,0),\qquad w|_{\d\Omega}=0.\\
\end{cases}
\end{equation}
It is clear from  standard linear  estimates (e.g. Theorem \ref{th.dis} for $f = g = 0$) we
	\begin{equation}\label{le.ve2}
	\Vert  \xi_v \Vert_{\E} \leq e^{-\beta t}M(\|g\|),\quad t\geq 0,
	\end{equation} 
	for some constant $\beta>0$ and non-decreasing function $M$ that depend only on $\nu$.
Additionally, we have the following lemma on the regularity of $w$.
\begin{lem}\label{le.we2}
	Assume \eqref{mainassumptions}, $\xi \in B_{\E^1}(0,R)$ and $w$ solves \eqref{eq.we2}. Then
	\begin{equation*}
\| \di(a \Nx w)(t) +g \|_{H^1_0(\Omega)} +	\| \di(a\Nx  \Dt w)(t)\|\leq M(\|g\|),\quad  t\geq0,
	\end{equation*}
	for some non-decreasing function $M$ that depends only on $\nu$.
\end{lem}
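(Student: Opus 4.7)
The plan is to gain two time derivatives on $w$ and then read off the two desired elliptic bounds algebraically. Set $\tilde w := w - G$, where $G \in H^1_0(\Omega)$ satisfies $-\di(a\Nx G) = g$, so that $\di(a\Nx \tilde w) = \di(a\Nx w)+g$ and $\tilde w$ obeys
\[
\Dt^2 \tilde w + \gamma \Dt \tilde w - \di(a\Nx \tilde w) = -f(u), \qquad \xi_{\tilde w}|_{t=0}=0.
\]
Let $z := \Dt \tilde w = \Dt w$ and $Z := \Dt z = \Dt^2 w$. Differentiating once and twice in $t$, and computing the missing initial data from the $\tilde w$-equation at $t=0$, one obtains linear damped wave equations with homogeneous Dirichlet data:
\begin{align*}
& \Dt^2 z + \gamma \Dt z - \di(a\Nx z) = -f'(u)\Dt u, \qquad \xi_z|_{t=0}=(0,-f(\xi^1)),\\
& \Dt^2 Z + \gamma \Dt Z - \di(a\Nx Z) = -f''(u)(\Dt u)^2 - f'(u)\Dt^2 u, \qquad \xi_Z|_{t=0}=(-f(\xi^1),-f'(\xi^1)\xi^2).
\end{align*}

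The core step is a uniform-in-time $\E$-estimate for $\xi_Z$ via the standard energy method applied to this linear damped wave equation with $L^\infty_t L^2_x$ source. Since $\xi \in B_{\E^1}(0,R)$, Remark \ref{rem.holder} provides $\|\xi^1\|_{C^\alpha(\overline\Omega)} \le M(\|g\|)$, which, together with $f(0)=0$, gives $f(\xi^1)\in H^1_0(\Omega)$ and $f'(\xi^1)\xi^2 \in L^2(\Omega)$ with norms controlled by $M(\|g\|)$; hence $\|\xi_Z(0)\|_{\E}\le M(\|g\|)$. For the source terms, Theorem \ref{th.disE2} furnishes the uniform bound $\|u\|_{L^\infty} + \|\Dt u\|_{H^1_0} + \|\Dt^2 u\|_{L^2} \le M(\|g\|)$. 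Combined with the growth bounds on $f'$ and $f''$ from Remark \ref{rem.nonlinearity}, this yields $\|f'(u)\Dt^2 u\| \le M(\|g\|)$, while the quadratic term is controlled through the three-dimensional Sobolev embedding $H^1_0(\Omega) \hookrightarrow L^4(\Omega)$:
\[
\|f''(u)(\Dt u)^2\| \le \|f''(u)\|_{L^\infty}\|\Dt u\|_{L^4}^2 \le M(\|g\|)\|\Dt u\|_{H^1_0}^2 \le M(\|g\|).
\]
The $\E$-energy estimate then yields $\|\Nx \Dt^2 w\|+\|\Dt^3 w\| \le M(\|g\|)$. An analogous but simpler estimate for $z$ (whose source $f'(u)\Dt u$ is directly $L^\infty_t L^2_x$-bounded) delivers $\|\Nx \Dt w\| + \|\Dt^2 w\|\le M(\|g\|)$.

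With these bounds at hand, the two quantities in the lemma are extracted algebraically. Rearranging the $z$-equation gives
\[
\di(a\Nx \Dt w) \;=\; \Dt^3 w + \gamma\,\Dt^2 w + f'(u)\Dt u,
\]
whose right-hand side is uniformly $L^2$-bounded by the preceding estimates. Rearranging the $\tilde w$-equation gives
\[
\di(a\Nx w) + g \;=\; \Dt^2 w + \gamma \Dt w + f(u),
\]
where each of the three terms on the right lies in $H^1_0(\Omega)$: $\Dt^2 w$ and $\Dt w$ because their gradients belong to $L^2$ and their traces vanish (consequences of the $\E$-bounds above); and $f(u) \in H^1_0$ because $u \in H^1_0\cap C(\overline\Omega)$, $f(0)=0$ and $\Nx f(u)=f'(u)\Nx u \in L^2$. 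The corresponding $H^1_0$-norm is bounded by $\|\Nx \Dt^2 w\| + \gamma\|\Nx \Dt w\| + \|f'(u)\|_{L^\infty}\|\Nx u\| \le M(\|g\|)$.

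The main obstacle is the $\E$-estimate for $Z$, specifically the quadratic term $f''(u)(\Dt u)^2$ in its source: its $L^2$-control requires both the three-dimensional Sobolev embedding $H^1_0 \hookrightarrow L^4$ and the $H^1_0$-bound on $\Dt u$ provided by the $\E^1$-dissipative estimate of Theorem \ref{th.disE2}. The iterated time-differentiations used throughout are made rigorous by a standard Galerkin/density argument.
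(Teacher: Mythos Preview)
Your proof is correct and follows essentially the same approach as the paper: differentiate the $w$-equation twice in time, apply linear $\E$-energy estimates to $\Dt w$ and $\Dt^2 w$ using the $\E^1$-dissipative bounds on $u$ from Theorem~\ref{th.disE2}, and then read off the two elliptic quantities algebraically. The introduction of $\tilde w = w - G$ is cosmetic; the paper works directly with $w$ and arrives at the same algebraic identities. One minor slip: the second component of $\xi_Z|_{t=0}$ should be $\gamma f(\xi^1) - f'(\xi^1)\xi^2$, not $-f'(\xi^1)\xi^2$ (the term $\gamma\Dt z(0)=-\gamma f(\xi^1)$ contributes when evaluating $\Dt^2 z(0)$ from the $z$-equation), but this extra term is trivially $L^2$-bounded and does not affect the argument.
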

\begin{proof} By differentiating the first equation of \eqref{eq.we2} in time and by our choice of initial data $(G,0)$ we find that $p:=\Dt w$ solves
	\begin{equation}
	\label{eq.qe2}
	\begin{cases}
	\Dt^2 p+\gamma\Dt p-\di(a \Nx p)=-f'(u)\Dt u = : G_1,\quad  x\in\Omega,\ t\geq 0,\\
	\xi_{p}|_{t=0}=\big(0,-f(\xi^1)\big),\qquad p|_{\d\Omega}=0.
	\end{cases}
	\end{equation}
Moreover,  $q := \Dt p$ solves
	\begin{equation*}
	\begin{cases}
	\Dt^2 q+\gamma\Dt q-\di(a \Nx q)= -f''(u)|\Dt u|^2-f'(u)\Dt^2 u = : G_2,\quad x\in\Omega,\ t\geq 0,\\
	\xi_{q}|_{t=0}=\big(-f(\xi^1),\gamma f(\xi^1)-f'(\xi^1) \xi^2\big),\qquad q|_{\d\Omega}=0.
	\end{cases}
	\end{equation*} 
By the dissipative estimate in $\E^1$ (cf. Theorem \ref{th.disE2} and Remark \ref{rem.holder}) we find that 
	\[
	\|\Nx \Dt u(t)\| +
 \| u(t) \|_{C^\alpha(\overline{\Omega})}\leq M(\|g\|), \quad t\ge0.
	\]
This inequality and the conditions on the non-linearity $f$ (see \eqref{mainassumptions}) imply that
	\begin{align*}
&\|\xi_{p}(0)\|_\E +  \|G_1\|_{L^\infty(\R_+;L^2(\Omega))}\leq M(\|g\|);\\
& \|\xi_{q}(0)\|_\E + \|G_2\|_{L^\infty(\R_+;L^2(\Omega))}\leq M(\|g\|).  
	\end{align*}
	Therefore, using the dissipative estimate in $\E$ (\eqref{est.dis}) we conclude 
	\begin{align*}
	\|\Nx p(t)\|+\|\Dt p(t)\|\leq M(\|g\|),\qquad \& \qquad 
	\|\Nx q(t)\|+\|\Dt q(t)\|\leq M(\|g\|),\quad t\geq 0.
	\end{align*}
	Returning back to $p=\Dt w$, we rewrite \eqref{eq.qe2} to find
	\begin{equation*}
\Vert 	\di(a \Nx \Dt w)(t) \Vert =  \Vert -G_1(t)	+\gamma\Dt p(t)+ \Dt q(t) \Vert \le M(\Vert g\Vert), \quad t\ge0.
	\end{equation*}
	Rewriting the first equation in \eqref{eq.we2}, and using cubic growth of $f$ (see Remark \ref{rem.nonlinearity}.\ref{fcubic}) gives
	\[
\Vert 	\di(a\Nx w)(t) + g \Vert_{H^1_0(\Omega)} = \Vert  q(t) + \gamma p(t) +f(u(t)) \Vert_{H^1_0(\Omega)} \le M(\Vert g \Vert), \quad t\ge0. 
	\]
Hence, the desired result holds and the proof is complete.
\end{proof}
Combining \eqref{le.ve2} and Lemma  \ref{le.we2} produces the following result.
\begin{cor}
	\label{cor.smooth2}
	Assume \eqref{mainassumptions} and let $S(t)$ be the semi-group defined by \eqref{ds.e}. Then, there exists a `ball' in $\Cal E^2$ that  attracts $B_{\E^1(0,R)}$ in $\E$. More precisely, the inequality
	\[
	\dist_{\E} \big( S(t) B_{\E^1}(0,R) , B_{\Cal E^2}(0,R_1) \big) \le M e^{- \beta t}, \quad t\ge 0,
	\]
	holds	for some positive constants $R_1$, $M$ and $\beta$ that depend only on $\nu$.
\end{cor}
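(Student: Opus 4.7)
The plan is to unpack the splitting $u = v + w$ of \eqref{eq.ve2}-\eqref{eq.we2} and combine the two ingredients just established: the exponential decay of the linear part $v$ and the uniform $\Cal E^2$-bound on the nonlinear part $w$.

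First, I would fix an arbitrary $\xi \in B_{\E^1}(0,R)$ and recall that the auxiliary function $G\in H^1_0(\Omega)$ solving $-\di(a\Nx G)=g$ satisfies $\|G\|_{H^1_0(\Omega)}\leq M(\|g\|)$ by standard elliptic regularity for symmetric uniformly elliptic operators. Hence both the $v$-initial data $(\xi^1-G,\xi^2)$ and the $w$-initial data $(G,0)$ are admissible energy data with norms controlled by $R$ and $\|g\|$, and the decomposition is well-defined.

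Next, the linear estimate \eqref{le.ve2} immediately gives $\|\xi_v(t)\|_\E\leq M(\|g\|)e^{-\beta t}$ for constants depending only on $\nu$. For the nonlinear part, Lemma \ref{le.we2} yields uniform-in-time control of $\|\di(a\Nx w)(t)+g\|_{H^1_0(\Omega)}$ and $\|\di(a\Nx \Dt w)(t)\|$. The remaining term $\|\di(a\Nx w)(t)\|$ appearing in the $\Cal E^2$ functional is then recovered from the first bound by the triangle inequality (adding and subtracting $g$, using $g\in L^2(\Omega)$ and the continuous embedding $H^1_0(\Omega)\hookrightarrow L^2(\Omega)$). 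Packaging the three pieces together, there exists $R_1=R_1(\|g\|)$ such that
\[
\|\xi_w(t)\|_{\Cal E^2}\leq R_1,\qquad t\geq 0,
\]
i.e. $\xi_w(t)\in B_{\Cal E^2}(0,R_1)$ for every $t\geq 0$.

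Finally, using $\xi_u = \xi_v + \xi_w$ and the definition of the one-sided Hausdorff distance,
\[
\dist_\E\bigl(S(t)\xi,\,B_{\Cal E^2}(0,R_1)\bigr)\leq \|\xi_u(t)-\xi_w(t)\|_\E=\|\xi_v(t)\|_\E\leq M(\|g\|)e^{-\beta t},
\]
and taking the supremum over $\xi\in B_{\E^1}(0,R)$ yields the claimed uniform estimate. I do not anticipate any genuine obstacle: the analytic work has been concentrated in Lemma \ref{le.we2}, and this corollary is a clean book-keeping step that packages that lemma together with the free linear decay of $v$; the only point requiring care is to verify that the constants $R_1$, $M$, $\beta$ depend only on $\nu$ and $\|g\|$, which is transparent from the chain of estimates above.
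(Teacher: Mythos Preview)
Your proposal is correct and follows exactly the paper's approach: the paper states only ``Combining \eqref{le.ve2} and Lemma \ref{le.we2} produces the following result,'' and you have filled in precisely those details. The additional bookkeeping you supply (recovering $\|\di(a\Nx w)\|$ from the $H^1_0$-bound on $\di(a\Nx w)+g$ via the triangle inequality, and tracking the dependence of constants) is straightforward and accurate.
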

Let us now recall the so-called {transitivity property of exponential attraction} (cf. \cite[Theorem 5.1]{FG2004} for a proof):
\begin{theorem}
\label{thm.trans}
Let $E$ be a Banach space, $S(t)$ a semi-group acting on $E$, and $E_1$ be a positively invariant subset of $E$, i.e. $S(t) E_1 \subset E_1$ for all $t\ge 0$, such that
\[
\Vert S(t) \xi^1  -S(t) \xi^2  \Vert_{E}\le M_0 e^{K_0t}  \Vert \xi^1 - \xi^2 \Vert_{E}, \qquad  \xi^1, \xi^2 \in E_1,
\]
for some constants $M_0$, $K_0 > 0$. Furthermore, assume that there exist subsets $E_2 \subset E_1$ and  $E_3 \subset E$ such that 
\[
\dist_{E} \big( S(t) E_1 , E_2 \big) \le M_1 e^{-\beta_1 t}, \quad \dist_{E} \big( S(t) E_2 , E_3 \big) \le M_2 e^{-\beta_2 t}, \qquad t\ge 0,
\]
for some $M_1, M_2$, $\beta_1>0$ and $\beta_2>0$. Then 
\[
\dist_{E} \big( S(t) E_1 , E_3 \big) \le M e^{-\beta t}, \qquad t\ge 0,
\]
for $M = M_0 M_1 +M_2$ and $\beta = \tfrac{\beta_1 \beta_2}{ K_0 + \beta_1 + \beta_2}$.
\end{theorem}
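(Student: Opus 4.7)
The plan is the standard ``two-step relay'' argument. I will estimate $\dist_E(S(t)E_1,E_3)$ by splitting the time interval $[0,t]$ at an intermediate $s$: on $[0,s]$ I use the first attraction hypothesis to approximate $S(s)\xi$ by a point of $E_2$; on $[s,t]$ I propagate this approximation using the Lipschitz estimate in $E_1$ and then invoke the second attraction hypothesis. A single scalar optimisation in $s$ will produce the advertised rate $\beta$.

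Concretely, fix $\xi\in E_1$, $t\ge 0$, $s\in[0,t]$, and $\delta>0$. By positive invariance of $E_1$, $S(s)\xi\in E_1$, so the first attraction hypothesis furnishes some $\eta\in E_2\subset E_1$ with $\|S(s)\xi-\eta\|_E\le M_1 e^{-\beta_1 s}+\delta$. Using the semigroup identity $S(t)\xi=S(t-s)S(s)\xi$ and inserting $S(t-s)\eta$ in the triangle inequality, for any $\zeta\in E_3$ one gets
\[
\|S(t)\xi-\zeta\|_E\le \|S(t-s)S(s)\xi-S(t-s)\eta\|_E + \|S(t-s)\eta-\zeta\|_E.
\]
The first summand is controlled by the Lipschitz hypothesis applied to the pair $(S(s)\xi,\eta)\in E_1\times E_1$, giving at most $M_0 e^{K_0(t-s)}(M_1 e^{-\beta_1 s}+\delta)$. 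Taking the infimum of the second summand over $\zeta\in E_3$ and noting $S(t-s)\eta\in S(t-s)E_2$ bounds it by $\dist_E(S(t-s)E_2,E_3)\le M_2 e^{-\beta_2(t-s)}$. Sending $\delta\to 0$ and taking the supremum over $\xi\in E_1$ yields
\[
\dist_E(S(t)E_1,E_3)\le M_0 M_1 e^{K_0(t-s)-\beta_1 s} + M_2 e^{-\beta_2(t-s)}, \qquad s\in[0,t].
\]

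To finish, I choose $s\in[0,t]$ so that the two exponents coincide with a common value $-\beta t$. The system $K_0(t-s)-\beta_1 s=-\beta t$ and $\beta_2(t-s)=\beta t$ admits the unique solution $\beta=\beta_1\beta_2/(K_0+\beta_1+\beta_2)$, $s=(1-\beta/\beta_2)t$; since $\beta\le\beta_2$ the value $s$ lies in $[0,t]$, so the choice is admissible. Substituting produces the claimed estimate with $M=M_0 M_1+M_2$.

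The argument is purely soft and uses no PDE input beyond the three stated hypotheses, so I do not expect any real analytic obstacle. The only delicate point is the final optimisation in $s$, but that reduces to solving a small linear system for the pair $(s,\beta)$ in the exponents.
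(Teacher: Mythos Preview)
Your argument is correct and is precisely the standard ``transitivity of exponential attraction'' proof; the paper itself does not give a proof of this statement but refers to \cite[Theorem 5.1]{FG2004}, whose argument is exactly the two-step relay you wrote out. There is nothing to compare: your proof coincides with the one in the cited reference.
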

Note that Theorem \ref{th.dis} (in particular \eqref{StB C B}), Theorem \ref{cor.smooth1} and Corollary \ref{cor.smooth2} imply that the assumptions of the above theorem  hold for $E = \E$, $E_1 = \Cal B, E_2 = B_{\E^1}(0,R)$ and $E_3 = B_{\Cal E^2}(0,R_1)$. Therefore, we see  that $B_{\Cal E^2}(0,R_1)$ attracts the positively invariant absorbing set $\B$ and, therefore,  bounded sets in $\E$. That is the following result holds.
\begin{theorem}
\label{th.smoothattracting}
Assume \eqref{mainassumptions}, $S(t)$ given by \eqref{ds.e} and $ B_{\Cal E^2}(0,R_1)$ given by Corollary \ref{cor.smooth2}. Then, for every bounded $B$ in $\E$  the following assertion
\[
\dist_{\E} \big( S(t) B,  B_{\Cal E^2}(0,R_1) \big) \leq M(\|B\|_\E)e^{-\beta t},\quad t\geq 0,
\]
holds for some non-decreasing $M$ and $\beta>0$ that depend only on $\nu$. \end{theorem}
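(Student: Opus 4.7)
The plan is to apply the transitivity property of exponential attraction (Theorem \ref{thm.trans}) with $E=\E$, $E_1=\B$, $E_2 = B_{\E^1}(0,R)$ and $E_3 = B_{\Cal E^2}(0,R_1)$, and then to bootstrap the resulting estimate from the absorbing set $\B$ to an arbitrary bounded set $B\subset\E$. All four hypotheses of Theorem \ref{thm.trans} are already in hand: the positive invariance $S(t)\B\subset\B$ is \eqref{StB C B}; the Lipschitz continuity $\|S(t)\xi^1-S(t)\xi^2\|_\E\leq M_0 e^{K_0 t}\|\xi^1-\xi^2\|_\E$ on $\B\times\B$ is Corollary \ref{co.St.inC} (with $K_0$ determined solely by $\|\B\|_\E$, hence only by $\nu$ and $\|g\|$); and the two intermediate exponential-attraction estimates are supplied by Theorem \ref{cor.smooth1} and Corollary \ref{cor.smooth2} respectively. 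Theorem \ref{thm.trans} then yields constants $\tilde M, \tilde\beta>0$ (depending only on $\nu$) such that
\[
\dist_\E\big(S(t)\B,\, B_{\Cal E^2}(0,R_1)\big) \leq \tilde M\, e^{-\tilde\beta t}, \qquad t\geq 0.
\]

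Next I would pass from $\B$ to an arbitrary bounded set $B\subset\E$ by exploiting the absorbing property of $\B$. The dissipative estimate \eqref{est.dis} ensures the existence of $T=T(\|B\|_\E)>0$ such that $S(t)B\subset\B$ for all $t\geq T$. For $t\geq T$ one therefore has, using semi-group property,
\[
\dist_\E\big(S(t)B,\, B_{\Cal E^2}(0,R_1)\big) = \dist_\E\big(S(t-T)S(T)B,\, B_{\Cal E^2}(0,R_1)\big) \leq \tilde M\, e^{-\tilde\beta(t-T)} = \tilde M\, e^{\tilde\beta T} e^{-\tilde\beta t},
\]
whereas for $0\leq t\leq T$ the dissipative estimate gives the trivial bound $\dist_\E(S(t)B,B_{\Cal E^2}(0,R_1))\leq M(\|B\|_\E)+R_1 \leq \big(M(\|B\|_\E)+R_1\big) e^{\tilde\beta T} e^{-\tilde\beta t}$. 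Combining the two regimes produces the claim with $\beta:=\tilde\beta$ (depending only on $\nu$) and an overall constant $M(\|B\|_\E)$ which absorbs the factor $e^{\tilde\beta T(\|B\|_\E)}$ and remains non-decreasing in $\|B\|_\E$.

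I do not anticipate any genuine obstacle: all the required pieces have been assembled in the preceding results, and the statement is essentially a book-keeping corollary of transitivity. The one point worth highlighting is that Theorem \ref{thm.trans} requires the starting set $E_1$ to be positively invariant, so transitivity cannot be applied directly to a general bounded $B$; routing through the positively invariant absorbing set $\B$ (and using absorption, rather than mere attraction, to transport the estimate back to $B$) is precisely what makes the argument go through.
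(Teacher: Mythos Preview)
Your proposal is correct and follows essentially the same approach as the paper: apply the transitivity theorem with $E_1=\B$, $E_2=B_{\E^1}(0,R)$, $E_3=B_{\Cal E^2}(0,R_1)$, and then extend from the positively invariant absorbing set $\B$ to an arbitrary bounded $B$ via the absorbing property. In fact you supply more detail than the paper, which merely asserts that the hypotheses of Theorem~\ref{thm.trans} hold and that attraction of $\B$ implies attraction of bounded sets.
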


We are now ready to prove that  the global attractor is bounded in $\Cal E^2$.

\begin{theorem}\label{th.ExAttr}
Assume \eqref{mainassumptions} and let $\A$ be the global attractor of  the dynamical system $(\E,S(t))$ given by \eqref{ds.e}. Then
\begin{equation}
\label{Ae=Ke(0)}
\|\A \|_{\Cal E^2 }\leq M(\|g\|),
\end{equation}
for some non-decreasing $M$ that depends only on $\nu$.
\end{theorem}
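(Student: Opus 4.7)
The plan is to deduce the claim from the strict invariance of $\A$ combined with the exponential attraction result of Theorem \ref{th.smoothattracting}, closed off by a weak-compactness argument showing that $B_{\Cal E^2}(0,R_1)$ is already closed in $\E$.

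The first step I would carry out is as follows: by Theorem \ref{th.ExAttrinE} the attractor $\A$ is bounded in $\E$ (in fact in $\E^1$) with $\|\A\|_\E\le M(\|g\|)$. Since $S(t)\A=\A$ for every $t\ge0$, applying Theorem \ref{th.smoothattracting} to the bounded set $B=\A$ itself yields
\[
\dist_\E\bigl(\A,B_{\Cal E^2}(0,R_1)\bigr)=\dist_\E\bigl(S(t)\A,B_{\Cal E^2}(0,R_1)\bigr)\le M(\|\A\|_\E)e^{-\beta t},\qquad t\ge 0,
\]
and passing to the limit $t\to\infty$ shows that every $\xi\in\A$ lies in the $\E$-closure of $B_{\Cal E^2}(0,R_1)$.

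The second step would verify that this $\E$-closure coincides with $B_{\Cal E^2}(0,R_1)$ itself. If $\xi_n\in B_{\Cal E^2}(0,R_1)$ converges to some $\xi$ in $\E$, then $\xi^1_n\to\xi^1$ in $H^1_0(\Omega)$ and $\xi^2_n\to\xi^2$ in $L^2(\Omega)$, so $\di(a\Nx\xi^j_n)\to\di(a\Nx\xi^j)$ in $H^{-1}(\Omega)$ for $j=1,2$. Since, by the definition of $\Cal E^2$, the sequences $\di(a\Nx\xi^1_n)+g$, $\di(a\Nx\xi^1_n)$ and $\di(a\Nx\xi^2_n)$ are respectively bounded in $H^1_0(\Omega)$, $L^2(\Omega)$ and $L^2(\Omega)$, weak compactness and uniqueness of distributional limits identify their weak limits as $\di(a\Nx\xi^1)+g$, $\di(a\Nx\xi^1)$ and $\di(a\Nx\xi^2)$; weak lower semicontinuity of the relevant Hilbert norms then yields $\|\xi\|_{\Cal E^2}\le R_1$. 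Combining the two steps gives $\A\subset B_{\Cal E^2}(0,R_1)$, which is the desired bound $\|\A\|_{\Cal E^2}\le M(\|g\|)$.

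The main (mild) technical point lies in the second step: the paper emphasises in a footnote that $\|\cdot\|_{\Cal E^2}$ is not a norm and that $\Cal E^2$ is only an affine subset of $\E^1$, so the $\E$-closedness of $B_{\Cal E^2}(0,R_1)$ is not automatic. It must be handled by tracking the affine shift by $g$ in the $H^1_0$ piece of $\|\cdot\|_{\Cal E^2}$ separately from the purely linear divergence pieces, as sketched above.
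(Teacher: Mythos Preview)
Your proposal is correct and follows essentially the same route as the paper: strict invariance $S(t)\A=\A$ plus Theorem \ref{th.smoothattracting} gives $\A\subset[B_{\Cal E^2}(0,R_1)]_\E$, and then one uses that $B_{\Cal E^2}(0,R_1)$ is closed in $\E$. The paper simply asserts this closedness, whereas you supply the weak-compactness and weak lower semicontinuity argument (including the handling of the affine shift by $g$), which is exactly the right justification.
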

\begin{rem}
\label{rem.holder2}
Note that \eqref{Ae=Ke(0)} implies  the following estimate
\begin{equation}
\label{Ae.sm1}
\| \A \|_{(C^\alpha(\overline{\Omega}))^2}\leq M(\|g\|),
\end{equation}
for a non-decreasing function $M$ that depends only on $\nu$ and the exponent $\alpha$ from Remark \ref{rem.holder}.
\end{rem}
\begin{proof}[Proof of Thoerem \ref{th.ExAttr}] The proof follows from  the strict invariance of the global attractor (property 2. of Definition \ref{de.A}) and Theorem \ref{th.smoothattracting}. Indeed, 	for an arbitrary $\delta$-neighbourhood $\Cal O_\delta(B_{\Cal E^2}(0,R_1))$ of $B_{\Cal E^2}(0,R_1)$ in $\E$, one has
	\[
	\A = S(t) \A \subset \Cal O_\delta(B_{\Cal E^2}(0,R_1)), 
	\]
for some $t=t(\delta)$. Therefore $\Cal A\subset [B_{\Cal E^2}(0,R_1)]_\E$ and it remains to note that, since $B_{\Cal E^2}(0,R)$ is closed in $\E$, the identity $[B_{\Cal E^2}(0,R_1)]_\E=B_{\Cal E^2}(0,R_1)$ holds.
%
\end{proof}
	We end this section with one more result which will be useful later.
	\begin{theorem}
		\label{th.disD}
		Assume \eqref{mainassumptions}. Then, for any initial data $\xi\in\Cal E^2$, the energy solution $u$ to problem \eqref{eq.generic} is such that $\xi_u\in L^\infty(\R_+;\Cal E^2)$ and the following dissipative estimate is valid:
		\begin{equation}
		\label{est,disD}
		\|\Dt^3 u(t)\|+\|\nabla\Dt^2 u(t) \|+\|\xi_u(t)\|_{\Cal E^2}\leq M(\|\xi\|_{\Cal E^2})e^{-\beta t}+M(\|g\|),\quad t\geq 0,
		\end{equation} 
		for some non-decreasing function $M$ and constant $\beta>0$ that depend only on $\nu>0$. 
	\end{theorem}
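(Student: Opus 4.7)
The plan is to mirror the proof strategy of Lemma \ref{le.we2} one level higher: differentiate the PDE twice in time, treating the resulting equations for $p := \Dt u$ and $q := \Dt^2 u$ as linear damped wave equations with polynomial forcings that are controlled by the lower-regularity bounds already at our disposal, and then recover the $\Cal E^2$ bounds on $\xi_u(t)$ directly from the PDE.

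First, since $\Cal E^2 \subset \E^1$, Theorem \ref{th.disE2} immediately yields
$$\|\xi_u(t)\|_{\E^1} + \|\Dt^2 u(t)\| \le M(\|\xi\|_{\Cal E^2}) e^{-\beta t} + M(\|g\|),$$
and consequently, by Remark \ref{rem.holder}, $\|u(t)\|_{C^\alpha(\overline{\Omega})}$ is uniformly bounded. In particular $f'(u), f''(u) \in L^\infty(\R_+; L^\infty(\Omega))$ and $p = \Dt u$ is uniformly bounded in $H^1_0(\Omega) \hookrightarrow L^6(\Omega)$. Differentiating the PDE twice in time produces the linear damped wave problem
$$\Dt^2 q + \gamma \Dt q - \di(a\Nx q) = -f''(u)\,p^2 - f'(u)\,q,$$
with initial data $q(0) = \di(a\Nx\xi^1) + g - \gamma\xi^2 - f(\xi^1)$ and $\Dt q(0) = \di(a\Nx\xi^2) - \gamma\,q(0) - f'(\xi^1)\xi^2$. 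Using the defining conditions of $\Cal E^2$ — namely $\di(a\Nx\xi^1) + g \in H^1_0(\Omega)$ and $\di(a\Nx\xi^2) \in L^2(\Omega)$ — together with the Dirichlet condition $\xi^1|_{\partial\Omega} = 0$ and the assumption $f(0) = 0$ (so that $f(\xi^1) \in H^1_0(\Omega)$), one checks that $\xi_q(0) \in \E$ with $\|\xi_q(0)\|_\E \le M(\|\xi\|_{\Cal E^2}) + M(\|g\|)$.

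Next, apply the standard damped-wave energy estimate to $q$ (multiply by $\Dt q + \lambda q$ with $\lambda$ sufficiently small and integrate over $\Omega$). The dissipative term $\gamma \|\Dt q\|^2$ together with $\lambda \int a |\Nx q|^2$ absorbs the linear-in-$q$ forcing $-f'(u)q$ via $\|f'(u)q\| \le M\|q\|$. The genuine forcing $f''(u)p^2$ is bounded in $L^\infty(\R_+; L^2(\Omega))$ via
$$\|f''(u)\,p^2\| \le K_3(1+\|u\|_{L^\infty})\,\|p\|_{L^4}^2 \le M(\|g\|)\,\|p\|_{H^1_0}^2,$$
using the Sobolev embedding $H^1_0(\Omega) \hookrightarrow L^4(\Omega)$ in three dimensions and the $\E^1$-bound on $p$. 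Gronwall then delivers the exponentially decaying estimate
$$\|\Dt^3 u(t)\| + \|\Nx \Dt^2 u(t)\| \le M(\|\xi\|_{\Cal E^2}) e^{-\beta t} + M(\|g\|).$$

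Finally, the $\Cal E^2$ bound on $\xi_u(t)$ is extracted algebraically from the PDE itself. Writing
$$\di(a\Nx u) + g = \Dt^2 u + \gamma \Dt u + f(u), \qquad \di(a\Nx \Dt u) = \Dt^3 u + \gamma \Dt^2 u + f'(u)\Dt u,$$
the first identity is controlled in $H^1_0(\Omega)$ using $\|\Nx \Dt^2 u\|$, $\|\Nx \Dt u\|$, and $\|f'(u)\Nx u\| \le M$; the second is controlled in $L^2(\Omega)$ from the estimates just obtained. Combining these with the $\E^1$ control of $\|\di(a\Nx u)\|$ produces \eqref{est,disD}. The principal technical step is the energy estimate for $q$, whose closure relies on the compatibility conditions built into the very definition of $\Cal E^2$ — these ensure $\xi_q(0) \in \E$, which is exactly the regularity needed to run the damped-wave energy method on the twice-differentiated equation.
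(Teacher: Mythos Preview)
Your proposal is correct and follows essentially the same approach the paper sketches: use the $\E^1$ dissipative estimate (Theorem~\ref{th.disE2}) to gain $L^\infty$ control on $u$, then run the linear damped-wave dissipative estimate on the twice-differentiated equation for $q=\Dt^2 u$, with the $\Cal E^2$ conditions ensuring $\xi_q(0)\in\E$, and finally read off the $\Cal E^2$ norm of $\xi_u$ from the PDE. The only cosmetic difference is that the paper phrases the argument as applying the $p$- and $q$-equation estimates from Lemma~\ref{le.we2} with new initial data, whereas you observe that the $p$-level is already contained in Theorem~\ref{th.disE2} and go straight to $q$; either way the content is the same.
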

	
	The proof is very close to the proof of Lemma \ref{le.we2} and for this reason is omitted. We only remark that, since $\Cal E^2\subset \E^1$ and the dissipative estimate in $\E^1$ is already known, we see that the quantity $\|u(t)\|_{L^\infty(\Omega)}$ is bounded. Thus, basically, one applies linear dissipative estimates to the equations for $p$ and $q$ in the proof of Lemma \ref{le.we2} with the appropriately changed initial data.    
\section{Homogenisation and convergence of global attractors}\label{s.conv}
Let us now consider the dynamical systems $S_\eb(t)$ and $S_0(t)$ generated by problems \eqref{eq.dw} and \eqref{eq.dwh} respectively.  In Theorem \ref{th.ExAttr} we established that $S_\eb$ (respect. $S_0$) has a global attractor $\Cal A^\eb$ (respect.  $\Cal A^0$). Moreover, Theorem \ref{th.ExAttr} informs us that  $\Cal A^\eb$ is a, uniformly in $\eb$, bounded subset of $ \Cal E^2_\eb$  and $\Cal A^0$ is a bounded subset of $\Cal E^2_0$, where
\begin{equation}
\label{Dep}
\left\{ \begin{aligned}
&\Cal E^2_\eb : = \big\{ \xi  \in (H^1_0(\Omega))^2 \, | \, \big( \di (a(\tfrac{\cdot}{\eb}) \Nx \xi^1 ) + g\big) \in H^1_0(\Omega),\ \di (a(\tfrac{\cdot}{\eb}) \Nx \xi^2 ) \in L^2(\Omega)  \big\},\\
& \Vert \xi \Vert_{\Cal E^2_\eb}^2 : =  \Vert \di (a(\tfrac{\cdot}{\eb}) \Nx \xi^1 ) + g \Vert^2_{H^1_0(\Omega)} +\|\di(a(\tfrac{\cdot}{\eb})\Nx \xi^1)\|^2 + \Vert \di (a(\tfrac{\cdot}{\eb})\Nx \xi^2 ) \Vert^2,
\end{aligned} \right.
\end{equation}

and
\begin{equation}
\label{D0}
\left\{ \begin{aligned}
&\Cal E^2_0 : = \big\{ \xi  \in (H^1_0(\Omega))^2 \, | \, \big( \di (a^h \Nx \xi^1 ) + g\big) \in H^1_0(\Omega),\ \di (a^h \Nx \xi^2 ) \in L^2(\Omega)  \big\},\\
& \Vert \xi \Vert_{\Cal E^2_0}^2 : =  \Vert \di (a^h \Nx \xi^1 ) + g \Vert^2_{H^1_0(\Omega)} +\|\di(a^h\Nx \xi^1)\|^2 + \Vert \di (a^h\Nx \xi^2 ) \Vert^2 .
\end{aligned} \right.
\end{equation}
\begin{rem}
\label{rem.holder3}
 We note that, by elliptic regularity (see Remark \ref{rem.holder}), the global attractors $\A^\eb$ are \emph{uniformly} in $\eb$ bounded subsets of $\Cal E^2_\eb \cap (C^\alpha(\overline{\Omega}))^2$. Additionally  for $\A^0$, as $a^h$ is constant, we can readily deduce that $\A^0$ is a bounded subset of  $\Cal E^2_0 \cap (H^2(\Omega))^2$. That is, the inequalities
\[
\Vert \A^\eb \Vert_{\Cal E^2_\eb} + \Vert \A^\eb \Vert_{(C^\alpha(\overline{\Omega}))^2} \le M(\Vert g \Vert), \qquad \& \qquad \Vert \A^0 \Vert_{\Cal E^2_0} + \Vert \A^0 \Vert_{(H^2({\Omega}))^2} \le M(\Vert g \Vert), 
\]
hold for some non-decreasing function $M$ independent of $\eb$.
\end{rem}
The main result of this section is  following theorem which establishes convergence of the global attractors $\A^\eb$ to the global attractor $\A^0$ in the one-sided Hausdorff distance.
\begin{theorem}
\label{th.AetoAh1}
The global attractor $\A^{\eb}$ of the problem \eqref{eq.dw} converges to the global attractor $\A^0$ of the homogenised problem \eqref{eq.dwh} in the following sense
\begin{equation*}
\lim_{\eb \rightarrow 0}\dist_{(C^\beta(\overline{\Omega}))^2}(\A^{\eb},\A^0)=0,
\end{equation*} 
for any $0\leq \beta <\alpha$ where $\alpha$ is given in Remark \ref{rem.holder3}.
\end{theorem}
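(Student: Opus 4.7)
The plan is to argue by contradiction using the characterisation \eqref{bddedsolns} of the global attractor as the slice at $t=0$ of the set of bounded complete trajectories, together with the uniform H\"older regularity recorded in Remark \ref{rem.holder3}. Assume the assertion fails; then there exist $\delta>0$, $\eb_n\downarrow 0$ and $\xi^{\eb_n}\in\A^{\eb_n}$ with $\dist_{(C^\beta(\overline{\Omega}))^2}(\xi^{\eb_n},\A^0)\ge \delta$ for every $n$. By strict invariance and \eqref{bddedsolns}, each $\xi^{\eb_n}$ lifts to a complete bounded energy solution $u^{\eb_n}:\R\to H^1_0(\Omega)$ of \eqref{eq.dw} (with $\eb=\eb_n$) satisfying $\xi_{u^{\eb_n}}(0)=\xi^{\eb_n}$ and $\xi_{u^{\eb_n}}(t)\in\A^{\eb_n}$ for all $t\in\R$. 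Invoking Remark \ref{rem.holder3} for spatial regularity and Theorem \ref{th.disD} for time derivatives, one obtains the uniform-in-$n$ bounds
\begin{equation*}
\|u^{\eb_n}(t)\|_{C^\alpha(\overline{\Omega})} + \|\Dt u^{\eb_n}(t)\|_{C^\alpha(\overline{\Omega})} + \|\Dt^2 u^{\eb_n}(t)\| + \|\di(a(\tfrac{\cdot}{\eb_n})\Nx u^{\eb_n}(t))\|_{H^1_0(\Omega)} \le M, \qquad t\in\R.
\end{equation*}

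With these bounds in hand I extract a subsequence, not relabelled, along which convergence holds in a topology strong enough to close up the equation. The uniform $C^\alpha$ bound on $u^{\eb_n}(t)$, together with the uniform Lipschitz-in-time estimate supplied by $\|\Dt u^{\eb_n}\|_{L^\infty}\le M$, renders $\{u^{\eb_n}\}$ equicontinuous on every slab $[-T,T]\times \overline{\Omega}$, and Arzel\`a-Ascoli with a diagonal argument yields a limit $u^0\in C(\R;C(\overline{\Omega}))$ with $u^{\eb_n}\to u^0$ locally uniformly. Lower semi-continuity of the $C^\alpha$ seminorm together with the interpolation inequality $\|v\|_{C^\beta}\le C\|v\|_{C^0}^{1-\beta/\alpha}\|v\|_{C^\alpha}^{\beta/\alpha}$ upgrades this to $u^{\eb_n}\to u^0$ in $C_{\rm loc}(\R;C^\beta(\overline{\Omega}))$ for every $\beta<\alpha$; the same reasoning applied to $\Dt u^{\eb_n}$, using now the uniform $\Dt^2$ bound, gives $\Dt u^{\eb_n}\to \Dt u^0$ in the same space. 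In particular $\xi_{u^{\eb_n}}(0)\to \xi_{u^0}(0)$ in $(C^\beta(\overline{\Omega}))^2$.

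It remains to identify $u^0$ as a complete energy solution of \eqref{eq.dwh}, since then \eqref{bddedsolns} forces $\xi_{u^0}(0)\in\A^0$ and contradicts the standing assumption. The linear terms $\Dt^2 u^{\eb_n}$ and $\gamma\Dt u^{\eb_n}$ converge in $\Cal D'(\R\times \Omega)$ from the bounds above, while $f(u^{\eb_n})\to f(u^0)$ follows from the strong convergence. Rewriting \eqref{eq.dw} as
\begin{equation*}
-\di\bigl(a(\tfrac{\cdot}{\eb_n})\Nx u^{\eb_n}\bigr) \;=\; g - \Dt^2 u^{\eb_n} - \gamma\Dt u^{\eb_n} - f(u^{\eb_n})
\end{equation*}
exhibits the oscillating elliptic flux as having a distributional limit. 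Since $u^{\eb_n}\upto u^0$ weakly in $L^2_{\rm loc}(\R;H^1_0(\Omega))$ and $a(\tfrac{\cdot}{\eb_n})$ $H$-converges to $a^h$, classical homogenisation (see \cite{JKO94}) identifies this limit as $-\di(a^h\Nx u^0)$, so $u^0$ solves \eqref{eq.dwh}. I expect this identification to be the main technical point of the proof: it is the only step where genuine homogenisation machinery (as opposed to mere compactness of \emph{a priori} bounds) is invoked, all other limit passages being routine. As $u^0$ is complete and uniformly bounded in $\E$, \eqref{bddedsolns} yields $\xi_{u^0}(0)\in\A^0$, delivering the required contradiction.
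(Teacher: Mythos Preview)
Your argument is correct and follows essentially the same route as the paper: lift points of $\A^{\eb_n}$ to complete bounded trajectories, exploit the uniform $\Cal E^2_\eb$ and $(C^\alpha(\overline\Omega))^2$ bounds to extract a convergent subsequence, and invoke classical elliptic homogenisation to identify the limit as a bounded complete trajectory of \eqref{eq.dwh}, hence a point of $\A^0$. The only cosmetic differences are that the paper extracts compactness via Aubin--Lions in $L^2$ and $H^{-1}$ on finite time slabs (securing strong $C([z,z+2];H^{-1})$ convergence of $\Dt^2 u_n$ from a $\Dt^3$ bound, so that Theorem~\ref{homthm} can be applied pointwise in $t$) rather than Arzel\`a--Ascoli in $C^\alpha$, and that you phrase the passage to the limit as a single space--time $H$-convergence step; the latter is justified because your Arzel\`a--Ascoli argument already gives $u^{\eb_n}(t)\rightharpoonup u^0(t)$ weakly in $H^1_0(\Omega)$ for each $t$, while the right-hand side is uniformly bounded in $L^2(\Omega)$ and hence precompact in $H^{-1}(\Omega)$.
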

To prove Theorem \ref{th.AetoAh1} we shall use the following classical homogenisation theorem for elliptic PDEs (see for example \cite[Section 1]{JKO94}).
\begin{theorem}(Homogenisation theorem)\label{homthm}
Let $\Omega\subset\R^3$ be a bounded smooth domain, $a(\cdot)$ a positive bounded periodic matrix and $\eb_n \rightarrow 0$ as $n\rightarrow \infty$. Then for any sequence $g_n \in H^{-1}(\Omega)$ that strongly converges to $g$ in $H^{-1}(\Omega)$ we have that $u_n \in H^1_0(\Omega)$ the weak solution of
\[
\di(a(\tfrac{x}{\eb_n})\Nx u_n ) = g_n,
\]
weakly converges in $H^1_0(\Omega)$ to $u_0$ the weak solution of 
\[
\di (a^h\Nx u_0 ) = g.
\]
\end{theorem}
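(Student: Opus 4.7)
The plan is to carry out the classical Tartar method of oscillating test functions, using the cell-problem correctors $N_i$ already introduced in \eqref{Ni}. I would organise the argument in three main steps, the heart of the matter being the identification of the weak limit of the flux $a(x/\eb_n)\nabla u_n$.

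\textbf{Step 1 (A priori bounds and extraction).} Testing the equation for $u_n$ against $u_n$ itself and using the uniform ellipticity bound $\nu|\eta|^2 \le a(y)\eta\cdot \eta$, which is inherited by $a(\cdot/\eb_n)$ with the same constant, gives $\nu\|\Nx u_n\|^2 \le \|g_n\|_{H^{-1}}\|u_n\|_{H^1_0}$, so $\|u_n\|_{H^1_0(\Omega)}$ is bounded uniformly in $n$. Passing to a subsequence (not relabelled) I extract $u_n \upto u^*$ in $H^1_0(\Omega)$ and, by the Rellich--Kondrachov theorem, $u_n \to u^*$ strongly in $L^2(\Omega)$. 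The flux $\sigma_n:=a(\tfrac{\cdot}{\eb_n})\Nx u_n$ is bounded in $L^2(\Omega;\R^3)$, hence (along a further subsequence) $\sigma_n \upto \sigma^*$ in $L^2$, and since $\di\sigma_n = g_n \to g$ in $H^{-1}(\Omega)$, the limit satisfies $\di\sigma^* = g$ in $\Cal D'(\Omega)$. Uniqueness for the homogenised equation will close the argument once I show $\sigma^* = a^h\Nx u^*$.

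\textbf{Step 2 (Oscillating test functions).} For each $i\in\{1,2,3\}$ introduce $w_i^{\eb_n}(x):= x_i + \eb_n N_i(x/\eb_n)$ and the associated flux $\zeta_i^{\eb_n}(x):= a(x/\eb_n)\Nx w_i^{\eb_n}(x) = \big(a(y)(e_i + \Nx_y N_i(y))\big)\big|_{y=x/\eb_n}$. From \eqref{Ni} we have $\di\zeta_i^{\eb_n}=0$ in $\Cal D'(\R^3)$, and $Q$-periodicity gives the weak-$L^2$ limits $w_i^{\eb_n} \to x_i$ (in fact strongly in $L^2$, since $N_i\in L^\infty_{loc}$ and $\eb_n N_i(\cdot/\eb_n)\to 0$) and $\zeta_i^{\eb_n} \upto \int_Q a(y)(e_i+\Nx_y N_i(y))\,dy = a^h e_i$ by the definition of $a^h$.

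\textbf{Step 3 (Tartar's identification via compensated compactness).} Fix $\phi\in C^\infty_c(\Omega)$. The symmetry of $a$ gives the key identity
\begin{equation*}
\int_\Omega \phi\, \sigma_n\cdot \Nx w_i^{\eb_n}\, dx \;=\; \int_\Omega \phi\, \zeta_i^{\eb_n}\cdot \Nx u_n\, dx.
\end{equation*}
For the left-hand side I integrate by parts, producing $-\int w_i^{\eb_n}\Nx\phi\cdot\sigma_n\,dx - \int w_i^{\eb_n}\phi\, g_n\,dx$. Using $w_i^{\eb_n} \to x_i$ strongly in $L^2$, $\sigma_n \upto \sigma^*$ weakly in $L^2$, $g_n\to g$ strongly in $H^{-1}$ (together with $w_i^{\eb_n}\phi \to x_i\phi$ strongly in $H^1_0$), the limit equals $-\int x_i \Nx\phi\cdot \sigma^*\,dx - \langle g, x_i\phi\rangle$, which after exploiting $\di\sigma^*=g$ collapses to $\int \phi (\sigma^*)_i\,dx$. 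For the right-hand side I integrate by parts using $\di\zeta_i^{\eb_n}=0$, obtaining $-\int u_n \Nx\phi\cdot \zeta_i^{\eb_n}\,dx$; the strong $L^2$ convergence of $u_n$ paired with the weak $L^2$ convergence of $\zeta_i^{\eb_n}$ to $a^h e_i$ passes to the limit $-\int u^* \Nx\phi \cdot a^h e_i\,dx = \int \phi (a^h\Nx u^*)_i\,dx$, using $u^*\in H^1_0(\Omega)$ and symmetry of $a^h$. Equating the two limits for every $\phi$ and every $i$ yields $\sigma^* = a^h\Nx u^*$ a.e. in $\Omega$.

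\textbf{Conclusion.} Combining with $\di\sigma^*=g$ and $u^*\in H^1_0(\Omega)$ gives that $u^*$ is the weak solution of the homogenised equation, which is unique by Lax--Milgram. Therefore the limit is independent of the extracted subsequence, so the whole sequence $u_n$ converges weakly in $H^1_0(\Omega)$ to $u_0$. The main obstacle is the passage to the limit in the product $\zeta_i^{\eb_n}\cdot \Nx u_n$ of two merely weakly convergent $L^2$ sequences; this is overcome by the compensated compactness structure (one factor has vanishing divergence, the other is a gradient), which is exactly what the two-sided integration by parts with $w_i^{\eb_n}$ exploits.
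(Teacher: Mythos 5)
Your proof is correct. Note that the paper does not actually prove Theorem \ref{homthm} at all --- it is stated as a classical result with a pointer to \cite[Section 1]{JKO94} --- and the Tartar oscillating-test-function argument you give is precisely the standard proof found in that reference (compensated compactness via the divergence-free flux $\zeta_i^{\eb_n}$ paired with the gradient $\Nx u_n$, exploited through two-sided integration by parts). All the essential points are in place: the uniform energy bound, the identification $\di \sigma^*=g$, the identity $\phi\,\sigma_n\cdot\Nx w_i^{\eb_n}=\phi\,\zeta_i^{\eb_n}\cdot\Nx u_n$ from the symmetry of $a$, the weak limits $\zeta_i^{\eb_n}\upto a^h e_i$ consistent with the paper's formula for $a^h$, and the uniqueness argument upgrading subsequential to full convergence. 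One small inaccuracy: $w_i^{\eb_n}\phi$ does \emph{not} converge strongly in $H^1_0(\Omega)$ to $x_i\phi$, since $\Nx w_i^{\eb_n}=e_i+(\Nx_y N_i)(\cdot/\eb_n)$ converges only weakly in $L^2$ (to $e_i$, the corrector gradient having zero mean); the convergence is merely weak in $H^1_0(\Omega)$. This does not damage the argument, because the pairing $\langle g_n, w_i^{\eb_n}\phi\rangle$ still passes to the limit when one factor converges strongly in $H^{-1}(\Omega)$ and the other weakly in $H^1_0(\Omega)$, but the justification should be stated that way. Also, your appeal to the boundedness of $N_i$ is harmless but unnecessary: $\eb_n N_i(\cdot/\eb_n)\to 0$ in $L^2(\Omega)$ already follows from the uniform boundedness of $\|N_i(\cdot/\eb_n)\|_{L^2(\Omega)}$ by periodicity. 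Finally, you implicitly use the symmetry of $a^h$ in the last step; this is legitimate here since the paper records that $a^h$ inherits symmetry from $a$.
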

\begin{rem}\label{rem:convE}
In general, one can not expect \emph{strong} convergence of $u_n$ to $u_0$ in $H^1_0(\Omega)$ since this would imply that the homogenised matrix $a^h$ is simply the average $\int_Qa(y)\,dy$. Clearly this formula for the homogenised matrix is, in general, not true and it is known that the equality $a^h=\int_Qa(y)\,dy$ holds if, and only if, $\di_ya=0$ in weak sense.

A consequence of the above observation is that, in general, we can not expect convergence of the attractors $\A^{\eb}$ to $\A^0$ in the strong topology of $\E$. To obtain such convergence results a correction to $\A^0$ needs to be made, see Section \ref{s.gaE} for further information.
\end{rem}
\begin{proof}[Proof of Theorem \ref{th.AetoAh1}]

Fix an arbitrary sequence $\eb_n\rightarrow 0$ and $\xi_n \in \A^{\eb_n}$. To prove the result it is sufficient to show that there exists   $\xi_0 \in \A^0$ such that  $\xi_n$ converges, up to some subsequence, to $\xi_0$ in $(C^\beta(\overline{\Omega}))^2$  as $n \rightarrow \infty$.

For each $n \in \mathbb{N}$, we denote by $u_n \in \mathcal{K}^{\eb_n}$ the bounded (for all time) in $\E$ solution of \eqref{eq.dw}  that satisfies $\xi_{u_n}(0) = \xi_n$. Now, 
$\A^{\eb}$ is a (uniformly in $\eb$) bounded subset of $\big(H^1_0(\Omega) \cap C^\alpha(\overline{\Omega})\big)^2$ (see Remark \ref{rem.holder3}). Moreover, for any fixed $0 \leq  \beta < \alpha$, it is well-known that $C^\beta(\overline{\Omega})$ is compactly embedded in  $C^\alpha(\overline{\Omega})$. Therefore, up to some discarded subsequence,
\begin{equation}
\label{homprfe0}
\text{ $\xi_n$ converges strongly in $(C^\beta(\overline{\Omega}))^2$ to some $\xi_0 \in \big( H^1_0(\Omega) \cap C^\beta(\overline{\Omega}) \big)^2$.}
\end{equation}  

It remains to prove that $\xi_0 \in \A^0$, and this is established if we demonstrate that $\xi_0 = \xi_{u_0}(0)$ for some bounded (for all time) in $\E$  solution $u_0$ to \eqref{eq.dwh}. The remainder of the proof is to establish the existence of such a $u_0$. In what follows convergence is meant up to an appropriately discarded subsequence.

By Remark \ref{rem.holder3} and the strict invariance  of $\A^\eb$ (property 2 of Definition \eqref{de.A})  there exists $M>0$ such that 
\begin{equation}
\label{hompfe1}
\begin{aligned}
&\|\Nx u_n(t)\| + \|\di(a(\tfrac{x}{\eb_n})\Nx u_n)(t) \| + \|u_n(t)\|_{C^\alpha(\overline{\Omega})}\\
& \hspace{.2\textwidth} +\|\Nx \Dt u_n(t)\|+\|\di(a(\tfrac{x}{\eb_n})\Nx \Dt u_n)(t) \|+\|\Dt u_n(t) \|_{C^\alpha(\overline{\Omega})} \le M, 
\end{aligned}
\end{equation}
for all $ n \in \mathbb{N}$ and all $ t \in \mathbb{R}$. 

Let us fix $z \in \mathbb{Z}$. Using \eqref{hompfe1} we find
\[
u_n\mbox{ is bounded in }W_1 : = \{ w \in L^\infty\big( [z,z+2] ; H^1_0(\Omega) \big) \, | \,  \Dt w \in L^\infty\big( [z,z+2] ; L^2(\Omega) \big) \}.
\]
Similarly, since (cf. \eqref{eq.dw})
\begin{equation}
\label{hompfe4}
\Dt^2 u_n = - \gamma \Dt u_n + \di(a(\tfrac{x}{\eb_n})\Nx u_n) - f(u_n) + g,
\end{equation}
assertion \eqref{hompfe1} and the cubic growth condition of $f$ (Remark \ref{rem.nonlinearity}\eqref{fcubic}) imply that 
\[\Dt u_n \mbox{ is bounded in }W_1.\]

Furthermore, differentiating \eqref{hompfe4} in $t$ gives
\begin{equation*}
\Dt^3 u_n = - \gamma \Dt^2 u_n + \di(a(\tfrac{x}{\eb_n})\Nx \Dt u_n) - f'(u_n)\Dt u_n.
\end{equation*}
This equation, along with  \eqref{hompfe1}, the boundedness of $\Dt u_n$ in $W_1$ and growth assumption on $f$ imply that
\[
\Dt^2 u_n\mbox{ is bounded in }\{ w \in L^\infty\big( [z,z+2] ; L_2(\Omega) \big) \, | \,  \Dt w \in L^\infty\big( [z,z+2] ; H^{-1}(\Omega) \big) \}.
\] 
Therefore, since the embeddings $H^1_0(\Omega)\subset L^2(\Omega)$ and $L^2(\Omega)\subset H^{-1}(\Omega)$ are compact, by Aubin-Lions lemma we deduce that

\begin{equation}\label{hompfe3.2}
\begin{aligned}
u_n & \longrightarrow u  & \  & \text{strongly in $C\big( [z, z+ 2] ;  L^2(\Omega)\big)$ as $n \longrightarrow \infty$;}\\
\Dt u_n & \longrightarrow \Dt u & &\text{strongly in $C\big( [z, z+ 2] ;  L^2(\Omega)\big)$ as $n \rightarrow \infty$;}\\
\Dt^2  u_n &\longrightarrow \Dt^2 u& &\text{strongly in $C\big( [z, z+ 2] ;  H^{-1}(\Omega)\big)$ as $n \rightarrow \infty$.}
\end{aligned}
\end{equation}
 

Let us demonstrate that $u$ solves \eqref{eq.dwh} on the time interval $[z,z+2]$. To this end we are going to pass to the limit in
\begin{equation}\label{hompfe3.4}
-\di\big(a(\tfrac{x}{\eb_n})\Nx u_n\big)=-\Dt^2 u_n-\gamma \Dt u_n-f(u_n)+g=:h_n.
\end{equation}
Due to \eqref{hompfe3.2} we know that 
\begin{equation*}
h_n(t)\longrightarrow-\Dt^2 u(t)-\gamma \Dt u(t)-f(u(t))+g\mbox{ strongly in }H^{-1}(\Omega) \mbox{ for all } t\in[z,z+2].
\end{equation*}
Therefore, by an application of the homogenisation theorem (Theorem \ref{homthm}), we conclude, that for every $t\in[z,z+2]$, $u_n(t)$ weakly converges in $H^1_0(\Omega)$ to the solution $u_0(t)$ of the homogenised problem
\begin{equation*}
-\di(a^h\Nx u_0(t))=-\Dt^2 u(t)-\gamma \Dt u(t)-f(u(t))+g.
\end{equation*} 
It follows from \eqref{hompfe3.2} and the weak convergence $u_n(t) \rightharpoonup u_0(t)$ in $H^1_0(\Omega)$ that $u(t)=u_0(t)$ for all $t\in[z,z+2]$. Consequently, 
from this identity and the above equation, we see that $u_0$ (weakly) solves
 \[
 \Dt^2 u_0 + \gamma \Dt u_0 -\di\big(a^h\Nx u_0 \big) + f(u_0)=g, \quad  t \in [z,z+2].
 \]
Let us argue that the above equation holds for all time. Indeed, by a Cantor diagonalisation argument we see that the convergences \eqref{hompfe3.2} can be taken to hold for all $z \in \mathbb{Z}$. Then, by noting that any $\phi \in C^\infty_0(\mathbb{R};C^\infty_0(\Omega))$ can be represented as  a finite sum of smooth functions whose individual supports (w.r.t to time) are in some $[z,z+2]$, we deduce that $u_0$ weakly solves the homogenised equation \eqref{eq.dwh}.
  Hence, $u_0$ is a bounded in $\E$ solution to \eqref{eq.dwh} for all time.

It remains to show that $\xi_{u_0}(0) =( u_0(0) , \Dt u_0(0))$ equals  $\xi_0$. On the one hand,  from \eqref{homprfe0} we see that $\xi_n$ converges strongly to $\xi_0$ in $(L^2(\Omega))^2$. On the other hand, by  \eqref{hompfe3.2} (for $z=0$) $\xi_n = (u_n(0), \Dt u_n(0))$ converges strongly to $( u_0(0) , \Dt u_0(0)) $ in $(L^2(\Omega))^2$. Hence, $( u_0(0) , \Dt u_0(0)) = \xi_0$ and the proof is complete.
 \end{proof}

\section{Rate of convergence to the homogenised global attractor}
\label{s.Er}
We shall begin with recalling an important result on error estimates in homogenisation theory  of elliptic PDEs. 
Recall, for fixed $\eb >0$, the mappings
\begin{equation}
\label{op.A}
\begin{aligned}
A_\eb u : =-\di(a(\tfrac{\cdot}{\eb})\Nx u), \quad \& \quad A_0u:=-\di(a^h\Nx u).
\end{aligned}
\end{equation}

\begin{theorem}[Theorem 3.1, \cite{ZhPas16}]
\label{th.Er.ell}
Let $\Omega\subset\R^3$ be a bounded smooth domain, symmetric periodic matrix $a(\cdot)$
satisfying uniform ellipticity and boundedness assumptions, $A_\eb$ and $A_0$ given by \eqref{op.A} and $g\in L^2(\Omega)$. Let also $u^\eb,u^0\in H^1_0(\Omega)$ solve the problems
\begin{equation*}
\begin{cases}
A_\eb u^\eb=g,\quad \text{ in $\Omega$},\\
u^\eb|_{\d\Omega}=0,
\end{cases} \qquad \& \qquad
\begin{cases}
A_0 u^0=g,\quad \text{ in $\Omega$},\\
u^0|_{\d\Omega}=0.
\end{cases}
\end{equation*}
Then, the following estimate
\begin{align}
\label{ell.err.L2}
&\|u^\eb-u^0\|\leq C\eb\|g\|,
\end{align}
holds for some constant $C=C(\nu,\Omega)$. 
\end{theorem}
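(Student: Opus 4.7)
The plan is to prove this sharp $L^2$ norm-resolvent estimate (due, in various generalities, to Birman--Suslina \cite{BiSu04} and Suslina \cite{Su17bd}) by combining a two-scale asymptotic expansion with a boundary-layer cut-off and a duality (Aubin--Nitsche) argument. Define the first-order approximation
\[
\tilde u^\eb(x) := u^0(x) + \eb\,\chi_\eb(x)\sum_{i=1}^3 N_i\!\left(\tfrac{x}{\eb}\right) \partial_{x_i} u^0(x),
\]
where $N_i$ solves the cell problem \eqref{Ni} and $\chi_\eb\in C^\infty_0(\Omega)$ satisfies $\chi_\eb\equiv 1$ on $\{x:\dist(x,\partial\Omega)>3\eb\}$, $\chi_\eb\equiv 0$ near $\partial\Omega$, and $|\Nx\chi_\eb|\le C/\eb$. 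The cut-off is needed because the formal corrector $u^0+\eb N_i(\tfrac{\cdot}{\eb})\partial_i u^0$ does not vanish on $\partial\Omega$ and we require $\tilde u^\eb\in H^1_0(\Omega)$.

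A direct computation using the cell equation \eqref{Ni} and the definition of $a^h$ rewrites $A_\eb(u^\eb-\tilde u^\eb)$, after introducing the flux corrector (the skew-symmetric periodic matrix $G$ with $\di_y G(y)=a^h-a(y)(I+\nabla_y N)$), as the divergence of an $\eb$-order oscillating vector field plus boundary-layer terms supported in $\{\dist(x,\partial\Omega)\le 3\eb\}$. The boundary strip has measure $O(\eb)$ and, using $H^2$-regularity of $u^0$ (valid since $\partial\Omega$ is smooth), contributes $O(\sqrt{\eb})$ to the $H^{-1}$-norm of the residual. Energy estimates then yield
\[
\|u^\eb-\tilde u^\eb\|_{H^1_0(\Omega)}\le C\sqrt{\eb}\,\|g\|,
\]
which immediately gives the classical but non-sharp bound $\|u^\eb-u^0\|\le C\sqrt{\eb}\|g\|$.

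To reach the sharp order-$\eb$ bound in $L^2$, I would apply duality. For arbitrary $\phi\in L^2(\Omega)$, let $\psi^\eb:=A_\eb^{-1}\phi$, $\psi^0:=A_0^{-1}\phi$, and $\tilde\psi^\eb:=\psi^0+\eb\chi_\eb N_i(\tfrac{\cdot}{\eb})\partial_i\psi^0$. Using the self-adjointness of $A_\eb$ and $A_0$,
\[
(u^\eb-u^0,\phi)=\int_\Omega\bigl(a^h-a(\tfrac{x}{\eb})\bigr)\Nx u^0\cdot\Nx\psi^\eb\,dx+\text{l.o.t.},
\]
and I would split $\Nx\psi^\eb=\Nx\tilde\psi^\eb+\Nx(\psi^\eb-\tilde\psi^\eb)$. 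Symmetrising the first summand with the corrector on the $u^0$-side produces a purely interior term of order $\eb$ (via a second application of the flux corrector), together with paired boundary strips whose contributions multiply as $\sqrt{\eb}\cdot\sqrt{\eb}=\eb$. The second summand is controlled by $\|\psi^\eb-\tilde\psi^\eb\|_{H^1}\le C\sqrt{\eb}\|\phi\|$ (from the previous step applied to $\psi$) paired against $\|\Nx u^0\|_{L^2(\text{strip})}\le C\sqrt{\eb}\|u^0\|_{H^2}$, again yielding $O(\eb)$. Taking supremum over unit $\phi\in L^2$ completes the proof.

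The main obstacle is the bookkeeping of the boundary layer: neither side of the duality pairing alone yields the optimal exponent, and it is only the pairwise multiplication of two boundary-strip contributions of size $\sqrt{\eb}$ that recovers the sharp $\eb$. This critically requires the smoothness of $\partial\Omega$, which ensures $u^0,\psi^0\in H^2(\Omega)$ and that the boundary strip $\{x:\dist(x,\partial\Omega)<3\eb\}$ has measure $O(\eb)$; any roughness of $\partial\Omega$ degrades the rate, as is well-documented in the literature.
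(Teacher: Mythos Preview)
The paper does not prove this theorem at all: it is quoted verbatim as Theorem~3.1 from \cite{ZhPas16} and used as a black box (see also the attribution to \cite{Su17bd,BiSu04} in the introduction). Your sketch---first-order corrector with boundary cut-off and flux potential to obtain the $H^1$ bound $\|u^\eb-\tilde u^\eb\|_{H^1_0}\le C\sqrt{\eb}\|g\|$, followed by an Aubin--Nitsche duality argument that pairs two $\sqrt{\eb}$ boundary-strip contributions---is precisely the strategy carried out in the cited reference \cite{ZhPas16} (and, with spectral methods, in \cite{Su17bd}). So there is nothing to compare against in this paper; you have correctly reconstructed the standard proof from the literature, and the only thing left would be to fill in the bookkeeping you yourself flag as the main obstacle.
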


\begin{rem}
\label{rem:resolvents}
Note that inequality \eqref{ell.err.L2} is equivalent to the following operator estimate on resolvents:
\[
\Vert A^{-1}_\eb - A^{-1}_0 \Vert_{\Cal L(L^2(\Omega))} \le C \eb.
\]
\end{rem}
In what follows we wish to compare properties of the semi-groups  associated to \eqref{eq.dw} and \eqref{eq.dwh} via estimates in terms of $\eb$. In fact, we shall provide stronger estimates in terms of the difference $\Vert A^{-1}_\eb - A^{-1}_0 \Vert_{\Cal L(L^2(\Omega))}$. The mentioned $\eb$ estimates then immediately follow by Remark \ref{rem:resolvents}.  

Our first important result is the following continuity estimate.
\begin{theorem}
\label{th.|ue-uh|E-1}
Let $\Cal E^2_\eb$ be the 
set \eqref{Dep}, $R>0$. Then, for all $\xi\in B_{\Cal E^2_\eb}(0,R) = \{ \xi \in \Cal E^2_\eb,\, \| \xi \|_{\Cal E^2_\eb} \le R\}$,  the inequality 
\begin{equation}
\label{est.|ue-uh|E-1}
\|S_{\eb}(t)\xi-S_0(t)\xi\|_{\E^{-1}}\leq M e^{K t} \Vert A^{-1}_\eb - A^{-1}_0 \Vert_{\Cal L(L^2(\Omega))},\quad t\geq 0,
\end{equation}
holds for some non-decreasing functions $M=M(R,\|g\|)$ and $K=K(R,\|g\|)$ which are independent of $\eb>0$.
\end{theorem}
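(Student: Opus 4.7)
Set $w(t) := u^\varepsilon(t) - u^0(t)$, where $u^\varepsilon$ and $u^0$ are the energy solutions to \eqref{eq.dw} and \eqref{eq.dwh} with the same initial data $\xi \in B_{\Cal E^2_\eb}(0,R)$. Subtracting the equations yields
\begin{equation*}
\Dt^2 w + \gamma \Dt w + A_0 w + \big( f(u^\varepsilon) - f(u^0) \big) = (A_0 - A_\eb) u^\varepsilon, \qquad \xi_w|_{t=0} = 0.
\end{equation*}
The central algebraic identity $A_0 - A_\eb = A_0 (A_\eb^{-1} - A_0^{-1}) A_\eb$ converts the source into $A_0 h$ with
\[
h(t) := (A_\eb^{-1} - A_0^{-1}) A_\eb u^\varepsilon(t) \in H^1_0(\Omega).
\]
By Theorem \ref{th.disD} the quantities $\|A_\eb u^\varepsilon(t)\|$ and $\|A_\eb \Dt u^\varepsilon(t)\|$ are bounded uniformly in $t \ge 0$ and $\eb > 0$ on $B_{\Cal E^2_\eb}(0,R)$, so that $\|h(t)\| + \|\Dt h(t)\| \le M\, \| A_\eb^{-1} - A_0^{-1} \|_{\Cal L(L^2(\Omega))}$ uniformly.

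The plan is to run an energy estimate in the $\E^{-1}$-type functional by testing the equation with $A_0^{-1} \Dt w \in H^1_0(\Omega) \cap H^2(\Omega)$. Using self-adjointness of $A_0$ and the identity $\langle A_0 h, A_0^{-1} \Dt w\rangle = \langle h, \Dt w\rangle$, the linear part produces
\[
\tfrac{d}{dt} V(t) + \gamma \|A_0^{-1/2} \Dt w\|^2 = \int_\Omega h \, \Dt w \, dx - \int_\Omega (f(u^\varepsilon) - f(u^0)) \, A_0^{-1} \Dt w \, dx,
\]
where $V(t) := \tfrac{1}{2}\|w\|^2 + \tfrac{1}{2}\|A_0^{-1/2} \Dt w\|^2$ (equivalent to the squared $\E^{-1}$-norm of $\xi_w$). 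A naive Cauchy--Schwarz on the first term would only yield $V(t) \le Ct\,\| A_\eb^{-1} - A_0^{-1}\|$, giving the sub-optimal square-root order. To recover the sharp order I would integrate by parts in time,
\[
\int_\Omega h \, \Dt w \, dx = \tfrac{d}{dt} \int_\Omega hw \, dx - \int_\Omega \Dt h \cdot w \, dx,
\]
and introduce the modified energy $\tilde V(t) := V(t) - \int_\Omega hw \, dx$; Cauchy--Schwarz on $\int hw$ combined with the uniform bound on $\|h\|$ yields the two-sided equivalence $V \le 2 \tilde V + M \| A_\eb^{-1} - A_0^{-1} \|^2$.

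The nonlinear contribution is controlled by
\[
\Bigl| \int_\Omega (f(u^\varepsilon) - f(u^0)) A_0^{-1} \Dt w \, dx \Bigr| \le C \|f(u^\varepsilon) - f(u^0)\|_{H^{-1}(\Omega)} \|A_0^{-1/2} \Dt w\|.
\]
Writing $f(u^\varepsilon) - f(u^0) = \big(\!\int_0^1 f'(su^\varepsilon + (1-s)u^0)\,ds\big) w$, the cubic-growth bound $|f'(z)|\le K_4(1+|z|^2)$ from Remark \ref{rem.nonlinearity}\eqref{fcubic} gives a coefficient bounded by $C(1 + |u^\varepsilon|^2 + |u^0|^2)$. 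Since Theorem \ref{th.disD} together with Remark \ref{rem.holder} supplies a uniform $L^\infty$-bound on $u^\varepsilon$ and Theorem \ref{th.dis} supplies a uniform $H^1_0 \hookrightarrow L^6$-bound on $u^0$ (note $\|\xi\|_\E \le C(\nu,R,\|g\|)$), a direct H\"older splitting with exponents $(3,2,6)$ against a test function $\phi \in H^1_0(\Omega)$ gives $\|f(u^\varepsilon) - f(u^0)\|_{H^{-1}(\Omega)} \le C \|w\|$. Thus the nonlinear contribution is absorbed as $C\|w\|^2 + \tfrac{\gamma}{2}\|A_0^{-1/2}\Dt w\|^2$.

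Combining these bounds produces the differential inequality
\[
\tilde V'(t) \le C\, \| A_\eb^{-1} - A_0^{-1} \|_{\Cal L(L^2(\Omega))}^2 + C\, \tilde V(t),
\]
subject to $\tilde V(0) = 0$ (since $w(0) = 0$ forces both $V(0) = 0$ and $\int h(0)w(0) = 0$). Gr\"onwall's lemma then yields $\tilde V(t) \le M e^{Kt} \| A_\eb^{-1} - A_0^{-1} \|^2$, whence the same bound for $V(t)$, and taking square roots and using the equivalence $\|\cdot\|_{H^{-1}(\Omega)} \sim \|A_0^{-1/2}\cdot\|$ yields the desired \eqref{est.|ue-uh|E-1}. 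I expect the main obstacle to be precisely the mechanism for producing the sharp linear-in-$\| A_\eb^{-1} - A_0^{-1} \|$ order: this is secured by the combination of the algebraic identity factoring out the resolvent difference, the $A_0^{-1} \Dt w$ test function that is compatible with the $\E^{-1}$ energy, and the integration-by-parts-in-time trick that shifts the time derivative from $w$ to $h$.
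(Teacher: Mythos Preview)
Your proposal is correct and follows essentially the same approach as the paper: the paper also tests the equation for $r^\eb=u^\eb-u^0$ against $A_0^{-1}\Dt r^\eb$, arrives at your identity $(A_0-A_\eb)u^\eb=A_\eb u^\eb\cdot(A_\eb^{-1}-A_0^{-1})$ (derived there via a short chain of equalities rather than the factorisation $A_0(A_\eb^{-1}-A_0^{-1})A_\eb$), integrates by parts in time exactly as you do, and builds the same modified functional $\Lambda=V-\int h\,r^\eb$ before applying Gr\"onwall. The only cosmetic difference is that the paper bounds the nonlinear term using only the $H^1_0\hookrightarrow L^6$ control on both $u^\eb$ and $u^0$ from \eqref{est.dis}, whereas you invoke the stronger $L^\infty$ bound on $u^\eb$ from Theorem~\ref{th.disD}; either version works.
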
   
\begin{proof}[Proof of Theorem \ref{th.|ue-uh|E-1}]
Let us fix $\xi$,  set $
\xi_{u^\eb}(t):=S_{\eb}(t)\xi$, $ \xi_{u^0}(t):=S_0(t)\xi,
$
and define $\r:=\u-u^0$. Then, $\r$ solves
\begin{equation}
\label{eq.re}
\begin{cases}
\Dt^2\r+\gamma\Dt\r+A_0 \r= A_0 u^\eb - A_\eb u^\eb + f(u^0) - f(\u),\quad x\in\Omega,\ t\geq0,\\
\xi_{\r}|_{t=0}=0,\qquad \r|_{\d\Omega}=0.
\end{cases}
\end{equation}
By testing the first equation  in \eqref{eq.re} with $A_{0}^{-1}\Dt\r$ we deduce that 
\begin{multline}
\label{drE-1}
\frac{d}{dt}\Big(\frac{1}{2} 
{ \big( \Dt\r, A_{0}^{-1}\Dt\r\big) }+\frac{1}{2} \|\r\|^2\Big)+\gamma \big( \Dt\r, A_{0}^{-1}\Dt\r\big)=\\
\big( A_0 u^\eb-A_\eb u^\eb,A_{0}^{-1}\Dt \r \big)+\big(f(u^0)-f(\u),A_{0}^{-1}\Dt\r\big).
\end{multline}
We compute
\[
\begin{aligned}
\big( A_0 u^\eb-A_\eb u^\eb,A_{0}^{-1}\Dt \r \big) = \big( A_0 u^\eb,A_{0}^{-1}\Dt\r\big) - \big( A_\eb u^\eb,A_{0}^{-1}\Dt\r\big)
 &  =  \big(  u^\eb,\Dt\r\big) -  \big( A_\eb u^\eb,A_{0}^{-1}\Dt\r\big)\\
  &  =  \big( A_\eb u^\eb,A_{\eb}^{-1}\Dt\r\big) -  \big( A_\eb u^\eb,A_{0}^{-1}\Dt\r\big) \\
   &  = \big( A_\eb u^\eb, (A_{\eb}^{-1}-A_{0}^{-1})\Dt\r\big).
\end{aligned}
\]
Furthermore, 
\[
\begin{aligned}
 \big( A_\eb u^\eb, (A_{\eb}^{-1}-A_{0}^{-1})\Dt\r\big) = \frac{d}{dt}  \big( A_\eb u^\eb, (A_{\eb}^{-1}-A_{0}^{-1})\r \big)   -  \big( A_\eb \Dt u^\eb, (A_{\eb}^{-1}-A_{0}^{-1})\r\big).
\end{aligned}
\]
Therefore, we can rewrite \eqref{drE-1} as
\begin{equation}
\begin{aligned}
&\frac{d}{dt} \Lambda +\gamma \big( \Dt\r, A_{0}^{-1}\Dt\r\big)  = -  \big( A_\eb \Dt u^\eb, (A_{\eb}^{-1}-A_{0}^{-1})\r\big)+\big(f(u^0)-f(\u),A_{0}^{-1}\Dt\r\big),
\end{aligned}\label{drE-1.1}
\end{equation}
for
\[
\Lambda(t) : =\tfrac{1}{2} 
{ \big( \Dt\r(t), A_{0}^{-1}\Dt\r(t)\big) }+\tfrac{1}{2} \|\r(t)\|^2 - \big( A_\eb u^\eb(t), (A_{\eb}^{-1}-A_{0}^{-1})\r(t) \big), \quad t\ge0.
\]
%
%
%
%
We now aim to bound the right-hand-side of \eqref{drE-1.1} in terms of $\Vert A^{-1}_\eb - A^{-1}_0 \Vert^2_{\Cal L(L^2(\Omega))}$ and $\Lambda$,  then subsequently apply Gronwall's inequality and the following  standard estimate
\begin{equation}\label{errgron1}
\nu \| \phi \|^2_{H^{-1}(\Omega)} \le  \big( \phi,  A_{0}^{-1} \phi \big)
\le \nu^{-1}\|  \phi\|^2_{H^{-1}(\Omega)}, \quad  \phi \in H^{-1}(\Omega)
\end{equation}
to deduce the desired result.

 To this end, let us first estimate the non-linear term. Using 
the growth restriction on $f'$ (see Remark \ref{rem.nonlinearity}\ref{fcubic})
and H\"older's inequality (for exponents $(p_1,p_2,p_3)=(3,2,6)$) we compute 
\begin{equation}
\label{[f(ue)-f(uh)].Ae-1re'}
\begin{aligned}
\left|\big(f(\u)-f(u^0),A_{0}^{-1}\Dt\r\big)\right| & \leq M \big( (1 + | u^\eb|^2 + |u^0|^2)|r^\eb|,|A_{0}^{-1}\Dt\r|\big) \\
& \le M\Vert 1 + | u^\eb|^2 + |u^0|^2\Vert_{L^3(\Omega)} \Vert r^\eb \Vert \Vert A_{0}^{-1}\Dt\r\Vert_{L^6(\Omega)}.
\end{aligned}
\end{equation}
Then, by the Sobolev embedding $L^6(\Omega) \subset H^1(\Omega)$, the fact that $u^\eb$ and $u^0$ are bounded in $\E$ (see dissipative estimate \eqref{est.dis}) and \eqref{errgron1} we compute
\begin{flalign*}
\left|\big(f(\u)-f(u^0),A_{0}^{-1}\Dt\r\big)\right|    \leq M \Vert r^\eb \Vert \Vert A_{0}^{-1}\Dt\r\Vert_{H^1(\Omega)}  &\le  M \Vert r^\eb \Vert \Vert \Dt\r\Vert_{H^{-1}(\Omega)} \nonumber \\
& \le M \Vert r^\eb\Vert \big( \Dt r^\eb, A_{0}^{-1}\Dt\r\big)^\frac{1}{2} \\
& \le M_1 \big( \tfrac{1}{2}  \Vert r^\eb \Vert^2 + \tfrac{1}{2} \big( \Dt r^\eb, A_{0}^{-1}\Dt\r\big) \big),
\end{flalign*}
for some positive $M_1$. By utilising the above inequality in \eqref{drE-1.1} we infer that
\begin{equation*}
\begin{aligned}
&\frac{d}{dt} \Lambda  \le  \big(    2M_1  A_\eb u^\eb - A_\eb \Dt u^\eb, (A_{\eb}^{-1}-A_{0}^{-1})\r \big) - 2M_1  \big( A_\eb u^\eb, (A_{\eb}^{-1}-A_{0}^{-1})\r \big) +
\\ & \hspace{2cm} + M_1 \big( \tfrac{1}{2}  \Vert r^\eb \Vert^2 + \tfrac{1}{2} \big( \Dt r^\eb, A_{0}^{-1}\Dt\r\big) \big).  
\end{aligned}
\end{equation*}
Now, by the dissipative estimate in $\Cal E^2_\eb$ (Theorem \ref{th.disD}) we have the following uniform bounds in $t$  and $\eb$:
\begin{equation}
\label{reptbounds}
 \Vert A_\eb u^\eb(t) \Vert + \Vert A_\eb \Dt u^\eb(t) \Vert\le M, \quad  t \ge 0, \ \eb>0,
\end{equation}
which we use along with the Cauchy-Schwarz inequality to compute
\[
\begin{aligned}
\left|  \big(    2M_1  A_\eb u^\eb - A_\eb \Dt u^\eb, (A_{\eb}^{-1}-A_{0}^{-1})\r \big)  \right|  & \leq M   \Vert A^{-1}_\eb - A^{-1}_0 \Vert_{\Cal L(L^2(\Omega))}^2 +  \tfrac{M_1}{2} \Vert  r^\eb \Vert^2.
\end{aligned}
\]
By collecting the above inequalities together we deduce that
\[
\frac{d}{dt} \Lambda \le  M   \Vert A^{-1}_\eb - A^{-1}_0 \Vert_{\Cal L(L^2(\Omega))}^2 + 2M_1 \Lambda.
\]
Consequently, by applying Gronwall's inequality and the initial data $\xi_{r^\eb}|_{t=0} = 0$ we have
\[
\tfrac{1}{2} 
{ \big( \Dt\r(t), A_{0}^{-1}\Dt\r(t)\big) }+\tfrac{1}{2}\|\r(t)\|^2 - \big( A_\eb u^\eb(t), (A_{\eb}^{-1}-A_{0}^{-1})\r(t) \big) \le e^{2M_1t}\tfrac{M}{M_1}  \Vert A^{-1}_\eb - A^{-1}_0 \Vert_{\Cal L(L^2(\Omega))}^2, \quad t\ge 0.
\]
Now, we compute
\[
\begin{aligned}
\left| \big( A_\eb u^\eb, (A_{\eb}^{-1}-A_{0}^{-1})\r \big) \right| &  \le \Vert A_\eb u^\eb \Vert  \Vert A^{-1}_\eb - A^{-1}_0 \Vert_{\Cal L(L^2(\Omega))} \Vert \r \Vert \\
& \le   \Vert A_\eb u^\eb \Vert^2  \Vert A^{-1}_\eb - A^{-1}_0 \Vert_{\Cal L(L^2(\Omega))}^2 + \tfrac{1}{4}  \Vert \r \Vert^2 .
\end{aligned}
\]
Hence, the above two inequalities along with \eqref{errgron1} and \eqref{reptbounds} demonstrate \eqref{est.|ue-uh|E-1} and the proof is complete.
\end{proof}

	Along with Theorem \ref{th.|ue-uh|E-1}, to prove error estimates on the distance between global attractors we need the following exponential attraction property of $\A^0$:
	\begin{equation}\tag{H2}
	\label{d(StB,Ah)<e-t}
	\left\{ \quad \begin{aligned}
	&\textit{ there exists a constant $\sigma>0$ such that for every bounded set $B \subset \E$ the estimate} \\[2pt]
	&\dist_{\E}(S_0(t)B,\A^0)\leq M(\|B\|_\E)e^{-\sigma t},\quad t\geq 0, \\[2pt]
	&\textit{holds for some non-decreasing function $M$.}
	\end{aligned} \right.
	\end{equation}
It is known that, for problem \eqref{eq.dwh}, the property \eqref{d(StB,Ah)<e-t} is a generic assumption in the sense that it holds for an open dense subset of forces $g \in L^2(\Omega)$ (cf. \cite{BV}).	


We are now ready to formulate and prove our main result of this section.
\begin{theorem}
\label{th.d(Ae,Ah)<e1/2.w}
Assume \eqref{mainassumptions} and \eqref{d(StB,Ah)<e-t}. Let  $\A^\eb$ and $\A^0$ be the global attractors of the dynamical systems $(\E,S_\eb(t))$ and $(\E,S_0(t))$ corresponding to the problems \eqref{eq.dw} and \eqref{eq.dwh}. Then the following estimate
\begin{equation}
\label{d(Ae,Ah)<e1/2.w}
\dist_{\E^{-1}}(\A^\eb,\A^0)\leq M\Vert A^{-1}_\eb - A^{-1}_0 \Vert_{\Cal L(L^2(\Omega))}^{\varkappa}, \quad \varkappa=\frac{\sigma}{(K+\sigma)},
\end{equation}
holds. Here, $K$ is as in Theorem \ref{th.|ue-uh|E-1}, $\sigma$ as in \eqref{d(StB,Ah)<e-t}, and $M=M(\|g\|)$ is  a non-decreasing function independent of $\eb$.
\end{theorem}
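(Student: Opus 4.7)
The plan is to exploit the strict invariance $\A^\eb = S_\eb(t)\A^\eb$ for every $t\ge 0$, splitting the one-sided Hausdorff distance via a triangle inequality into a short-time error term and a long-time attraction term, and then to optimise the resulting inequality in $t$.

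More precisely, for any $t \ge 0$, strict invariance gives
\[
\dist_{\E^{-1}}(\A^\eb, \A^0) = \dist_{\E^{-1}}(S_\eb(t)\A^\eb, \A^0) \le \dist_{\E^{-1}}\bigl(S_\eb(t)\A^\eb, S_0(t)\A^\eb\bigr) + \dist_{\E^{-1}}\bigl(S_0(t)\A^\eb, \A^0\bigr).
\]
For the first summand, Theorem \ref{th.ExAttr} tells us that $\A^\eb \subset B_{\Cal E^2_\eb}(0, R)$ with $R = R(\|g\|)$ independent of $\eb$, so Theorem \ref{th.|ue-uh|E-1} yields
\[
\dist_{\E^{-1}}\bigl(S_\eb(t)\A^\eb, S_0(t)\A^\eb\bigr) \le M e^{Kt} \Vert A^{-1}_\eb - A^{-1}_0 \Vert_{\Cal L(L^2(\Omega))},
\]
with $M, K$ independent of $\eb$. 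For the second summand, the uniform (in $\eb$) boundedness of $\A^\eb$ in $\E$, combined with the continuous embedding $\E \hookrightarrow \E^{-1}$ and the generic exponential attraction hypothesis \eqref{d(StB,Ah)<e-t}, gives
\[
\dist_{\E^{-1}}\bigl(S_0(t)\A^\eb, \A^0\bigr) \le C \, \dist_{\E}\bigl(S_0(t)\A^\eb, \A^0\bigr) \le M e^{-\sigma t}.
\]

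Collecting the two bounds, we obtain for every $t \ge 0$
\[
\dist_{\E^{-1}}(\A^\eb, \A^0) \le M\bigl(e^{Kt} \Vert A^{-1}_\eb - A^{-1}_0 \Vert_{\Cal L(L^2(\Omega))} + e^{-\sigma t}\bigr).
\]
The final step is to optimise in $t$: choosing $t$ so that the two exponentials balance, namely $t = \frac{1}{K+\sigma} \log \bigl(\Vert A^{-1}_\eb - A^{-1}_0 \Vert_{\Cal L(L^2(\Omega))}^{-1}\bigr)$ (which is admissible, at least for $\eb$ small enough, as $\Vert A^{-1}_\eb - A^{-1}_0 \Vert_{\Cal L(L^2(\Omega))} \to 0$), both terms equal $\Vert A^{-1}_\eb - A^{-1}_0 \Vert_{\Cal L(L^2(\Omega))}^{\sigma/(K+\sigma)}$, producing exactly \eqref{d(Ae,Ah)<e1/2.w}; for large $\Vert A^{-1}_\eb - A^{-1}_0 \Vert_{\Cal L(L^2(\Omega))}$ the bound is trivial by taking $t=0$ and absorbing into the constant $M$.

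No single step is technically hard once the ingredients are in place; the only subtle point is ensuring that \eqref{d(StB,Ah)<e-t} can genuinely be applied to $B = \A^\eb$ uniformly in $\eb$, which is why the $\eb$-independent bound $\Vert \A^\eb \Vert_\E \le M(\|g\|)$ from Theorem \ref{th.ExAttr} is essential. The resulting exponent $\varkappa = \sigma/(K+\sigma)$ reflects precisely this interpolation between the order-sharp forward error estimate and the exponential rate of attraction in the homogenised dynamics.
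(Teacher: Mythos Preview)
Your proof is correct and follows essentially the same approach as the paper: the paper argues point-by-point (fixing $\xi_\eb\in\A^\eb$, pulling it back along a complete trajectory to $\xi_{-T,\eb}\in\A^\eb$, and then comparing $S_\eb(T)\xi_{-T,\eb}$ with $S_0(T)\xi_{-T,\eb}$), whereas you phrase the same idea at the set level via strict invariance and the triangle inequality for the one-sided Hausdorff distance. The two arguments are logically identical, and the optimisation in $t$ is exactly the same.
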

\begin{proof}
The assertion follows from the already obtained estimate \eqref{est.|ue-uh|E-1} and the exponential attraction property \eqref{d(StB,Ah)<e-t}. Indeed, let $\xi_{\eb}\in\A^\eb \subset B_{\Cal E^2_\eb}(0,R_1)$ be arbitrary. Then due to \eqref{Ae=Ke(0)} there exists a complete bounded trajectory $\xi_{\u}(t)\in\Cal K^\eb$, such that $\xi_{\u}(0)=\xi_{\eb}$. Let us fix an arbitrary $T\geq 0$ and consider $\xi_{-T,\eb}=\xi_{\u}(-T)\in\A^\eb$. By Theorem \ref{th.|ue-uh|E-1} we deduce
\begin{equation*}
\|\xi_{\eb}-S_0(T)\xi_{-T,\eb}\|_{\E^{-1}}\leq M\kappa e^{KT},\qquad \text{for } \kappa = \Vert A^{-1}_\eb - A^{-1}_0 \Vert_{\Cal L(L^2(\Omega))}.
\end{equation*}   
for some $M$ and $K$ which are independent of $\eb$ and $\xi_{\eb}\in \A^\eb$. On the other hand, due to exponential attraction \eqref{d(StB,Ah)<e-t} we have
\begin{equation*}
\dist_{\E^{-1}}(S_0(T)\xi_{-T,\eb},\A^0)\leq M e^{-\sigma T}.
\end{equation*}
Therefore, using the triangle inequality, we derive
\begin{equation}
\label{t12}
\dist_{\E^{-1}}(\xi_{\eb},\A^0)\leq M(\kappa e^{KT}+e^{-\sigma T}).
\end{equation}  
We recall that $T\geq 0$ is arbitrary and therefore we choose $T$ that minimizes the right hand side of \eqref{t12}. For example, taking $T=T(\eb)$ such that $\kappa e^{KT}=e^{-\sigma T}$ yields
\begin{equation*}
\dist_{\E^{-1}}(\xi_{\eb},\A^0)\leq 2M\Vert A^{-1}_\eb - A^{-1}_0 \Vert_{\Cal L(L^2(\Omega))}^{\varkappa},\quad \varkappa=\frac{\sigma}{(K+\sigma)},
\end{equation*}
and since $\xi_{\eb}\in\A^\eb$ is arbitrary we obtain the desired inequality \eqref{d(Ae,Ah)<e1/2.w}.
\end{proof}
To complement the convergence result in Theorem  \ref{th.AetoAh1}, we have the following error estimates.
\begin{cor}	
\label{co.d(Ae,A0)}
Assume \eqref{mainassumptions} and \eqref{d(StB,Ah)<e-t}. Let $\alpha>0$ be given by Remark \ref{rem.holder}, $\varkappa$ as in Theorem \ref{th.d(Ae,Ah)<e1/2.w} and $0\leq \beta < \alpha$. Then the inequality
\begin{equation*}
\dist_{(C^\beta(\overline{\Omega}))^2}(\A^\eb,\A^0)\leq M\Vert A^{-1}_\eb - A^{-1}_0 \Vert_{\Cal L(L^2(\Omega))}^{\theta \varkappa}, \quad \theta = \tfrac{\alpha-\beta}{2+\alpha}, 
\end{equation*}
for some non-decreasing function $M=M(\|g\|)$ which is independent of $\eb$.
\end{cor}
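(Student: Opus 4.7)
The plan is to upgrade the $\E^{-1}$ estimate of Theorem \ref{th.d(Ae,Ah)<e1/2.w} to a $(C^\beta(\overline{\Omega}))^2$ estimate by interpolating against the uniform $(C^\alpha(\overline{\Omega}))^2$ bound on the attractors supplied by Remark \ref{rem.holder3}.

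Fix $\xi_\eb \in \A^\eb$ and set $\kappa := \|A_\eb^{-1} - A_0^{-1}\|_{\Cal L(L^2(\Omega))}$. Theorem \ref{th.d(Ae,Ah)<e1/2.w} ensures that, for every $\delta > 0$, one can find $\xi_0 \in \A^0$ with $\|\xi_\eb - \xi_0\|_{\E^{-1}} \le M\kappa^\varkappa + \delta$, while Remark \ref{rem.holder3} yields the uniform bound $\|\xi_\eb - \xi_0\|_{(C^\alpha(\overline{\Omega}))^2} \le 2M(\|g\|)$, independent of $\eb$. The conclusion will follow once we invoke, componentwise, an interpolation inequality of the form
\[
\|v\|_{C^\beta(\overline{\Omega})} \le C\,\|v\|_{X}^{\theta}\,\|v\|_{C^\alpha(\overline{\Omega})}^{1-\theta}, \qquad \theta = \frac{\alpha-\beta}{\alpha+2},
\]
applied with $X = L^2(\Omega)$ for the first component and $X = H^{-1}(\Omega)$ for the second. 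Substituting the two bounds, sending $\delta \to 0$, and taking the supremum over $\xi_\eb\in\A^\eb$ then gives
\[
\dist_{(C^\beta(\overline{\Omega}))^2}(\A^\eb, \A^0) \le C\,M^{1-\theta}\,(M\kappa^\varkappa)^\theta \le M'\,\kappa^{\theta\varkappa},
\]
which is exactly the claimed bound.

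The main technical obstacle is establishing the interpolation inequality with the stated exponent, particularly for $X = H^{-1}(\Omega)$ which controls the time-derivative component. This is done by the standard mollification/duality trick: represent a point value $v(x_0)$ as a local average of $v$ against a smooth bump $\phi_r$ supported in a ball of radius $r$ centred at $x_0$, estimate the resulting pairing by duality (Cauchy--Schwarz for $X=L^2$ or $H^{-1}$--$H^1_0$ duality for $X = H^{-1}$), optimise $r$ to balance this term against the H\"older remainder $\|v\|_{C^\alpha}r^\alpha$ to obtain an $L^\infty$ bound, and finally recover the $C^\beta$-seminorm via the elementary two-sided estimate
\[
\frac{|v(x) - v(y)|}{|x-y|^\beta} \le \min\bigl(2\|v\|_\infty |x-y|^{-\beta},\ \|v\|_{C^\alpha}|x-y|^{\alpha-\beta}\bigr).
\]
Some care is required for points near $\partial\Omega$, where the test function $\phi_r$ must remain in $H^1_0(\Omega)$; this is handled by a suitable one-sided bump or by extending $v$ across the smooth boundary before running the argument. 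A uniform $\theta$ valid for both components (the smaller of the two produced this way) yields the exponent appearing in the statement.
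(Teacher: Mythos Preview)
Your proposal is correct and follows essentially the same route as the paper: combine the $\E^{-1}$ distance estimate from Theorem~\ref{th.d(Ae,Ah)<e1/2.w} with the uniform $(C^\alpha(\overline{\Omega}))^2$ bound from Remark~\ref{rem.holder3} via interpolation. The paper states the two interpolation inequalities
\[
\|u\|_{L^\infty(\Omega)} \le C\,\|u\|_{H^{-1}(\Omega)}^{\vartheta}\|u\|_{C^\alpha(\overline{\Omega})}^{1-\vartheta},\quad \vartheta=\tfrac{\alpha}{2+\alpha},\qquad
\|u\|_{C^\beta(\overline{\Omega})}\le 2\,\|u\|_{C^\alpha(\overline{\Omega})}^{\beta/\alpha}\|u\|_{L^\infty(\Omega)}^{1-\beta/\alpha},
\]
and composes them (giving exactly your $\theta=\vartheta(1-\beta/\alpha)=\tfrac{\alpha-\beta}{2+\alpha}$), whereas you merge these into a single $H^{-1}$--$C^\alpha$ to $C^\beta$ inequality and sketch its proof by the mollification/duality argument; the content is the same, and your treatment of the first component via $L^2\hookrightarrow H^{-1}$ is harmless.
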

\begin{proof}
The corollary follows directly from the uniform boundedness of $\A^\eb$ and $\A^0$ in $\big(C^\alpha(\overline{\Omega})\big)^2$ (Remark \ref{rem.holder3}), the estimate on the distance between attractors in $\E^{-1}$ (cf. \eqref{d(Ae,Ah)<e1/2.w}) and the interpolation inequalities
\begin{align*}
& \Vert u \Vert_{L^\infty(\Omega)} \le  C \Vert u \Vert_{H^{-1}(\Omega)}^\vartheta\|u\|_{C^\alpha(\overline{\Omega})}^{1-\vartheta}, \qquad \forall u \in H^{-1}(\Omega) \cap C^\alpha(\overline{\Omega}),\ \mbox{where }\vartheta=\tfrac{\alpha}{2+\alpha},\\
& \|u\|_{C^\beta(\overline{\Omega})}\le 2  \|u\|_{C^\alpha(\overline{\Omega})}^{\beta/\alpha}  \Vert u \Vert_{L^\infty(\Omega)}^{(1-\beta/\alpha)}, \qquad \forall u \in  C^\alpha(\overline{\Omega}).
\end{align*}	
\end{proof}

\section{Approximation of global attractors with error estimates in the energy space $\E$}\label{s.gaE}
In addition to the obtained estimates in Section \ref{s.Er}  on the distance in $\E^{-1}$ we would like to obtain estimates in the energy space $\E$. Note that we can not expect, in general, convergence of the global attractors in the strong topology of $\E$, cf. Remark \ref{rem:convE}. As in the elliptic case, estimates in $H^1(\Omega)$-norm require involving the  correction  $\eb \sum_i N_i(\tfrac{\cdot}{\eb}) \partial_{x_i} u^0$ of homogenised trajectories $u^0$. To this end, we introduce the `correction' operator $\Cal T_\eb: H^2(\Omega) \rightarrow H^1(\Omega)$ given by
	\begin{equation}
	\label{ue1}
	\mathcal{T}_\eb w(x):=w(x)+\eb\sum_{i=1}^3N_i\left( \tfrac{x}{\eb} \right)\partial_{x_i}w(x),\quad  x\in \Omega.
	\end{equation}
	Here, $N_i$, $i\in\{1,2,3\}$, are the solutions to the cell problem \eqref{Ni}. 
%

Now, it is known that $N_i$, $i=1,2,3$, are multipliers in $H^1(\Omega)$; in particular the following \emph{non-trivial} estimate holds (see \cite[Section 3]{ZhPas16}): there exists $C=C(\nu,\Omega)$ such that
\[
\int_\Omega | \nabla_y N_i(\tfrac{x}{\eb}) u(x)|^2 \, {\rm d}x  \le C  \int_\Omega\big( |u(x)|^2 + \eb^2 | \nabla u(x) |^2 \big) \, {\rm d}x, \quad \forall u \in H^1(\Omega).
\]
Consequently, the following inequality
\begin{equation}
\label{Tepbound}
\| \nabla \Cal T_\eb w\| \le C \big( \| \nabla w \| + \eb \| w \|_{H^2(\Omega)} \big), \quad \forall w \in H^2(\Omega),
\end{equation}
holds for some $C>0$ independent of $\eb$ and $w$. Indeed, this follows from the above multiplier estimate and the fact $N_i \in L^\infty(Q)$ (by elliptic regularity).

Now, we are ready to present the well-known corrector estimate result in elliptic homogenisation theory which improves the $L^2$-estimate given in Theorem \ref{th.Er.ell} to $H^1$-norm. 
\begin{theorem}[Theorem 3.1, \cite{ZhPas16}]
	\label{th.Er.ellCOR}
	Let $\Omega\subset\R^3$ be a bounded smooth domain, periodic matrix $a(\cdot)$
	satisfying uniform elliptic and boundedness assumptions, $A_\eb$ and $A_0$ given by \eqref{op.A} and $g\in L^2(\Omega)$. Let also $u^\eb,u^0\in H^1_0(\Omega)$ solve the problems
	\begin{equation*}
	\begin{cases}
	A_\eb u^\eb=g,\quad \text{in $\Omega$},\\
	u^\eb|_{\d\Omega}=0,
	\end{cases} \qquad \& \qquad
	\begin{cases}
	A_0 u^0=g,\quad \text{in $\Omega$},\\
	u^0|_{\d\Omega}=0.
	\end{cases}
	\end{equation*}
	Then, the following estimate
	\begin{align}
	\label{ell.err.L2COR}
	&\|u^\eb-\Cal T_\eb u^0\|_{H^1_0(\Omega)}\leq C\sqrt{\eb}\|g\|,
	\end{align}
	holds for some constant $C=C(\nu,\Omega)$. 
\end{theorem}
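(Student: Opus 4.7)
The plan is to follow the classical two-scale expansion argument for elliptic homogenisation, modified by a boundary cutoff since the first-order corrector $\Cal T_\eb u^0$ does not lie in $H^1_0(\Omega)$. As a preliminary, note that since $a^h$ is constant and $\Omega$ smooth, elliptic regularity gives $u^0\in H^2(\Omega)\cap H^1_0(\Omega)$ with $\|u^0\|_{H^2(\Omega)}\leq C\|g\|$, which is the key input. I would then fix a smooth cutoff $\chi_\eb\in C^\infty_0(\Omega)$ with $0\leq\chi_\eb\leq 1$, $\chi_\eb\equiv 1$ on $\{\dist(x,\d\Omega)\geq 2\eb\}$, and $|\Nx\chi_\eb|\leq C/\eb$, and introduce the boundary-corrected ansatz
\[
\v(x):=u^0(x)+\eb\,\chi_\eb(x)\sum_{i=1}^3 N_i(x/\eb)\,\partial_{x_i}u^0(x)\in H^1_0(\Omega).
\]
By the triangle inequality the theorem reduces to the two estimates $\|\u-\v\|_{H^1_0(\Omega)}\leq C\sqrt\eb\|g\|$ and $\|\v-\Cal T_\eb u^0\|_{H^1_0(\Omega)}\leq C\sqrt\eb\|g\|$. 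The latter is purely geometric: the difference is supported on the boundary strip $\Omega_\eb:=\{\dist(x,\d\Omega)<2\eb\}$, whose volume is of order $\eb$, and the multiplier estimate \eqref{Tepbound} combined with the standard boundary-strip bound $\|\Nx u^0\|_{L^2(\Omega_\eb)}\leq C\sqrt\eb\|u^0\|_{H^2(\Omega)}$ yields the claim.

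For the former, set $\w:=\u-\v$ and compute $a(x/\eb)\Nx\v$ by the chain rule; grouping terms separates a bulk part, $\chi_\eb\sum_i a(x/\eb)(e_i+\Nx_y N_i(x/\eb))\partial_{x_i}u^0 + (1-\chi_\eb)a(x/\eb)\Nx u^0$, from an interior remainder $\eb\chi_\eb a(x/\eb)\sum_i N_i(x/\eb)\Nx\partial_{x_i}u^0$ of $L^2$-size $O(\eb\|u^0\|_{H^2})$ and a boundary-strip remainder $\eb\sum_i\partial_{x_i}u^0\,N_i(x/\eb)\,a(x/\eb)\Nx\chi_\eb$ of $L^2$-size $O(\sqrt\eb\|u^0\|_{H^2})$ (since $|\Nx\chi_\eb|\sim 1/\eb$ on a set of volume $\sim\eb$). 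Subtracting $a^h\Nx u^0$ and invoking the cell problem \eqref{Ni}, the leading bulk difference is $\chi_\eb\sum_i\big(a(x/\eb)(e_i+\Nx_y N_i(x/\eb))-a^h e_i\big)\partial_{x_i}u^0$, and the periodic skew-symmetric flux corrector $\Psi^{(i)}\in L^\infty(Q)$ (whose existence follows from $\di_y[a(e_i+\Nx_y N_i)]=0$ together with the definition of $a^h$) satisfies $a(y)(e_i+\Nx_y N_i(y))-a^h e_i=\di_y \Psi^{(i)}(y)$. Using skew-symmetry, the product against $\Nx\w$ in the weak formulation of $-\di(a(x/\eb)\Nx\w)=\text{(these terms)}$ integrates by parts and produces a contribution of order $\eb\|u^0\|_{H^2}\|\Nx\w\|$. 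Testing with $\w\in H^1_0(\Omega)$ and using uniform ellipticity thus gives $\nu\|\Nx\w\|^2\leq C(\sqrt\eb+\eb)\|u^0\|_{H^2}\|\Nx\w\|$, and Young's inequality together with $\|u^0\|_{H^2}\leq C\|g\|$ yields the required estimate.

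The main obstacle is the boundary layer. The interior two-scale expansion alone gives an error of order $\eb$, but since $\Cal T_\eb u^0 \notin H^1_0(\Omega)$ in general one must insert the cutoff $\chi_\eb$, and the resulting boundary-strip remainder contributes at order only $\sqrt\eb$: the large factor $|\Nx\chi_\eb|\sim 1/\eb$ is only partially compensated by the volume $|\Omega_\eb|\sim\eb$. Obtaining the sharper $\eb$-rate, as in Suslina's work, would require additional boundary-layer correctors; the stated $\sqrt\eb$-rate suffices for our purposes and is produced by this simpler construction.
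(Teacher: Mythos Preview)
The paper does not prove this statement at all: it is quoted verbatim as Theorem 3.1 of \cite{ZhPas16} and used as a black box. So there is no ``paper's own proof'' to compare against.

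Your sketch is the standard argument and is essentially the one in \cite{ZhPas16}. A couple of minor technical remarks: the claim $\Psi^{(i)}\in L^\infty(Q)$ is not automatic for merely bounded measurable $a$; what one actually uses is that $\Psi^{(i)}$ satisfies the same multiplier-type estimate as $N_i$ (cf.\ the discussion around \eqref{Tepbound}), which suffices for the integration-by-parts step. Also, the boundary-strip bound $\|\Nx u^0\|_{L^2(\Omega_\eb)}\leq C\sqrt\eb\|u^0\|_{H^2(\Omega)}$ deserves a one-line justification (coarea/trace). With those caveats, your outline is correct and matches the cited reference.
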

\begin{rem}
	\label{rem:resolventsCOR}
	Note that inequality \eqref{ell.err.L2COR} is equivalent to the following operator estimate:
	\[
	\Vert A^{-1}_\eb g - \Cal T_\eb A^{-1}_0 g \Vert_{H^1_0(\Omega)} \le C \sqrt{\eb} \| g \|, \quad g \in L^2(\Omega).
	\]
\end{rem}

As in Theorem \ref{est.|ue-uh|E-1}, we would like to compare the distance between $S_\eb(t) \xi$, for $\xi \in \Cal E^2_\eb$, to some trajectory for $S_0$ but this time in the energy space $\E$. However, here the trajectory $S_0(t) \xi$ is not a suitable candidate as it does not have the sufficient regularity needed to apply the above corrector estimates. To overcome this difficulty we carefully choose our initial data for the homogenised problem \eqref{eq.dwh}. 

More precisely, let us recall the spaces $\Cal E^2_\eb$, $\Cal E^2_0$ given in \eqref{Dep}, \eqref{D0}, 
and introduce the bounded linear operator $\Pi_\eb: \Cal E^2_\eb \rightarrow \Cal E^2_0$ given by 
\begin{equation}
\label{xih.ch}
\Pi_\eb (\xi^1, \xi^2): = (\xi^1_0, \xi^2_0), \quad \text{ where } \quad \begin{cases}\text{the term $\xi^i_0 \in H^2(\Omega)\cap H^1_0(\Omega)$, $i=1,2$, satisfies} \\
\di(a^h\Nx\xi^i_{0})=\di(a\left( \tfrac{\cdot}{\eb} \right)\Nx \xi^i).
\end{cases}
\end{equation}
 The operator $\Pi_\eb$ has the following nice properties.
\begin{lem}
	\label{le.xih}
The operator $\Pi_\eb: \Cal E^2_\eb \rightarrow \Cal E^2_0$
 is a bijection that satisfies:
	\begin{align}
	\label{est.|xih|De}
	& \| \Pi_\eb \xi \Vert_{\Cal E^2_0} = \| \xi \|_{\Cal E^2_\eb},  &\xi\in\Cal E^2_\eb;\\
	\label{|xih-xie|<Me}
	& \|\Pi_\eb \xi-\xi\|_{(L^2(\Omega))^2}\leq \Vert A^{-1}_\eb - A^{-1}_0 \Vert_{\Cal L(L^2(\Omega))}\|\xi\|_{\Cal E^2_\eb} ,\hspace{-2cm}  &\xi\in\Cal E^2_\eb.
	\end{align}
\end{lem}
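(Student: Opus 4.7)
The plan is to recognise the defining relation for $\Pi_\eb$ as the operator identity $A_0 \xi^i_0 = A_\eb \xi^i$, $i=1,2$. Indeed, the assumption $\xi \in \Cal E^2_\eb$ guarantees that $A_\eb \xi^1, A_\eb \xi^2 \in L^2(\Omega)$, so by the uniform ellipticity of $a^h$ and the resulting bounded invertibility of $A_0 : H^2(\Omega) \cap H^1_0(\Omega) \to L^2(\Omega)$, the definition \eqref{xih.ch} is equivalent to
\[
\Pi_\eb \xi = \bigl( A_0^{-1} A_\eb \xi^1,\, A_0^{-1} A_\eb \xi^2 \bigr),
\]
with obvious candidate inverse $\Pi_\eb^{-1} \eta = (A_\eb^{-1} A_0 \eta^1, A_\eb^{-1} A_0 \eta^2)$. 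Once this is noted, the rest of the lemma is essentially book-keeping.

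To establish bijectivity between $\Cal E^2_\eb$ and $\Cal E^2_0$, I would verify that each of the defining membership conditions of $\Cal E^2_0$ (namely $\xi^i_0 \in H^1_0(\Omega)$, $A_0 \xi^1_0 + g \in H^1_0(\Omega)$, and $A_0 \xi^2_0 \in L^2(\Omega)$) transfers from the corresponding $\eb$-condition on $\xi$; this is immediate from the pointwise equality $A_0 \xi^i_0 = A_\eb \xi^i$. The inverse map is handled symmetrically, noting that any $\eta \in \Cal E^2_0$ satisfies $A_0 \eta^i \in L^2(\Omega)$ so that $A_\eb^{-1} A_0 \eta^i \in H^1_0(\Omega)$ is well-defined. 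Identity \eqref{est.|xih|De} is then instant: every summand in $\|\Pi_\eb \xi\|^2_{\Cal E^2_0}$ coincides term-by-term with the corresponding summand in $\|\xi\|^2_{\Cal E^2_\eb}$ by the same pointwise equality.

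For the quantitative estimate \eqref{|xih-xie|<Me}, the key step is the elementary algebraic identity
\[
A_0^{-1} A_\eb - I = A_0^{-1} A_\eb - A_\eb^{-1} A_\eb = \bigl( A_0^{-1} - A_\eb^{-1} \bigr) A_\eb,
\]
valid on the domain of $A_\eb$. Applying this componentwise and taking $L^2$-norms gives
\[
\| \xi^i_0 - \xi^i \| \le \| A_\eb^{-1} - A_0^{-1} \|_{\Cal L(L^2(\Omega))} \, \| A_\eb \xi^i \|, \qquad i=1,2,
\]
and summing the squares while invoking $\| A_\eb \xi^1 \|^2 + \| A_\eb \xi^2 \|^2 \le \| \xi \|^2_{\Cal E^2_\eb}$ yields the claim. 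I do not anticipate a serious obstacle: the entire argument is a reshuffling of the defining identity $A_0 \xi_0 = A_\eb \xi$, and the only mildly subtle point is keeping track of which space each intermediate quantity inhabits so that $A_\eb$, $A_0^{-1}$ and the operator-norm difference $A_\eb^{-1} - A_0^{-1}$ are all applied legitimately.
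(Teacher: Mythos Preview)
Your proposal is correct and follows essentially the same approach as the paper: the paper also writes $\Pi_\eb(\xi^1,\xi^2)=(A_0^{-1}A_\eb\xi^1,A_0^{-1}A_\eb\xi^2)$, derives bijectivity and the norm identity directly from this, and obtains \eqref{|xih-xie|<Me} from the same algebraic identity $A_0^{-1}A_\eb\xi^i-\xi^i=(A_0^{-1}-A_\eb^{-1})A_\eb\xi^i$. Your write-up is simply a more detailed unpacking of the same argument.
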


\begin{proof} The bijective property and equality \eqref{est.|xih|De} directly follow from the definitions of $\Cal E^2_\eb$, $\Cal E^2_0$ and the identity $\Pi_\eb (\xi^1 , \xi^2) =  (A^{-1}_0 A_\eb \xi^1,A^{-1}_0 A_\eb\xi^2)$. Inequality \eqref{|xih-xie|<Me} follows from the identity
	\[
	A^{-1}_0A_\eb  \xi^i - \xi^i =  (A^{-1}_0  - A_\eb^{-1}) A_\eb \xi^i.
	\]
\end{proof}

We now compare $S_\eb(t) \xi$ with $ S_0(t)\Pi_\eb \xi$ in $\E$ for $\xi \in \Cal E^2_\eb$.
The following result is the direct analogue of Theorem \ref{th.|ue-uh|E-1} when one replaces the initial data $\xi$ by $\Pi_\eb\xi$ in problem \eqref{eq.dwh}.
\begin{theorem}\label{th.Dterror}
Let $\Cal E^2_\eb$ be the 
set \eqref{Dep}.  
 Then, for every  $\xi \in B_{\Cal E^2_\eb}(0,R)$, the following inequalities
	\begin{align}
\label{dttreasy}
&\Vert S_\eb(t) \xi - S_0(t) \Pi_\eb \xi \Vert_{\E^{-1}}\leq Me^{Kt} \Vert A^{-1}_\eb - A^{-1}_0 \Vert_{\Cal L(L^2(\Omega))},\quad t\geq 0,  \\ 	\label{Dttestimate1}
&	
\Vert \Dt S_\eb(t) \xi - \Dt S_0(t) \Pi_\eb \xi \Vert_{\E^{-1}}\leq Me^{Kt} \Vert A^{-1}_\eb - A^{-1}_0 \Vert_{\Cal L(L^2(\Omega))}^{1/2},\quad t\geq 0,
	\end{align}
	hold	for some non-decreasing functions $M=M(R,\|g\|)$ and $K=K(R,\|g\|)$ which are independent of $\eb>0$.
\end{theorem}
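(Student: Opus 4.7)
Set $\u := S_\eb(t)\xi$, $u^0 := S_0(t)\Pi_\eb\xi$, and $\r := \u - u^0$, and write $\kappa := \Vert A_\eb^{-1} - A_0^{-1}\Vert_{\Cal L(L^2(\Omega))}$. Then $\r$ satisfies
\begin{equation*}
\Dt^2 \r + \gamma\Dt \r + A_0 \r = (A_0 - A_\eb) \u + f(u^0) - f(\u),
\end{equation*}
with initial data $(\r(0), \Dt\r(0)) = (\xi^1 - \xi_0^1,\ \xi^2 - \xi_0^2)$, which is $O(\kappa)$ in $(L^2(\Omega))^2$ by Lemma~\ref{le.xih}. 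The plan for \eqref{dttreasy} is to mimic the proof of Theorem~\ref{th.|ue-uh|E-1}: test the above equation with $A_0^{-1}\Dt\r$. The only new features are (i) the nonzero initial data, which contributes $\Lambda(0) = O(\kappa^2)$ to the Lyapunov functional and so does not affect the Gronwall step, and (ii) the need for the uniform bound $\Vert A_0 u^0(t)\Vert + \Vert A_0\Dt u^0(t)\Vert \le M$ on the homogenised trajectory, which is supplied by Theorem~\ref{th.disD} applied to \eqref{eq.dwh} thanks to $\Vert\Pi_\eb\xi\Vert_{\Cal E^2_0} = \Vert\xi\Vert_{\Cal E^2_\eb} \le R$.

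For \eqref{Dttestimate1}, differentiate the $\r$-equation in time to obtain, for $\p := \Dt\r$,
\begin{equation*}
\Dt^2 \p + \gamma \Dt \p + A_0 \p = (A_0 - A_\eb)\Dt\u + f'(u^0)\Dt u^0 - f'(\u)\Dt\u.
\end{equation*}
The defining property $A_0 \xi_0^1 = A_\eb \xi^1$ of $\Pi_\eb$ is decisive at $t=0$: evaluating the two wave equations \eqref{eq.dw}, \eqref{eq.dwh} at the initial time, the elliptic and forcing contributions to $\Dt\p(0) = \Dt^2\u(0) - \Dt^2 u^0(0)$ cancel exactly, leaving $\Dt\p(0) = -\gamma(\xi^2 - \xi_0^2) - (f(\xi^1) - f(\xi_0^1))$. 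Combining Lemma~\ref{le.xih}, the cubic growth of $f'$ from Remark~\ref{rem.nonlinearity}.\ref{fcubic}, H\"older's inequality and the embedding $L^{6/5}(\Omega) \hookrightarrow H^{-1}(\Omega)$, one finds $\Vert\p(0)\Vert + \Vert\Dt\p(0)\Vert_{H^{-1}(\Omega)} \le C\kappa$; without the correction $\Pi_\eb$ the initial defect would have been $O(1)$, ruining the argument.

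Testing the $\p$-equation with $A_0^{-1}\Dt\p$ and applying the integration-by-parts trick from Theorem~\ref{th.|ue-uh|E-1} reduces matters to controlling the remainder $(A_\eb\Dt^2\u,\ (A_\eb^{-1} - A_0^{-1})\p)$. Here lies the main obstacle: whereas $A_\eb\Dt\u$ was uniformly bounded in $L^2(\Omega)$ in the proof of Theorem~\ref{th.|ue-uh|E-1}, $A_\eb\Dt^2\u$ is only uniformly bounded in $H^{-1}(\Omega)$ (via $\Vert\nabla\Dt^2\u\Vert \le M$ from Theorem~\ref{th.disD}). Pairing in $H^{-1}$--$H^1_0$ duality, invoking the elliptic bound $\Vert(A_\eb^{-1} - A_0^{-1})\p\Vert_{H^1_0(\Omega)} \le C\Vert\p\Vert_{H^{-1}(\Omega)}$ from uniform ellipticity, and using the bound $\Vert\p\Vert_{H^{-1}(\Omega)} \le Me^{Kt}\kappa$ already established in \eqref{dttreasy}, the remainder is $O(e^{Kt}\kappa)$: linear in $\kappa$ rather than quadratic. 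The nonlinear terms are handled as in Theorem~\ref{th.|ue-uh|E-1} using the $L^\infty$-bounds from Remark~\ref{rem.holder}, with the cross-term $((f'(u^0) - f'(\u))\Dt\u,\ A_0^{-1}\Dt\p)$ absorbing the $L^2$-smallness $\Vert\r\Vert \le Me^{Kt}\kappa$ of \eqref{dttreasy}. The resulting energy inequality is $\Lambda' \le C\Lambda + Me^{Kt}\kappa$ with $\Lambda(0) = O(\kappa^2)$; Gronwall yields $\Lambda(t) \le Me^{K't}\kappa$, and since $\Lambda$ controls $\Vert\p\Vert^2 + \Vert\Dt\p\Vert_{H^{-1}(\Omega)}^2$, taking square roots yields the square-root rate $\kappa^{1/2}$ in \eqref{Dttestimate1}.
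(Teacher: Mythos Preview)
Your argument is correct, but it differs from the paper's in both parts and is somewhat more elaborate than necessary.

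For \eqref{dttreasy}, the paper does not redo the energy argument with modified initial data. Instead it writes
\[
\|S_\eb(t)\xi - S_0(t)\Pi_\eb\xi\|_{\E^{-1}} \le \|S_\eb(t)\xi - S_0(t)\xi\|_{\E^{-1}} + \|S_0(t)\xi - S_0(t)\Pi_\eb\xi\|_{\E^{-1}},
\]
and controls the first term by Theorem~\ref{th.|ue-uh|E-1} and the second by the $\E^{-1}$-Lipschitz property of $S_0$ (Corollary~\ref{cor.contE-1}) together with Lemma~\ref{le.xih}. Your direct approach works too, and in fact the ingredients you cite are exactly what is needed; the paper's route is just shorter.

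For \eqref{Dttestimate1}, the paper does \emph{not} use the integration-by-parts trick at all. After testing the $\q$-equation with $A_0^{-1}\Dt\q$, it bounds the troublesome term crudely:
\[
\big|\big(A_\eb\Dt\u,\,(A_\eb^{-1}-A_0^{-1})\Dt\q\big)\big| \le \|A_\eb\Dt\u\|\,\kappa\,\|\Dt\q\| \le M\kappa,
\]
using only that $\|\Dt\q\| = \|\Dt^2\u - \Dt^2 u^0\| \le M$ in $L^2(\Omega)$, which follows from Theorem~\ref{th.disD} applied to both trajectories. This is simpler than your route of absorbing a time derivative into $\Lambda$, invoking the $H^{-1}$ bound on $A_\eb\Dt^2\u$, and feeding \eqref{dttreasy} back in via $\|\p\|_{H^{-1}}\le Me^{Kt}\kappa$. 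Your approach is valid and reaches the same $O(\kappa)$ remainder in the Gronwall inequality, hence the same $\kappa^{1/2}$ rate; it just works harder than needed, since the obstacle you identify (only $H^{-1}$ control of $A_\eb\Dt^2\u$) is sidestepped by not moving the time derivative in the first place.
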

\begin{proof}
First note that inequality \eqref{dttreasy} is a consequence of the Lipschitz continuity of $S_0$ in $\E^{-1}$ (Corollary \ref{cor.contE-1}), Lemma \ref{le.xih}  and \eqref{est.|ue-uh|E-1}. Indeed,
\[
\begin{aligned}
\Vert S_\eb(t) \xi - S_0(t) \Pi_\eb \xi \Vert_{\E^{-1}} &  \le \Vert S_\eb(t) \xi - S_0(t) \xi \Vert_{\E^{-1}} + \Vert S_0(t) \xi - S_0(t) \Pi_\eb \xi \Vert_{\E^{-1}} \\
&\le \Vert S_\eb(t) \xi - S_0(t) \xi \Vert_{\E^{-1}} + M e^{Kt} \Vert \xi -  \Pi_\eb \xi \Vert_{\E^{-1}} \le Me^{Kt} \Vert A^{-1}_\eb - A^{-1}_0 \Vert_{\Cal L(L^2(\Omega))}.
\end{aligned}
\]
 It remains to prove \eqref{Dttestimate1}.

Set $\xi_{u^\eb}(t):=S_\eb(t)\xi$, $\xi_{u^0}(t):=S_0(t)\Pi_\eb\xi$. We begin by noting the following uniform bounds in $t$ and $\eb$:
\begin{equation}
\label{Eerror.e1}
\begin{aligned}
\| \Dt^2 u^\eb\| + \| \Nx  u^\eb \| + \| A_\eb  \Dt u^\eb \|  + \| \Dt^2 u^0\| +    \| \Nx  u^0 \|  \le M.
\end{aligned}
\end{equation}
Indeed, these bounds  are a consequence of identity 
 $\Pi_\eb B_{\Cal E^2_\eb}(0,R) = B_{\Cal E^2_0}(0,R)$ and the dissipative estimates for $u^\eb$ and $u^0$ in $\Cal E^2_\eb$ and $\Cal E^2_0$ respectively (Theorem \ref{th.disD} for $a=a(\tfrac{\cdot}{\eb})$ and  $a=a^h$ respectively).
 

 Now, the difference $r^\eb : = u^\eb - u^0$ solves
\begin{equation}\label{secondrep}\begin{cases}
\Dt^2\r = -\gamma\Dt\r+ A_0 u^0 - A_\eb u^\eb + f(u^0) - f(\u),\quad x\in\Omega,\ t\geq0,\\
\xi_{\r}|_{t=0}=\xi - \Pi_\eb \xi,\quad \r|_{\d\Omega}=0.
\end{cases}
\end{equation}
Note that by the definition of $\Pi_\eb, \eqref{xih.ch}$, we have
\[
\xi_{\Dt r^\eb}|_{t=0} = \big(\xi^2 - \xi^2_0 , \gamma ( \xi^2_0 -  \xi^2) + f(\xi^1_{0}) - f(\xi^1) \big).
\]
Upon handling the non-linearity as in \eqref{[f(ue)-f(uh)].Ae-1re'}, and utilising Lemma \ref{le.xih} we conclude that
\begin{equation}
\label{DttrID}
\Vert \xi_{\Dt r^\eb}(0) \Vert_{\E^{-1}} \le C \Vert A^{-1}_\eb - A^{-1}_0 \Vert_{\Cal L(L^2(\Omega))}.
\end{equation}

Now,  by differentiating the first equation in \eqref{secondrep} in time (and then  adding $A_0 \Dt r^\eb$ to both sides) we find that $q^\eb : = \Dt r^\eb$ solves
\begin{equation*}
\begin{cases}
\Dt^2\q+\gamma\Dt\q + A_0 \q= A_0 \Dt  u^\eb - A_\eb \Dt u^\eb + f'(u^0) \Dt u^0  - f'(\u)\Dt \u,\quad x\in\Omega,\ t\geq0,\\
\xi_{\q}|_{t=0}=\xi_{\Dt r^\eb}(0),\qquad \q|_{\d\Omega}=0.
\end{cases}
\end{equation*}
Testing the first equation in the above problem  with $A_{0}^{-1}\Dt\q$ gives  
\begin{multline*}
\frac{d}{dt}\Big(\frac{1}{2} 
{ \big( \Dt\q, A_{0}^{-1}\Dt\q\big) }+\frac{1}{2} \|\q\|^2\Big)+\gamma \big( \Dt\q, A_{0}^{-1}\Dt\q\big)=\\
\big( A_0 \Dt u^\eb-A_\eb \Dt u^\eb,A_{0}^{-1}\Dt \q \big)+\big(f'(u^0)\Dt u^0 -f'(\u)\Dt \u,A_{0}^{-1}\Dt\q\big).
\end{multline*}

We aim to prove the inequality 
\begin{equation}
\label{dttrgron}
\frac{d}{dt} \Lambda \le M e^{Kt} \Vert A^{-1}_\eb - A^{-1}_0 \Vert_{\Cal L(L^2(\Omega))} + M \Lambda, \quad \Lambda : = \frac{1}{2} 
{ \big( \Dt\q, A_{0}^{-1}\Dt\q\big) }+\frac{1}{2} \|\q\|^2
\end{equation}
for some $M$ and $K$ independent of $\eb$ and $\xi_0$, which subsequently implies the desired result  via an application of Gronwall's inequality and \eqref{DttrID}. As usual, we shall utilise the $H^{-1}$-norm equivalence given by \eqref{errgron1}.

So it remains to prove \eqref{dttrgron}.  By arguing as in Theorem \ref{th.|ue-uh|E-1}, we utilise the identity $\Dt \q = \Dt^2 \u - \Dt^2 u^0$  and uniform bounds  \eqref{Eerror.e1}   to compute
 \begin{equation}
 \label{e.toimprove}
\begin{aligned}
| \big( A_0 \Dt u^\eb-A_\eb \Dt u^\eb,A_{0}^{-1}\Dt \q \big) | & = | \big( A_\eb \Dt u^\eb,(A^{-1}_\eb -A_{0}^{-1})\Dt \q \big) | \\  &\le \Vert A_\eb \Dt \u \Vert \Vert A^{-1}_\eb - A^{-1}_0 \Vert_{\Cal L(L^2(\Omega))} \Vert \Dt \q \Vert  \\ & \le M\Vert A^{-1}_\eb - A^{-1}_0 \Vert_{\Cal L(L^2(\Omega))}.
\end{aligned} 
 \end{equation}
Let us now handle the non-linear term. We compute 
\[
\big(f'(u^0)\Dt u^0 -f'(\u)\Dt \u,A_{0}^{-1}\Dt\q\big) = -\big(f'(u^0)\q , A^{-1}_0 \Dt \q \big) + \big ( (f'(u^0) -f'(\u))\Dt \u,A_{0}^{-1}\Dt\q\big) =: I_1 + I_2.
\] 
The arguments to bound  $I_1$ and $I_2$ will use the uniform  bounds on $\u$ and $u^0$ given by \eqref{Eerror.e1}.

  By the growth condition on $f$ and the $H^{-1}$-norm  equivalence \eqref{errgron1}, we compute
\[
\begin{aligned}
| I_1| = | \big(f'(u^0)\q , A^{-1}_0 \Dt \q \big) | & \le M \big(  (1+| u^0|^2)  |\q|, |A^{-1}_0 \Dt \q| \big) \le M \Vert 1+ |u^0|^2\Vert_{L^3(\Omega)}  \Vert  \q \Vert  \Vert  A^{-1}_0 \Dt \q \Vert_{L^6(\Omega)} \\
& \le  M \Vert  \q \Vert  \Vert  \Dt \q \Vert_{H^{-1}(\Omega)}   \le M\big( \tfrac{1}{2} \Vert  \q \Vert^2 +\tfrac{1}{2}  ( \Dt \q, A^{-1}_0 \Dt \q ) \big).
\end{aligned}
\]
Additionally, by H\"{o}lder's inequality (for exponents $(p_1,p_2,p_3,p_4) = (6,2,6,6)$) we compute
\[
\begin{aligned}
|I_2| =|  \big ( (f'(u^0) -f'(\u))\Dt \u,A_{0}^{-1}\Dt\q\big)| &  \le M\big( (1 + |u^0| + |\u| ) | \r | |\Dt \u | , | A_{0}^{-1}\Dt\q| \big)  \\
& \le M \Vert 1 + |u^0| + |\u| \Vert_{L^6(\Omega)} \Vert \r \Vert  \Vert \Dt \u\Vert_{L^6(\Omega)}   \Vert A_{0}^{-1}\Dt\q \Vert_{L^6(\Omega)} \\
& \le M \big( \tfrac{1}{2} \Vert \r \Vert^2 +  \tfrac{1}{2}( \Dt \q , A^{-1}_0 \Dt \q)  \big).
\end{aligned}
\]

The above assertion and \eqref{dttreasy} imply
\[
|I_2|   \le C \big( e^{2Kt} \Vert A^{-1}_\eb - A^{-1}_0 \Vert_{\Cal L(L^2(\Omega))}^2 + \tfrac{1}{2}( \Dt \q , A^{-1}_0 \Dt \q) \big).
\] 

Combining the above calculations leads to the inequality \eqref{dttrgron}. The proof is complete.
\end{proof}
The following estimate  is an immediate consequence of Theorem \ref{th.Dterror} and standard elliptic theory.
\begin{cor}
	\label{co.|ue-uh|E}
Let $\Cal E^2_\eb$ be the set \eqref{Dep}, $\xi\in B_{\Cal E^2_\eb}(0,R)$ and set $\xi_{u^\eb}(t):=S_\eb(t)\xi$, $\xi_{u^0}(t):=S_0(t)\Pi_\eb\xi$. Let $\Cal T_\eb$ be given by \eqref{ue1}.
Then, the following inequality 
\begin{equation}\label{Dttestimate}
	\|u^\eb(t)-\Cal T_\eb u^0(t)\|_{H^1(\Omega)}
\leq  Me^{Kt} \sqrt{\eb},\quad t\geq 0,
\end{equation}
holds	for some non-decreasing $M=M(R,\|g\|)$ and $K=K(R,\|g\|)$ which are independent of $\eb>0$.
\end{cor}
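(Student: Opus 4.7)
My plan is to obtain this $H^1$-corrector estimate by algebraically reducing the task to the known elliptic corrector estimate (Remark \ref{rem:resolventsCOR}) plus the $\E^{-1}$-type estimates already established in Theorem \ref{th.Dterror}. The key observation is that both $\u(t)$ and $u^0(t)$ can be represented at each fixed time as solutions of elliptic problems with right-hand sides controlled by $\Dt^2$, $\Dt$, $f$ and $g$. Concretely, using the PDEs \eqref{eq.dw}, \eqref{eq.dwh} I would set
\[
h^\eb(t):= -\Dt^2\u(t)-\gamma\Dt\u(t)-f(\u(t))+g,\qquad h^0(t):= -\Dt^2 u^0(t)-\gamma\Dt u^0(t)-f(u^0(t))+g,
\]
so that $A_\eb \u(t) = h^\eb(t)$ and $A_0 u^0(t) = h^0(t)$, and then decompose
\[
\u(t)-\Cal T_\eb u^0(t) \;=\; A_\eb^{-1}\bigl(h^\eb(t)-h^0(t)\bigr) \;+\; \bigl(A_\eb^{-1} h^0(t)-\Cal T_\eb A_0^{-1} h^0(t)\bigr).
\]

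The second bracket is directly amenable to the corrector estimate of Remark \ref{rem:resolventsCOR}, which gives $\le C\sqrt{\eb}\|h^0(t)\|$; I would check that $\|h^0(t)\|$ is uniformly bounded using the dissipative estimate in $\Cal E^2_0$ (Theorem \ref{th.disD}) applied to the initial data $\Pi_\eb\xi$, whose $\Cal E^2_0$-norm equals $\|\xi\|_{\Cal E^2_\eb}\le R$ by Lemma \ref{le.xih}, together with the cubic growth of $f$ and the uniform $L^\infty$-control on $u^0$ coming from Remark \ref{rem.holder}. For the first bracket I would use that $A_\eb^{-1}\colon H^{-1}(\Omega)\to H^1_0(\Omega)$ is bounded uniformly in $\eb$ (by the uniform ellipticity assumption in \eqref{mainassumptions}), reducing matters to an $H^{-1}$ bound on $h^\eb(t)-h^0(t)$.

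The $H^{-1}$-bound on $h^\eb - h^0$ is exactly what Theorem \ref{th.Dterror} provides: the $\Dt^2$-difference is controlled by \eqref{Dttestimate1} in $H^{-1}$, the $\Dt$-difference is controlled in $L^2\subset H^{-1}$ by the $\E^{-1}$-part of \eqref{Dttestimate1}, and the nonlinear difference $f(\u)-f(u^0)$ is bounded, using the uniform $L^\infty$-control on $\u,u^0$ (again via Remark \ref{rem.holder} applied to the dissipative estimate of Theorem \ref{th.disE2}) and the growth condition on $f'$, by $C\|\u-u^0\|\le C\|\u-u^0\|_{\E^{-1}}$, which by \eqref{dttreasy} is $\le M e^{Kt}\|A_\eb^{-1}-A_0^{-1}\|_{\Cal L(L^2(\Omega))}$. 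Summing these three contributions and invoking Theorem \ref{th.Er.ell} (equivalently Remark \ref{rem:resolvents}) to replace $\|A_\eb^{-1}-A_0^{-1}\|_{\Cal L(L^2(\Omega))}$ by $C\eb\le C\sqrt{\eb}$ yields $\|h^\eb(t)-h^0(t)\|_{H^{-1}(\Omega)}\le M e^{Kt}\sqrt{\eb}$. Combining the two bracket bounds gives \eqref{Dttestimate}.

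The part I expect to need the most care is the nonlinear term: the $\sqrt{\eb}$ appearing in the final estimate comes from the $\sqrt{\cdot}$ on the RHS of \eqref{Dttestimate1}, so it is important that the nonlinear contribution to $h^\eb - h^0$ be bounded by the strictly smaller quantity $\|A_\eb^{-1}-A_0^{-1}\|_{\Cal L(L^2)}\le C\eb$ (not $\sqrt{\eb}$), which is why the $L^\infty$-boundedness of $\u,u^0$ (and the resulting \emph{Lipschitz} estimate $|f(\u)-f(u^0)|\le C|\u-u^0|$ on the relevant range) is essential rather than the merely cubic growth used elsewhere. Once that subtlety is handled, the overall bound collapses cleanly to $M e^{Kt}\sqrt{\eb}$ as required.
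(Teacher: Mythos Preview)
Your proposal is correct and follows essentially the same approach as the paper: the paper introduces the intermediate function $\tilde u^\eb := A_\eb^{-1} h^0$ and writes $\u-\Cal T_\eb u^0 = (\u-\tilde u^\eb)+(\tilde u^\eb-\Cal T_\eb u^0)$, which is exactly your decomposition $A_\eb^{-1}(h^\eb-h^0)+(A_\eb^{-1}h^0-\Cal T_\eb A_0^{-1}h^0)$, and then bounds the two pieces precisely as you describe. The only minor remark is that your caution about needing the nonlinear term to be of order $\eb$ (rather than $\sqrt{\eb}$) is unnecessary---since the target bound is $\sqrt{\eb}$ anyway, a nonlinear contribution of order $\sqrt{\eb}$ would also suffice.
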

\begin{proof}
Note that $u^\eb \in H^1_0(\Omega)$ satisfies the equation
\[
A_\eb u^\eb = - \Dt^2 u^\eb - \gamma \Dt u^\eb - f(u^\eb) + g = : F_\eb(t), \quad t\ge 0,
\]
and $u^0 \in H^1_0(\Omega)$ satisfies
\[
A_0 u^0 = - \Dt^2 u^0 - \gamma \Dt u^0 -f(u^0) +g =: F_0(t), \quad t \ge0.
\]
Since  $\xi \in B_{\Cal E^2_\eb}(0,R)$ then by \eqref{est.|xih|De} we have $\Pi_\eb \xi \in B_{\Cal E^2_0}(0,R)$ and the dissipative estimate in $\Cal E^2_0$ (Theorem \ref{th.disD} for $a=a^h$) gives  $F_0 \in L^\infty\big( \R_+ ; L^2(\Omega) \big)$. Let us introduce the intermediate function $\widetilde{u}_\eb = \widetilde{u}^\eb(t) \in H^1_0(\Omega)$ the solution to
\[
A_\eb \tilde{u}^\eb = F_0(t), \quad t\ge0.
\] 
Then, by Theorem \ref{th.Er.ellCOR} we have
\[
\| \tilde{u}^\eb(t) - \Cal T_\eb u^0(t) \|_{H^1_0(\Omega)} \le C \sqrt{\eb} \| F_0(t) \|, \quad t\ge0,
\]
and, since $A^{-1}_\eb$ is uniformly bounded in $\Cal L(H^{-1}(\Omega), H^1_0(\Omega))$, we have
\[
\| u^\eb(t) - \tilde{u}^\eb(t) \|_{H^1_0(\Omega)} \le C \| F_\eb(t) - F_0(t) \|_{H^{-1}(\Omega)}, \quad t\ge0.
\]
Therefore, by the triangle inequality, we have
\begin{equation}
\label{supercool}
\| u^\eb(t) - \Cal T_\eb u^0(t) \|_{H^1_0(\Omega)} \le C \big( \sqrt{\eb} \| F_0 \|_{L^\infty(\R_+;L^2(\Omega))} +  \| F_\eb(t) - F_0(t) \|_{H^{-1}(\Omega)} \big), \quad t\ge0.
\end{equation}

Now, upon estimating the non-linear term as in the proof of Theorem \ref{th.Dterror}, along with utilising Remark \ref{rem:resolvents} and Theorem \ref{th.Dterror}, we readily deduce that
	\begin{equation*}
	\Vert F_\eb(t) - F_0(t)\Vert_{H^{-1}(\Omega)} \le  M e^{Kt}\sqrt{\eb}, \quad t\ge 0.
	\end{equation*}
	The above  inequality along with \eqref{supercool} imply the desired result and the  proof is complete.
\end{proof}

Let us now provide  estimates on the distance in the energy space. As in Corollary \ref{co.|ue-uh|E} this requires adding an appropriate correction to the attractor $\A^0$. To this end, we
introduce the corrector\linebreak $\textsc{T}_\eb : \Cal E^2_0 \rightarrow \big( L^2(\Omega) \big)^2$ which maps the pair $\xi =(\xi^1,\xi^2)$ to the pair 
\begin{equation}
\label{Tepsets}
\textsc{T}_\eb \xi = ( \Cal T_\eb \xi^1,  \xi^2). 
\end{equation}
By \eqref{Tepbound}, we readily deduce the following inequality: there exists a constant $C >0$, independent of $\eb$, such that the inequality
\begin{equation} \label{distTep}
\dist^s_{\E} ( \textsc{T}_\eb A, \textsc{T}_\eb B) \le C\big (\dist^s_{\E}(A,B) + \eb   \dist^s_{\Cal E^2_0} ( A , B ) \big), \quad A,B \subset \Cal E^2_0,
\end{equation}
holds.

By inequality \eqref{dttreasy} and Corollary \ref{co.|ue-uh|E} we have shown the following result.
\begin{cor} \label{cor.SebS0inE}
Let $\Cal E^2_\eb$ be the set \eqref{Dep},  $\xi\in B_{\Cal E^2_\eb}(0,R)$ and set $\xi_{u^\eb}(t):=S_\eb(t)\xi$, $\xi_{u^0}(t):=S_0(t)\Pi_\eb\xi$. Then, the inequality 
\[
\| S_\eb(t) \xi - \textsc{T}_\eb S_0 (t)\Pi_\eb \xi \|_{\E} \le M e^{Kt} \sqrt{\eb},
\] 
holds for some non-decreasing $M=M(R, \| g\|)$ and $K = K(R,\| g \|)$ independent of $\eb$.
\end{cor}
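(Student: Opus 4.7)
The plan is to unpack the definition of the $\E$-norm, reducing the claim to two separate estimates: one on the ``spatial'' (first) component of $S_\eb(t)\xi - \textsc{T}_\eb S_0(t)\Pi_\eb \xi$ in the $H^1_0$-seminorm, and one on the ``time-derivative'' (second) component in $L^2(\Omega)$. Both are already at hand from Theorem \ref{th.Dterror} and Corollary \ref{co.|ue-uh|E}; the only remaining task is to combine them and convert the right-hand side of \eqref{Dttestimate1} into an estimate in terms of $\sqrt{\eb}$.

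First I would recall that by definition \eqref{Tepsets} the corrector $\textsc{T}_\eb$ only modifies the first component, so
\begin{equation*}
\| S_\eb(t)\xi - \textsc{T}_\eb S_0(t)\Pi_\eb \xi \|_{\E}^2
= \| \Nx \bigl(u^\eb(t) - \Cal T_\eb u^0(t)\bigr) \|^2
+ \| \Dt u^\eb(t) - \Dt u^0(t) \|^2.
\end{equation*}
The first summand is controlled directly by Corollary \ref{co.|ue-uh|E}: since $u^\eb(t)-\Cal T_\eb u^0(t) \in H^1_0(\Omega)$ (both $u^\eb$ and $\Cal T_\eb u^0$ vanish on $\d\Omega$, the latter because $N_i$ is cancelled by Dirichlet data up to the zero trace of $\d_{x_i} u^0$ on admissible initial data in $\Cal E^2_0$), inequality \eqref{Dttestimate} gives the desired $M e^{Kt}\sqrt{\eb}$ bound.

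For the second summand, I would invoke estimate \eqref{Dttestimate1} of Theorem \ref{th.Dterror}. Unpacking the $\E^{-1}$-norm of $\Dt S_\eb(t)\xi - \Dt S_0(t)\Pi_\eb\xi = (\Dt u^\eb(t)-\Dt u^0(t),\, \Dt^2 u^\eb(t)-\Dt^2 u^0(t))$ immediately yields
\begin{equation*}
\| \Dt u^\eb(t)-\Dt u^0(t) \| \le M e^{Kt}\, \Vert A_\eb^{-1}-A_0^{-1}\Vert_{\Cal L(L^2(\Omega))}^{1/2}.
\end{equation*}
Applying Theorem \ref{th.Er.ell} (equivalently Remark \ref{rem:resolvents}) to bound the operator-norm difference by $C\eb$, this becomes $M e^{Kt}\sqrt{\eb}$ after absorbing $\sqrt{C}$ into $M$.

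Combining the two summand bounds and taking a square root gives the advertised estimate. The main ``obstacle'' — really a bookkeeping point — is to make sure one uses the half-power estimate \eqref{Dttestimate1} for the time-derivative rather than the sharper $\eb$-order estimate \eqref{dttreasy}, because the latter lives in the weaker $H^{-1}$-topology and is therefore insufficient for bounding $\|\Dt u^\eb-\Dt u^0\|$ in $L^2(\Omega)$; however, since $\|A_\eb^{-1}-A_0^{-1}\|^{1/2}\sim \sqrt{\eb}$ this loss of power is exactly balanced by the $\sqrt{\eb}$-rate inherent in the corrector estimate \eqref{Dttestimate}, so both contributions enter at the same order.
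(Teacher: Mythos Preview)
Your proof is correct and follows essentially the same approach as the paper, which simply states that the corollary follows from inequality \eqref{dttreasy} and Corollary \ref{co.|ue-uh|E}. In fact you are more precise than the paper: the $L^2$-bound on $\Dt u^\eb - \Dt u^0$ genuinely requires \eqref{Dttestimate1} (the first component of the $\E^{-1}$-norm of $\Dt S_\eb(t)\xi - \Dt S_0(t)\Pi_\eb\xi$), not \eqref{dttreasy}, whose second component only controls $\|\Dt u^\eb - \Dt u^0\|_{H^{-1}}$.

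One harmless slip: your parenthetical claim that $\Cal T_\eb u^0$ vanishes on $\d\Omega$ is false in general --- the corrector $\eb\sum_i N_i(\tfrac{x}{\eb})\d_{x_i}u^0$ need not have zero trace, and this boundary mismatch is precisely the source of the $\sqrt{\eb}$ (rather than $\eb$) rate in Theorem \ref{th.Er.ellCOR}; see Remark \ref{rem.optimalbdy}. This does not affect your argument, since the $\E$-norm only requires the gradient seminorm $\|\Nx(\cdot)\|$, which is already controlled by the $H^1(\Omega)$-bound in Corollary \ref{co.|ue-uh|E}.
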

The following estimate on the global attractors in $\E$ holds.
\begin{theorem}\label{th.gEest}
Assume \eqref{mainassumptions} and \eqref{d(StB,Ah)<e-t}. Let $\A^\eb$ and $\A^0$ the global attractors of problems \eqref{eq.dw} and \eqref{eq.dwh} respectively, and let $\textsc{T}_\eb$ be given by \eqref{Tepsets}. 
Then, the following estimate
\begin{equation*}
\dist_{\E}(\A^\eb, \textsc{T}_\eb \A^0)\leq M{{\sqrt{\eb}}^{\varkappa}},
\end{equation*}
holds for some $M=M(\|g\|)$ which is independent of $\eb$. Here $\varkappa$ is as in Theorem \ref{th.d(Ae,Ah)<e1/2.w}.
\end{theorem}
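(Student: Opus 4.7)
The plan is to adapt the argument of Theorem \ref{th.d(Ae,Ah)<e1/2.w} to the energy space $\E$, replacing the continuity estimate \eqref{est.|ue-uh|E-1} with the corrector estimate supplied by Corollary \ref{cor.SebS0inE}, and then handling the correction operator $\textsc{T}_\eb$ with the help of \eqref{Tepbound}.

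First, fix an arbitrary $\xi_\eb \in \A^\eb$. By the strict invariance of the global attractor and the representation \eqref{bddedsolns}, there is a complete bounded trajectory $\xi_{u^\eb}(t) \in \Cal K^\eb$ with $\xi_{u^\eb}(0) = \xi_\eb$; for any $T \ge 0$ set $\xi_{-T,\eb} := \xi_{u^\eb}(-T) \in \A^\eb$. By Theorem \ref{th.ExAttr} / Remark \ref{rem.holder3}, $\A^\eb \subset B_{\Cal E^2_\eb}(0,R)$ uniformly in $\eb$, so $\xi_{-T,\eb}$ meets the hypotheses of Corollary \ref{cor.SebS0inE}. Applying that corollary yields
\[
\| S_\eb(T) \xi_{-T,\eb} - \textsc{T}_\eb S_0(T) \Pi_\eb \xi_{-T,\eb} \|_\E \le M e^{KT} \sqrt{\eb},
\]
with $M, K$ independent of $\eb$ and $\xi_\eb$.

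Next, I would use the exponential attraction hypothesis \eqref{d(StB,Ah)<e-t} in $\E$ to compare $S_0(T)\Pi_\eb \xi_{-T,\eb}$ to $\A^0$. Since $\Pi_\eb$ maps $B_{\Cal E^2_\eb}(0,R)$ to $B_{\Cal E^2_0}(0,R)$ by \eqref{est.|xih|De}, the corresponding trajectory is bounded in $\E$ uniformly in $\eb$ and $T$, so
\[
\dist_\E\bigl( S_0(T) \Pi_\eb \xi_{-T,\eb}, \A^0 \bigr) \le M e^{-\sigma T}.
\]
To transfer this through $\textsc{T}_\eb$, I observe that by \eqref{Tepbound} together with standard elliptic regularity for $A_0$ (constant coefficients),
\[
\| \textsc{T}_\eb \eta - \textsc{T}_\eb \zeta \|_\E \le C \bigl( \| \eta - \zeta \|_\E + \eb \, \|\eta^1 - \zeta^1\|_{H^2(\Omega)} \bigr) \le C \bigl( \| \eta - \zeta \|_\E + \eb \bigr),
\]
provided $\eta, \zeta$ belong to a bounded set of $\Cal E^2_0$. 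By the dissipative estimate in $\Cal E^2_0$ (Theorem \ref{th.disD} with $a = a^h$) applied to the trajectory, and Remark \ref{rem.holder3} applied to $\A^0$, this bounded-$\Cal E^2_0$ requirement is met uniformly in $\eb$ and $T$. Taking the infimum over $\zeta \in \A^0$ therefore gives
\[
\dist_\E\bigl( \textsc{T}_\eb S_0(T) \Pi_\eb \xi_{-T,\eb}, \textsc{T}_\eb \A^0 \bigr) \le C \bigl( M e^{-\sigma T} + \eb \bigr).
\]

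Finally, the triangle inequality combines the two estimates into
\[
\dist_\E\bigl( \xi_\eb, \textsc{T}_\eb \A^0 \bigr) \le M e^{KT} \sqrt{\eb} + C M e^{-\sigma T} + C \eb,
\]
and an optimisation in $T$ analogous to the proof of Theorem \ref{th.d(Ae,Ah)<e1/2.w}, namely choosing $T = T(\eb)$ so that $e^{KT}\sqrt{\eb} \sim e^{-\sigma T}$, yields the rate $\sqrt{\eb}^{\sigma/(K+\sigma)} = \sqrt{\eb}^\varkappa$, with the residual $C\eb$ absorbed since $\varkappa < 1$. Taking the supremum over $\xi_\eb \in \A^\eb$ produces the required bound. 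The one step requiring genuine care is the transfer through $\textsc{T}_\eb$: the naive bound \eqref{distTep} involves a $\dist^s_{\Cal E^2_0}$ term, and the argument above works precisely because both $S_0(T)\Pi_\eb \xi_{-T,\eb}$ and $\A^0$ stay in a single $\Cal E^2_0$-bounded set, which is exactly what the $\Cal E^2_\eb$-regularity of $\A^\eb$ established in Section \ref{s.smth}, combined with the intertwining $\Pi_\eb$, gives us.
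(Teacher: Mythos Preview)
Your proof is correct and follows essentially the same route as the paper: both apply Corollary \ref{cor.SebS0inE} to $\xi_{-T,\eb}\in\A^\eb$, use the exponential attraction \eqref{d(StB,Ah)<e-t} for the homogenised trajectory started from $\Pi_\eb\xi_{-T,\eb}$, transfer through $\textsc{T}_\eb$ via the uniform $\Cal E^2_0$-boundedness of $S_0(T)\Pi_\eb\xi_{-T,\eb}$ and $\A^0$ (Theorem \ref{th.disD} and Remark \ref{rem.holder3}), and then optimise in $T$. Your final paragraph correctly identifies the one non-trivial point---the $\Cal E^2_0$-regularity needed to push $\textsc{T}_\eb$ through---and handles it exactly as the paper does.
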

\begin{proof}
distance in operator-norm between theThe method of proof follows along the same lines as the argument for Theorem \ref{th.d(Ae,Ah)<e1/2.w} and so we shall only sketch it here.

For $\xi_\eb \in \A^\eb$ and $T>0$,   consider $\xi_{-T,\eb} \in \A^\eb$ that satisfies $ S_\eb(T) \xi_{-T,\eb} = \xi_0$. Then, by Corollary \ref{cor.SebS0inE} we have
\[
\| \xi_\eb - \textsc{T}_\eb S_0(T) \Pi_\eb \xi_{-T,\eb} \|_{\E} \le M  e^{KT} \sqrt{\eb}.
\]
Furthermore, by \eqref{distTep} we have
\[
\dist_{\E}( \textsc{T}_\eb S_0(T) \Pi_\eb \xi_{-T,\eb} , \textsc{T}_\eb \A^0 ) \le C \big(  \dist_{\E} (S_0(T) \Pi_\eb \xi_{-T,\eb}, \A^0) + \eb  \dist_{\Cal E^2_0} (S_0(T) \Pi_\eb \xi_{-T,\eb}, \A^0) \big).
\]
Now, to control the second term on the above right  we use the fact that $\Pi_\eb \A^\eb$ and $\A^0$ are bounded subsets of $\Cal E^2_0$ (see Remark \ref{rem.holder3} and inequality \eqref{est.|xih|De}) and that we have a dissipative estimate for $S_0(t)$ on $\Cal E^2_0$ (see Theorem \ref{th.disD}). Consequently, we compute
\begin{flalign*}
\dist_{\E}(\xi_\eb, \textsc{T}_\eb \A^0) &\le \dist_{\E}( \xi_\eb,  \textsc{T}_\eb S_0(T) \Pi_\eb \xi_{-T,\eb}  ) + \dist_{\E}( \textsc{T}_\eb S_0(T) \Pi_\eb \xi_{-T,\eb} , \textsc{T}_\eb \A^0 ) \\
& \le  M_1 e^{KT} \sqrt{\eb}+ M_2 \dist_{\E} (S_0(T) \Pi_\eb \xi_{-T,\eb}, \A^0),
\end{flalign*}
and the remainder of the proof utilises the exponential attraction property of $\A^0$, as in Theorem \ref{th.d(Ae,Ah)<e1/2.w}.
\end{proof}

\begin{rem}
\label{rem.optimalbdy} { \ }

\begin{enumerate}
	\item{
	The appearance of $\sqrt{\eb}$ in \eqref{ell.err.L2COR}	is a well-known consequence of the fact that the correction $\Cal T_\eb u^0$ does not approximate well the function $u^\eb$ in a $\eb$-neighbourhood of the boundary. In particular, the reduced power of $\eb$ appears  in the estimate due to the fact that $\Cal T_\eb u^0$ does not satisfy the Dirichlet boundary conditions and a `boundary correction' is needed. In general, the explicit $\eb$-dependence  (i.e. leading-order asymptotics) of this boundary correction is not known.
	}
	\item{ In certain situations, such as when $\Omega$ is the whole space or a torus (see Remark \ref{rem.rationalperiod}), there is no need for the boundary correction and, consequently,  the error estimate \eqref{ell.err.L2COR} is order $\eb$.   In such situations we expect order $\eb^{\varkappa}$ in our estimate on the distance  between global attractors in $\E$ (Theorem \ref{th.gEest}). As it stands, our argument does not provide such an estimate and this is because the power in the right-hand side of  \eqref{Dttestimate1} is not optimal. This is consciously done to avoid unnecessary complications and we provide an argument in Appendix \ref{app.improvement} that gives the expected power.
	}
\item{ Let us return to Remark \ref{rem:convE}. In this case it is interesting to note that estimate  \eqref{ell.err.L2COR} is order $\eb$. This is simply because the cell solutions $N_i$ are trivial ($N_i \equiv  0$) and there is no need for boundary corrections; indeed, this can be readily seen by noting that  the right-hand-side in problem \eqref{Ni} is zero in this situation. Consequently  $\Cal T_\eb = I$  and (under the refinement in Appendix \ref{app.improvement}) we have the following improvement of Theorem \ref{th.gEest}:
	\[
\dist_{\E}\big( \A^\eb, \A^0) \le M \eb^{\varkappa}.
	\]
}
\end{enumerate}
\end{rem}	

\section{Exponential attractors: existence, homogenisation and convergence rates}
\label{s.ea}
 Let us recall the definition of an exponential attractor for a dynamical system.

\begin{Def}\label{de.eA} Let $S(t):\E\to\E$, $t\geq 0$, be a semi-group acting on a Banach space $\E$. Then a set $\Cal M$ is called an exponential attractor for the dynamical system $(\E,S(t))$ if it possesses the following properties:
	\par
	1. The set $\Cal M$ is compact in $\E$ with finite fractal (box-counting) dimension $\dim_f(\Cal M,\Cal E)$;
	\par
	2. The set $\Cal M$ is positively invariant:
	\[
	S(t) \Cal M \subset \Cal M, \quad \forall t \ge0;
	\]
	\par
	3. The set $\Cal M$ exponentially attracts every bounded set $B$ of $\E$, that is 
	\begin{equation*}
	\dist_{\E}(S(t)B,\Cal M)\leq M(\|B\|_\E)e^{-\sigma t},\quad t\geq 0,
	\end{equation*}
for some non-decreasing $M$ and constant  $\sigma>0$.
\end{Def}
\vspace{0pt}
\subsection{Existence of exponential attractors and continuity in $\E^{-1}$}\label{s.ExE-1}{\ }

\vspace{10pt}
Let us present our main result for this subsection.
\begin{theorem}
	\label{th.exAe}
	Assume \eqref{mainassumptions}. Then, the dynamical systems $(\E,S_{\eb}(t))$, $\eb >0 $ and $(\E,S_0(t))$ generated by problems \eqref{eq.dw} and \eqref{eq.dwh} respectively possess exponential attractors $\Cal M^\eb,\ \Cal M^0 \subset (H^1_0(\Omega))^2$ such that the following properties hold:
	\begin{enumerate}
		\item{
			$
			\| \di(a(\tfrac{\cdot}{\eb}) \Nx \xi^1) + g \|_{H^1_0(\Omega)} + \| \di(a(\tfrac{\cdot}{\eb}) \Nx \xi^2 ) \| +\ \|\xi\|_{(C^\alpha(\bar\Omega))^2}\leq M(\|g\|),\ \ \mbox{for all }\xi=(\xi^1,\xi^2)\in \Cal M^\eb;
			$
		}
		\item{
			$	
			\| \di(a^h \Nx \xi^1) + g \|_{H^1_0(\Omega)} + \| \di(a^h \Nx \xi^2 ) \| +\ \|\xi\|_{(H^2(\Omega))^2}\leq M(\|g\|),\ \ \mbox{for all }\xi=(\xi^1,\xi^2)\in \Cal M^0;
			$
		}
		\item{For every bounded set $B\subset\E$ one has 
			\begin{equation*}
			\dist_{\E}(S_\eb(t)B,\Cal M^\eb) +	\dist_{\E}(S_0(t)B,\Cal M^0)\leq M(\|B\|_\E)e^{-\sigma t},\quad t\geq 0;
			\end{equation*}
		}
		\item{
			$\hspace{.32\textwidth} \dim_f(\Cal M^\eb,\Cal E) + \dim_f(\Cal M^0,\Cal E) \le D;$
		}
		\item{$\hspace{.3\textwidth}
			\dist^s_{\E^{-1}} (\Cal M^\eb , \Cal M^0)  \leq M\|A^{-1}_\eb-A^{-1}_0\|^{\varkappa}_{\Cal L(L^2(\Omega))}.$
		}
	\end{enumerate}
	
	Here $\alpha$ is the same as in Remark \ref{rem.holder}
and the constants $M>0$, $\sigma>0$, $0<\varkappa<1$ and $D\ge 0$ are independent of $\eb$.
\end{theorem}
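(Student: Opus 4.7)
The plan is to apply an abstract construction of exponential attractors (promised in Appendix \ref{app.E-1}) to each semi-group $S_\eb$ and $S_0$, to verify that every quantitative input to that construction is uniform in $\eb$, and finally to obtain the symmetric Hausdorff distance estimate by a perturbation argument that compares the two constructions via the continuity result of Theorem \ref{th.|ue-uh|E-1}.

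First I would construct the exponential attractors on the discrete level. Fix some time $T_*>0$ and consider the maps $S_\eb(T_*)$, $S_0(T_*)$ restricted to the positively invariant absorbing set $\Cal B$ from \eqref{StB C B}. The abstract Efendiev--Miranville--Zelik scheme (cf. \cite{EMZ2000,FG2004}), in the variant given in Appendix \ref{app.E-1}, requires: (i) a smoothing/contraction decomposition $S_\eb(T_*)\xi_1-S_\eb(T_*)\xi_2 = V+W$ with $V$ contracting in $\E$ and $W$ taking values in a compactly embedded set; (ii) Lipschitz continuity of $S_\eb(T_*)$ on the absorbing set. Input (i) is  available via the splitting $u=v+w$ from \eqref{eq.ve2}--\eqref{eq.we2}: linear decay of $v$ in $\E$ by \eqref{le.ve2}, together with the $\Cal E^2_\eb$-smoothing of $w$ from Lemma \ref{le.we2} and Corollary \ref{cor.smooth2}. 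Input (ii) is Corollary \ref{co.St.inC}, whose constants depend only on $\|g\|$ and $\nu$, hence are uniform in $\eb$. This yields a discrete exponential attractor $\Cal M^\eb_d$, and the continuous one is then $\Cal M^\eb := \bigcup_{t\in[0,T_*]}S_\eb(t)\Cal M^\eb_d$, using joint continuity of $(t,\xi)\mapsto S_\eb(t)\xi$.

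Next I would verify items 1--4 of the theorem. Positive invariance of $\Cal M^\eb$ combined with Theorem \ref{th.smoothattracting} forces $\Cal M^\eb \subset B_{\Cal E^2_\eb}(0,R_1)$ (closedness of the ball in $\E$ allows the same argument as in the proof of Theorem \ref{th.ExAttr}), giving item 1; the $C^\alpha$-bound  follows by elliptic regularity as in Remark \ref{rem.holder}. Item 2 is analogous with $a^h$ constant, where the $H^2$ bound is just interior-plus-boundary elliptic regularity in a smooth domain. Item 3 is encoded in the abstract construction, with $\sigma$ and $M$ depending only on the Lipschitz constant of $S_\eb(T_*)$, the contraction rate of $V$, and the covering numbers of  $B_{\Cal E^2_\eb}(0,R_1)$ in $\E$; all of these are uniform in $\eb$ (the last by the uniform multiplier estimate that underlies \eqref{Tepbound}, or more directly by compactness of $H^1_0(\Omega)\hookrightarrow L^2(\Omega)$ together with the uniform bound on $\|\di(a(\cdot/\eb)\Nx\,\cdot\,)\|$). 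Item 4, the uniform fractal dimension bound, is an immediate output of the same construction, since it is computed from the same ingredients.

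Finally, for item 5 (the symmetric distance in $\E^{-1}$), the one-sided estimate $\dist_{\E^{-1}}(\Cal M^\eb,\Cal M^0)$ follows exactly as in Theorem \ref{th.d(Ae,Ah)<e1/2.w}: for $\xi_\eb\in \Cal M^\eb \subset B_{\Cal E^2_\eb}(0,R_1)$, use exponential attraction of $\Cal M^0$ on the forward orbit $S_0(T)\xi_\eb$, combine with Theorem \ref{th.|ue-uh|E-1} for fixed initial data $\xi_\eb$, and optimise $T$ to obtain the exponent $\varkappa$. The reverse inequality $\dist_{\E^{-1}}(\Cal M^0,\Cal M^\eb)$ cannot use the same preimage trick since $\Cal M^0$ is only positively (not strictly) invariant; instead, I would invoke the abstract stability theorem from Appendix \ref{app.E-1}, which produces symmetric estimates whenever the discrete maps $S_\eb(T_*)$ and $S_0(T_*)$ are close in operator norm on a common smooth absorbing set. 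The key input
\[
\sup_{\xi \in B_{\Cal E^2_\eb}(0,R_1)}\|S_\eb(T_*)\xi - S_0(T_*)\xi\|_{\E^{-1}} \le M\,\|A_\eb^{-1}-A_0^{-1}\|_{\Cal L(L^2(\Omega))}
\]
is precisely Theorem \ref{th.|ue-uh|E-1}. Interpolating between the $\E^{-1}$-estimate and the uniform $(C^\alpha(\overline\Omega))^2$-bound from item 1 (exactly as in the proof of Corollary \ref{co.d(Ae,A0)}) upgrades the estimate to $(C^\beta(\overline\Omega))^2$.

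\emph{Main obstacle.} The delicate point is the asymmetry in norms: the smoothing/contraction decomposition and the Lipschitz structure must be set up in $\E$ (where the regularising estimates of Section \ref{s.smth} live), whereas the perturbation $S_\eb(T_*)-S_0(T_*)$ is only small in the weaker $\E^{-1}$-norm. One therefore needs an abstract exponential-attractor theorem formulated in two norms simultaneously, together with uniform Lipschitz dependence of $S_\eb(t)$ in $\E^{-1}$ (supplied by Corollary \ref{cor.contE-1}) to propagate the discrete $\E^{-1}$-proximity to the continuous hulls $\Cal M^\eb=\bigcup_{t\in[0,T_*]}S_\eb(t)\Cal M^\eb_d$. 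This two-norm refinement is exactly what the appendix alluded to is designed to deliver.
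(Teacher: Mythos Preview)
Your outline captures the overall architecture correctly---apply the abstract two-norm exponential attractor theorem from Appendix \ref{app.E-1} to the discrete maps $S_\eb(T)$, then pass to continuous time---and you correctly identify that the key perturbative input is Theorem \ref{th.|ue-uh|E-1} together with $\E^{-1}$-Lipschitz continuity (Corollary \ref{cor.contE-1}). However, there is a genuine gap in your treatment of item~(5).

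The abstract estimate \eqref{est.dE-1(Me,Mo).g} requires more than smallness of $\sup_{\xi}\|S_\eb\xi-S_0\xi\|_{\E^{-1}}$: it also requires the finite covers $\Cal U_\eb(\mu,r)$ of $B_{\E^1_\eb}(0,r)$ used in the construction to be $\E^{-1}$-close to the corresponding covers $\Cal U_0(\mu,r)$. Uniform covering \emph{numbers} are not enough; the cover \emph{centers} must themselves satisfy $\dist^s_{\E^{-1}}(\Cal U_\eb,\Cal U_0)\le C\|A_\eb^{-1}-A_0^{-1}\|_{\Cal L(L^2)}$. This is non-trivial because the spaces $\E^1_\eb$ depend on $\eb$ through $A_\eb$, so there is no ``common smooth absorbing set'' as you write. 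The paper handles this by an explicit construction \eqref{Ue(mu,r)}: one first covers a fixed ball in $L^2\times H^1_0$ by balls centred at points $(p_i,q_{i0})$, then pulls back the first component via $A_\eb^{-1}$ to get centers $(A_\eb^{-1}(p_i+g),q_{i\eb})$, with $q_{i\eb}$ obtained by spectrally truncating $q_{i0}$ with respect to the eigenfunctions of $A_\eb$. The resolvent estimate then gives \eqref{Ecover} directly. Without this step your argument for the symmetric bound is incomplete.

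A smaller issue: your route to item~(1), via positive invariance of $\Cal M^\eb$ plus exponential attraction to $B_{\Cal E^2_\eb}(0,R_1)$, does not work as written, since positive invariance only gives $S_\eb(t)\Cal M^\eb\subset\Cal M^\eb$, not the reverse containment needed to run the Theorem~\ref{th.ExAttr} argument. The paper avoids this by choosing $B_\eb:=B_{\Cal E^2_\eb}(0,R_2)$ (the absorbing ball from Theorem~\ref{th.disD}) from the outset, so that $\Cal M^\eb_d\subset\Cal O(B_\eb)\subset\Cal E^2_\eb$ by construction, and the $\Cal E^2_\eb$-bound on $\Cal M^\eb=\bigcup_{\tau\in[0,T]}S_\eb(\tau)\Cal M^\eb_d$ follows from the dissipative estimate \eqref{est,disD}.
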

\begin{cor}\label{co.expat}
	Assume \eqref{mainassumptions}. Let $\alpha>0$ be given by Remark \ref{rem.holder}, $\varkappa$ as in Theorem \ref{th.exAe} and $0\leq \beta < \alpha$. Then the inequality
	\begin{equation*}
	\dist_{(C^\beta(\overline{\Omega}))^2}^s(\Cal M^\eb,\Cal M^0)\leq M\Vert A^{-1}_\eb - A^{-1}_0 \Vert_{\Cal L(L^2(\Omega))}^{\theta \varkappa}, \quad \theta = \tfrac{\alpha-\beta}{2+\alpha}, 
	\end{equation*}
	for some non-decreasing function $M=M(\|g\|)$ which is independent of $\eb$.
\end{cor}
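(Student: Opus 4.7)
The plan is to follow the proof of Corollary \ref{co.d(Ae,A0)} verbatim, the only substantive modification being that the one-sided Hausdorff bound in $\E^{-1}$ used there is replaced by the symmetric Hausdorff bound supplied by property (5) of Theorem \ref{th.exAe}.

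First I would record that both $\Cal M^\eb$ and $\Cal M^0$ are uniformly bounded in $(C^\alpha(\overline{\Omega}))^2$ with constant independent of $\eb$. For $\Cal M^\eb$ this is literally statement (1) of Theorem \ref{th.exAe} (it is already phrased in terms of the $C^\alpha$ norm). For $\Cal M^0$, statement (2) of the same theorem yields a uniform bound in $(H^2(\Omega))^2$, and since $\Omega\subset\R^3$ the Sobolev embedding $H^2(\Omega)\hookrightarrow C^{1/2}(\overline{\Omega})$ provides a uniform $(C^\alpha(\overline{\Omega}))^2$ bound (for Remark \ref{rem.holder} we may as well assume $\alpha\le 1/2$; otherwise one simply replaces $\alpha$ by $\min(\alpha,1/2)$, which only decreases $\theta$). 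Thus there exists $R^\ast>0$ independent of $\eb$ with $\|\Cal M^\eb\|_{(C^\alpha)^2}+\|\Cal M^0\|_{(C^\alpha)^2}\le R^\ast$.

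Next, given any $\xi\in\Cal M^\eb$, property (5) of Theorem \ref{th.exAe} produces $\eta\in\Cal M^0$ with
\[
\|\xi-\eta\|_{\E^{-1}} \le M\,\|A_\eb^{-1}-A_0^{-1}\|_{\Cal L(L^2(\Omega))}^{\varkappa},
\]
which controls each component $\xi^i-\eta^i$ in $H^{-1}(\Omega)$ (using $L^2(\Omega)\hookrightarrow H^{-1}(\Omega)$ for the first component). Combining this with the $C^\alpha$ triangle-inequality bound $\|\xi^i-\eta^i\|_{C^\alpha(\overline{\Omega})}\le 2R^\ast$, and feeding both into the same two-step interpolation chain
\[
\|u\|_{L^\infty(\Omega)} \le C\,\|u\|_{H^{-1}(\Omega)}^{\alpha/(2+\alpha)}\|u\|_{C^\alpha(\overline{\Omega})}^{2/(2+\alpha)}, \qquad \|u\|_{C^\beta(\overline{\Omega})} \le 2\,\|u\|_{C^\alpha(\overline{\Omega})}^{\beta/\alpha}\|u\|_{L^\infty(\Omega)}^{1-\beta/\alpha}
\]
used in the proof of Corollary \ref{co.d(Ae,A0)}, produces $\|\xi-\eta\|_{(C^\beta(\overline{\Omega}))^2}\le M'\,\|A_\eb^{-1}-A_0^{-1}\|^{\theta\varkappa}$ with exponent $\theta=(\alpha-\beta)/(2+\alpha)$. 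Taking the supremum over $\xi\in\Cal M^\eb$ controls $\dist_{(C^\beta)^2}(\Cal M^\eb,\Cal M^0)$. The reverse one-sided estimate $\dist_{(C^\beta)^2}(\Cal M^0,\Cal M^\eb)$ follows by swapping the roles of the two attractors, since the $\E^{-1}$ input supplied by Theorem \ref{th.exAe}(5) is already symmetric. Taking the maximum of the two one-sided bounds yields the claimed symmetric estimate.

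No real obstacle arises: the proof is a mechanical transcription of Corollary \ref{co.d(Ae,A0)}, where the one-sided input \eqref{d(Ae,Ah)<e1/2.w} is replaced by its symmetric analogue from Theorem \ref{th.exAe}(5), and all the remaining ingredients (uniform $C^\alpha$ smoothness and the interpolation inequalities) are already in place.
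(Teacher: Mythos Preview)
Your proposal is correct and follows essentially the same approach as the paper: the paper does not write out a separate proof for this corollary, but it is clearly intended to be the symmetric-distance analogue of Corollary \ref{co.d(Ae,A0)}, obtained by combining the uniform $(C^\alpha(\overline{\Omega}))^2$ bounds on $\Cal M^\eb$ and $\Cal M^0$ from Theorem \ref{th.exAe}(1)--(2) with the symmetric $\E^{-1}$ estimate (5) and the same interpolation inequalities. One minor remark: your detour through $H^2(\Omega)\hookrightarrow C^{1/2}(\overline{\Omega})$ for $\Cal M^0$ is unnecessary, since property (2) already bounds $\|\di(a^h\Nx\xi^i)\|$ and Remark \ref{rem.holder} (which applies to $a^h$ with the same $\nu$) gives the $C^\alpha$ bound directly with the same exponent $\alpha$.
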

The remainder of the section is dedicated to the proof of Theorem \ref{th.exAe}. First, we {recall} 
a variation of an abstract result which establishes the existence of an exponential attractor $\Cal M^\eb$, for a parameter-dependent family of semi-groups $S_\eb$, whose characteristics  are independent of $\eb$ (see Appendix \ref{app.E-1},  \cite[Theorem 2.10]{EMZ2005}  and \cite[Section 3, Theorem 3.1]{FG2004}).

\begin{theorem}\label{th.expAttr.E-1}
	Let $\E$ be a Banach space and  $\E^1_\eb$, $\eb \ge 0$, be a family of Banach spaces compactly embedded into $\E$ uniformly in the following sense:
	\begin{enumerate}[(i)]	
		\item{There exists $c_0$ independent of $\eb\geq 0$ such that $\|\xi\|_{\E} \leq c_0 \|\xi\|_{\E^1_\eb}$ for all $\xi\in \E^1_\eb$;}
\item{ For all $\mu>0$, $r>0$ there exists a finite cover of $B_{\E^1_\eb}(0,r)\, $ consisting of balls radius of $\mu$ in $\E$  with centers $\Cal U_\eb(\mu,r) \subset B_{\E^1_\eb}(0,\delta_r)$, for some $\delta_r \ge r$, satisfying
	\[ \mathrm{card}\, \Cal U_\eb(\mu,r) \leq N(\mu,r),
	\]
	for some finite $N(\mu,r)$ independent of $\eb$.
}
	\end{enumerate}
Let us consider, for each $\eb\ge0$, a map defined on $\E$ such that 
	\[S_\eb:\Cal O(B_\eb)\to B_\eb, \qquad \Cal O(B_\eb) : = B_\eb + \bigcup_{r\in[0,\delta_1]} r\, \Cal U_\eb(\tfrac{1}{4K},1),
	\]
where  the set $B_\eb \subset B_{\E^1_\eb}(0,R)$ is closed in $\E$. 
 Furthermore, we assume $S_\eb$ satisfies the following properties:
\begin{enumerate}
\item{	for every $\xi_1$ and $\xi_2$ from $\Cal O(B_\eb)$, the difference $S_\eb\xi_1-S_\eb\xi_2$ can be represented in the form:
	\begin{align} 
	\label{th.expAttr:spl}
	& S_\eb\xi_1-S_\eb\xi_2=v_\eb+w_\eb, \quad \text{with} \quad 
	\|v_\eb\|_\E\leq \tfrac{1}{2}\|\xi_1-\xi_2\|_\E,\quad \|w_\eb\|_{\E^1_\eb}\leq K\|\xi_1-\xi_2\|_\E,
	\end{align}
	for $K>0$ independent of $\eb$. }
\item{Furthermore, there exists a Banach space $\E^{-1} \supset \E$ such that
		\begin{align*}
	\Vert \xi \Vert_{\E^{-1}} \le c_{-1} \Vert \xi \Vert_{\E}, \quad \forall \xi \in \E; \quad & &		\Vert S_0 \xi_1 - S_0 \xi_2 \Vert_{\E^{-1}} \le L \Vert \xi_1 - \xi_2 \Vert_{\E^{-1}}, \qquad \forall \xi_1 \in \Cal O(B_\eb),\, \forall \xi_2 \in \Cal O(B_0),
		\end{align*}
for constants $c_{-1}$ and $L >0$. }
\end{enumerate}
	Then, for every $\eb\geq 0$, the discrete dynamical system $(B_\eb,S_\eb)$ possesses an exponential attractor \mbox{$\Cal M^\eb\subset \Cal O(B_\eb)$}. The exponent of attraction $\sigma>0$ is independent of $\eb\geq 0$ and   $\dim_f(\Cal M^\eb, \E) \le D$  for some positive $D$ independent of $\eb$ (see Definition \ref{de.eA}). Moreover
\begin{equation}
\label{est.dE-1(Me,Mo).g}
\begin{aligned}
&\dist^s_{\E^{\text{-}1}}(\Cal M^\eb, \Cal M^0)  \le  \\ & \hspace{8pt} C \Big( \sup_{\xi \in \Cal O(B_\eb) }\Vert S_\eb \xi  - S_0 \xi \Vert_{\E^{\text{-}1}} + \dist^s_{\E^{\text{-}1}}\big(\Cal U_\eb(\tfrac{1}{4K},1),\Cal U_0(\tfrac{1}{4K},1)\big) 
+ \dist^s_{\E^{\text{-}1}} \big(\Cal U_\eb(\tfrac{1}{K},R), \Cal U_0(\tfrac{1}{K},R)\big)   \Big)^{\varkappa},
\end{aligned}
\end{equation}
where the constants $C>0$ and $\varkappa=\varkappa(c_0,L,K,\delta_1)$ are independent of $\eb$.
\end{theorem}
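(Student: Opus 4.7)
The plan is to adapt the classical Efendiev--Miranville--Zelik / Fabrie--Galusinski abstract scheme, but track explicitly how every constant depends on $\eb$ and, crucially, how the two iterative constructions (at level $\eb$ and at level $0$) can be indexed compatibly so that the $\E^{-1}$-distance can be propagated through the iteration using only the properties \emph{stated in the hypotheses}. I would proceed in four stages.

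\textbf{Stage 1 (iterative $\delta$-nets).} Starting from $B_\eb \subset B_{\E^1_\eb}(0,R)$, condition (ii) with $(\mu,r)=(\tfrac{1}{K},R)$ supplies an initial $\tfrac{1}{K}$-net $E_0^\eb:=\Cal U_\eb(\tfrac{1}{K},R)$ of $B_\eb$ in $\E$ of cardinality at most $N(\tfrac{1}{K},R)$. Given a $\rho_n$-net $E_n^\eb$ of $S_\eb^n B_\eb$ in $\E$ contained in $\Cal O(B_\eb)$, the splitting \eqref{th.expAttr:spl} shows that for every $\xi$ with $\|\xi-y\|_\E\le\rho_n$ the difference $S_\eb\xi-S_\eb y=v_\eb+w_\eb$ has $\|v_\eb\|_\E\le\tfrac{\rho_n}{2}$ and $\|w_\eb\|_{\E^1_\eb}\le K\rho_n$. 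Applying (ii) to the $\E^1_\eb$-ball of radius $K\rho_n$ (that is, rescaling $\Cal U_\eb(\tfrac{1}{4K},1)$ by $K\rho_n$) gives a $\tfrac{\rho_n}{4}$-$\E$-net with cardinality $N(\tfrac{1}{4K},1)$. Hence
\[
E_{n+1}^\eb:=\bigcup_{y\in E_n^\eb}\Big(S_\eb y+K\rho_n\,\Cal U_\eb\big(\tfrac{1}{4K},1\big)\Big)
\]
is a $\tfrac{3}{4}\rho_n$-net of $S_\eb^{n+1}B_\eb$, which explains the choice of $\Cal O(B_\eb)$ in the hypothesis as precisely the enlargement needed to contain every intermediate $E_n^\eb$. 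Thus $\rho_n=\tfrac{1}{K}(\tfrac{3}{4})^n$ and $|E_n^\eb|\le N(\tfrac{1}{K},R)\,N(\tfrac{1}{4K},1)^n$.

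\textbf{Stage 2 (existence, dimension and rate).} Define
\[
\Cal M^\eb:=\bigg[\bigcup_{n\ge 0}\bigcup_{k\ge 0}S_\eb^{\,k}(E_n^\eb)\bigg]_\E,
\]
and verify the three properties of Definition \ref{de.eA} by the standard covering argument: positive invariance by construction, compactness from the shrinking nets, finite fractal dimension from the ratio $\log|E_n^\eb|/\log\rho_n^{-1}$, and exponential attraction at rate $\sigma=-\log(3/4)$. The constants entering these bounds are the cover cardinalities $N(\tfrac{1}{K},R)$, $N(\tfrac{1}{4K},1)$, the splitting constant $K$, and the embedding constant $c_0$, all of which are $\eb$-independent by assumption, so $\sigma$ and $D$ are uniform in $\eb$.

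\textbf{Stage 3 ($\E^{-1}$ comparison).} Index both families of nets by the common abstract tree of finite sequences $\alpha=(\alpha_0,\ldots,\alpha_n)$ labelling the choices of covering centre, obtaining matched points $y_n^\eb(\alpha)\in E_n^\eb$, $y_n^0(\alpha)\in E_n^0$. The key one-step identity is
\[
y_{n+1}^\eb(\alpha)-y_{n+1}^0(\alpha)=\bigl(S_\eb y_n^\eb(\alpha)-S_0 y_n^\eb(\alpha)\bigr)+\bigl(S_0 y_n^\eb(\alpha)-S_0 y_n^0(\alpha)\bigr)+K\rho_n\bigl(u^\eb_{\alpha_{n+1}}-u^0_{\alpha_{n+1}}\bigr).
\]
The first bracket is controlled by $\sup_{\xi\in\Cal O(B_\eb)}\|S_\eb\xi-S_0\xi\|_{\E^{-1}}$, the second, using only the $S_0$-Lipschitz bound of Property 2, by $L\|y_n^\eb(\alpha)-y_n^0(\alpha)\|_{\E^{-1}}$, and the third by $K\rho_n\,\dist^s_{\E^{-1}}(\Cal U_\eb(\tfrac{1}{4K},1),\Cal U_0(\tfrac{1}{4K},1))$; the base case $n=0$ is bounded by $\dist^s_{\E^{-1}}(\Cal U_\eb(\tfrac{1}{K},R),\Cal U_0(\tfrac{1}{K},R))$. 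Writing $\delta$ for the right-hand side of \eqref{est.dE-1(Me,Mo).g} inside the $\varkappa$-th power, the discrete Gronwall iteration yields $\|y_n^\eb(\alpha)-y_n^0(\alpha)\|_{\E^{-1}}\le C L^n\delta$.

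\textbf{Stage 4 (Hölder interpolation).} Combining the matched-point $\E^{-1}$-estimate $CL^n\delta$ with the $\E$-attraction residual $c_{-1}c_0\rho_n=c_{-1}c_0 K^{-1}(\tfrac{3}{4})^n$ and optimising $n$ so that the two terms balance, i.e.\ $L^n\delta\sim(\tfrac{3}{4})^n$, yields the Hölder estimate $\dist^s_{\E^{-1}}(\Cal M^\eb,\Cal M^0)\le C\delta^\varkappa$ with $\varkappa=\log(4/3)/(\log(4/3)+\log L)$, which depends only on $c_0$, $L$, $K$, $\delta_1$ as claimed. The main obstacle I foresee lies in Stage 3: the Lipschitz hypothesis is \emph{asymmetric}, only $S_0$ is assumed Lipschitz in $\E^{-1}$, so the inductive step must always freeze the error on the $S_0$-side, and one must check that the comparison point $y_n^\eb(\alpha)$ lies in the domain $\Cal O(B_\eb)$ on which the estimate $\|S_\eb\xi-S_0\xi\|_{\E^{-1}}$ is taken; a careful definition of $\Cal O(B_\eb)$ as the union $B_\eb+\bigcup_{r\in[0,\delta_1]}r\,\Cal U_\eb(\tfrac{1}{4K},1)$ is what makes this bookkeeping work.
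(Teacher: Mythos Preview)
Your Stages 1--3 match the paper's construction closely: the paper builds exactly the same nets (denoted $\Cal V_k(\eb)$, with your $E_n^\eb=\Cal V_n(\eb)$), proves the same recurrence $d_{k+1}\le s_0+\hat d_0+Ld_k$ for $d_k=\dist^s_{\E^{-1}}(\Cal V_k(\eb),\Cal V_k(0))$, and obtains the same $L^k$-growth. Your matched-point formulation is an equivalent way to state this.

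However, Stage 4 contains a genuine gap. The matched-point estimate controls only $\dist^s_{\E^{-1}}(E_n^\eb,E_n^0)$ at level $n$, but $\Cal M^\eb$ is (the closure of) a union over \emph{all} levels. For a point $\xi_\eb\in E_k^\eb$ with $k$ large, the matching gives $CL^k\delta$, which is useless. Your suggested fix, pairing with the ``attraction residual'' $\rho_n$, does not work as stated: $E_n^\eb$ is a $\rho_n$-net of $S_\eb^n B_\eb$, not of $\Cal M^\eb$, so there is no reason a general $\xi_\eb\in\Cal M^\eb$ is $\rho_n$-close to $E_n^\eb$. The paper fills this hole with an additional lemma (its Lemma~\ref{le.d(Ek,SnB)}) asserting that $\dist_\E(E_k(\eb),S_\eb^n B_\eb)\le M_1(3/4)^{\omega k}$ for all $k\ge n/\omega$, where $\omega=\omega(c_0,K,\delta_1)\in(0,1)$. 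This lets one treat the two regimes $k\le n/\omega$ (use the $L^k\delta$ estimate) and $k>n/\omega$ (chain through $S_\eb^n B_\eb$ and $S_0^n B_0$ to $\Cal M^0$) separately, then optimise $n$. The same lemma is also what makes the fractal-dimension estimate in Stage 2 go through; your ``standard covering argument'' hides exactly this point.

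A symptom of the gap is your formula $\varkappa=\log(4/3)/(\log(4/3)+\log L)$, which depends only on $L$. The paper obtains $\varkappa=\omega\log(4/3)/(\omega\log(4/3)+\log L)$, and the presence of $\omega=\omega(c_0,K,\delta_1)$ is precisely why the theorem states $\varkappa=\varkappa(c_0,L,K,\delta_1)$.
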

The proof of Theorem \ref{th.expAttr.E-1} is postponed to Appendix \ref{app.E-1}. 
	
We now move on to the proof of Theorem \ref{th.exAe}.	As in the usual way,  we first construct exponential attractors for the \emph{discrete} dynamical systems with maps $S_\eb:=S_\eb(T)$, $S_0:=S_0(T)$, for large enough $T>0$. Then by a standard procedure, clarified below, one arrives at exponential attractors for the \emph{continuous} dynamical systems $(\E,S_\eb(t))$, $t \geq 0$.
	\begin{proof}[Proof of Theorem \ref{th.exAe}] { \ }

	\emph{Step 1: Construction of discrete exponential attractors}.
		Recall the maps $A_\eb$ and $A_0$ given by \eqref{op.A}. Let $\E= H^1_0(\Omega) \times L^2(\Omega)$, $\E^{-1} = L^2(\Omega) \times H^{-1}(\Omega)$,   and let $\E^1_\eb$ and $\E^1_0$  be given by \eqref{E1space} for $a(\cdot) = a(\tfrac{\cdot}{\eb})$ and $a(\cdot) = a^h$ respectively).
		 Then property (\emph{i}) is an immediate consequence of the uniform ellipticity of $a(\cdot)$ and Poincar\'{e}'s inequality.

{\emph{Proof of (ii)}.} We shall provide an explicit construction for the covers. Moreover, it will be important later that we produce a cover such that
\begin{equation}
\label{Ecover}
\Cal U_\eb(\mu, r) \subset \Cal E^2_\eb \cap B_{\E^1_\eb}(0,\delta_r), \qquad \& \qquad  \dist^s_{\E^{-1}} \big(\Cal U_\eb(\mu,r), \Cal U_0(\mu,r)\big) \le C_r \| A^{-1}_\eb - A^{-1}_0 \|_{\Cal L(L^2(\Omega))},
\end{equation}
for some $C_r >0$ independent of $\eb\ge 0$.

For this reason we seek a cover of $B_{\E^1_\eb}(0,r)$ in the form
\[
\bigcup_{i=1}^{N(\mu,r) }B_{\E} \big( \xi_{i \eb}, \mu \big), \quad \text{for $ \xi_{i\eb}=( A^{-1}_\eb( p_i + g) , q_{i\eb} ) \in \Cal E^2_\eb$.}
\]
To ensure $\xi_{i \eb}$ are in $\Cal E^2_\eb$ we see that $(p_i,q_{i\eb}) $ should belong to $(H^1_0(\Omega))^2$ with $ A_\eb q_{i \eb} \in L^2(\Omega)$.

We now proceed with the construction of such a cover. As $L^2(\Omega)\times H^1_0(\Omega)$ is compactly embedded in $H^{-1}(\Omega) \times L^2(\Omega)$ then, for each $\hat{\mu}>0$,  there exist finitely many $(p_i,q_{i0}), {i=1},\ldots, {N({\hat\mu,r})}$, such that
\begin{multline*}
B_{L^2(\Omega)\times H^1_0(\Omega)}\big((-g,0), r \big) \subset \bigcup_{i=1}^{N(\hat\mu,r)} B_{H^{-1}(\Omega)\times L^2(\Omega)}\big((p_i,q_{i0}), \hat{\mu}\big), \quad  (p_i,q_{i0}) \in  B_{L^2(\Omega)\times H^1_0(\Omega)}\big((-g,0),r\big).
\end{multline*}
Additionally, due to density arguments,  we can suppose
\[
(p_i,q_{i0}) \in H^1_0(\Omega) \times H^2(\Omega).
\]
Moreover,  as the eigenfunctions of $A_\eb$  form an orthonormal basis for $L^2(\Omega)$ we can find $q_{i\eb}$ such that $A_\eb q_{i  \eb} \in L^2(\Omega)$ and
\begin{equation}
\label{e.qiebound}
\| q_{i\eb}- q_{i0} \| \le \min\{ \hat \mu, \| A^{-1}_\eb - A^{-1}_0 \|_{\Cal L(L^2(\Omega))} \}, \quad i=1,\ldots,N(\hat\mu,r).
\end{equation}
Therefore, we have the covering
\[
B_{L^2(\Omega)\times H^1_0(\Omega)}\big((-g,0), r \big) \subset \bigcup_{i=1}^{N(\hat\mu,r)} B_{H^{-1}(\Omega)\times L^2(\Omega)}\big((p_i,q_{i\eb}), 2\hat{\mu}\big), \quad \eb \ge 0.
\]

Now, for fixed $\xi \in B_{\E^1_\eb}(0,r)$ we readily deduce  from the ellipticity of $a$  that \[
\| \nabla\big( \xi^1 - A^{-1}_\eb(p_i +g) \big)\| \le \nu^{-1} \| A_\eb \xi^1 -p_i -g \|_{H^{-1}(\Omega)}.
\] 
Furthermore, it is clear that  $(A_\eb \xi^1 - g, \xi^2) \in B_{L^2(\Omega)\times H^1_0(\Omega)}\big((-g,0),r\big)$.  Consequently, one can readily check that 
\[
B_{\E^1_\eb}(0,r) \subset \bigcup_{i=1}^{N(\hat\mu,r)} B_{\E}\big( (A^{-1}_\eb(p_i +g), q_{i\eb}) , 2(1 \vee \nu^{-1}) \hat\mu   \big).
\]
Additionally, since $q_{i\eb}$ are obtained by truncating $q_{i 0}$ with respect to the eigenfunctions of $A_\eb$, we compute 
\[
\| \nabla q_{i \eb} \|^2 \le \nu^{-1} (A_\eb q_{i\eb}, q_{i\eb}) \le \nu^{-1} (A_\eb q_{i0}, q_{i0}) \le \nu^{-2} \| \nabla q_{i0}\|^2, 
\]
and so we deduce that
\[(A^{-1}_\eb(p_i +g), q_{i\eb})  \in B_{\E^1_\eb}(0, (1\vee \nu^{-1}) r).\]
Hence, upon setting $\hat\mu = \tfrac{1}{2(1\vee \nu^{-1})} \mu$, we see that the centers 
\begin{equation}
\label{Ue(mu,r)}
\Cal U_{\eb}(\mu,r) : = \big\{ \big( A^{-1}_\eb (p_i + g), q_{i\eb} \big) \, | \, i =1,\ldots, N\big(  \tfrac{1}{2(1\vee \nu^{-1})} \mu, r\big)  \big\}, \quad \eb\ge0,
\end{equation}
satisfy \emph{(ii)} for $\delta_r = (1\vee \nu^{-1}) r$.  Also the additional desired properties \eqref{Ecover} hold.

\vspace{10pt}

{\emph{Construction of $B_\eb$ and $S_\eb$.}} We set $B_\eb:=B_{\Cal E^2_\eb}(0,R_2)$ to be the absorbing ball provided by Theorem \ref{th.disD} for $\Cal E^2=\Cal E^2_\eb$, and $a(\cdot)=a\left(\tfrac{\cdot}{\eb}\right)$ in the case $\eb>0$ and $a(\cdot)\equiv a^h$ for $\eb=0$. The radius $R_2$ is independent of $\eb$ and clearly  $B_\eb$ is closed in $\E$. 

 Since $B_\eb$ is an absorbing set in $\Cal E^2_\eb$ and,  by  \eqref{Ecover},  $\Cal O(B_\eb)$ is a subset of $\Cal E^2_\eb$,   we can choose $T_1$ large enough (and independent of $\eb$) such that $S_\eb : = S_\eb(T)$, $\eb \ge0$, satisfies
\[
S_\eb:\Cal O(B_\eb)\to B_\eb, \qquad \Cal O(B_\eb)  = B_\eb + \bigcup_{r\in[0,\delta_1]} r\, \Cal U_\eb(\tfrac{1}{4K},1).
\]
 Let us verify properties $(\emph{1})$ and $(\emph{2})$ of $S_\eb$.

{\emph{Proof of (1)}.} For $\xi_i \in \Cal O(B_\eb) \subset \Cal E^2_\eb$, $i=1,2$, let $u_i(t) = S_\eb(t) \xi_i$. Consider the splitting $u_i = v_i + w_i$ given by \eqref{eq.ve2}-\eqref{eq.we2}, and set $v = v_1 - v_2$ and $w = w_1 - w_2$.

As the equation for $v$ is linear then obviously the inequality
\[
\| v(T_2) \|_{\E} \le \tfrac{1}{2} \| \xi_1 - \xi_2 \|_{\E},
\]
holds for large enough time $T_2$ (independent of $\eb$).

From \eqref{eq.we2} we find that $w$ solves 
\begin{equation}
\label{eq.we2.1}
\begin{cases}
\Dt^2 w+\gamma\Dt w-\di(a \Nx w)=f(u_2)-f(u_1),\quad x\in\Omega,\ t\geq 0,\\
\xi_{w}|_{t=0}=(0,0),\quad w|_{\d\Omega}=0,\\
\end{cases}
\end{equation}
for $a = a(\tfrac{\cdot}{\eb})$ or $a\equiv a^h$. Moreover, $p = \Dt w$ solves
\begin{equation}
\label{eq.we2.2}
\begin{cases}
\Dt^2 p+\gamma\Dt p-\di(a \Nx p)=f'(u_2)\Dt u_2 -f'(u_1)\Dt u_1,\quad x\in\Omega,\ t\geq 0,\\
\xi_{p}|_{t=0}=(0,f(\xi^1_1) - f(\xi^1_2)),\quad p|_{\d\Omega}=0.\\
\end{cases}
\end{equation}
Using the fact that our initial data is from $\Cal E^2_\eb$ we conclude that $u_i$, $\Dt u_i$ are bounded in $L^\infty(\Omega)$ uniformly in $\eb$. Then upon testing the first equation in \eqref{eq.we2.2} with $\Dt p$, rewriting the subsequent right-hand-side in the form 
\[
\big( f'(u_2) (\Dt u_2 - \Dt u_1)  ,\Dt p\big) + \big( (f'(u_2) - f'(u_1) )  \Dt u_1  ,\Dt p\big),
\]
we obtain via standard arguments, and the Lipschitz continuity of $S_\eb(t)$ in $\E^{-1}$ (Corollary \ref{cor.contE-1}),  the uniform estimate
\[
\| \Dt p(t) \| + \| \Nx p(t) \| \le M e^{Kt} \| \xi_1 - \xi_2 \|_{\E^{-1}}, \quad t\ge 0.
\]
Consequently, we use $p = \Dt w$ and \eqref{eq.we2.1} to conclude
\[
\| \xi_{w}(t) \|_{\E^1_\eb}  \le Me^{Kt} \| \xi_1 - \xi_2 \|_{\E^{-1}}, \quad t\ge0,
\] 
for some positive constants $M$ and $K$ independent of $\eb$ and $\xi_i$. Therefore, for $T = \max\{T_1,T_2\}$, property $\emph{(1)}$ holds.

\emph{Proof of (2)}. This property is given by Corollary \ref{cor.contE-1} for $a \equiv a^h$.

Hence, the assumptions of Theorem \ref{th.expAttr.E-1} hold and therefore Theorem \ref{th.exAe} holds for the discrete dynamical systems $(B_\eb,S_\eb(T))$ with discrete exponential attractors $\Cal M^{\eb}_d$. Indeed, Theorem \ref{th.exAe} \emph{(1)}-\emph{(4)} hold due to the choice of $B_\eb$ and $\Cal U_\eb$, and \emph{(5)} follows from \eqref{est.dE-1(Me,Mo).g}, \eqref{Ecover} and Theorem \ref{th.|ue-uh|E-1}.

\emph{Step 2: Discrete to continuous dynamics.} From the discrete exponential attractors $\Cal M^\eb_d$ we can build exponential attractors $\Cal M^\eb$ for the original dynamical systems $(\E,S_\eb(t))$ by the following standard construction  (\cite{MZ.hbk2008}):
\begin{equation}
\Cal M^\eb := \bigcup_{\tau\in [0,T]}S_\eb(\tau)\Cal M^\eb_d,\qquad \eb\geq 0.
\end{equation} 
Indeed, the properties \emph{(1)}-\emph{(4)} can be easily verified due to dissipative estimate in $\Cal E^2_\eb$, Lipschitz continuity with respect to initial data in $\E$ (Corollary \ref{co.St.inC}) on the bounded set $B_\eb$:
\[
\| S_\eb(t) \xi_1 - S_\eb(t) \xi_2 \|_{\E} \le M \| \xi_1 - \xi_2 \|_{\E}, \quad  \xi_1,\xi_2 \in B_\eb, \quad \eb \ge0, 
\] and Lipschitz continuity with respect to time:
\begin{equation*}
	\|S_\eb(\tau_1)\xi-S_\eb(\tau_2)\xi\|_\E\leq M|\tau_1-\tau_2|,\quad \tau_1,\tau_2\in[0,T],\ \xi\in B_\eb,\qquad \eb\geq 0,
\end{equation*}
for some constant $M>0$ (independent of $\eb$). Indeed, the continuity in time follows from the uniform boundedness of $B_\eb$ in the space $\E^1_\eb$. It remains to check the continuity property \emph{(5)} for the exponential attractors $\Cal M^\eb$. This readily follows from the fact that \emph{(5)} holds for the discrete exponential attractors $\Cal M^\eb_d$, {Theorem \ref{th.|ue-uh|E-1}} and the following computation:
\begin{flalign*}
\dist^s_{\E^{-1}}(\Cal M^\eb, \Cal M^0) & = \dist^s_{\E^{-1}}\big(  \bigcup_{\tau\in [0,T]}S_\eb(\tau)\Cal M^\eb_d,  \bigcup_{\tau\in [0,T]}S_0(\tau)\Cal M^0_d \big) \\
& \le \sup_{\tau \in [0,T]}  \dist^s_{\E^{-1}}\big( S_\eb(\tau)\Cal M^\eb_d,  S_0(\tau)\Cal M^0_d \big) \\
& \le \sup_{\tau \in [0,T]}  \dist^s_{\E^{-1}}\big( S_\eb(\tau)\Cal M^\eb_d,  S_0(\tau)\Cal M^\eb_d \big) + \sup_{\tau \in [0,T]}  \dist^s_{\E^{-1}}\big( S_0(\tau)\Cal M^\eb_d,  S_0(\tau)\Cal M^0_d \big) \\
& \le \sup_{\tau \in [0,T]} M e^{K\tau} \| A^{-1}_\eb - A^{-1}_0 \|_{\Cal L(L^2(\Omega))} + \sup_{\tau \in [0,T]} L \dist^s_{\E^{-1}}\big( \Cal M^\eb_d,  \Cal M^0_d \big).
\end{flalign*}
\end{proof}

\subsection{Continuity of exponential attractors in $\E$.}\label{s.dE(Me,M0)}{\ }

\vspace{10pt}
Theorem \ref{th.exAe}{\em (5)} demonstrates H\"{o}lder continuity  between the exponential attractors $\Cal M^\eb$ and $\Cal M^0$ in the space $\E^{-1}$.
In this section we provide continuity results in the energy space $\E$. Unlike in $\E^{-1}$, in the stronger topology of $\E$ this requires a correction (such as in Definition \ref{Tepsets}) of the exponential attractor $\Cal M^0$. More precisely, the main result of this section is the following theorem. 
\begin{theorem}
	\label{th.expAttr.E.wv}	
	Assume \eqref{mainassumptions} and let $\Cal M^\eb$, $\Cal M^0$ be the exponential attractors constructed in \mbox{Theorem \ref{th.exAe}}. Then, the following estimate is valid:
	\begin{equation}
	\label{est.dE(Me,TeMo).wv}
	\dist^s_{\E}(\Cal M^\eb,\textsc{T}_\eb \Cal M^0)\leq M\sqrt{\eb}^{\varkappa},\quad \eb>0,
	\end{equation}
	where the `correction' operator $\textsc{T}_\eb$ is given by \eqref{Tepsets}, $0<\varkappa<1$ as in Theorem \ref{th.exAe}  and the constant $M>0$  is independent of $\eb$. 
\end{theorem}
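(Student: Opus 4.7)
The plan is to invoke the abstract result of Appendix \ref{app:estinE}, which compares two families of exponential attractors when the second semigroup is a corrected asymptotic limit of the first, applied to the \emph{discrete} dynamical systems $(B_\eb, S_\eb(T))$ and $(B_0, S_0(T))$ for large enough $T>0$. Here $\Pi_\eb$ of Lemma \ref{le.xih} plays the role of the intertwiner on initial data and $\textsc{T}_\eb$ of \eqref{Tepsets} plays the role of the state-space correction. I would reuse the discrete exponential attractors $\Cal M^\eb_d$, $\Cal M^0_d$ from Step~1 of the proof of Theorem \ref{th.exAe} so that the uniform-in-$\eb$ smoothing decomposition \eqref{th.expAttr:spl} on $\Cal O(B_\eb) \subset \Cal E^2_\eb$ and the compactness cover of Theorem \ref{th.expAttr.E-1}(ii) are already in hand.

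Two closeness inputs then drive the abstract theorem and deliver a discrete estimate of the form $\dist^s_\E(\Cal M^\eb_d, \textsc{T}_\eb \Cal M^0_d) \le M \sqrt{\eb}^\varkappa$. The first is the energy-space closeness of the semigroups at time $T$ through the correction,
\[
\sup_{\xi \in \Cal O(B_\eb)} \| S_\eb(T)\xi - \textsc{T}_\eb S_0(T)\Pi_\eb\xi \|_\E \le M e^{KT}\sqrt{\eb},
\]
which follows from Corollary \ref{cor.SebS0inE} since $\Cal O(B_\eb)$ is uniformly bounded in $\Cal E^2_\eb$. The second is an analogous closeness between the cover centers $\Cal U_\eb(\mu,r)$ of \eqref{Ue(mu,r)} and $\textsc{T}_\eb \Cal U_0(\mu,r)$: the bijectivity property of $\Pi_\eb$ already identifies $\Pi_\eb \Cal U_\eb(\mu,r)$ with $\Cal U_0(\mu,r)$ (up to modifying $q_{i\eb}$ so the truncation is taken with respect to eigenfunctions of $A_0$ instead of $A_\eb$, at no extra cost in the $\E$-cover property), and the corrector estimate \eqref{ell.err.L2COR} applied to $A_\eb^{-1}(p_i+g) - \Cal T_\eb A_0^{-1}(p_i+g)$ then provides an order-$\sqrt{\eb}$ bound in $\E$.

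To pass from discrete to continuous attractors $\Cal M^\eb = \bigcup_{\tau\in[0,T]} S_\eb(\tau)\Cal M^\eb_d$, I would follow the end-of-proof scheme of Theorem \ref{th.exAe}: for $\xi \in \Cal M^\eb_d$ and $\tau\in[0,T]$ combine Corollary \ref{cor.SebS0inE} (giving $\| S_\eb(\tau)\xi - \textsc{T}_\eb S_0(\tau)\Pi_\eb\xi \|_\E \le M\sqrt{\eb}$), the Lipschitz continuity of $S_0(\tau)$ in $\E$ on bounded subsets of $\Cal E^2_0$ from Corollary \ref{co.St.inC}, the continuity property \eqref{distTep} of $\textsc{T}_\eb$, and the uniform $\Cal E^2_0$-bound on $\Cal M^0_d$ from Theorem \ref{th.exAe}(2) to bound $\dist^s_\E$ in both directions simultaneously, yielding \eqref{est.dE(Me,TeMo).wv}.

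The main obstacle I anticipate is the cover-closeness condition needed in $\E$: in the $\E^{-1}$-setting of Theorem \ref{th.exAe} the same centers $(p_i,q_{i\eb})$ could be compared directly, but in $\E$ one must compare $\Cal U_\eb(\mu,r)$ with its correction $\textsc{T}_\eb \Cal U_0(\mu,r)$, and this invokes the corrector estimate \eqref{ell.err.L2COR} which requires $H^2(\Omega)$-regularity of the second component. This forces a mild refinement of the choice of $q_{i\eb}$ and $q_{i0}$ in \eqref{Ue(mu,r)} — picking them from $H^2(\Omega)$ via density and eigenfunction truncation for \emph{both} $A_\eb$ and $A_0$ — after which the Dirichlet-boundary-layer loss from $\eb$ to $\sqrt{\eb}$ is consistent with the $\sqrt{\eb}$ in \eqref{est.dE(Me,TeMo).wv} and the subsequent interpolation-exponent $\varkappa$ emerges from the usual balancing between closeness and exponential attraction exactly as in Theorem \ref{th.d(Ae,Ah)<e1/2.w}.
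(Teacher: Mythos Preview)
Your plan is essentially the paper's own argument: apply the abstract Theorem \ref{th.expAttr.E} to the discrete attractors $\Cal M^\eb_d$, $\Cal M^0_d$ built in Theorem \ref{th.exAe}, check its hypotheses using $\Pi_\eb$, $\textsc{T}_\eb$, Corollary \ref{cor.SebS0inE} and the corrector estimate, and then pass to continuous time exactly as in the end of Theorem \ref{th.exAe}. Two small corrections are in order, though.

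First, the abstract estimate \eqref{est.dE(Me,Mo).g} has a \emph{third} closeness input that you omit, namely $\sup_{\xi\in\Cal O(B_0)}\|\textsc{T}_\eb\xi-\Pi_\eb^{-1}\xi\|_\E$. The paper controls it via the identity $\Cal T_\eb A_0^{-1}A_\eb w - w = (\Cal T_\eb A_0^{-1}-A_\eb^{-1})A_\eb w$ together with Remark \ref{rem:resolventsCOR}, giving an $O(\sqrt{\eb})$ bound; this term also reappears in the discrete-to-continuous passage \eqref{e.experrorshow}, so you should include it explicitly.

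Second, your ``main obstacle'' is not one. Recall from \eqref{Tepsets} that $\textsc{T}_\eb\xi=(\Cal T_\eb\xi^1,\xi^2)$ leaves the second component untouched, so comparing the covers reads
\[
\xi_{i\eb}-\textsc{T}_\eb\xi_{i0}=\big((A_\eb^{-1}-\Cal T_\eb A_0^{-1})(p_i+g),\,q_{i\eb}-q_{i0}\big).
\]
The first component is $O(\sqrt{\eb})$ in $H^1_0(\Omega)$ by Remark \ref{rem:resolventsCOR} (the required $H^2$-regularity of $A_0^{-1}(p_i+g)$ is automatic since $a^h$ is constant), and the second component is already $O(\eb)$ in $L^2(\Omega)$ by the construction \eqref{e.qiebound}. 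No refinement of the $q_{i\eb}$ is needed, and no $H^2$-regularity of the second component enters.
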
 
To prove this result, we make an important development of Theorem \ref{th.expAttr.E-1}  to provide estimates between exponential attractors which admit correction.  That is we establish the following new result.
%
%
%
%
%
\begin{theorem}
\label{th.expAttr.E}	
Let assumptions of Theorem \ref{th.expAttr.E-1} be satisfied and $\Cal M^\eb$, $\Cal M^0$ be the exponential attractors constructed therein.
Additionally, assume that:
\begin{enumerate}
	\setcounter{enumi}{2}
	\item for every $\eb> 0$ there exists a  bijection $\Pi_\eb : \E^1_\eb  \rightarrow \E^1_0$
		that satisfies 
		\[
 \Pi_\eb B_\eb=  B_0;
		\]
		\item for every $\eb>0$ there exists a `correction' operator $\textsc{T}_\eb:\E^1_0\to\E$ which possesses the property
		\begin{equation*}
			\|\textsc{T}_\eb\xi_1-\textsc{T}_\eb\xi_2\|_{\E}\leq L_{\rm cor} \|\xi_1-\xi_2\|_\E + m(\eb)\qquad \mbox{for all }\xi_1,\ \xi_2\in\Cal O(B_0);
		\end{equation*}
for some constant $L_{\rm cor}>0$ independent of $\eb$ and positive function $m(\cdot)$  with $m(0^+)=0$.
		\item the maps $S_\eb$ are uniformly Lipschitz continuous in $\E$ with respect to $\eb>0$, that is
		\begin{equation*}
			\|S_\eb\xi_1-S_\eb\xi_2\|_{\E}\leq L\|\xi_1-\xi_2\|_\E,\qquad \forall \xi_1, \xi_2\in\Cal O(B_\eb),
		\end{equation*}
		with some constant $L>1$ independent of $\eb>0$.
\end{enumerate}
Then the following estimate
\begin{equation}
\label{est.dE(Me,Mo).g}
\begin{aligned}
\dist^s_{\E}(\Cal M^\eb, \textsc{T}_\eb\Cal M^0)  \le
  &C \Big(  \sup_{\xi \in  \Cal O(B_0)}\Vert S_\eb \Pi_\eb^{-1} \xi  - \textsc{T}_\eb S_0 \xi \Vert_{\E}  + \sup_{\xi \in \Cal O(B_0) }\Vert \textsc{T}_\eb\xi - \Pi_\eb^{-1}\xi\Vert_{\E}+m(\eb)  \\ 
&\dist^s_{\E}\big(\Cal U_\eb(\tfrac{1}{4K},1),\textsc{T}_\eb\,\Cal U_0(\tfrac{1}{4K},1)\big) + \dist^s_{\E} \big(\Cal U_\eb(\tfrac{1}{K},R),\textsc{T}_\eb\, \Cal U_0(\tfrac{1}{K},R)\big)   \Big)^{\varkappa},
\end{aligned}
\end{equation}
holds for constant $C>0$ independent of $\eb$ and $\varkappa$  as in Theorem \ref{th.expAttr.E-1}.
\end{theorem}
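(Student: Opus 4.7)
The strategy is to mimic the inductive construction of discrete exponential attractors from the proof of Theorem~\ref{th.expAttr.E-1} (see Appendix~\ref{app.E-1}), but to run it along two parallel tracks, one for $S_\eb$ on $B_\eb$ and one for $S_0$ on $B_0$, and then to compare the resulting discrete attractors after applying the correction $\textsc{T}_\eb$ on the homogenised side. That construction produces, for each $n\ge 0$, finite sets $V^n_\eb \subset B_\eb$ (resp.\ $V^n_0 \subset B_0$) whose $2^{-n}$-neighbourhoods in $\E$ cover $S_\eb^n B_\eb$ (resp.\ $S_0^n B_0$), so that the discrete exponential attractors are recovered as $\Cal M^\eb_d = [\bigcup_n V^n_\eb]_\E$ and $\Cal M^0_d = [\bigcup_n V^n_0]_\E$. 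Once the estimate \eqref{est.dE(Me,Mo).g} is established for $\Cal M^\eb_d$ versus $\textsc{T}_\eb \Cal M^0_d$, the passage to the continuous-time attractor $\Cal M^\eb = \bigcup_{\tau\in[0,T]}S_\eb(\tau)\Cal M^\eb_d$ is handled exactly as in Step~2 of the proof of Theorem~\ref{th.exAe}, using uniform Lipschitz continuity of $S_\eb(\tau)$ in $\E$ (assumption~(5)) together with the Lipschitz property of $\textsc{T}_\eb$ (assumption~(4)).

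\textbf{Inductive comparison.} Denote by $\eta(\eb)$ the sum of the four quantities appearing inside the parentheses on the right-hand side of \eqref{est.dE(Me,Mo).g} (including $m(\eb)$). The key claim is the existence of $C>0$ independent of $\eb$ and $n$ such that
\begin{equation*}
\dist^s_\E\bigl(V^n_\eb,\ \textsc{T}_\eb V^n_0\bigr) \le C L^n \eta(\eb), \qquad n\ge 0,
\end{equation*}
where $L$ is the Lipschitz constant from assumption~(5). Each new centre $\xi^{n+1}_\eb \in V^{n+1}_\eb$ is either (a) the $S_\eb$-image of some $\zeta \in V^n_\eb$, or (b) a new centre adjoined from a rescaled copy of $\Cal U_\eb(\tfrac{1}{4K},1)$ to refine the cover of the $w_\eb$-part of the splitting \eqref{th.expAttr:spl}. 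In case~(a), one pairs $\zeta$ with a nearest match $\zeta_0 \in V^n_0$ provided by the inductive hypothesis and decomposes
\begin{equation*}
\|S_\eb \zeta - \textsc{T}_\eb S_0 \zeta_0\|_\E \le \|S_\eb \zeta - S_\eb \Pi_\eb^{-1}\zeta_0\|_\E + \|S_\eb \Pi_\eb^{-1}\zeta_0 - \textsc{T}_\eb S_0 \zeta_0\|_\E;
\end{equation*}
the second summand is bounded directly by the first term of $\eta(\eb)$, while the first summand is bounded via assumption~(5) by $L\|\zeta - \Pi_\eb^{-1}\zeta_0\|_\E \le L\bigl(\|\zeta - \textsc{T}_\eb \zeta_0\|_\E + \|\textsc{T}_\eb \zeta_0 - \Pi_\eb^{-1}\zeta_0\|_\E\bigr)$, giving the inductive factor $L$ on the first term and an additional $\eta(\eb)$ contribution from the second. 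In case~(b), the covering distances $\dist^s_\E(\Cal U_\eb(\tfrac{1}{4K},1), \textsc{T}_\eb \Cal U_0(\tfrac{1}{4K},1))$ and $\dist^s_\E(\Cal U_\eb(\tfrac{1}{K},R), \textsc{T}_\eb \Cal U_0(\tfrac{1}{K},R))$ (both inside $\eta(\eb)$) directly supply a matching $\textsc{T}_\eb$-corrected centre in $V^{n+1}_0$, with the constant $L_{\rm cor}$ from assumption~(4) entering as an $O(1)$ factor. Summing the two contributions and absorbing the resulting geometric series yields the claim.

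\textbf{Optimisation and main obstacle.} Combining the inductive bound with the covering property $\dist_\E(S_\eb^n B_\eb, V^n_\eb) \le 2^{-n} c_0$ (and the analogous bound $\dist_\E(\textsc{T}_\eb S_0^n B_0, \textsc{T}_\eb V^n_0) \le L_{\rm cor} 2^{-n} c_0 + m(\eb)$ supplied by assumption~(4)) via the triangle inequality produces, for every $n\ge 0$,
\begin{equation*}
\dist^s_\E(\Cal M^\eb_d, \textsc{T}_\eb \Cal M^0_d) \le C\bigl(L^n \eta(\eb) + 2^{-n}\bigr).
\end{equation*}
Choosing $n = n(\eb) \sim \log(1/\eta(\eb)) / \log(2L)$ balances the two contributions and yields the H\"older estimate with exponent $\varkappa = \log 2 / \log(2L)$, identical to the exponent provided by Theorem~\ref{th.expAttr.E-1}. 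The principal obstacle is the systematic bookkeeping of $\textsc{T}_\eb$: since it commutes neither with $\Pi_\eb^{-1}$ nor with $S_\eb$, every appearance of $\textsc{T}_\eb$ in the induction must be traded for an additive $O(\eta(\eb))$ error, which is precisely why $\eta(\eb)$ contains both the consistency term $\|S_\eb \Pi_\eb^{-1}\cdot - \textsc{T}_\eb S_0\cdot\|_\E$ and the compatibility term $\|\textsc{T}_\eb\cdot - \Pi_\eb^{-1}\cdot\|_\E$. The most delicate point in case~(b) is that the matching centre in $\textsc{T}_\eb V^{n+1}_0$ must be selected as the $\textsc{T}_\eb$-image of a nearest point of $\Cal U_0$, rather than the nearest point of $\textsc{T}_\eb \Cal U_0$ itself, so that the cover-distance terms inside $\eta(\eb)$ remain effective.
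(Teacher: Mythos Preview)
Your inductive comparison is essentially the paper's Lemma \ref{le.d(Eke,TeEk0)}: the decomposition in case~(a),
\[
\|S_\eb \zeta - \textsc{T}_\eb S_0 \zeta_0\|_\E \le \|S_\eb \zeta - S_\eb \Pi_\eb^{-1}\zeta_0\|_\E + \|S_\eb \Pi_\eb^{-1}\zeta_0 - \textsc{T}_\eb S_0 \zeta_0\|_\E,
\]
followed by the triangle $\|\zeta - \Pi_\eb^{-1}\zeta_0\|_\E \le \|\zeta - \textsc{T}_\eb \zeta_0\|_\E + \|\textsc{T}_\eb \zeta_0 - \Pi_\eb^{-1}\zeta_0\|_\E$, is exactly the paper's inequality~(\ref{ap.9'}), and iterating it yields the recursion $d_{k+1}\le s_0+\hat d_0+Ld_k$ just as in the paper. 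So the heart of the argument is correct and matches the paper.

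The gap is in your optimisation step. You write that the covering property $\dist_\E(S_\eb^n B_\eb, V^n_\eb)\le 2^{-n}c_0$ combined with the inductive bound ``via the triangle inequality'' gives $\dist^s_\E(\Cal M^\eb_d,\textsc{T}_\eb\Cal M^0_d)\le C(L^n\eta(\eb)+2^{-n})$. But $\Cal M^\eb_d$ is the closure of $\bigcup_{k\ge 1} V^k_\eb$, and for a point $\xi_\eb\in V^k_\eb$ with $k$ large the inductive bound gives only $CL^k\eta(\eb)$, which blows up. The covering inequality you invoke says that $S_\eb^n B_\eb$ is close to $V^n_\eb$; it does \emph{not} say that $V^k_\eb$ (for $k\gg n$) is close to $S_\eb^n B_\eb$, which is what a naive triangle argument would require. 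The paper closes this gap with the non-trivial ``reverse approximation'' Lemma~\ref{le.d(Ek,SnB)}, namely $\dist_\E(E_k(\eb),S_\eb(n)B_\eb)\le M_1(3/4)^{\omega k}$ for $k\ge n/\omega$, and then runs a two-regime argument ($k\le n/\omega$ versus $k>n/\omega$). For the large-$k$ regime the paper uses the three-term triangle inequality~(\ref{ap.14'}), bounding the middle piece by iterating~(\ref{ap.9'}) and the last piece by combining $\Pi_\eb B_\eb=B_0$ with assumption~\emph{(4)} and the exponential attraction of $\Cal M^0$ (not with the covering of $S_0^n B_0$ by $V^n_0$, as you suggest). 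This is also why the exponent $\varkappa$ carries the factor $\omega$ from Lemma~\ref{le.d(Ek,SnB)} rather than being simply $\log 2/\log(2L)$.
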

The proof of this result is 
{presented} in Appendix \ref{app:estinE}.

\begin{proof}[Proof of Theorem \ref{th.expAttr.E.wv}]
	Let the sets $B_\eb$, $\Cal O(B_\eb)$, $\eb\geq 0$, and the operator $S_\eb=S_\eb(T)$ be as in \mbox{Theorem \ref{th.exAe}}.
	
	We first establish, based on the abstract result Theorem \ref{th.expAttr.E}, the estimate \eqref{est.dE(Me,TeMo).wv} for the discrete exponential attractors $\Cal M^\eb_d$ (defined in the proof of Theorem \ref{th.exAe}). That is we prove the following inequality:
	\begin{equation}
	\label{est.dE(Med,TeMod).wv}
	\dist^s_{\E}(\Cal M^\eb_d,\textsc{T}_\eb \Cal M^0_d)\leq M\sqrt{\eb}^{\varkappa},\quad \eb>0,
	\end{equation}
for some constant $M>0$. 

Let us check that the assumptions of Theorem \ref{th.expAttr.E} hold. Indeed,  assumption \emph{(3)} follows from the fact that $B_\eb=B_{\Cal E^2_\eb}(0,R_2)$ (see the proof of Theorem \ref{th.exAe}) and Definition \ref{xih.ch} of the projector $\Pi_\eb$ (where we note that $\Pi_\eb$ can be trivially extended to the map from $\E^1_\eb$ onto $\E^1_0$, preserving the bijection property). Assumption \emph{(4)} holds with $m(\eb)=C\eb$ (for some constant $C>0$, independent of $\eb$) due to the multiplier estimate \eqref{distTep} and the fact that $\Cal O(B_0)$ is a bounded subset of $\Cal E^2_0$ by construction. Assumption \emph{(5)} is a consequence of Corollary \ref{co.St.inC}.  Hence the assumptions of Theorem \ref{th.expAttr.E} hold and \eqref{est.dE-1(Me,Mo).g} hold for the discrete exponential attractors $\Cal M^\eb_d$ and $\Cal M^0_d$.

Let us now estimate the terms on the right-hand side of \eqref{est.dE(Me,Mo).g} in terms of $\eb$. Since $\Pi_\eb:\Cal E^2_\eb\to\Cal E^2_0$ is bijective and preserves the norm (Lemma \ref{le.xih}), and since $\Cal O(B_0)\subset \Cal E^2_0$ is bounded, we see that $\|\Pi_\eb^{-1}\Cal O(B_0)\|_{\Cal E^2_\eb}=\|\Cal O(B_0)\|_{\Cal E^2_0}$; that is the set $\Pi_\eb^{-1}\Cal O(B_0)$ is bounded in $\Cal E^2_\eb$. Therefore, this observation and Corollary \ref{cor.SebS0inE} imply that   
\begin{equation}
	\sup_{\xi \in  \Cal O(B_0)}\Vert S_\eb \Pi_\eb^{-1} \xi  - \textsc{T}_\eb S_0 \xi \Vert_{\E}=\sup_{\xi \in \Pi^{-1}_\eb \Cal O(B_0)}\Vert S_\eb \xi  - \textsc{T}_\eb S_0\Pi_\eb \xi \Vert_{\E}\leq M\sqrt{\eb},
\end{equation}
for some $M>0$ independent of $\eb>0$. Also from the  identity
\begin{equation}
	\Cal T_\eb A_0^{-1}A_\eb w-w= (\Cal T_\eb A_0^{-1}-A_\eb^{-1})A_\eb w,
\end{equation}
and Remark \ref{rem:resolventsCOR} we deduce that
\begin{equation}
\label{est.Te-Pie-1.E}
	\sup_{\xi \in \Cal O(B_0) }\Vert \textsc{T}_\eb\, \xi - \Pi_\eb^{-1}\xi\Vert_{\E}=\sup_{\xi \in \Pi_\eb^{-1}\Cal O(B_0) }\Vert \textsc{T}_\eb\,\Pi_\eb\,\xi - \xi\Vert_{\E}\leq M\sqrt{\eb},
\end{equation}
for some constant $M>0$ independent of $\eb>0$. It remains to compare the distance between the covers present in the right-hand side of \eqref{est.dE(Me,Mo).g}. To this end, we notice that if $\xi_{i\eb}:=(A_\eb^{-1}(p_i+g),q_{i\eb})\in \Cal U_\eb(\mu,r)$, then
\begin{equation}
	\xi_{i\eb}-\textsc{T}_\eb\xi_{i0}=\big((A_{\eb}^{-1}-\Cal T_\eb A_0^{-1})(p_i+g), q_{i\eb}-q_{i0} \big),\qquad \eb>0.
\end{equation} 
Consequently, due to Remark \ref{rem:resolventsCOR}  and the properties of $q_{i\eb}$ (see \eqref{e.qiebound}) one can see that
\begin{equation}
	\dist^s_{\E}(\Cal U_\eb(\mu,r),\textsc{T}_\eb\, \Cal U_0(\mu,r))\leq C_r\sqrt{\eb},
\end{equation}  
for some constant $C_r>0$ independent of $\eb$, $\mu$. Upon collecting the above estimates we derive \eqref{est.dE(Med,TeMod).wv}.

	It remains to establish \eqref{est.dE(Me,TeMo).wv} for the  exponential attractors $\Cal M^\eb$. It is sufficient to show that
	\begin{equation}
	\label{e.experrorshow}
	\begin{aligned}
&	\dist^s_{\E}(\Cal M^\eb, \textsc{T}_\eb \Cal M^0)  \le L\dist^s_{\E}(\Cal M^\eb_d,\textsc{T}_\eb\Cal M^0_d)+L\sup_{\xi\in\Pi^{-1}_\eb \Cal O(B_0)}\|\textsc{T}_\eb\Pi_\eb \xi-\xi\|_{\E}+ \\
& \hspace{6cm} +	\sup_{\tau \in [0,T]}\sup_{\xi\in \Pi_\eb^{-1}\Cal O(B_0)} \|S_\eb(\tau)\xi- \textsc{T}_\eb S_0(\tau)\Pi_\eb\xi\|_{\E}.
	\end{aligned}
	\end{equation}
Indeed, since $\varkappa <1$, the above inequality, \eqref{est.dE(Med,TeMod).wv}, \eqref{est.Te-Pie-1.E} and Corollary \ref{cor.SebS0inE} implies \eqref{est.dE(Me,TeMo).wv}.

Let us demonstrate \eqref{e.experrorshow}: 
\begin{flalign*}
		\dist^s_{\E}(&\Cal M^\eb,  \textsc{T}_\eb \Cal M^0) 	= \dist^s_{\E}\big( \bigcup_{\tau\in [0,T]}S_\eb(\tau)\Cal M^\eb_d,  \bigcup_{\tau\in [0,T]}\textsc{T}_\eb S_0(\tau)\Cal M^0_d \big) \\
		&
		\le \sup_{\tau \in [0,T]}  \dist^s_{\E}\big( S_\eb(\tau)\Cal M^\eb_d,  \textsc{T}_\eb S_0(\tau)\Cal M^0_d \big)\\
		&\le \sup_{\tau \in [0,T]}  \dist^s_{\E}\big( S_\eb(\tau)\Cal M^\eb_d, S_\eb(\tau)\Pi^{-1}_{\eb}\Cal M^0_d \big)+\sup_{\tau \in [0,T]}  \dist^s_{\E}\big(S_\eb(\tau)\Pi^{-1}_{\eb}\Cal M^0_d, \textsc{T}_\eb S_0(\tau)\Cal M^0_d \big)\\ 	
		&\leq L\dist^s_{\E}(\Cal M^\eb_d,\Pi^{-1}_\eb\Cal M^0_d)+\sup_{\tau \in [0,T]}  \dist^s_{\E}\big(S_\eb(\tau)\Pi^{-1}_{\eb}\Cal M^0_d, \textsc{T}_\eb S_0(\tau)\Cal M^0_d \big)\\
		&\leq L\dist^s_{\E}(\Cal M^\eb_d,\textsc{T}_\eb\Cal M^0_d)+L\dist^s_{\E}(\textsc{T}_\eb\Cal M^0_d,\Pi^{-1}_\eb\Cal M^0_d)+\sup_{\tau \in [0,T]}  \dist^s_{\E}\big(S_\eb(\tau)\Pi^{-1}_{\eb}\Cal M^0_d, \textsc{T}_\eb S_0(\tau)\Cal M^0_d \big)\\
		&\leq L\dist^s_{\E}(\Cal M^\eb_d,\textsc{T}_\eb\Cal M^0_d)+L\sup_{\xi\in\Cal O(B_0)}\|\textsc{T}_\eb\xi-\Pi^{-1}_\eb\xi\|_{\E}+\sup_{\tau \in [0,T]}\sup_{\xi\in \Cal O(B_0)} \|S_\eb(\tau)\Pi^{-1}_{\eb}\xi- \textsc{T}_\eb S_0(\tau)\xi\|_{\E}\\
		&\leq L\dist^s_{\E}(\Cal M^\eb_d,\textsc{T}_\eb\Cal M^0_d)+L\sup_{\xi\in\Pi^{-1}_\eb \Cal O(B_0)}\|\textsc{T}_\eb\Pi_\eb \xi-\xi\|_{\E}+
		\sup_{\tau \in [0,T]}\sup_{\xi\in \Pi_\eb^{-1}\Cal O(B_0)} \|S_\eb(\tau)\xi- \textsc{T}_\eb S_0(\tau)\Pi_\eb\xi\|_{\E}. 
\end{flalign*}
Hence the theorem is proved.
\end{proof}
 
\section{The case of different boundary conditions}
\label{s.difbc}

In this section we are going to show that the analogues of the obtained homogenisation error estimates for the global and exponential attractors still hold  if we change the Dirichlet boundary conditions to be either Neumann or periodic.

Let $\Omega\subset \R^3$ be a smooth bounded domain and $\Cal H^1 : = H^1(\Omega)$ or $\Omega$ be a three-dimensional torus $\mathbb{T}^3:=[0,\ell)^3$, $\ell>0$,
with \[
\Cal H^1 : = \big\{u\in H^1(\Omega)|\ u(x+\ell e_k)=u(x),\ k\in\{1,\ 2,\ 3\}\big\}.
\]
 In both cases we endow $\Cal H^1$ with the norm
\[
\|u\|^2_{\Cal H^1}:=\|\nabla u\|^2+\|u\|^2,\quad u\in\Cal H^1.
\]
For the maps $A_\eb$ be given by \eqref{op.A},  $\eb\geq 0$, we consider the problem  
\begin{equation}\label{eq.dw.NP}
\begin{cases}
\Dt^2\u +\gamma\Dt \u +(A_\eb+1)\u+f(\u)=g(x),\quad x\in\Omega,\ t\geq 0,\\
(\u,\Dt \u)|_{t=0}=\xi,
\end{cases}
\end{equation} 
endowed with either Neumann
\begin{equation}\tag{N}\label{Neqn}
 \begin{cases}a\big(\tfrac{\cdot}{\eb}\big)\nabla \u\cdot n|_{\partial\Omega}=0,\quad \eb>0,\\[2pt]
a^h\nabla u^0 \cdot n|_{\partial\Omega}=0,\quad \eb=0,\end{cases}
\end{equation}
 or periodic
 \begin{equation}\tag{P}\label{Peqn}
\left\{  \,\begin{aligned}
& u^\eb(x+\ell e_k)=u^\eb(x),  \\
&  \Nx u^\eb(x+\ell e_k) = \Nx u^\eb(x),
\end{aligned} \right. \qquad \ k\in\{1,\ 2,\ 3\},\ \eb\ge 0, 
 \end{equation}
  boundary conditions.

It is well-known that problem \eqref{eq.dw.NP} with either boundary conditions (N) or (P)  is well-posed in the energy space $\E: = \Cal H^1\times L^2(\Omega)$  and, therefore, defines a dynamical system $(\E,S_\eb(t))$ where
\[
 S_{\eb}(t)\xi:=\xi_{\u}(t),\quad t\geq0,
\] for  $\u(t)$  the unique solution of the corresponding problem with initial data $\xi$. 

Moreover, is well-known that $A_\eb+1 : \Cal D(A_\eb+1) \subset L^2(\Omega) \rightarrow L^2(\Omega)$ is self-adjoint, where
\[
\Cal D(A_\eb+1)  = \left\{ \begin{aligned}
&\{u\in\Cal H^1|\ A_\eb u\in L^2(\Omega),\ a\big(\tfrac{\cdot}{\eb}\big)\nabla u\cdot n|_{\partial\Omega}=0\},\quad \eb>0,\\
&\{u\in\Cal H^1|\ A_0 u\in L^2(\Omega),\ a^h\nabla u\cdot n|_{\partial\Omega}=0\},\quad\ \eb=0,
\end{aligned} \right.
\]
for condition \eqref{Neqn} or 
\[
\Cal D(A_\eb+1)  = \{u\in\Cal H^1|\ A_\eb u\in L^2(\Omega), \nabla u(x+\ell e_k)=\nabla u(x),\ k\in\{1,\ 2,\ 3\}\},\ \eb\geq 0,
\]
for condition \eqref{Peqn}. Setting \begin{equation}
\label{Dep.NP}
\left\{ \begin{aligned}
	&\Cal E^2_{\eb} : = \big\{ \xi  \in  \big( \Cal D(A_\eb+1) \big)^2 \, | \, \big( A_\eb\xi^1 - g\big) \in \Cal H^1   \big\},\\
	& \Vert \xi \Vert_{\Cal E^2_{\eb}}^2 : =  \| A_\eb\xi^1  - g \|^2_{\Cal H^1} +\|(A_\eb+1)\xi^1\|^2 + \| (A_\eb+1)\xi^2  \|^2, 
\end{aligned} \right.\qquad \eb\geq 0,
\end{equation}
it is straightforward to see from Appendix \ref{s.sma} and Sections \ref{s.smth}-\ref{s.ea} that the following theorem holds.
\begin{theorem}
	\label{th.Attr1.NP}Assume \eqref{mainassumptions}. Then, for every $\eb \ge0$, the dynamical systems $(\E,S_\eb(t))$ generated by problem \eqref{eq.dw.NP} with boundary conditions \eqref{Neqn} or \eqref{Peqn} possesses a global attractor $\Cal A^\eb$, and exponential attractor $\Cal M^\eb$, of finite fractal dimension such that:
	\begin{align*}
	&\A^\eb\subset\Cal M^\eb\subset \Cal E^2_\eb,\qquad \|\A^\eb \|_{\Cal E^2_\eb}\leq\|\Cal M^\eb\|_{\Cal E^2_\eb}\leq M(\|g\|),\qquad \Cal A^\eb=\Cal K^\eb|_{t=0},\\
	&\dist_{\E}(S_\eb(t)B,\Cal M^\eb)\leq e^{-\sigma t}M(\|B\|_\E),\quad t\geq 0,\qquad \mbox{for all bounded}\ B\subset\E,\\
	&\dim_f(\A^\eb,\E)\leq\dim_f (\Cal M^\eb,\E)\leq D,
	\end{align*}
	where the constants $\sigma,\ D>0$ and non-decreasing function $M$ are independent of $\eb$. Here $\Cal K^\eb$ is the set of all bounded energy solutions to problem \eqref{eq.dw.NP}, with \eqref{Neqn} or \eqref{Peqn}, defined \emph{for all} $t\in\R$. 
\end{theorem}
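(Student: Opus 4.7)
The strategy is to transport each of the preceding constructions of Sections \ref{s.smth}--\ref{s.ea} (dissipative estimate in $\E$, compact attracting set in $\E^1_\eb$, exponential attraction to a `ball' in $\Cal E^2_\eb$, existence of a finite-dimensional exponential attractor) to problem \eqref{eq.dw.NP} with \eqref{Neqn} or \eqref{Peqn}. The key structural point is that $A_\eb+1$ is a bounded, positive, self-adjoint operator on $L^2(\Omega)$ with form domain $\Cal H^1$ whose coercivity constant depends only on $\nu$, so the energy-type and operator-theoretic arguments of the Dirichlet case carry over with only cosmetic changes. Importantly, the splitting/smoothing estimates of Section \ref{s.smth} and the abstract exponential attractor machinery of Section \ref{s.ea} never used the precise Dirichlet nature of the boundary condition---only self-adjointness and coercivity.

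First, I would derive the dissipative estimate in $\E$ by testing \eqref{eq.dw.NP} with $\Dt\u+\alpha\u$ for a small $\alpha=\alpha(\nu)>0$; the boundary term $\int_{\partial\Omega}(a(\tfrac{\cdot}{\eb})\Nx\u\cdot n)(\Dt\u+\alpha\u)\,dS$ vanishes under \eqref{Neqn}, and in the periodic setting the corresponding boundary contributions on opposite faces of $[0,\ell)^3$ cancel by \eqref{Peqn}. The extra zero-order term $+\u$ in \eqref{eq.dw.NP} replaces the role played by Poincar\'e's inequality in the Dirichlet case, guaranteeing coercivity on $\Cal H^1$ for the principal part. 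Standard Gronwall-type arguments then yield continuity in initial data in both $\E$ and $\E^{-1}$ (analogues of Corollaries \ref{co.St.inC}, \ref{cor.contE-1}), the dissipative estimate \eqref{est.dis}, and a positively invariant absorbing set $\B\subset\E$ with radius depending only on $\nu$.

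Next, I would repeat the Pata--Zelik splitting of Theorem \ref{cor.smooth1} with $G\in\Cal D(A_\eb+1)$ chosen to solve $(A_\eb+1)G=g$, which is uniquely solvable by positivity of $A_\eb+1$ with $\|G\|_{\Cal H^1}$ bounded uniformly in $\eb$. Writing $u=v+w$ in analogy with \eqref{eq.ve2}--\eqref{eq.we2}, the linear estimate \eqref{le.ve2} for $v$ and the nonlinear smoothing estimate of Lemma \ref{le.we2} for $w$ depend only on self-adjointness of $A_\eb+1$, on the growth conditions on $f$, and on the uniform elliptic regularity estimate \eqref{we.Ca}. Differentiating in time as in Lemma \ref{le.we2}, and noting that the differentiated problems inherit the boundary conditions \eqref{Neqn} or \eqref{Peqn}, one produces the analogues of Theorem \ref{th.disE2} and Corollary \ref{cor.smooth2} in this setting. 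Transitivity of exponential attraction (Theorem \ref{thm.trans}) then delivers the analogue of Theorem \ref{th.smoothattracting}: $B_{\Cal E^2_\eb}(0,R_1)$, with $\Cal E^2_\eb$ as in \eqref{Dep.NP}, exponentially attracts $\B$ in $\E$ at a rate independent of $\eb$. Strict invariance of the attractor then produces the global attractor $\A^\eb$ together with $\|\A^\eb\|_{\Cal E^2_\eb}\le M(\|g\|)$ and the characterisation $\A^\eb=\Cal K^\eb|_{t=0}$.

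The exponential attractor $\Cal M^\eb$ and its $\eb$-uniform fractal dimension bound are then produced by verifying the hypotheses of Theorem \ref{th.expAttr.E-1} for $B_\eb=B_{\Cal E^2_\eb}(0,R_2)$ exactly as in the proof of Theorem \ref{th.exAe}: the finite covers $\Cal U_\eb(\mu,r)$ are built from truncations with respect to the eigenbasis of $A_\eb+1$, and the smoothing splitting \eqref{th.expAttr:spl} of $S_\eb\xi_1-S_\eb\xi_2$ comes from applying the above decomposition to the difference of two trajectories. The main obstacle I anticipate is ensuring that the H\"older elliptic regularity \eqref{we.Ca}, used via Remark \ref{rem.holder} to control $f(\u)$ in $L^\infty(\Omega)$, has a constant \emph{independent of} $\eb$ in the new boundary settings. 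This is resolved by De Giorgi--Nash--Moser theory: up to the boundary on smooth $\Omega$ for the Neumann problem (using the co-normal boundary condition and smoothness of $\partial\Omega$), and interiorly on $\T^3$ for the periodic problem. With this uniform regularity in hand the remainder of the arguments of Sections \ref{s.smth}--\ref{s.ea} apply verbatim, yielding all of the stated uniform bounds on $\A^\eb$ and $\Cal M^\eb$.
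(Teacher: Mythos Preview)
Your proposal is correct and follows essentially the same approach as the paper, which does not give a detailed proof but simply asserts that ``it is straightforward to see from Appendix \ref{s.sma} and Sections \ref{s.smth}--\ref{s.ea} that the following theorem holds.'' You have correctly identified the two nontrivial adaptation points---the $+1$ term supplying the coercivity that Poincar\'e gave in the Dirichlet case, and the need for a uniform-in-$\eb$ boundary H\"older estimate via De Giorgi--Nash--Moser for the Neumann/periodic settings---so your outline is, if anything, more explicit than what the paper provides.
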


Let us now discuss error estimates between the anisotropic and homogenised attractors. It is known  that the main homogenisation results, Theorems \ref{th.Er.ell} and \ref{th.Er.ellCOR}, remain valid for the case of Neumann and periodic boundary conditions.
\begin{theorem}[\cite{ZhPas16}]
	\label{th.Er.ell.NP}
	Let $\Omega\subset\R^3$ be a bounded smooth domain or three-dimensional torus $\mathbb{T}^3$, $\eb>0$, periodic matrix $a(\cdot)$ satisfying uniform ellipticity and boundedness assumptions, $A_\eb$ and $A_0$ given by \eqref{op.A} and $g\in L^2(\Omega)$. Let also $u^\eb\in\Cal D(A_\eb+1)$, $u^0 \in \Cal D(A_0+1)$, solve the equations
	\begin{equation*}
	(A_\eb+1) u^\eb=g\  \text{in $\Omega$},\qquad (A_0+1) u^0=g\ \text{in $\Omega$}.
	\end{equation*}
Then, the following estimates
	\begin{align}
	\label{ell.err.L2.NP}
	&\|u^\eb-u^0\|\leq C\eb\|g\|,\\
	\label{ell.err.H1.NP}
	&\|u^\eb-\Cal {T}_\eb u^0\|_{\Cal H^1}\leq C\sqrt{\eb}\|g\|,
	\end{align}
	hold for some constant $C=C(\nu,\Omega)$. Here the operator $\Cal T_\eb$ is given in \eqref{ue1}.
\end{theorem}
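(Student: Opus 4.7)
The estimates are classical and I would follow the Steklov-smoothing strategy of Suslina/Zhikov-Pastukhova. The torus case contains no boundary issues, so I focus on the Neumann case and indicate simplifications for the periodic case at the end.

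Step 1 (first-order ansatz with smoothing and cutoff). Because $u^0$ is not $H^2$ uniformly up to $\partial\Omega$, I cannot plug the naive corrector $\Cal T_\eb u^0$ directly into $A_\eb+1$. Instead I introduce the Steklov mollifier $S_\eb$ at scale $\eb$ and a cutoff $\eta_\eb\in C^\infty_c(\Omega)$ with $\eta_\eb\equiv 1$ on $\{\dist(x,\partial\Omega)>2\eb\}$, $|\nabla\eta_\eb|\le C\eb^{-1}$, and define the modified corrector
\begin{equation*}
w^\eb(x):=u^0(x)+\eb\sum_{i=1}^3 N_i(\tfrac{x}{\eb})\,\eta_\eb(x)\,S_\eb\partial_{x_i}u^0(x).
\end{equation*}
The cutoff ensures that $w^\eb$ satisfies the Neumann boundary condition with no extra work (and for the torus case one simply takes $\eta_\eb\equiv 1$); the smoothing $S_\eb$ replaces the missing $H^2$-bound on $u^0$ by the weaker regularity $\|u^0\|_{\Cal H^1}\le C\|g\|$ that is available from standard elliptic theory.

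Step 2 (residual in divergence form). Using the cell identities \eqref{Ni}, a direct computation shows that
\begin{equation*}
(A_\eb+1)(u^\eb-w^\eb)=\di F^\eb+h^\eb
\end{equation*}
for an $F^\eb\in(L^2(\Omega))^3$ and $h^\eb\in L^2(\Omega)$ built out of (i) commutators between multiplication by $N_i(\cdot/\eb)$ and $S_\eb$, (ii) boundary layer terms supported on $\{\dist(x,\partial\Omega)\le 2\eb\}$ coming from $\nabla\eta_\eb$ and $1-\eta_\eb$, and (iii) standard second-order remainder terms in the two-scale expansion involving $a_{ij}(y)N_k$ and the homogenised coefficients. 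The $\eb^{-2}$ and $\eb^{-1}$ terms cancel exactly thanks to the cell problem and the identity $A_0u^0+u^0=g$. Using the multiplier estimate for $N_i$, boundedness of $S_\eb$ on $L^2$, the bound $\|N_i\|_{L^\infty}+\|\nabla_y N_i\|_{L^2(Q)}<\infty$, and the classical boundary strip inequality
\begin{equation*}
\int_{\{\dist(x,\partial\Omega)\le c\eb\}}|\nabla u^0|^2\,dx\le C\eb\,\|u^0\|_{\Cal H^1}\|u^0\|_{H^2_{\rm loc}}\le C\eb\,\|g\|^2,
\end{equation*}
one obtains $\|F^\eb\|+\|h^\eb\|_{\Cal H^{-1}}\le C\sqrt{\eb}\,\|g\|$.

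Step 3 ($\Cal H^1$ corrector estimate and deduction of \eqref{ell.err.H1.NP}). Testing the equation for $u^\eb-w^\eb$ against $u^\eb-w^\eb$ itself, using uniform ellipticity of $a(\cdot/\eb)$ and the Neumann (or periodic) boundary condition satisfied by both $u^\eb$ and $w^\eb$, gives
\begin{equation*}
\|u^\eb-w^\eb\|_{\Cal H^1}\le C\sqrt{\eb}\,\|g\|.
\end{equation*}
A straightforward comparison of $w^\eb$ with $\Cal T_\eb u^0$, based on $\|(1-\eta_\eb)\nabla u^0\|+\|(S_\eb-I)\nabla u^0\|\le C\sqrt{\eb}\,\|g\|$ and the multiplier bound on $N_i(\cdot/\eb)$, yields the same $\sqrt{\eb}$-bound for $\|w^\eb-\Cal T_\eb u^0\|_{\Cal H^1}$; adding these gives \eqref{ell.err.H1.NP}.

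Step 4 ($L^2$ estimate by duality). For \eqref{ell.err.L2.NP} I would use the self-adjointness of $A_\eb+1$ (explicitly noted in the excerpt) and a duality argument: given $\phi\in L^2(\Omega)$, let $v^\eb, v^0$ solve $(A_\eb+1)v^\eb=\phi$, $(A_0+1)v^0=\phi$ with the same boundary conditions. Then
\begin{equation*}
(u^\eb-u^0,\phi)=((A_\eb+1)(u^\eb-w^\eb),v^0)+\text{(corrector cross-terms)},
\end{equation*}
and applying the Step 3 $\sqrt{\eb}$-corrector estimates to \emph{both} $u^\eb-\Cal T_\eb u^0$ and $v^\eb-\Cal T_\eb v^0$ pairs the two $\sqrt{\eb}$ factors into a full $\eb$, producing $|(u^\eb-u^0,\phi)|\le C\eb\,\|g\|\|\phi\|$, hence \eqref{ell.err.L2.NP} by duality.

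The main technical obstacle is Step 2: carefully organising the residual so that every error term is either in divergence form with $O(\sqrt{\eb})$ $L^2$-norm or supported in the $O(\eb)$ boundary strip where the strip inequality provides exactly the missing $\sqrt{\eb}$. For the torus case this step trivialises ($\eta_\eb\equiv 1$, no boundary strip, no strip inequality), yielding in fact an $O(\eb)$ bound in $\Cal H^1$, but only the $O(\sqrt{\eb})$ statement is claimed here.
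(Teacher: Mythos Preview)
The paper does not supply a proof of this theorem: it is stated as a known result, attributed via the citation \cite{ZhPas16} in the theorem header, and used as a black box (exactly as Theorems \ref{th.Er.ell} and \ref{th.Er.ellCOR} are in the Dirichlet case). So there is no ``paper's own proof'' to compare against.

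That said, your sketch is the standard route taken in the cited reference and is essentially correct. One small inaccuracy worth flagging: in Step~1 you claim the cutoff $\eta_\eb$ ``ensures that $w^\eb$ satisfies the Neumann boundary condition with no extra work.'' This is not quite the role of the cutoff. Near $\partial\Omega$ one has $w^\eb=u^0$, and $u^0$ satisfies the \emph{homogenised} conormal condition $a^h\nabla u^0\cdot n=0$, not $a(\cdot/\eb)\nabla w^\eb\cdot n=0$. The point is rather that in the Neumann variational formulation no boundary condition is imposed on test functions, so the residual in Step~2 is computed purely as $\int_\Omega a(\tfrac{\cdot}{\eb})\nabla w^\eb\cdot\nabla v + \int_\Omega w^\eb v - \int_\Omega g v$ for $v\in\Cal H^1$, with no boundary integral appearing. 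The cutoff is there to localise the corrector away from $\partial\Omega$ so that the boundary-strip inequality can absorb the resulting error, and (together with $S_\eb$) to guarantee $w^\eb\in\Cal H^1$. With that correction, Steps~2--4 are the right mechanism and reproduce the argument in \cite{ZhPas16}.
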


\begin{rem}
	\label{rem:resolvents.NP}
	Note that inequalities \eqref{ell.err.L2.NP} and \eqref{ell.err.H1.NP} are equivalent to the following operator estimates:
	\begin{align*}
	&\| (A_\eb+1)^{-1} - (A_0+1)^{-1} \|_{\Cal L(L^2(\Omega))} \le C \eb,\\
	& \|(A_\eb+1)^{-1} g - \Cal T_\eb (A_0+1)^{-1} g \|_{\Cal H^1} \le C \sqrt{\eb} \| g \|, \quad \forall g \in L^2(\Omega).
	\end{align*}
\end{rem}
\begin{rem}
		\label{rem.rationalperiod}
		In the case of periodic boundary conditions \eqref{Peqn}, where $Q = [0,1)^3$ and $\Omega = [0,\ell)^3$,  if $\tfrac{\ell}{\eb} \in \N$ then for $w \in \Cal D(A_0 +1) $ the   corrector $\Cal T_\eb w$ belongs to $\Cal H^1$.
		In this setting it is well-known that one can improve the bound in \eqref{ell.err.H1.NP} from $\sqrt{\eb}$ to $\eb$. Consequently, as discussed in Remark \ref{rem.optimalbdy}, for this case we can replace $\sqrt{\eb}$ with $\eb$ in the relevant  results below. 
\end{rem}
Let us also define the energy space of order $-1$: \[
\E^{-1}:=L^2(\Omega)\times \Cal (\Cal H^1)^*, \]
where $(\Cal H^1)^*$ stands for the dual space of $\Cal H^1$.

We now draw the reader's  attention to the fact that the  key theorems (Theorems \ref{th.|ue-uh|E-1} and \ref{th.Dterror})  on the distance between trajectories in $\E^{-1}$ are in terms of resolvents of the operator $A_\eb$, $\eb\geq 0$. The key point to note is that the proofs of these results essentially rely on the fact $A_\eb$ is self-adjoint and (uniformly in $\eb$) bounded and positive. Since the operator $A_\eb+1$, for  Neumann \eqref{Neqn} or periodic \eqref{Peqn} boundary conditions, also possesses these properties one can see that analogues of Theorems \ref{th.|ue-uh|E-1}-\ref{th.Dterror} readily hold 
(after  appropriately changing  the projector $\Pi_\eb$). Namely, upon defining $\Pi_\eb:\Cal E^2_\eb\to\Cal E^2_0$, for $\Cal E^2_\eb$ given by \eqref{Dep.NP}, as follows
\begin{equation}
\label{xih.ch.NP}
\Pi_\eb (\xi^1, \xi^2): = (\xi^1_0, \xi^2_0), \quad \text{ where } \quad \begin{cases}\text{the term $\xi^i_0 \in \Cal D(A_0 +1)$, $i=1,2$, satisfies} \\
	(A_0+1)\xi^i_{0}=(A_\eb+1)\xi^i,
\end{cases}
\end{equation}
we have the following result.
%
%
\begin{theorem}
	\label{th.|ue-uh|.NP}
	Let $\Cal E^2_\eb$ be given by \eqref{Dep.NP} and $S_\eb(t)$ be the solution operator to the problem \eqref{eq.dw.NP} with Neumann \eqref{Neqn} or periodic \eqref{Peqn} boundary conditions. Then, for all $ \xi \in \Cal E^2_\eb,\, \| \xi \|_{\Cal E^2_\eb} \le R$,  $R>0$, the inequalities 
\[
\left. \begin{aligned}
	&\|S_{\eb}(t)\xi-S_0(t)\xi\|_{\E^{-1}}+\|S_{\eb}(t)\xi-S_0(t)\Pi_\eb\xi\|_{\E^{-1}}\leq M e^{K t} \| (A_\eb +1)^{-1} - (A_0 +1)^{-1} \|_{\Cal L(L^2(\Omega))},\\[2pt]
		&\|\partial_t S_{\eb}(t)\xi-\partial_t S_0(t)\Pi_\eb\xi\|_{\E^{-1}}\leq Me^{Kt}\| (A_\eb +1)^{-1} - (A_0 +1)^{-1} \|_{\Cal L(L^2(\Omega))}^{1/2},
	\\
	&\|S_{\eb}(t)\xi-\textsc{T}_\eb S_0(t)\Pi_\eb\xi\|_{\E}\leq Me^{Kt}\sqrt{\eb},
	\end{aligned} \right. \quad t\geq 0,
\]
	hold for some non-decreasing functions $M=M(R,\|g\|)$ and $K=K(R,\|g\|)$ which are independent of $\eb>0$.	
	\end{theorem}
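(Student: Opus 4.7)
The proof is a systematic transcription of the Dirichlet arguments (Theorem \ref{th.|ue-uh|E-1}, Theorem \ref{th.Dterror} and Corollary \ref{co.|ue-uh|E}) in which $A_\eb$, $H^1_0(\Omega)$ and the projector of \eqref{xih.ch} are replaced by $A_\eb+1$, $\Cal H^1$ and the redefinition \eqref{xih.ch.NP} respectively. All operator-theoretic ingredients used in those proofs --- self-adjointness, uniform-in-$\eb$ boundedness and positivity of the elliptic part, the equivalence $\nu\|\phi\|^2_{(\Cal H^1)^*} \leq (\phi,(A_0+1)^{-1}\phi) \leq \nu^{-1}\|\phi\|^2_{(\Cal H^1)^*}$, and the Sobolev embedding $\Cal H^1 \subset L^6(\Omega)$ --- remain valid on $\Cal D(A_\eb+1)$ in both the Neumann and periodic settings, so the arguments transfer with only cosmetic changes.

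\textbf{First estimate.} Setting $r^\eb := u^\eb-u^0$ with the common initial datum $\xi$, the difference solves
\[
\Dt^2 r^\eb+\gamma\Dt r^\eb+(A_0+1)r^\eb = (A_0-A_\eb)u^\eb + f(u^0)-f(u^\eb).
\]
Test against $(A_0+1)^{-1}\Dt r^\eb$ and invoke the identity
\[
\big((A_0-A_\eb)u^\eb,(A_0+1)^{-1}\Dt r^\eb\big)=\big((A_\eb+1)u^\eb,((A_\eb+1)^{-1}-(A_0+1)^{-1})\Dt r^\eb\big),
\]
which is the verbatim analogue of the one used in Theorem \ref{th.|ue-uh|E-1} (note $A_0-A_\eb = (A_0+1)-(A_\eb+1)$). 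Transferring the time derivative onto $(A_\eb+1)u^\eb$, Gronwall's inequality, and the uniform bounds on $(A_\eb+1)u^\eb$ and $(A_\eb+1)\Dt u^\eb$ furnished by the dissipative estimate in $\Cal E^2_\eb$ of \eqref{Dep.NP} (analogue of Theorem \ref{th.disD}), yield the first bound. The variant with $\Pi_\eb\xi$ follows from the triangle inequality, the $\E^{-1}$-Lipschitz continuity of $S_0(t)$ (Neumann/periodic analogue of Corollary \ref{cor.contE-1}) and
\[
\|\Pi_\eb\xi-\xi\|_{(L^2(\Omega))^2}\le \|(A_\eb+1)^{-1}-(A_0+1)^{-1}\|_{\Cal L(L^2(\Omega))}\|\xi\|_{\Cal E^2_\eb},
\]
which is immediate from $\xi^i_0-\xi^i=((A_0+1)^{-1}-(A_\eb+1)^{-1})(A_\eb+1)\xi^i$, exactly as in Lemma \ref{le.xih}.

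\textbf{Second and third estimates.} For the second, differentiate the equation for $r^\eb$ (now with initial datum $\xi-\Pi_\eb\xi$) once in time and apply the argument of Theorem \ref{th.Dterror} to $q^\eb:=\Dt r^\eb$; the initial value $\xi_{q^\eb}(0)=\big(\xi^2-\xi^2_0,\,\gamma(\xi^2_0-\xi^2)+f(\xi^1_0)-f(\xi^1)\big)$ is controlled in $\E^{-1}$ by the resolvent norm via \eqref{xih.ch.NP} and the cubic growth of $f$. For the third estimate, the second component $\|\Dt u^\eb-\Dt u^0\|_{L^2(\Omega)}$ is controlled directly by the $L^2$-part of the second estimate combined with Remark \ref{rem:resolvents.NP}; the $\Cal H^1$-control of $u^\eb - \Cal T_\eb u^0$ is obtained as in Corollary \ref{co.|ue-uh|E} by writing $(A_\eb+1)u^\eb=F_\eb(t)$, $(A_0+1)u^0=F_0(t)$ with $F_\bullet=-\Dt^2 u^\bullet-\gamma\Dt u^\bullet-f(u^\bullet)+g$, introducing the intermediate $\tilde u^\eb:=(A_\eb+1)^{-1}F_0(t)$ and splitting
\[
u^\eb-\Cal T_\eb u^0 = (u^\eb-\tilde u^\eb)+(\tilde u^\eb-\Cal T_\eb u^0).
\]
The first term is bounded in $\Cal H^1$ by $\|F_\eb-F_0\|_{(\Cal H^1)^*}$ (uniform continuity of $(A_\eb+1)^{-1}:(\Cal H^1)^*\to \Cal H^1$), which in turn is handled via the second estimate and the cubic growth of $f$; the second term is bounded by the corrector estimate \eqref{ell.err.H1.NP}. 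The only point that needs attention --- and is the main obstacle to flag --- is verifying that the uniform-in-$\eb$ dissipative estimates in $\Cal E^2_\eb$ (the analogue of Theorem \ref{th.disD}) transfer to the Neumann and periodic cases; this is however immediate because the splitting $u=v+w$ underpinning those estimates (Appendix \ref{s.sma}) is entirely interior and inherits the boundary condition of $u$, so no boundary obstruction arises.
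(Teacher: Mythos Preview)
Your proposal is correct and follows exactly the paper's own approach: the paper does not give a detailed proof of this theorem but simply observes that the proofs of Theorems \ref{th.|ue-uh|E-1} and \ref{th.Dterror} (and hence of Corollaries \ref{co.|ue-uh|E} and \ref{cor.SebS0inE}) rely only on $A_\eb$ being self-adjoint, uniformly bounded and positive, so they transfer verbatim to $A_\eb+1$ with Neumann or periodic boundary conditions after redefining $\Pi_\eb$ as in \eqref{xih.ch.NP}. Your write-up in fact supplies more detail than the paper does, correctly tracking each of the three estimates through its Dirichlet counterpart.
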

   Based on Theorem \ref{th.|ue-uh|.NP} and arguing along the same lines as in Sections \ref{s.Er} - \ref{s.ea} we obtain the following theorem on the comparison of distances between anisotropic and homogenised attractors in terms of $\eb$.
\begin{theorem}
	\label{th.Attr2.NP}
Assume \eqref{mainassumptions} and \eqref{d(StB,Ah)<e-t}.	Let $\Cal A^\eb$, $\Cal M^\eb$, $\eb\geq 0$ be attractors corresponding to problem \eqref{eq.dw.NP}, with Neuman \eqref{Neqn} or periodic \eqref{Peqn} boundary conditions, provided by Theorem \ref{th.Attr1.NP}. Let also $\alpha>0$ be such an exponent that $(A_\eb+1)^{-1}\in \Cal L\big(L^2(\Omega),C^\alpha(\overline{\Omega})\big)$ and $0\leq \beta<\alpha$. Then, the following estimates
	\begin{align*}
		& \dist_{\E^{-1}}(\Cal A^\eb,\Cal A^0)\leq M\eb^\varkappa,\ & &\dist_{\E}(\Cal A^\eb,\textsc{T}_\eb\Cal A^0)\leq M\sqrt{\eb}^\varkappa,\ & &\dist_{(C^\beta(\overline{\Omega}))^2}(\Cal A^\eb,\Cal A^0)\leq M\eb^{\theta\varkappa},\quad\\
		& \dist^s_{\E^{-1}}(\Cal M^\eb,\Cal M^0)\leq M\eb^\varkappa,\ & &\dist^s_{\E}(\Cal M^\eb,\textsc{T}_\eb\Cal M^0)\leq M\sqrt{\eb}^\varkappa,\ & &\dist^s_{(C^\beta(\overline{\Omega}))^2}(\Cal M^\eb,\Cal M^0)\leq M\eb^{\theta\varkappa},  
	\end{align*}
	hold for some non-decreasing $M=M(\|g\|)$ and  constants $\varkappa\in(0,1)$, $\theta=\tfrac{\alpha-\beta}{2+\alpha}$ independent of $\eb$. Here $\textsc{T}_\eb$ is the `correction' operator defined by \eqref{Tepsets}.	
\end{theorem}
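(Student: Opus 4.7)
The plan is to transfer, essentially line-for-line, the arguments of Sections \ref{s.Er}--\ref{s.ea} to the Neumann/periodic setting, using Theorem \ref{th.|ue-uh|.NP} as the replacement for Theorems \ref{th.|ue-uh|E-1}--\ref{th.Dterror} and Corollary \ref{co.|ue-uh|E}, together with Remark \ref{rem:resolvents.NP} to convert operator-norm resolvent differences into powers of $\eb$. The shift by $+1$ in \eqref{eq.dw.NP} is precisely what makes $A_\eb+1$ a self-adjoint, bounded, positive operator on $L^2(\Omega)$ in the absence of Poincar\'e's inequality, and this is the only structural hypothesis needed by the abstract results of Sections \ref{s.Er}--\ref{s.ea}. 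All the preparatory regularity and dissipativity theory (existence of smooth absorbing balls in $\Cal E^2_\eb$, the transitivity of exponential attraction, the $\E^1_\eb$-smoothing splitting) carries over to Neumann/periodic conditions because the arguments of Section \ref{s.smth} only use self-adjointness, uniform ellipticity, and elliptic regularity up to the boundary, all of which remain available.

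First I would prove the bounds on the global attractors. For $\dist_{\E^{-1}}(\A^\eb,\A^0)\le M\eb^\varkappa$, fix $\xi_\eb\in\A^\eb$, unwind strict invariance to obtain a complete trajectory $\xi_{u^\eb}(t)\in\Cal K^\eb$ with $\xi_{u^\eb}(0)=\xi_\eb$ and $\xi_{u^\eb}(-T)\in\A^\eb\subset B_{\Cal E^2_\eb}(0,R)$ (using Theorem \ref{th.Attr1.NP}), apply the first inequality of Theorem \ref{th.|ue-uh|.NP} at time $T$ to get $\|\xi_\eb-S_0(T)\xi_{u^\eb}(-T)\|_{\E^{-1}}\le M e^{KT}\eb$ via Remark \ref{rem:resolvents.NP}, then combine with the exponential attraction assumption \eqref{d(StB,Ah)<e-t} (transported to this setting on an open dense set of $g$) and optimise in $T$. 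This is exactly the argument of Theorem \ref{th.d(Ae,Ah)<e1/2.w}. The $(C^\beta(\overline{\Omega}))^2$ bound then follows by the interpolation inequality used in Corollary \ref{co.d(Ae,A0)}, combined with the uniform $C^\alpha$-boundedness of the attractors (which comes from elliptic regularity for $A_\eb+1$ with Neumann/periodic data). For the $\E$-estimate $\dist_{\E}(\A^\eb,\textsc{T}_\eb\A^0)\le M\sqrt{\eb}^\varkappa$, I would use the projector $\Pi_\eb$ defined in \eqref{xih.ch.NP} (which still satisfies the analogue of Lemma \ref{le.xih} because $\Pi_\eb=(A_0+1)^{-1}(A_\eb+1)$), apply the third estimate of Theorem \ref{th.|ue-uh|.NP} as the analogue of Corollary \ref{cor.SebS0inE}, and reproduce the proof of Theorem \ref{th.gEest} verbatim (using \eqref{distTep} for $\textsc{T}_\eb$ acting between bounded sets of $\Cal E^2_0$ in the current setting).

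Next I would handle the exponential attractors. The abstract construction from Theorem \ref{th.expAttr.E-1} applies with $\E=\Cal H^1\times L^2(\Omega)$, $\E^{-1}=L^2(\Omega)\times(\Cal H^1)^*$, and $\E^1_\eb=\{\xi\in\E\mid (A_\eb+1)\xi^1\in L^2(\Omega),\ \xi^2\in\Cal H^1\}$: compactness of $\E^1_\eb\hookrightarrow\E$ is uniform in $\eb$ because the eigenvalues of $A_\eb+1$ are uniformly bounded below. The covering construction \eqref{Ue(mu,r)} of the proof of Theorem \ref{th.exAe} generalises by truncating in the eigenbasis of $A_\eb+1$, yielding coverings with $\dist^s_{\E^{-1}}\big(\Cal U_\eb(\mu,r),\Cal U_0(\mu,r)\big)\le C_r\|(A_\eb+1)^{-1}-(A_0+1)^{-1}\|_{\Cal L(L^2(\Omega))}$ and (in the $\E$-corrected sense required by assumption (4) of Theorem \ref{th.expAttr.E}) $\dist^s_{\E}\big(\Cal U_\eb(\mu,r),\textsc{T}_\eb\Cal U_0(\mu,r)\big)\le C_r\sqrt{\eb}$, via Remark \ref{rem:resolvents.NP}. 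The smoothing splitting into linear $v_\eb$ plus regular $w_\eb$ is identical to Step 1 of the proof of Theorem \ref{th.exAe}, since the non-linear estimates depend only on the ellipticity of $a$ and the growth of $f$. Feeding all this into Theorem \ref{th.expAttr.E-1} produces $\Cal M^\eb$ with uniform fractal dimension and the symmetric $\E^{-1}$-estimate, while Theorem \ref{th.expAttr.E} (with $\Pi_\eb$ from \eqref{xih.ch.NP} and correction $\textsc{T}_\eb$) produces the symmetric $\E$-estimate, exactly as in Theorem \ref{th.expAttr.E.wv}. The symmetric $(C^\beta(\overline{\Omega}))^2$-estimate then follows from the $\E^{-1}$-estimate and the uniform $C^\alpha$-bound in Theorem \ref{th.Attr1.NP} by the same interpolation as in Corollary \ref{co.expat}.

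The main obstacle is verifying that the two boundary-specific ingredients remain valid: (i) elliptic regularity for $A_\eb+1$ with Neumann or periodic data, uniform in $\eb$, which is used to get boundedness of the attractors in $(C^\alpha(\overline{\Omega}))^2$ and to produce the smoothing in the decomposition \eqref{eq.ve2}--\eqref{eq.we2}; and (ii) the fact that the multiplier estimate \eqref{Tepbound} and the corrector bound of Theorem \ref{th.Er.ell.NP} apply to elements of $\Cal D(A_0+1)$ under the relevant boundary conditions. Point (i) is standard (interior regularity together with standard boundary regularity for second-order elliptic operators with bounded measurable coefficients that are uniformly $C^{0,\alpha}$-controlled by $\nu$) and point (ii) is the content of \cite{ZhPas16} as quoted in Theorem \ref{th.Er.ell.NP}. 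Once these two ingredients are taken for granted, every remaining step is a transcription of the Dirichlet arguments already given, and the final estimates follow by substituting $\eb$ for $\|(A_\eb+1)^{-1}-(A_0+1)^{-1}\|_{\Cal L(L^2(\Omega))}$ via Remark \ref{rem:resolvents.NP}.
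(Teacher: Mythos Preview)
Your proposal is correct and matches the paper's own approach: the paper does not give a separate proof of Theorem \ref{th.Attr2.NP} but simply states that it follows from Theorem \ref{th.|ue-uh|.NP} by ``arguing along the same lines as in Sections \ref{s.Er}--\ref{s.ea}''. Your outline is a faithful (and in fact more detailed) expansion of exactly that reduction, correctly identifying the role of the shift $A_\eb+1$, the modified projector $\Pi_\eb$ from \eqref{xih.ch.NP}, and Remark \ref{rem:resolvents.NP} as the ingredients that let the Dirichlet arguments transfer verbatim.
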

\appendix
\section{Proof of Theorem \ref{cor.smooth1}}
\label{s.sma}
To prove Theorem \ref{cor.smooth1} we perform a splitting of the solution $u=v+w$ to the problem \eqref{eq.generic} into asymptotically contractive and compact parts. This form of splitting was intoduced in \cite{PataZel}. 

Let us consider 
\begin{equation}
\label{eq.v}
\begin{cases}
\Dt^2 v+\gamma\Dt v-\di(a \Nx v)+Lv+f(u)-f(w)=0,\ x\in\Omega,\ t\geq 0,\\
\xi_{v}|_{t=0}=\xi_{u}(0),\ v|_{\d\Omega}=0,\\ 
\end{cases}
\end{equation}
and
\begin{equation}
\label{eq.w}
\begin{cases}
\Dt^2 w+\gamma\Dt w-\di (a\Nx w)+Lw+f(w)=Lu+g,\ x\in\Omega,\ t\geq 0,\\
\xi_{w}|_{t=0}=0,\ w|_{\d\Omega}=0,\\
\end{cases}
\end{equation} 
where the fixed constant $L>0$ is specified below. 

Recall that $\B$ denotes a positive invariant absorbing set of the semigroup $(\E,S(t))$ (see \eqref{StB C B}). Similar to Theorem \ref{th.dis} we have the following result.
\begin{lem}
	\label{le.|w|E<C}
	Assume \eqref{mainassumptions}, $\xi_{u}(0)\in\B$, $L>0$ be an arbitrary constant and $w$ solve the equation \eqref{eq.w}. Then the  estimate 
	\begin{equation*}
	\|\xi_{w}(t)\|_\E\leq M_L(\|\B\|_\E), \quad t\geq 0,
	\end{equation*}
	holds for some non-decreasing function $M_L$ that depends only on $\nu$ and $L$.
\end{lem}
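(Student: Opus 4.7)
The plan is to run a standard energy/dissipative estimate directly on \eqref{eq.w}, using that the right-hand side $Lu+g$ is uniformly bounded in $L^2(\Omega)$. Indeed, by Theorem \ref{th.dis} and the positive invariance $S(t)\B \subset \B$ (see \eqref{StB C B}), we have $\|u(t)\| \le \|\xi_u(t)\|_\E \le M(\|\B\|_\E)$ for all $t\ge 0$, so that $\|Lu(t)+g\| \le C_L(\|\B\|_\E,\|g\|)$ uniformly in $t$. Equation \eqref{eq.w} is then of the same structural form as \eqref{eq.generic}, with non-linearity $\tilde f(s) := Ls + f(s)$ still satisfying all the bounds in \eqref{mainassumptions}, with zero initial data, and with a uniformly-in-time bounded forcing.

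First I would introduce the natural energy
\[
E_w(t) := \tfrac{1}{2}\|\Dt w(t)\|^2 + \tfrac{1}{2}\big(a\Nx w(t),\Nx w(t)\big) + \tfrac{L}{2}\|w(t)\|^2 + \int_\Omega F(w(t))\,dx,
\]
where $F(s) = \int_0^s f(\tau)\,d\tau$. Using Remark \ref{rem.nonlinearity}\emph{(c)}, uniform ellipticity $a\ge \nu I$, and the Poincar\'e inequality in $H^1_0(\Omega)$, choosing $\mu>0$ sufficiently small (independently of $L$) gives a coercivity bound of the form $E_w(t) \ge c_0\,\|\xi_w(t)\|^2_\E - C$ for some $c_0 = c_0(\nu)>0$ and $C = C(\nu)>0$. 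Testing \eqref{eq.w} with $\Dt w$ produces the standard identity $\tfrac{d}{dt}E_w + \gamma\|\Dt w\|^2 = (Lu+g,\Dt w)$, which alone only yields boundedness of $E_w$ on finite time intervals.

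To upgrade this to a uniform-in-time bound I would introduce the auxiliary multiplier: test \eqref{eq.w} also with $\beta w$ for a small $\beta \in (0,\gamma/2)$, add the result to the $E_w$-identity, and set $\Psi := E_w + \beta(\Dt w, w) + \tfrac{\beta\gamma}{2}\|w\|^2$. Using the structural estimates $f(s)s\ge -K_1$, $(a\Nx w,\Nx w)\ge \nu \|\Nx w\|^2$, Cauchy--Schwarz on the forcing term, and choosing $\beta$ small, one arrives at a differential inequality of the form
\[
\tfrac{d}{dt}\Psi(t) + \alpha\,\Psi(t) \le C_L',\qquad t\ge 0,
\]
with $\alpha>0$ depending only on $\nu$, $\gamma$, and $\Psi$ equivalent to $\|\xi_w\|^2_\E$ up to an additive constant. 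Since the zero initial data give $\Psi(0) = \int_\Omega F(0)\,dx = 0$, Gronwall's inequality yields $\Psi(t)\le M_L(\|\B\|_\E)$ for all $t\ge 0$, which is the desired estimate.

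The main (mild) obstacle is ensuring coercivity of $E_w$ uniformly in $L$ --- in particular for small $L>0$, where the potential term $\tfrac{L}{2}\|w\|^2$ cannot by itself absorb the lower quadratic bound $-\mu s^2 - K_\mu$ on $F$. This is handled by paying half of the elliptic term via Poincar\'e: $\tfrac{\nu}{4}\|\Nx w\|^2 \ge \tfrac{\nu c_P}{4}\|w\|^2$, so $\mu$ can be fixed in terms of $\nu$ and $c_P$ alone, and the argument proceeds with constants that depend on $L$ only through the forcing bound $C_L$, as required.
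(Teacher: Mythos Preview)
Your proposal is correct and follows essentially the same route as the paper: the paper simply states that one multiplies \eqref{eq.w} by $\Dt w+\kappa w$ for small $\kappa>0$ and uses the uniform bound $\|\xi_u(t)\|_\E\le M(\|\B\|_\E)$ from \eqref{est.dis}, which is exactly your test with $\Dt w$ plus $\beta w$ combined into the functional $\Psi$. The extra care you take with coercivity for small $L$ (via Poincar\'e) is a genuine detail the paper leaves implicit, but it is handled correctly.
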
 
The proof of Lemma \ref{le.|w|E<C} follows from the multiplication of the first equation in  \eqref{eq.w} by $\Dt w+\kappa w$ with sufficiently small $\kappa>0$ and the fact that we already know that $\|\xi_{u}(t)\|_\E\leq M(\|\B\|_\E)$ for all $t\geq 0$ (due to the dissipative estimate  \eqref{est.dis}).
\begin{lem}
	\label{le.|w'|to0}
	Assume \eqref{mainassumptions}, $\xi_{u}(0)\in\B$, $L>0$ be an arbitrary  constant and $w$ solve \eqref{eq.w}. Then, for every $\mu>0$ the estimate 
	\begin{equation}
	\label{est.|w'|to0}
	\int_s^t\|\Dt w(\tau)\|^2\,d\tau\leq \mu(t-s)+\frac{M_L(\|\B\|_\E)}{\mu},\quad t\geq s\geq 0,
	\end{equation}
	holds for some non-decreasing function $M_L$ that depends only on $\nu$ and $L$.  
\end{lem}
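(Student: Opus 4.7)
\textbf{Proof plan for Lemma \ref{le.|w'|to0}.} The plan is to exploit the standard energy identity for $w$ and then, crucially, to integrate by parts in time on the forcing term so that the time derivative is transferred from $w$ to $u$. The resulting term contains $\int_s^t\|\partial_t u(\tau)\|\,d\tau$, which by Cauchy--Schwarz and the global dissipative bound $\int_0^\infty\|\partial_t u\|^2\,d\tau\le M(\|\B\|_\E)$ from \eqref{est.dis} is controlled by $C\sqrt{t-s}$. A final application of Young's inequality of the form $C\sqrt{t-s}\le \tfrac{1}{2}\mu(t-s)+\tfrac{C^2}{2\mu}$ produces precisely the claimed shape $\mu(t-s)+M_L(\|\B\|_\E)/\mu$.

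\textbf{Key steps.} First I would multiply the first equation of \eqref{eq.w} by $\partial_t w$ and integrate over $\Omega$ to obtain
\[
\frac{d}{dt}E_w(t)+\gamma\|\partial_t w\|^2=(Lu+g,\partial_t w),\qquad
E_w(t):=\tfrac{1}{2}\|\partial_t w\|^2+\tfrac{1}{2}(a\nabla w,\nabla w)+\tfrac{L}{2}\|w\|^2+\int_\Omega F(w)\,dx.
\]
Using Remark \ref{rem.nonlinearity}(c) with $\mu=L/4$ absorbs $\int F(w)$ into the quadratic part, so Lemma \ref{le.|w|E<C} yields $|E_w(t)|\le M_L(\|\B\|_\E)$ uniformly in $t$. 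Integrating the identity between $s$ and $t$ and rearranging gives
\[
\gamma\int_s^t\|\partial_t w\|^2\,d\tau= E_w(s)-E_w(t)+\int_s^t(Lu+g,\partial_t w)\,d\tau.
\]
Next, since $g$ is time-independent and $u,w$ possess sufficient regularity, I would integrate by parts in time in the last integral:
\[
\int_s^t(Lu+g,\partial_t w)\,d\tau=\big[(Lu+g,w)\big]_s^t-L\int_s^t(\partial_t u,w)\,d\tau.
\]
The boundary terms $(Lu+g,w)(s),(Lu+g,w)(t)$ are bounded by $M_L(\|\B\|_\E)$ through the dissipative estimates for $u$ (Theorem \ref{th.dis}) and $w$ (Lemma \ref{le.|w|E<C}).

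\textbf{Sub-linear growth of the remainder.} For the surviving term I would estimate
\[
L\Big|\int_s^t(\partial_t u,w)\,d\tau\Big|\le L\sup_{\tau\ge 0}\|w(\tau)\|\int_s^t\|\partial_t u\|\,d\tau\le L\,M_L(\|\B\|_\E)\sqrt{t-s}\,\Big(\int_s^t\|\partial_t u\|^2\,d\tau\Big)^{1/2},
\]
where the second inequality is Cauchy--Schwarz. The dissipative estimate \eqref{est.dis} (with $\xi\in\B$) bounds the integral $\int_0^\infty\|\partial_t u\|^2\,d\tau$ by $M(\|\B\|_\E)$, so this term is at most $C_L(\|\B\|_\E)\sqrt{t-s}$. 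Collecting everything gives
\[
\gamma\int_s^t\|\partial_t w\|^2\,d\tau\le \widetilde M_L(\|\B\|_\E)+C_L(\|\B\|_\E)\sqrt{t-s},
\]
and Young's inequality $C_L\sqrt{t-s}\le \tfrac{\gamma\mu}{2}(t-s)+\tfrac{C_L^2}{2\gamma\mu}$ (after rescaling $\mu$) produces the desired estimate \eqref{est.|w'|to0}.

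\textbf{Main obstacle.} The genuine subtlety — and the reason a \emph{sub-linear} bound $\mu(t-s)+M_L/\mu$ is available at all rather than only linear growth — is that the forcing $Lu+g$ is not merely bounded in $L^2$ in time (which would yield only linear growth) but has a derivative $L\partial_t u$ which is globally $L^2$-integrable in time by the dissipative estimate \eqref{est.dis}. The integration-by-parts trick exposes this gain, converting a ``bounded forcing'' into an ``$L^2$-integrable forcing derivative''. This is the step one must identify; the remaining estimates are standard Cauchy--Schwarz and Young manipulations.
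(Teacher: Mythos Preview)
Your proof is correct and follows essentially the same route as the paper: multiply \eqref{eq.w} by $\partial_t w$, transfer the time derivative from $w$ to $u$ in the forcing term (the paper does this by absorbing $-L(u,w)-(g,w)$ into the energy functional $\Lambda$, which is exactly your integration by parts written differently), and then exploit $\int_0^\infty\|\partial_t u\|^2\,d\tau\le M(\|\B\|_\E)$ from \eqref{est.dis}. The only cosmetic difference is that the paper applies Young's inequality pointwise in $t$ to $-L(\partial_t u,w)$ before integrating, whereas you first integrate, apply Cauchy--Schwarz in time, and then Young on $\sqrt{t-s}$; both yield the same structure $\mu(t-s)+M_L/\mu$.
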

\begin{proof}
	Multiplying the equation \eqref{eq.w} by $\Dt w$, integrating in $\Omega$ and using Lemma \ref{le.|w|E<C} we obtain
	
	\begin{equation}
	\label{E1}
	\frac{d}{dt}\L +\gamma\|\Dt w\|^2=-L(\Dt u,w)\leq \gamma \mu+\frac{\gamma^{-1}L^2M_L(\|\B\|_\E)}{\mu}\|\Dt u\|^2,
	\end{equation}
	where 
	\begin{equation*}
	\L =\frac{1}{2}\Big(\|\Dt w\|^2+(a\Nx w,\Nx w)+L\| w\|^2\Big)+(F( w),1)-L( u, w)-(g, w).
	\end{equation*}
	From the dissipative estimate \eqref{est.dis} and positive invariance \eqref{StB C B} we see that 
	\begin{equation}
	\label{int|dtu|}
	\int_s^t\|\Dt  u(\tau)\|^2\,d\tau\leq M(\|\B\|_\E),\quad t\geq s\geq 0.
	\end{equation}
	Integrating \eqref{E1} in time from $s$ to $t$, using Lemma \ref{le.|w|E<C} and \eqref{int|dtu|} we derive the desired inequality \eqref{est.|w'|to0} for some new function $M_L$.
\end{proof}

Before continuing, let us recall the following modified Gronwall's lemma.
\begin{lem}[Modified Gronwall's Lemma \cite{PataZel}]\label{le.Gr}
	Let $\L:\R^+\to\R^+$ be an absolutely continuous function satisfying
	\begin{equation*}
	\frac{d}{dt}\L(t)+2\mu \L(t)\leq h(t)\L(t)+k,
	\end{equation*}
	where $\mu>0$, $k\geq 0$ and $\int_s^th(\tau)\,d\tau\leq \mu(t-s)+m$, for all $t\geq s\geq 0$ and some $m\geq 0$. Then
	\begin{equation*}
	\L(t)\leq \L(0)e^{m}e^{-\mu t}+ \frac{ke^{m }}{\mu},\quad t\geq 0.
	\end{equation*} 
\end{lem}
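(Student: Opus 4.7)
The plan is to reduce the stated differential inequality to a tractable form via an integrating factor, and then exploit the hypothesis on $\int_s^t h(\tau)\,d\tau$ to absorb the $h(t)\L(t)$ term. First I would rewrite the inequality as
\[
\frac{d}{dt}\L(t) \leq \bigl(h(t) - 2\mu\bigr)\L(t) + k,
\]
and introduce the integrating factor $\phi(t) := \exp\!\Bigl(\int_0^t \bigl(2\mu - h(\tau)\bigr)\,d\tau\Bigr)$. Multiplying through by $\phi(t)>0$ and recognising the left-hand side as $\frac{d}{dt}\bigl(\phi(t)\L(t)\bigr)$ (justified by the Leibniz rule for absolutely continuous products, since $\L$ is absolutely continuous and $\phi$ is locally Lipschitz) gives $\frac{d}{dt}\bigl(\phi(t)\L(t)\bigr) \leq k\,\phi(t)$. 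Integrating on $[0,t]$ then yields
\[
\L(t) \leq \frac{\L(0)}{\phi(t)} + \frac{k}{\phi(t)}\int_0^t \phi(s)\,ds.
\]

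Next I would use the hypothesis $\int_s^t h(\tau)\,d\tau \leq \mu(t-s) + m$ to bound the two quantities above. Taking $s=0$ gives immediately
\[
\frac{1}{\phi(t)} = \exp\!\Bigl(-2\mu t + \int_0^t h(\tau)\,d\tau\Bigr) \leq e^{m} e^{-\mu t},
\]
which handles the first term and produces exactly $\L(0)\,e^{m} e^{-\mu t}$. For the integral term the same hypothesis applied on a generic subinterval $[s,t]$ yields
\[
\frac{\phi(s)}{\phi(t)} = \exp\!\Bigl(-2\mu(t-s) + \int_s^t h(\tau)\,d\tau\Bigr) \leq e^{m} e^{-\mu(t-s)},
\]
and consequently
\[
\frac{1}{\phi(t)}\int_0^t \phi(s)\,ds \;\leq\; e^{m}\int_0^t e^{-\mu(t-s)}\,ds \;=\; \frac{e^{m}\bigl(1-e^{-\mu t}\bigr)}{\mu} \;\leq\; \frac{e^{m}}{\mu}.
\]
Combining these two bounds would give the claimed estimate $\L(t) \leq \L(0)\,e^{m} e^{-\mu t} + \frac{k e^{m}}{\mu}$.

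This is a standard Gronwall-type argument and presents no real obstacle; the only point worth emphasising is that the hypothesis must be used on \emph{every} subinterval $[s,t]$ and not merely on $[0,t]$, since it is precisely the uniform-in-$s$ bound on the ratio $\phi(s)/\phi(t)$ that allows one to extract the constant $e^{m}/\mu$ in front of $k$ rather than an exponentially growing factor. Once that observation is in place, the rest is direct calculation.
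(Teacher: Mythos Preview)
Your argument is correct and is the standard integrating-factor proof of this Gronwall variant. Note, however, that the paper does not actually prove this lemma: it is merely stated with a citation to \cite{PataZel}, so there is no ``paper's own proof'' to compare against. Your proof would serve perfectly well as a self-contained justification.
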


We are now ready to show that $v$ exponentially goes to $0$ in the energy space $\E$.
\begin{prop}\label{pr.vto0} Assume \eqref{mainassumptions} and $\xi_{u}(0)\in\B$. Then, for sufficiently large constant $L=L(\gamma,\nu,f)$, the estimate
	\begin{equation*}
	\|\xi_{v}(t)\|_\E\leq M_L(\|\B\|_\E)e^{-\beta t},\quad t\geq 0,
	\end{equation*}
	holds for some non-decreasing function $M_L$ and constant $\beta>0$ that depend only on $\nu$ and $L$.
\end{prop}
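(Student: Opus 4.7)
The plan is to construct a Lyapunov functional equivalent to $\|\xi_v(t)\|_\E^2$ and derive a differential inequality amenable to the modified Gronwall Lemma~\ref{le.Gr}, with the time-dependent coefficient absorbed using the integral control from Lemma~\ref{le.|w'|to0}.

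First I would multiply \eqref{eq.v} by the hyperbolic multiplier $\Dt v + \kappa v$ for a small $\kappa \in (0,\gamma)$. Integration by parts gives, after rearrangement,
\begin{equation*}
\tfrac{d}{dt}\mathcal{E}_v + (\gamma-\kappa)\|\Dt v\|^2 + \kappa(a\Nx v,\Nx v)+\kappa L\|v\|^2 + \bigl(f(u)-f(w),\Dt v+\kappa v\bigr)=0,
\end{equation*}
where $\mathcal{E}_v := \tfrac12\|\Dt v\|^2+\kappa(\Dt v,v)+\tfrac{\gamma\kappa}{2}\|v\|^2+\tfrac12(a\Nx v,\Nx v)+\tfrac{L}{2}\|v\|^2$. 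For $\kappa$ small the cross-term $(\Dt v,v)$ stays subcritical and $\mathcal{E}_v$ is comparable to $\|\xi_v\|_\E^2$ up to a correction absorbed below.

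The crux is the nonlinearity. I would use the identity
\begin{equation*}
\bigl(f(u)-f(w),\Dt v\bigr)=\tfrac{d}{dt}\!\int_\Omega H(w,v)\,dx-\int_\Omega\bigl(f(u)-f(w)-f'(w)v\bigr)\Dt w\,dx,
\end{equation*}
with $H(w,v):=F(w+v)-F(w)-f(w)v$, together with the semi-convexity bounds $H(w,v)\ge -\tfrac{K_2}{2}v^2$ and $\kappa(f(u)-f(w),v)\ge -\kappa K_2\|v\|^2$, both consequences of $f'\ge -K_2$. Setting $\mathcal{L}(t):=\mathcal{E}_v(t)+\int_\Omega H(w,v)\,dx$, choosing $L$ larger than an explicit multiple of $K_2$ (this is where $L=L(\gamma,\nu,f)$ enters, independently of the data) ensures both $\mathcal{L}\sim\|\xi_v\|_\E^2$ and that the lower-order losses are dominated by the coercive $\kappa L\|v\|^2$ term. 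The residual $\left|\int(f(u)-f(w)-f'(w)v)\Dt w\,dx\right|$ is controlled via the Taylor bound $|f(u)-f(w)-f'(w)v|\le C(1+|u|+|w|)v^2$ (coming from $|f''|\le K_3(1+|s|)$), H\"older with exponents $(6,6,6,2)$, the Sobolev embedding $H^1\subset L^6$, and the uniform $H^1$ bounds on $u$, $w$ from \eqref{est.dis} and Lemma~\ref{le.|w|E<C}. This yields an estimate $C\|\Nx v\|^2\|\Dt w\|$ which Young's inequality splits as $\delta\|\Nx v\|^2+C'\|\Nx v\|^2\|\Dt w\|^2$; for $\delta<\kappa\nu$ the first piece is absorbed into the coercive left side.

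Assembling everything yields $\tfrac{d}{dt}\mathcal{L}+2\mu\mathcal{L}\le h(t)\mathcal{L}$, with $\mu>0$ independent of the data and $h(t)=C''\|\Dt w(t)\|^2$. Lemma~\ref{le.|w'|to0} provides $\int_s^t h\,d\tau\le C''\mu_0(t-s)+C''M_L/\mu_0$ for any $\mu_0>0$; selecting $\mu_0$ so that $C''\mu_0<\mu$ puts Lemma~\ref{le.Gr} in force with $k=0$, giving $\mathcal{L}(t)\le e^m\mathcal{L}(0)e^{-\beta t}$. The equivalence $\mathcal{L}\sim\|\xi_v\|_\E^2$ and the initial bound $\mathcal{L}(0)\le M(\|\B\|_\E)$ complete the argument. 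The main obstacle is calibrating $\kappa$, $L$, and $\delta$ so that coercivity of $\mathcal{L}$ survives the losses from the nonlinearity while $L$ depends only on $\gamma$, $\nu$, $f$ and not on the size of $\B$; all remaining estimates are routine.
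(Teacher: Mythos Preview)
Your argument is correct and follows a genuinely different route from the paper's. The paper incorporates the nonlinearity into the Lyapunov functional via the combination $(f(u)-f(w),v)-\tfrac{1}{2}(f'(u),|v|^2)$; differentiating this in time produces the two residual terms $(f'(u)-f'(w),\Dt w\,v)$ and $-\tfrac{1}{2}(f''(u)\Dt u,|v|^2)$, so the coefficient $h(t)$ in the modified Gronwall step involves both $\|\Dt w\|^2$ and $\|\Dt u\|^2$, and the paper invokes both Lemma~\ref{le.|w'|to0} and the dissipative bound~\eqref{int|dtu|}. Your choice $H(w,v)=F(w+v)-F(w)-f(w)v$ is cleaner: its time derivative leaves only the single residual $\int(f(u)-f(w)-f'(w)v)\Dt w$, so $h(t)=C\|\Dt w\|^2$ alone, and Lemma~\ref{le.|w'|to0} suffices. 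Your approach also has the pleasant feature that the lower bound $H(w,v)\ge -\tfrac{K_2}{2}v^2$ makes the threshold on $L$ depend only on $K_2$ (hence on $f$), whereas in the paper's functional the term $-\tfrac{1}{2}(f'(u),|v|^2)$ forces $L$ to beat a constant of size $M(\|\B\|_\E)$. One small point you glossed over: the \emph{upper} equivalence $\mathcal{L}\le C\|\xi_v\|_\E^2$ does require the a~priori $H^1$ bounds on $u,w$ (since $H(w,v)$ is quartic in $v$), so the constant $C$, and hence your rate $\mu$, depends on $\|\B\|_\E$; this is harmless here because $\B$ is fixed, but your remark that $\mu$ is ``independent of the data'' is a slight overstatement.
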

\begin{proof} Fix  $\kappa>0$ to be specified below.
	Multiplying equation \eqref{eq.v} by $\Dt v+\kappa v$ in $L^2(\Omega)$ we find (after some algebraic manipulation) that
	\begin{equation}
	\label{E2.1}
	\begin{aligned}
	&\frac{d}{dt}\L+(\gamma-\kappa)\|\Dt v\|^2+\kappa\big((a \Nx v,\Nx v)+L\| v\|^2+(f(u)-f(w),v)\big)=\\
	&\hspace{.5\textwidth}(f'(u)-f'(w),\Dt w v)-\frac{1}{2}(f''(u)\Dt u,|v|^2),
	\end{aligned}
	\end{equation}
	for
	\begin{equation}
	\label{defLam}
	\begin{aligned}
	&\L: =\frac{1}{2}\Big(\|\Dt v\|^2+(a \Nx v,\Nx v)+L\| v\|^2\Big)+ \kappa(\Dt v,v)+\frac{\kappa\gamma}{2}\| v\|^2+ \\ 
	& \hspace{.55\textwidth} (f(u)-f(w),v)-\frac{1}{2}(f'(u),|v|^2).
	\end{aligned}
	\end{equation}
	Now by the lower bound on $f'$ (see \eqref{mainassumptions})  we compute
	\[
L\| v\|^2 +	(f(u)-f(w),v) = L\| v\|^2 + \big(\int_0^1f'(\lambda u+(1-\lambda) w)d\lambda,|v|^2\big) \ge (L-K_2) \| v\|^2.
	\]
 Thus, for $L >  K_2$, \eqref{E2.1} implies
		\begin{equation}
	\label{E2.1.1}
	\begin{aligned}
	&\frac{d}{dt}\L+(\gamma-\kappa)\|\Dt v\|^2+\kappa(a \Nx v,\Nx v) \le (f'(u)-f'(w),\Dt w v)-\frac{1}{2}(f''(u)\Dt u,|v|^2).
	\end{aligned}
	\end{equation}

	We shall establish below, for sufficiently large $L$,  the equivalence 
	\begin{equation}
	\label{t1p}
	C_\nu \L \le \tfrac{1}{2}\|\Dt v\|^2+\tfrac{1}{2}(a \nabla v,\nabla v) \le 2 \L.
	\end{equation} as well as the inequalities
	\begin{align}
	\label{t2}
&	(f'(u)-f'(w),\Dt w v) \leq  \tfrac{\kappa}{4} (a \Nx v,\Nx v)+M(L,\|\B\|_\E)\|\Dt w\|^2 \L, \\
		\label{t3}
&	-\frac{1}{2}(f''(u),\Dt u|v|^2)\leq \tfrac{\kappa}{4}(a \Nx v,\Nx v)+M(\|\B\|_\E)\|\Dt u\|^2\L,
	\end{align}
	Consequently,
for $0 <\kappa<\gamma /2$,  inequalities \eqref{E2.1.1}-\eqref{t3} imply
\[
\frac{d}{dt}\L +C_\nu \kappa \L  \le h\L, \qquad \text{ for }\ h(t) = M(\|\B\|_\E) (\|\Dt w(t)\|^2  + \|\Dt u(t)\|^2).
\]
This inequality,  Lemma \ref{le.|w'|to0}  and \eqref{int|dtu|} show that  the assumptions of the  Modified Gronwall's Lemma (Lemma \ref{le.Gr}) hold with  $2\mu = C_\nu \kappa$ and $k=0$. Whence
\[
\L(t)\leq M(\Vert \mathcal{B}\Vert) \L(0)e^{-\tfrac{1}{2}C_\nu \kappa  t},\quad t\geq 0.
\] 
From  \eqref{t1p}, and the fact $\xi_v(0) = \xi_u(0)$, we prove the desired result. Therefore, to complete the proof it remains to establish \eqref{t1p}-\eqref{t3}.

Let us prove  \eqref{t1p}. We shall prove the upper bound, as the argument for the lower bound is similar. For $\kappa\in (0,\gamma/2)$, utilising the dissipative estimate in for $u$ (\eqref{est.dis}) and the bounds on $f'$ (see \eqref{mainassumptions} and Remark \ref{rem.nonlinearity}.\ref{fcubic}) we compute
	\begin{flalign*}
	\L &  \geq\tfrac{1}{4}\|\Dt v\|^2+ \tfrac{1}{2}\big((a \nabla v,\nabla v)+L\|v\|^2\big)+\kappa(\tfrac{\gamma}{2}-\kappa)\| v\|^2 +(\int_0^1f'(\lambda u+(1-\lambda) w)d\lambda,|v|^2)-\tfrac{1}{2}(f'(u),|v|^2)\\
	& \geq \tfrac{1}{4}\|\Dt v\|^2+\tfrac{1}{2}\big((a \nabla v,\nabla v)+L\|v\|^2\big)- K_2\|v\|^2 - \tfrac{K_4}{2} ( 1 + |u|^2 , |v|^2)\\
	& \geq\tfrac{1}{4}\|\Dt v\|^2+\tfrac{1}{2}\big((a\nabla v,\nabla v)+L\|v\|^2\big)-\big(K_2+\tfrac{K_4}{2}\big)\|v\|^2-\tfrac{K_4}{2}\| u\|^2_{L^4(\Omega)} \| v\|^{1/2} \| v\|^{3/2}_{L^6(\Omega)} \\ 
	& \ge \tfrac{1}{4}\|\Dt v\|^2+\tfrac{1}{4}(a\nabla v,\nabla v)+ \big( \tfrac{L}{2} - K_2+\tfrac{K_4}{2}-M(\| \B\|_\E) \big)\|v\|^2.
	\end{flalign*}
	Then for large enough $L$, we deduce $
	\L \ge \tfrac{1}{4}\|\Dt v\|^2+\tfrac{1}{4}(a\nabla v,\nabla v)$
	and the upper bound in  \eqref{t1p} holds.

	To prove \eqref{t2} and \eqref{t3}, we use dissipative bounds on $u$ and $w$(Lemma \ref{le.|w|E<C}) plus the growth assumption on $f''$  to establish
	\begin{flalign*}
	(f'(u)-f'(w),\Dt w v)&\leq K_5(1+|u|+|w|,|\Dt w|| v|^2)\leq K_5\| 1+|u|+|w|\|_{L^6(\Omega)}  \| \Dt w \| \, \| | v|^2 \|_L^3(\Omega) \\ 
	& \le  M(L,\|\B\|_\E)\|\Dt w\|\| \Nx v\|^2  \leq \frac{\kappa}{4}\| \Nx v \|^2+M(L,\|\B\|_\E)\|\Dt w\|^2\| \Nx v \|^2,
	\end{flalign*}
	and 
	\[
	-\frac{1}{2}(f''(u),\Dt u| v|^2)\leq M(\|\B\|_\E)\|\Dt u\|\| v\|^2_{L^6(\Omega)}\leq \frac{\kappa\nu}{4}\| \nabla v\|^2+M(\|\B\|_\E)\|\Dt u\|^2\| \nabla v\|^2.
	\]
	Then the desired inequalities follow by invoking the ellipticity of $a$ and the now established \eqref{t1p}. The proof is complete.
\end{proof}
To complete the proof of Theorem \ref{cor.smooth1} it remains to prove that $\xi_w$ is a bounded trajectory in $\E^1$, this is the subject of the next result. 
\begin{prop}\label{pr.w'sm}
	Assume \eqref{mainassumptions} and $\xi_{u}(0)\in\B$. Then, for sufficiently large constant $L=L(\gamma,\nu,f)$, the inequality
	\begin{equation*}
\| \di(a\Nx w)(t) \| +	\|\Nx {\Dt w}(t)\| + \| \Dt^2 w(t) \| \leq M_L(\|\B\|_\E),\quad  t\geq 0,
	\end{equation*}
	holds for some non-decreasing function $M_L$ that depends only on $\nu$ and $L$.
\end{prop}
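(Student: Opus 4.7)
The strategy is the standard one for smoothing in the damped wave equation: differentiate \eqref{eq.w} in time to obtain a linear wave equation for $p := \Dt w$, derive a uniform-in-time energy bound for $\xi_p$ in $\E$, and then recover the bound on $\|\di(a\Nx w)\|$ algebraically from the equation itself.

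Differentiating \eqref{eq.w} in $t$ shows that $p$ formally solves
\begin{equation*}
\Dt^2 p + \gamma \Dt p - \di(a \Nx p) + L p + f'(w) p = L \Dt u, \qquad p|_{\d\Omega} = 0,
\end{equation*}
with initial data $p(0) = \Dt w(0) = 0 \in H^1_0(\Omega)$ and $\Dt p(0) = \Dt^2 w(0) = L u(0) + g$, the latter obtained by evaluating \eqref{eq.w} at $t = 0$ together with $\xi_w(0) = 0$ and $f(0) = 0$. Since $\xi_u(0) \in \B \subset \E$ we have $u(0) \in H^1_0(\Omega)$, so $\|\xi_p(0)\|_\E \le M_L(\|\B\|_\E)$. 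Moreover, the dissipative estimate \eqref{est.dis} together with \eqref{int|dtu|} guarantees that $\Dt u \in L^\infty(\R_+;L^2(\Omega))$ with $\int_0^\infty \|\Dt u(\tau)\|^2 d\tau < \infty$, so the source $L\Dt u$ is admissible.

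Mimicking the scheme of Proposition \ref{pr.vto0}, I will multiply the equation for $p$ by $\Dt p + \kappa p$ with small $\kappa \in (0,\gamma/2)$ to derive a differential inequality of the form
\begin{equation*}
\frac{d}{dt} \Cal E_p + \alpha \Cal E_p \le h(t)\,\Cal E_p + k(t),
\end{equation*}
where $\Cal E_p(t)$ is equivalent to $\tfrac{1}{2}\|\Dt p\|^2 + \tfrac{1}{2}(a\Nx p,\Nx p)$ (the equivalence requires $L$ large enough, exactly as in \eqref{t1p}, to dominate $L p$ against $f'(w)\ge -K_2$). The conservative correction $\tfrac{1}{2}(f''(w)\Dt w, p^2)$ is controlled using $|f''|\le K_3(1+|w|)$, Lemma \ref{le.|w|E<C} and the embedding $H^1_0(\Omega)\hookrightarrow L^6(\Omega)$; it yields a multiplicative term with $h(t)\le M(\|\B\|_\E)\|\Dt w(t)\|^2$ that is integrable on $\R_+$ by Lemma \ref{le.|w'|to0}. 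The forcing $L(\Dt u,\Dt p + \kappa p)$ is split by Young's inequality, the quadratic parts being absorbed into the coercive side and leaving a residual $k(t)\le CL^2\|\Dt u(t)\|^2 \in L^1(\R_+)$, again by \eqref{int|dtu|}. A mild extension of the Modified Gronwall Lemma \ref{le.Gr} to accommodate a time-dependent, $L^1$-integrable inhomogeneity $k(t)$ in place of a constant $k$ (obtained via the variation-of-constants formula) then delivers
\begin{equation*}
\Cal E_p(t) \le M_L(\|\B\|_\E), \qquad t \ge 0,
\end{equation*}
i.e. $\|\Nx \Dt w(t)\| + \|\Dt^2 w(t)\| \le M_L(\|\B\|_\E)$.

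Finally, I recover the remaining bound by rewriting \eqref{eq.w} as
\begin{equation*}
\di(a\Nx w) = \Dt^2 w + \gamma\Dt w + Lw + f(w) - Lu - g,
\end{equation*}
and estimating each term in $L^2(\Omega)$: $\|\Dt^2 w\|$ from the second step; $\|\Dt w\|$ and $\|w\|$ from Lemma \ref{le.|w|E<C}; $\|u\|$ from \eqref{StB C B}; and $\|f(w)\|\le M(1+\|w\|_{L^6}^3)\le M_L(\|\B\|_\E)$ using Remark \ref{rem.nonlinearity}.\ref{fcubic} together with $H^1_0(\Omega)\hookrightarrow L^6(\Omega)$. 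The main obstacle in the argument is that the forcing $L\Dt u$ in the equation for $p$ is only $L^1$-in-time rather than uniformly bounded, so a straight application of Lemma \ref{le.Gr} is unavailable; overcoming this requires the small adaptation of that lemma described above, all other terms being of the type already handled in the proof of Proposition \ref{pr.vto0}.
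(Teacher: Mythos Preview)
Your proposal is correct and follows essentially the same route as the paper: differentiate \eqref{eq.w} in time, multiply the equation for $p=\Dt w$ by $\Dt p+\kappa p$, absorb the $f''(w)\Dt w\,p^2$ term into a multiplicative coefficient $h(t)$ controlled via Lemma~\ref{le.|w'|to0}, apply the modified Gronwall lemma, and then recover $\|\di(a\Nx w)\|$ algebraically from the equation. One unnecessary complication: you treat the forcing $L\Dt u$ as merely $L^1$-in-time and invoke an extension of Lemma~\ref{le.Gr}, but in fact $\|\Dt u(t)\|$ is uniformly bounded by the dissipative estimate \eqref{est.dis} (it is part of $\|\xi_u(t)\|_\E$), so the paper applies Lemma~\ref{le.Gr} directly with constant $k=M_L(\|\B\|_\E)$ and no extension is needed.
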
 
\begin{proof}
	Let us set $q:=\Dt w$, then $q$ solves
	\begin{equation*}
	\begin{cases}
	\Dt^2 q+\gamma\Dt q-\di (a \Nx q)+L q+f'(w) q=L\Dt u,\quad x\in\Omega,\ t\geq0,\\
	\xi_{q}|_{t=0}=(0,L u(0)+g),\qquad	q|_{\partial \Omega}=0.
	\end{cases}
	\end{equation*}
	Multiplying the first equation above  by $\Dt q+\kappa q$ and integrating in $\Omega$ we find
	\begin{equation}
	\label{E3.1}
	\begin{aligned}
	&\frac{d}{dt}\L+(\gamma-2\kappa)\|\Dt q\|^2+\kappa\Big(\|\Dt q\|^2+(a \Nx q,\Nx q)+L\| q\|^2+(f'(w),|q|^2)\Big)=\\
	& \hspace{.4\textwidth}L(\Dt u,\Dt q)+\kappa L(\Dt u,\Dt w)+\frac{1}{2}(f''(w)\Dt w,|q|^2),
	\end{aligned}
	\end{equation}
	for
	\begin{equation*}
	\L:=\frac{1}{2}\Big(\|\Dt q\|^2+(a \Nx q,\Nx q)+L\|q \|^2+(f'(w),|q|^2)\Big)+\kappa(\Dt q, q)+\frac{\kappa\gamma}{2}\|q\|^2.
	\end{equation*}
	The identity \eqref{E3.1} can be rewritten in the form
	\begin{equation}
	\label{E3.2}
	\begin{aligned}
	&\frac{d}{dt}\L+(\gamma-2\kappa)\|\Dt q\|^2+2\kappa\L=2\kappa^2(\Dt w,\Dt q)+\kappa^2\gamma\|\Dt w\|^2+ \\
	&\hspace{.25\textwidth}+L(\Dt u,\Dt q)+\kappa L(\Dt u,\Dt w)+\frac{1}{2}(f''(w),\Dt w|q|^2)=:H.
	\end{aligned}
	\end{equation}
	Arguing in a similar manner as in the proof of \eqref{t1p} we have
	\begin{equation}
	\label{<Lq<}
C_\nu	\|\xi_{ q}\|^2_{\E}\leq\L
	\end{equation}
	for some $C_\nu$, 
	as long as  $L=L(\gamma,\nu,f)$ is large enough.
	Using the growth condition of $f''$ (see \eqref{mainassumptions}), the dissipative estimate for $u$ (\eqref{est.dis}),  energy estimate for $w$ (Lemma \ref{le.|w|E<C}) and arguing as in the proof of  \eqref{t2}, the right-hand side $H(t)$ can be estimated as follows:
	\begin{equation}
	\label{t4}
	H\leq M_L(\|\B\|_\E)+\delta\|\xi_{q}\|^2_\E+\frac{M_L(\|\B\|_\E)}{\delta}\big(\|\Dt u\|^2+\|\Dt w\|^2\Big)\|\xi_{q}\|^2_\E ,
	\end{equation}
	for any $\delta>0$. Choosing $0<\kappa < \tfrac{\gamma}{2}$,  $\delta$ small, and collecting \eqref{E3.2}, \eqref{<Lq<}, \eqref{t4} we derive
	\begin{equation*}
	\frac{d}{dt}\L+\kappa\L\leq M_L(\|\B\|_\E)+M_L(\|\B\|_\E)\big(\|\Dt u\|^2+\|\Dt w\|^2\Big)\L.
	\end{equation*}
	Consequently, using \eqref{int|dtu|}, Lemma \ref{le.|w'|to0} and applying the modified Gronwall's lemma we determine that
\begin{equation}\label{pa2old}
\|\Nx {\Dt w}(t)\| + \| \Dt^2 w(t) \| \leq M_L(\|\B\|_\E),\quad  t\geq 0.
\end{equation}
It now readily follows that 
\[
\|\di (a \Nx w)\|\leq M_L(\|\B\|_\E),\quad  t\geq 0.
\]
Indeed, by rewriting equation \eqref{eq.w} in the form
\begin{equation*}
-\di(a \Nx w)= -\Dt^2 w-\gamma\Dt w-L w-f(w)+Lu+g =:H,\quad x\in\Omega,\ t\geq 0,
\end{equation*}
then due to Theorem \ref{th.dis},  Lemma \ref{le.|w|E<C} and \eqref{pa2old} we see that $\|H(t)\|\leq M_L(\|\B\|_{\E})$. Hence, the proof is complete.
\end{proof}

\section{Proof of Theorem \ref{th.expAttr.E-1}}\label{app.E-1}
The proof of Theorem \ref{th.expAttr.E-1} is an adaptation of a construction for exponential attractors presented in \cite[Theorem 2.10]{EMZ2005}. The difference here is one needs to keep track on the parameter dependence of all the sets used in the construction and incorporate the fact we compare the symmetric distance in a topology different to that in which the exponential attractors are constructed. For the reader's convenience we shall provide the details here.

\subsection{Construction of the exponential attractors.
}
\par
Let us introduce notations for the `starting' cover $\Cal U_\eb(\tfrac{1}{K},R)$ and the `model' cover $\Cal U_\eb(\tfrac{1}{4K},1)$:
\[
\Cal V_0(\eb):=\Cal U_\eb(\tfrac{1}{K},R),\quad 
\Cal U(\eb):=\Cal U_\eb(\tfrac{1}{4K},1)=\{\xi_{i\eb}\}^N_{i=1},\quad \eb\geq 0,
\]
where $N_0: =\mathrm{card}\Cal V_0(\eb)= N(\tfrac{1}{K},R) $ and $N:=N(\tfrac{1}{4K},1)$ are, by assumption, independent of $\eb\geq 0$.

We shall begin with constructing a family of sets  $\Cal V_k(\eb)$, $k\in \N$, that satisfy
\begin{equation}\label{ap.1}
\Cal V_k(\eb)\subset \Cal O(B_\eb),\ \footnote{Here $S_\eb(k)$ denotes the $k^{th}$ iteration of $S_\eb$.}S_\eb(k)B_\eb \subset \bigcup_{\xi\in\Cal V_k(\eb)}B_{\E}\big(\xi,\tfrac{1}{K}\left(\tfrac{3}{4}\right)^k\big),\quad \ k \in \N, \ \eb\geq 0.
\end{equation}
Note that,  by the assumptions of Theorem \ref{th.expAttr.E-1}, the above property holds for  $k=0$. We now assume that the set $\Cal V_k(\eb)$ exists, for some fixed $k$, and are going to construct from it the set $\Cal V_{k+1}(\eb)$.
From \eqref{ap.1} it follows that
\[
S_\eb(k+1)B_\eb \subset \bigcup\limits_{\xi\in\Cal V_k(\eb)}S_\eb B_{\E}(\xi,\tfrac{1}{K}\left(\tfrac{3}{4}\right)^k),\ \eb\geq 0.
\]
Let us consider an element $S_\eb \zeta\in S_\eb B_{\E}(\xi,\tfrac{1}{K}\left(\tfrac{3}{4}\right)^k)$ for some $\xi\in\Cal V_k(\eb)$. Due to the splitting \eqref{th.expAttr:spl} we have
\[
S_\eb\zeta-S_\eb\xi=v_\eb+w_\eb,\quad \|v_\eb\|_\E\leq \tfrac{1}{2K}\left(\tfrac{3}{4}\right)^k,\quad \|w_\eb\|_{\E^1_\eb}\leq \left(\tfrac{3}{4}\right)^k,\quad \eb\geq 0.
\]
Therefore, by using the model cover $\Cal U(\eb)$ of $B_{\E^1_\eb}(0,1)$, we see that
\[
w_\eb\in B_{\E^1_\eb}\big(0,\left(\tfrac{3}{4}\right)^k\big)\subset\bigcup_{i=1}^N B_{\E}\left(\left(\tfrac{3}{4}\right)^k\xi_{i\eb},\tfrac{1}{4K}\left(\tfrac{3}{4}\right)^k\right).
\]
Since $S_\eb \zeta=S_\eb\xi+v_\eb+w_\eb$ we deduce that 
\[
S_\eb(k+1)B_\eb\subset \bigcup_{\xi\in \Cal V_k(\eb)}\bigcup_{i=1}^N B_{\E}\left(S_\eb\xi+\left(\tfrac{3}{4}\right)^k\xi_{i\eb},\tfrac{1}{K}\left(\tfrac{3}{4}\right)^{k+1}\right),\quad \eb\geq 0.
\]
As $\|\xi_{i,\eb}\|_{\E^1_\eb}\le \delta_1$ we conclude that \eqref{ap.1} holds for
\begin{equation}
\label{ap.3}
\Cal V_{k+1}(\eb):=S_\eb \Cal V_k(\eb)+\left(\tfrac{3}{4}\right)^k\Cal U(\eb)\subset \Cal O(B_\eb),\quad k\in \mathbb{Z}_+,\ \eb\geq 0.
\end{equation}

Now, it is straightforward to verify the following properties of $\Cal V_k(\eb)$:
\begin{equation}
\label{ap.4}
\begin{cases}
\mathrm{card}\Cal V_k(\eb)=N_0N^{k},\\
\dist_{\E}(S_\eb(k)B_\eb,\Cal V_k(\eb))\le \tfrac{1}{K}\left(\tfrac{3}{4}\right)^k,\\
\dist^s_\E(\Cal V_{k+1}(\eb),S_\eb \Cal V_k(\eb))\le c_0\, \delta_1\left(\tfrac{3}{4}\right)^k, 
\end{cases}\quad
k\in\N,\ \eb\geq 0.
\end{equation}
Based on the sets $\Cal V_k(\eb)$ we construct the sets $E_k(\eb)\subset \Cal O(B_\eb)$:
\begin{equation}\label{ap.Eke}
E_1(\eb):=\Cal V_1(\eb),\quad E_{k+1}(\eb):=\Cal V_{k+1}(\eb)\cup S_\eb E_k(\eb),\quad k\in \N,\ \eb\geq 0,
\end{equation}
that clearly satisfy
\begin{equation}
\label{ap.4'}
\begin{cases}
\mathrm{card}E_k(\eb)\leq kN_0N^{k},\\
S_\eb E_k(\eb)\subset E_{k+1}(\eb),\\
\dist_{\E}(S_\eb(k)B_\eb,E_k(\eb))< \tfrac{1}{K}\left(\tfrac{3}{4}\right)^k,
\end{cases}\quad
k\in \N,\ \eb\geq 0.
\end{equation}

We shall now demonstrate that the sets
\begin{equation}
\label{ap.5}
\Cal M^\eb:=\left[\hat{\Cal M}^\eb\right]_\E,\qquad \hat{\Cal M}^\eb:=\bigcup_{k=1}^\infty E_k(\eb),\ \eb\geq 0,
\end{equation}
are exponential attractors for the discrete dynamical systems $(B_\eb, S_\eb)$. To this end we use the following  result.
\begin{lem}
	\label{le.d(Ek,SnB)}
	Let the assumptions of Theorem \ref{th.expAttr.E-1} hold and the sets $E_k(\eb)$, $k\in\N$, $\eb\geq 0$, be given by \eqref{ap.Eke}. Then, there exist constants $M_1=M_1(c_0,K,\delta_1)>0$ and $\omega=\omega(c_0,K,\delta_1)\in(0,1)$ (both independent of $\eb$) such that for all $\eb\geq 0$ we have
	\[\dist_\E(E_k(\eb),S_\eb(n)B_\eb)\leq M_1\left(\tfrac{3}{4}\right)^{\omega k},\qquad \mbox{for all }n\in\N,\ k\in\N:\ k\geq\tfrac{n}{\omega}.\]
\end{lem}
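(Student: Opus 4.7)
The plan is to exploit the inductive definitions of $E_k(\eb)$ and $\Cal V_k(\eb)$ together with a Lipschitz bound for $S_\eb$ on $\Cal O(B_\eb)$ and the telescoping estimate $\dist^s_\E(\Cal V_{m+1}(\eb), S_\eb \Cal V_m(\eb)) \le c_0\delta_1 (\tfrac34)^m$ from the third line of \eqref{ap.4}. First, \eqref{th.expAttr:spl} and assumption \emph{(i)} immediately give
\begin{equation*}
\|S_\eb \xi_1 - S_\eb\xi_2\|_\E \le L_*\|\xi_1-\xi_2\|_\E, \qquad L_* := \tfrac12 + c_0 K,
\end{equation*}
for all $\xi_1,\xi_2\in\Cal O(B_\eb)$. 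Unfolding \eqref{ap.Eke} shows $E_k(\eb) = \bigcup_{j=0}^{k-1} S_\eb(j)\Cal V_{k-j}(\eb)$, so any $\xi\in E_k(\eb)$ has the form $\xi = S_\eb(j)\eta$ with $j \in \{0,\ldots,k-1\}$ and $\eta \in \Cal V_{k-j}(\eb)\subset\Cal O(B_\eb)$; throughout I will use the positive invariance $S_\eb\Cal O(B_\eb)\subset B_\eb$.

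The core step is a telescoping back-substitution. If $j\ge n+1$ the inclusion $\xi = S_\eb(n)[S_\eb(j-n)\eta]\in S_\eb(n) B_\eb$ is immediate. Otherwise, iterating the telescoping estimate produces a sequence $\eta = \eta_0,\eta_1,\ldots,\eta_p$ with $\eta_i \in \Cal V_{k-j-i}(\eb)$ and $\|\eta_i - S_\eb\eta_{i+1}\|_\E \le c_0\delta_1(\tfrac34)^{k-j-i-1}$. Propagating each replacement through the Lipschitz bound for $S_\eb^{\,j+i}$ and summing gives
\begin{equation*}
\|\xi - S_\eb(j+p)\eta_p\|_\E \;\le\; c_0\delta_1 \sum_{i=1}^{p} L_*^{\,j+i-1}\bigl(\tfrac34\bigr)^{k-j-i}.
\end{equation*}
Choosing $p$ so that $j+p = n+1$ and $\eta_p \in \Cal O(B_\eb)$ (possible provided $k\ge n+2$, which the hypothesis below enforces) produces $S_\eb(j+p)\eta_p = S_\eb(n)(S_\eb\eta_p) \in S_\eb(n) B_\eb$, i.e. an admissible approximant.

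The sum on the right is geometric in $i$ and its dominant term is $L_*^{n}(\tfrac34)^{k-n-1} \simeq \exp\!\bigl(n\log L_* - k\log(4/3)\bigr)$. Under the hypothesis $n \le \omega k$ this is bounded by $\exp\!\bigl(-k[(1-\omega)\log(4/3) - \omega\log(L_*\vee 1)]\bigr)$. Taking $\omega := \log(4/3)/\log\bigl(\tfrac43(L_*\vee 1)\bigr) \in (0,1)$ makes the bracket positive and equal to $-\omega\log(3/4)$, yielding the desired bound $M_1(\tfrac34)^{\omega k}$ for an appropriate $M_1 = M_1(c_0,K,\delta_1)$; the remaining terms in the sum are absorbed into $M_1$.

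The main obstacle is precisely this balance between the Lipschitz blow-up $L_*^n$ over $n$ iterations of $S_\eb$ and the geometric gain $(\tfrac34)^k$ from the refined cover; this is exactly why the hypothesis is $k \ge n/\omega$ rather than merely $k \ge n$, and it is through $L_*$ that the explicit dependence $\omega = \omega(c_0,K,\delta_1)$ enters. (The boundary cases $j=n$ or $p$ slightly larger than $n-j+1$ are handled identically, possibly with one extra iteration so that $\eta_p$ itself sits in $\Cal V_{m'}(\eb) \subset \Cal O(B_\eb)$ and $S_\eb\eta_p\in B_\eb$.)
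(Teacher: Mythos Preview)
Your approach is correct and is precisely the standard argument the paper defers to (the paper omits the proof, citing Lemma~2.3 of \cite{EMZ2005}): unfold $E_k(\eb) = \bigcup_{j=0}^{k-1} S_\eb(j)\Cal V_{k-j}(\eb)$, derive a uniform Lipschitz constant $L_* = \tfrac12 + c_0K$ from the splitting \eqref{th.expAttr:spl}, then use \eqref{ap.4}$_3$ to telescope back to an element of $S_\eb(n)B_\eb$ and balance $L_*^{\,n}$ against $(\tfrac34)^k$.

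There is one arithmetic slip: your choice $\omega = \log(4/3)\big/\log\bigl(\tfrac43(L_*\vee 1)\bigr)$ makes the bracket $(1-\omega)\log(4/3) - \omega\log(L_*\vee 1)$ equal to \emph{zero}, not $\omega\log(4/3)$. Writing $a=\log(4/3)$ and $b=\log(L_*\vee1)$, your $\omega = a/(a+b)$ gives bracket $= a - \omega(a+b) = 0$. To obtain bracket $=\omega\log(4/3)$ you need
\[
\omega \;=\; \frac{\log(4/3)}{\log\bigl((4/3)^2(L_*\vee 1)\bigr)},
\]
which is still in $(0,1)$ and depends only on $c_0,K$; with this correction the argument goes through exactly as you describe.
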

The proof of this lemma, basically, repeats the proof of Lemma 2.3 from \cite{EMZ2005}, so we omit the proof. 

Now, we are ready to verify that the constructed sets $\Cal M^\eb$ satisfy Definition \ref{de.eA}. The positive invariance and the uniform exponential attraction property (with $\sigma=\ln\left(\tfrac{4}{3}\right)$) 
\begin{equation}
\label{ap.5'}
\dist_{\E}(S_\eb(k)B_\eb,\Cal M^\eb)\leq \frac{1}{K}\left(\frac{3}{4}\right)^k,\quad k\in \N,\ \eb\geq 0,
\end{equation}
 follow directly from \eqref{ap.4'}$_2$, \eqref{ap.4'}$_3$ and \eqref{ap.5}. From the construction it also follows that $\Cal M^\eb\subset \Cal O(B_\eb)$ and thus $\Cal M^\eb$ is compact in $\E$ for every $\eb\geq 0$. Let us check that $\dim_f(\Cal M^\eb,\E)\leq D$ uniformly with respect to $\eb\geq 0$. To this end we need to estimate the minimal number $N_r(\Cal M^\eb,\E)$ of open balls with radius $r>0$ in $\E$ needed to cover $\Cal M^\eb$. Note that, since the cover is open,  $N_r(\Cal M^\eb,\E)=N_r(\hat{\Cal M}^\eb,\E)$. We argue that for any $r>0$ there exist $k_r\in\N$ and $n_r\in\N$ (independent of $\eb$) such that
\begin{equation}
\label{ap.6}
\dist_\E\bigg(\bigcup_{k=k_r+1}^\infty E_k(\eb),\Cal V_{n_r}(\eb)\bigg)<r,\quad \eb\geq 0.
\end{equation}
Indeed, let $k_r$ and $n_r$ be parameters, then by the triangle inequality we have
\[\dist_\E\bigg(\bigcup_{k=k_r+1}^\infty E_k(\eb),\Cal V_{n_r}(\eb)\bigg)\leq \dist_\E\bigg(\bigcup_{k=k_r+1}^\infty E_k(\eb),S_\eb(n_r)B_\eb\bigg)+\dist_\E\left(S_\eb(n_r)B_\eb,\Cal V_{n_r}(\eb)\right),\ \eb\geq 0.\] 
Using \eqref{ap.4}$_2$ and taking $n_r\geq\footnote{Here $\lfloor c \rfloor$ denotes the the largest integer which does not exceeds $c\in\R$.}\left\lfloor\frac{1}{\ln(4/3)}\ln\left(\frac{2}{rK}\right)\right\rfloor\vee 0+1$ we obtain
\[\dist_\E\left(S_\eb(n_r)B_\eb,\Cal V_{n_r}(\eb)\right)<\tfrac{r}{2},\quad \eb\geq 0.\]
Also applying Lemma \ref{le.d(Ek,SnB)} for any $k_r\in\N$ such that $k_r\geq\tfrac{n_r}{\omega}$, we find that
\[
\dist_\E\bigg(\bigcup_{k=k_r+1}^\infty E_k(\eb),S_\eb(n_r)B_\eb\bigg)\leq M_1\left(\tfrac{3}{4}\right)^{\omega k_r}\leq M_1\left(\tfrac{3}{4}\right)^{n_r}<\tfrac{r}{2},\quad \eb\geq 0,
\]
if $n_r\geq\left\lfloor\tfrac{1}{\ln(4/3)}\ln\left(\frac{2 M_1}{r}\right)\right\rfloor\vee 0+1$. Therefore \eqref{ap.6} is valid for $n_r$ and $k_r$ of the form
\[n_r=\left\lfloor\tfrac{1}{\ln(4/3)}\ln\left(\tfrac{1}{r}\right)\right\rfloor\vee 0\,+ C_1(c_0,K,\delta_1),\quad k_r=\left\lfloor\tfrac{1}{\omega\ln(4/3)}\ln\left(\tfrac{1}{r}\right)\right\rfloor\vee 0\,+ C_2(c_0,K,\omega,\delta_1).
\] 

Using the control on the number of elements for $\Cal V_k(\eb)$ and $E_k(\eb)$, \eqref{ap.5} and \eqref{ap.6} we can estimate $N_r(\hat{\Cal M}^\eb,\E)$ as follows
\[
N_r(\hat{\Cal M}^\eb,\E)\leq \sum_{k=1}^{k_r}\mathrm{card} E_k(\eb)+\mathrm{card} \Cal V_{n_r}(\eb)\leq \sum_{k=1}^{k_r}kN_0N^k+N_0N^{n_r}\leq (k_r^2+1)N_0N^{k_r}.
\]
This estimate readily yields
\begin{equation}
\dim_f(\Cal M^\eb,\E):=\limsup_{r\to +0}\tfrac{\ln N_r(\Cal M^\eb,\E)}{\ln\left(\tfrac{1}{r}\right)}\leq \tfrac{\ln N}{\omega\ln(4/3)}=:D,\quad \eb\geq 0.
\end{equation}
\subsection{Estimate on the symmetric distance
}

Derivation of the estimate on the symmetric distance $\dist^s_{\E^{-1}}(\Cal M^\eb,\Cal M^0)$ relies on the following  result.
\begin{lem}
	\label{le.d(Eke,Ek0)}
	Let the assumptions of Theorem \ref{th.expAttr.E-1} hold and the sets $E_k(\eb)$, $k\in\N$, $\eb\geq 0$, be given by \eqref{ap.Eke}. Then for all $k\in\N$ and $\eb\geq 0$ the following estimate
	\begin{equation}
	\label{est.d(Eke,Ek0)}
\begin{aligned}
&	\dist^s_{\E^{-1}}(E_k(\eb),E_k(0))\leq M L^k\Big( \sup_{\xi \in \Cal O(B_\eb) }\Vert S_\eb \xi  - S_0 \xi \Vert_{\E^{-1}} + \dist^s_{\E^{-1}}\big(\Cal U_\eb(\tfrac{1}{4K},1),\Cal U_0(\tfrac{1}{4K},1)\big)\\
&\hspace{10cm}	+ \dist^s_{\E^{-1}} \big(\Cal U_\eb(\tfrac{1}{K},R), \Cal U_0(\tfrac{1}{K},R)\Big),
\end{aligned}
	\end{equation}
	holds for some constant $M=M(L)$ independent of $\eb$ and $k$.
\end{lem}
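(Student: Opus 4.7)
The plan is to run a parallel induction on $k$ for $\Cal V_k(\eb)$ and for $E_k(\eb)$. Denote by $\kappa_\eb$ the quantity inside the parentheses on the right-hand side of \eqref{est.d(Eke,Ek0)}; the goal becomes $\dist^s_{\E^{-1}}(E_k(\eb),E_k(0)) \le M L^k \kappa_\eb$, and I may harmlessly assume $L\ge 1$ by enlarging $L$ if needed.

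The engine of the argument is the following propagation inequality: for any $A_\eb \subset \Cal O(B_\eb)$ and $A_0 \subset \Cal O(B_0)$, the triangle inequality together with assumption \textit{(2)} of Theorem \ref{th.expAttr.E-1} gives
\[
\dist^s_{\E^{-1}}(S_\eb A_\eb, S_0 A_0) \le \sup_{\xi\in\Cal O(B_\eb)}\|S_\eb\xi - S_0\xi\|_{\E^{-1}} + L\,\dist^s_{\E^{-1}}(A_\eb, A_0) \le \kappa_\eb + L\,\dist^s_{\E^{-1}}(A_\eb, A_0).
\]
Combined with the elementary inequality $\dist^s_{\E^{-1}}(A_\eb + C_\eb, A_0 + C_0) \le \dist^s_{\E^{-1}}(A_\eb, A_0) + \dist^s_{\E^{-1}}(C_\eb, C_0)$ (for identically indexed finite Minkowski sums), applied to the recursion $\Cal V_k(\eb) = S_\eb\Cal V_{k-1}(\eb) + (3/4)^{k-1}\Cal U(\eb)$ together with the base case $\dist^s_{\E^{-1}}(\Cal V_0(\eb), \Cal V_0(0)) \le \kappa_\eb$, this yields the recurrence $a_k \le L\,a_{k-1} + 2\kappa_\eb$ for $a_k := \dist^s_{\E^{-1}}(\Cal V_k(\eb), \Cal V_k(0))$, which iterates to $a_k \le C L^k \kappa_\eb$.

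For the sets $E_k(\eb) = \Cal V_k(\eb) \cup S_\eb E_{k-1}(\eb)$, the routine property $\dist^s(A_1\cup A_2, B_1\cup B_2) \le \max\{\dist^s(A_1, B_1), \dist^s(A_2, B_2)\}$ combined with the propagation inequality yields $b_k \le \max\{C L^k \kappa_\eb,\ L\, b_{k-1} + \kappa_\eb\}$ for $b_k := \dist^s_{\E^{-1}}(E_k(\eb), E_k(0))$, and a straightforward induction starting from $E_1(\eb) = \Cal V_1(\eb)$ delivers the desired bound. The main technical point—really the only one I anticipate requiring care—is preserving the identification between the indexed finite sets constituting $\Cal V_k(\eb)$ and $\Cal V_k(0)$ through the iterations, so that at every step one compares the natural pair $S_\eb\xi + (3/4)^{k-1}\xi_{i\eb}$ with $S_0\tilde\xi + (3/4)^{k-1}\xi_{i0}$ rather than taking a worst-case combinatorial matching; once this bookkeeping is in place, the remainder is a routine linear recursion.
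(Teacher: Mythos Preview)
Your proposal is correct and follows essentially the same route as the paper: the propagation inequality $\dist^s_{\E^{-1}}(S_\eb A,S_0 C)\le s_0+L\,\dist^s_{\E^{-1}}(A,C)$ is exactly the paper's key inequality, the recurrence for the sets $\Cal V_k$ is iterated in the same way, and the passage to $E_k$ uses the same union bound $\dist^s(A_1\cup A_2,C_1\cup C_2)\le\max\{\dist^s(A_1,C_1),\dist^s(A_2,C_2)\}$ together with induction. Your caution about preserving the indexing is unnecessary---the Minkowski-sum and union inequalities for the symmetric Hausdorff distance hold at the level of sets, with no bookkeeping required---but it does no harm.
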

\begin{proof}
Fix $\eb\geq 0$. 
	
	\emph{Step 1}. We first establish \eqref{est.d(Eke,Ek0)} for the sets $\Cal V_k(\eb)$, $\Cal V_k(0)$. To this end it is convenient to introduce the notations
	\begin{align*}
	& d_k:=\dist^s_{\E^{-1}}(\Cal V_k(\eb),\Cal V_k(0)),\ k\in\mathbb{Z}_+,\qquad \hat d_0:=\dist^s_{\E^{-1}}(\Cal U(\eb),\Cal U(0));\\
	& s_0:=\sup_{\xi \in \Cal O(B_\eb) }\Vert S_\eb \xi  - S_0 \xi \Vert_{\E^{-1}}.
	\end{align*}
It is sufficient to  establish that the following recurrent chain of inequalities
	\begin{equation}
	\label{ap.7}
	d_{k+1}\leq s_0+\hat d_0+Ld_k,\qquad k\in\mathbb{Z}_+.
	\end{equation}
Indeed, upon iterating these inequalities one finds
	\begin{equation}
	\label{ap.10}
	d_k\leq \tfrac{L^{k+1}-1}{L-1}(s_0+\hat d_0+d_0),\qquad k\in\mathbb{Z}_+.
	\end{equation}

Let us prove \eqref{ap.7}.	Note that, from the construction of $\Cal V_k(\eb)$ \eqref{ap.3}, we readily have the following inequalities
	\begin{equation}
	\label{ap.8}
	\dist^s_{\E^{-1}}(\Cal V_{k+1}(\eb),\Cal V_{k+1}(0))\leq \dist^s_{\E^{-1}}(S_\eb \Cal V_k(\eb),S_0\Cal V_k(0))+\hat d_0,\qquad k\in\mathbb{Z}_+.\end{equation}
	Let us now verify the inequality
	\begin{equation}
	\label{ap.9}
	\dist^s_{\E^{-1}}(S_\eb A,S_0C)\leq s_0+L\dist^s_{\E^{-1}}(A,C),\quad \mbox{ for all } A\subset\Cal O(B_\eb),\ C\subset \Cal O(B_0).
	\end{equation}
	Fixing arbitrary $a\in A$, $c\in C$ and using Lipschitz continuity of $S_0$ in $\E^{-1}$ we obtain
	\begin{multline*}
	\|S_\eb a-S_0 c\|_{\E^{-1}}\leq \|S_\eb a-S_0 a\|_{\E^{-1}}+\|S_0 a-S_0 c\|_{\E^{-1}}
	\\
	\leq\|S_\eb a-S_0 a\|_{\E^{-1}}+L\|a-c\|_{\E^{-1}}\leq s_0+L\|a-c\|_{\E^{-1}}.  
	\end{multline*}
	Consequently  \eqref{ap.9} holds. Hence, upon combining \eqref{ap.8} with \eqref{ap.9}, we deduce  \eqref{ap.7} and \emph{step 1} is complete.

	\emph{Step 2}. We claim that the sets $E_k(\eb)$, $E_k(0)$ satisfy the same inequality as in \eqref{ap.10}, namely 
	\begin{equation}
	\label{ap.11}
	\dist^s_{\E^{-1}}(E_k(\eb),E_k(0))\leq \tfrac{L^{k+1}-1}{L-1} (s_0+d_0+\hat d_0),\ k\in \N. 
	\end{equation}
	Since $E_1(\eb)=\Cal V_1(\eb)$ for all $\eb\geq 0$, the above inequality is true for $k=1$.
	Assume \eqref{ap.11} holds for $k=m$ and let us verify it for $k=m+1$.
	It is straightforward to check that for any $A_1$, $A_2\subset \Cal O(B_\eb)$, $C_1$, $C_2\subset \Cal O(B_0)$ the following inequality
	\begin{equation}
	\label{ap.12}
	\dist^s_{\E^{-1}}(A_1\cup A_2,C_1\cup C_2)\leq \dist^s_{\E^{-1}}(A_1,C_1)\vee \dist^s_{\E^{-1}}(A_2,C_2) 
	\end{equation}  
	holds. Therefore, due to \eqref{ap.Eke}, it is enough to show that
	\begin{equation*}
	\dist^s_{\E^{-1}}(S_\eb E_{m}(\eb),S_0E_{m}(0))\leq (s_0+d_0+\hat d_0)\frac{L^{m+2}-1}{L-1}.
	\end{equation*}
	This inequality is a direct consequence of  \eqref{ap.9} and the induction assumption. Indeed, we compute 
	\begin{flalign*}
	\dist^s_{\E^{-1}}(S_\eb E_{m}(\eb),S_0E_{m}(0))&\leq s_0+L\dist^s_{\E^{-1}}(E_m(\eb),E_m(0))\\
&	\leq(s_0+d_0+\hat d_0)\left(1+L\tfrac{L^{m+1}-1}{L-1}\right)=(s_0+d_0+\hat d_0)\tfrac{L^{m+2}-1}{L-1}, 
	\end{flalign*}
	as required. Hence, inequality \eqref{ap.11} yields the desired result with $M(L)=\frac{L^2}{L-1}$.
\end{proof}
We proceed to the proof of the estimate \eqref{est.dE(Me,Mo).g} on the distance $\dist^s_{\E^{-1}}(\Cal M^\eb,\Cal M^0)$. We fix $\eb\geq 0$ and set
\begin{equation}
	\tilde d:= \sup_{\xi \in \Cal O(B_\eb) }\Vert S_\eb \xi  - S_0 \xi \Vert_{\E^{-1}}
	+ \dist^s_{\E^{-1}}\big(\Cal U_\eb(\tfrac{1}{4K},1),\Cal U_0(\tfrac{1}{4K},1)\big) + \dist^s_{\E^{-1}} \big(\Cal U_\eb(\tfrac{1}{K},R), \Cal U_0(\tfrac{1}{K},R)\big).
\end{equation}
In fact, we will only demonstrate how to obtain the estimate \eqref{est.dE-1(Me,Mo).g} for $\dist_{\E^{-1}}(\Cal M^\eb,\Cal M^0)$ as the other side ($\dist_{\E^{-1}}(\Cal M^0,\Cal M^\eb)$) can be done similarly. Let $k\in\N$ be arbitrary and fix $\xi_\eb\in E_k(\eb)$. Due to the just proved Lemma \ref{le.d(Eke,Ek0)} we have 
\begin{equation}
\label{ap.13}
\dist_{\E^{-1}}(\xi_\eb,\Cal M^0)\leq \dist_{\E^{-1}}(\xi_\eb,E_k(0))\leq ML^k\tilde d,\quad k\in \N,\ \eb\geq 0. 
\end{equation} 

On the other hand, we will show below that
\begin{equation}
\label{toshowexpat}
\dist_{\E^{-1}}(\xi_\eb,\Cal M^0) \le M\big(\tilde d L^{\tfrac{n}{\omega}}+\left(\tfrac{3}{4}\right)^n\big), \quad \mbox{for all }n\in\N,\ k\in\N:\ k\geq\tfrac{n}{\omega}.
\end{equation}
Using \eqref{ap.13} for $k\leq\tfrac{n}{\omega}$ and \eqref{toshowexpat} we deduce that
\begin{equation}
\dist_{\E^{-1}}(\xi_\eb,\Cal M^0)\leq M\left(\tilde d L^{\tfrac{n}{\omega}}+\left(\tfrac{3}{4}\right)^n\right),
\end{equation} 
for some $M=M(c_0,c_{-1},K,L,\delta_1)$ which is independent of $\eb$. Optimizing $n$ in the above inequality, for example taking $n=\left\lfloor\tfrac{\omega}{\omega\ln(4/3)+L}\ln\left(\tfrac{\omega\ln(4/3)}{\tilde d\ln L}\right)\right\rfloor\vee 0$, we conclude the desired estimate \eqref{est.dE-1(Me,Mo).g} with $\varkappa=\tfrac{\omega\ln(4/3)}{\omega\ln(4/3)+\ln(L)}$.

It remains to prove \eqref{toshowexpat}. By the triangle inequality we have
\begin{equation}
\label{ap.14}
\dist_{\E^{\text{-}1}}(\xi_\eb,\Cal M^0)\leq \dist_{\E^{\text{-}1}}(\xi_\eb,S_\eb(n)B_\eb)+\dist_{\E^{\text{-}1}}(S_\eb(n)B_\eb,S_0(n)B_0)+\dist_{\E^{\text{-}1}}(S_0(n) B_0,\Cal M^0).
\end{equation}
Let us estimate each of the terms on the right hand side of \eqref{ap.14} separately. Using Lemma \ref{le.d(Ek,SnB)} and considering $k\geq \frac{n}{\omega}$ we obtain
\begin{equation}
\label{ap.15}
\dist_{\E^{-1}}(\xi_\eb,S_\eb(n)B_\eb)\leq M_1\left(\tfrac{3}{4}\right)^n,\quad \mbox{for all }n\in\N,\ k\in\N:\ k\geq\tfrac{n}{\omega}.
\end{equation}

Iterating \eqref{ap.9} we find
\begin{equation}
\label{ap.16}
\dist_{\E^{-1}}(S_\eb(n)B_\eb,S_0(n) B_\eb)\leq s_0 \tfrac{L^n}{L-1}\leq \tilde d\tfrac{L^n}{L-1}.
\end{equation}

Finally, due to the continuous embedding $\E \subset \E^{-1}$ (assumption (2)) and the exponential attraction property of $\Cal M^0$ \eqref{ap.5'} we see that
\begin{equation}
\label{ap.17}
\dist_{\E^{-1}}(S_0(n)B_0,\Cal M^0)\leq c_{-1}\dist_{\E}(S_0(n)B_0,\Cal M^0)\leq c_{-1}\tfrac{1}{K}\left(\tfrac{3}{4}\right)^n,\ n\in\N.
\end{equation}  
Hence \eqref{toshowexpat} holds and the proof is complete.

\section{Proof of Theorem \ref{th.expAttr.E}}
\label{app:estinE}
Derivation of the estimate on the symmetric distance with correction relies on the following \\ interesting modification of Lemma \ref{le.d(Eke,Ek0)}.

\begin{lem}
	\label{le.d(Eke,TeEk0)}
	Let the assumptions of Theorem \ref{th.expAttr.E} hold and the sets $E_k(\eb)$, $k\in\N$, $\eb\geq 0$, be given by \eqref{ap.Eke}. Then for all $k\in\N$ and $\eb\geq 0$ the following estimate
	\begin{multline}
	\label{est.d(Eke,TeEk0)}
	\dist^s_{\E}\big(E_k(\eb),\textsc{T}_\eb E_k(0)\big)\leq M L^k\Big(  \sup_{\xi\in  \Cal O(B_0) }\Vert S_\eb \Pi_\eb^{-1} \xi  - \textsc{T}_\eb S_0 \xi \Vert_{\E} + \sup_{\xi \in \Cal O (B_0) }\Vert \textsc{T}_\eb  \xi  - \Pi_\eb^{-1} \xi   \Vert_{\E}   \\ + \dist^s_{\E}\big(\Cal U_\eb(\tfrac{1}{4K},1),\textsc{T}_\eb\,\Cal U_0(\tfrac{1}{4K},1)\big) + \dist^s_{\E} \big(\Cal U_\eb(\tfrac{1}{K},R), \textsc{T}_\eb\,\Cal U_0(\tfrac{1}{K},R)\big)\Big),
	\end{multline}
	holds with some constant $M=M(L)$ which is independent of $\eb$ and $k$.
\end{lem}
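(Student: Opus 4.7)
The plan is to mirror the proof of Lemma \ref{le.d(Eke,Ek0)} in structure, working now in the stronger topology $\E$ with the correction operator $\textsc{T}_\eb$ inserted on the homogenised side. Introduce
\begin{align*}
d_k &:= \dist^s_\E(\Cal V_k(\eb), \textsc{T}_\eb \Cal V_k(0)), \qquad d_0 := \dist^s_\E\big(\Cal U_\eb(\tfrac{1}{K},R), \textsc{T}_\eb \Cal U_0(\tfrac{1}{K},R)\big),\\
\hat d_0 &:= \dist^s_\E\big(\Cal U_\eb(\tfrac{1}{4K},1), \textsc{T}_\eb \Cal U_0(\tfrac{1}{4K},1)\big),\\
s_0 &:= \sup_{\xi \in \Cal O(B_0)} \|S_\eb \Pi_\eb^{-1} \xi - \textsc{T}_\eb S_0 \xi\|_\E, \qquad \bar s_0 := \sup_{\xi \in \Cal O(B_0)} \|\textsc{T}_\eb \xi - \Pi_\eb^{-1} \xi\|_\E,
\end{align*}
which (scaled by $L^k$) match the four quantities entering the claimed bound, with $\bar s_0$ playing the role of a new \emph{bijection-to-corrector discrepancy}.

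The first step is to establish the $\E$-analogue of inequality (\ref{ap.9}): for any $A \subset \Cal O(B_\eb)$ and $C \subset \Cal O(B_0)$,
\[
\dist^s_\E(S_\eb A, \textsc{T}_\eb S_0 C) \le s_0 + L \bar s_0 + L\,\dist^s_\E(A, \textsc{T}_\eb C).
\]
Given $a \in A$ and $c \in C$, the triangle inequality through the intermediate point $S_\eb \Pi_\eb^{-1} c$, together with assumption $(5)$ (uniform Lipschitz continuity of $S_\eb$ in $\E$), yields
\[
\|S_\eb a - \textsc{T}_\eb S_0 c\|_\E \le L\,\|a - \Pi_\eb^{-1} c\|_\E + s_0 \le L\,\|a - \textsc{T}_\eb c\|_\E + L \bar s_0 + s_0.
\]
With this in hand, the relation $\Cal V_{k+1}(\eb) = S_\eb \Cal V_k(\eb) + (\tfrac{3}{4})^k \Cal U(\eb)$, combined with the Lipschitz property of $\textsc{T}_\eb$ from assumption $(4)$, produces a recurrence of the form $d_{k+1} \le s_0 + L \bar s_0 + L_{\rm cor}(\tfrac{3}{4})^k \hat d_0 + L\, d_k$, which iterates in the style of (\ref{ap.10}) to $d_k \le C L^k\big(s_0 + \bar s_0 + \hat d_0 + d_0\big)$.

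Finally, the extension from $\Cal V_k$ to $E_k$ proceeds by induction on $k$ using $E_{k+1}(\eb) = \Cal V_{k+1}(\eb) \cup S_\eb E_k(\eb)$ and the elementary union inequality (\ref{ap.12}), exactly as in \emph{Step 2} of the proof of Lemma \ref{le.d(Eke,Ek0)}. The main obstacle will be the possible nonlinearity of $\textsc{T}_\eb$: being only Lipschitz with additive error $m(\eb)$, it does not commute with the Minkowski-sum structure defining $\Cal V_{k+1}(0)$, so a naive application of assumption $(4)$ when comparing $\textsc{T}_\eb(z + \tau_k u)$ with $\textsc{T}_\eb z + \tau_k \textsc{T}_\eb u$ seems to inject a fresh $m(\eb)$ at each iteration, giving a cumulative $L^k m(\eb)$ in the final estimate. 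Resolving this requires that the additive error be absorbed into $\bar s_0$: in the intended application (Theorem \ref{th.expAttr.E.wv}) $\textsc{T}_\eb$ is linear so $m(\eb) = 0$, and in the abstract setting each comparison argument can be paired with the specific element $\Pi_\eb^{-1}(\cdot)$ so that the additive error collapses into the already-present $\bar s_0$ term rather than accumulating independently.
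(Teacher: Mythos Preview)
Your proposal is correct and follows essentially the same route as the paper: introduce the corrected distances $d_k$, prove the analogue of \eqref{ap.9} via the intermediate point $S_\eb\Pi_\eb^{-1}c$ and the Lipschitz property of $S_\eb$, obtain the recurrence $d_{k+1}\le s_0+\hat d_0+Ld_k$, and then pass from $\Cal V_k$ to $E_k$ exactly as in Step~2 of Lemma~\ref{le.d(Eke,Ek0)}. The paper simply absorbs your $L\bar s_0$ into its definition of $s_0$, and it asserts the Minkowski-sum inequality \eqref{ap.8'} without comment---your concern about the additivity of $\textsc{T}_\eb$ is well placed, since the paper is implicitly using the linearity of $\textsc{T}_\eb$ (which holds in the concrete application) at exactly that step; no separate $m(\eb)$ accumulation arises once this is granted.
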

\begin{proof}
	We follow the strategy of Lemma \ref{le.d(Eke,Ek0)} and fix $\eb\geq 0$.
	
	We first derive an estimate on the distance between $\Cal V_k(\eb)$ and $\textsc{T}_\eb \Cal V_k(0)$. Let us introduce the notations
	\begin{align*}
		d_k:=\dist^s_{\E}\big(\Cal V_k(\eb),\textsc{T}_\eb \Cal V_k(0)\big),\ k\in\mathbb{Z}_+,\qquad \hat d_0:=\dist^s_{\E}\big(\Cal U(\eb),\textsc{T}_\eb\, \Cal U(0)\big);\\
		s_0:=\sup_{\xi\in  \Cal O(B_0) }\|S_\eb  \Pi_\eb^{-1} \xi-\textsc{T}_\eb S_0\xi\|_\E+L\sup_{\xi\in \Cal O(B_0)}\|\Pi_\eb^{-1}  \xi - \textsc{T}_\eb\xi\|_{\E}. 
	\end{align*}
	We are going to verify the recurrent chain of inequalities
	\begin{equation}
	\label{ap.7'}
		d_{k+1}\leq s_0+\hat d_0+Ld_k,\quad k\in\mathbb{Z}_+.
	\end{equation}
	From the construction of $\Cal V_k(\eb)$ \eqref{ap.3} we see
	\begin{equation}
	\label{ap.8'}
	\dist^s_{\E}\big(\Cal V_{k+1}(\eb),\textsc{T}_\eb\Cal V_{k+1}(0)\big)\leq \dist^s_{\E}\big(S_\eb \Cal V_k(\eb),\textsc{T}_\eb S_0\Cal V_k(0)\big)+\hat d_0,\ k\in\mathbb{Z}_+.
	\end{equation}
	We now argue that
	\begin{equation}
	\label{ap.9'}
		\dist^s_{\E}\big(S_\eb A,\textsc{T}_\eb S_0 C\big)\leq s_0+L\dist^s_{\E}\big(A,\textsc{T}_\eb C\big),\qquad \mbox{for all }A\subset \Cal O(B_\eb),\ C\subset\Cal O(B_0).
	\end{equation}
	Indeed, fixing $a\in A$, $c\in C$ and using the uniform  (with respect to $\eb>0$)  Lipschitz continuity of $S_\eb$ in $\E$ (assumption \emph{(5)} of Theorem \ref{th.expAttr.E}) we compute
	\begin{flalign*}
		\|S_\eb a - \textsc{T}_\eb S_0 c\|_\E & \leq \|S_\eb a-S_\eb \Pi_\eb^{-1} c  \|_\E+\| S_\eb \Pi_\eb^{-1} c-\textsc{T}_\eb S_0  c\|_\E\\
	&	\leq L \| a- \Pi_\eb^{-1} c \|_\E+ \| S_\eb \Pi_\eb^{-1} c-\textsc{T}_\eb S_0 c\|_\E \\
	& \le  L \| a- \textsc{T}_\eb  c \|_\E+ L \| \textsc{T}_\eb c -\Pi_\eb^{-1} c \|_\E + \| S_\eb \Pi_\eb^{-1} c-\textsc{T}_\eb S_0 c\|_\E.
	\end{flalign*}
The above inequality, obviously, implies \eqref{ap.9'}. Combining \eqref{ap.8'} and \eqref{ap.9'} we establish the recurrent inequalities \eqref{ap.7'} which yield
	\begin{equation*}
		d_k\leq (s_0+d_0+\hat d_0)\tfrac{L^{k+1}-1}{L-1},\quad  k\in\mathbb{Z}_+.
	\end{equation*}  
To derive the estimate \eqref{est.d(Eke,TeEk0)} on the distance $\dist^s_\E\big(E_k(\eb),\textsc{T}_\eb E_k(0)\big)$ we simply argue as in \emph{Step 2} of Lemma \ref{le.d(Eke,Ek0)}.
\end{proof}

We are ready to prove the theorem. We fix $\eb\geq 0$ and set
\begin{multline*}
	\tilde d := { \sup_{\xi\in \Cal O(B_0) }\|S_\eb \Pi_\eb^{-1}  \xi-\textsc{T}_\eb  S_0\xi\|_\E+\sup_{\xi\in\Cal O(B_0)}\|\textsc{T}_\eb\xi - \Pi_\eb^{-1} \xi\|_\E}\\
	+ \dist^s_{\E}\big(\Cal U_\eb(\tfrac{1}{4K},1),\textsc{T}_\eb\,\Cal U_0(\tfrac{1}{4K},1)\big) + \dist^s_{\E} \big(\Cal U_\eb(\tfrac{1}{K},R), \textsc{T}_\eb\,\Cal U_0(\tfrac{1}{K},R)\big).
\end{multline*} 
As in the proof of Theorem \ref{th.expAttr.E-1} we will only consider $\dist_{\E}(\Cal M^\eb,\textsc{T}_\eb\Cal M^0)$ as the other side can argued in a similar manner. Let $k\in\N$ and $\xi_\eb\in E_k(\eb)$ be fixed. Then according to Lemma \ref{le.d(Eke,TeEk0)} we have
\begin{equation}
	\label{ap.13'}
	\dist_\E\big(\xi_\eb,\textsc{T}_\eb\Cal M^0\big)\leq \dist_{\E}\big(\xi_\eb, \textsc{T}_\eb E_k(0)\big)\leq ML^k\tilde d,\quad k\in\N,\ \eb\geq 0. 
\end{equation}
{On the other hand we deduce below  that
\begin{equation}
\label{ap.13''}
\dist_{\E}(\xi_\eb, \textsc{T}_\eb \Cal M^0)\leq M\big( \tilde d  L^{\tfrac{n}{\omega}}+\left(\tfrac{3}{4}\right)^n\big) + m(\eb), \quad k\geq \tfrac{n}{\omega}, \, n \in \N,
\end{equation}
for $\omega$ given in Lemma \ref{le.d(Ek,SnB)}. The estimate \eqref{ap.13'} for $k\leq \tfrac{n}{\omega}$ together with \eqref{ap.13''} implies
\begin{equation}
\dist_{\E}(\xi_\eb, \textsc{T}_\eb \Cal M^0)\leq M\big( (\tilde d  +  m(\eb))L^{\tfrac{n}{\omega}}+\left(\tfrac{3}{4}\right)^n\big) ,
\end{equation} 
for some $M=M(c_0,K,L,L_{\rm cor},\delta_1)$ which is independent of $\eb$. Optimizing $n$ in the above inequality provides the desired result.}

It remains to prove \eqref{ap.13''}. By the triangle inequality we deduce that
\begin{equation}\label{ap.14'}
\begin{aligned}
&	\dist_\E\big(\xi_\eb,\textsc{T}_\eb\Cal M^0\big)\leq\\ 
& \hspace{2cm}\dist_\E(\xi_\eb,S_\eb(n) B_\eb)+\dist_\E(S_\eb(n)B_\eb,\textsc{T}_\eb S_0(n) \Pi_\eb B_\eb)
+\dist_\E(\textsc{T}_\eb S_0(n) \Pi_\eb B_\eb,\textsc{T}_\eb \Cal M^0).
\end{aligned}
\end{equation}

The first term on the right hand side of \eqref{ap.14'} can be controlled by Lemma \ref{le.d(Ek,SnB)} for $k\geq \tfrac{n}{\omega}$:
\begin{equation}
	\label{ap.15'}
	\dist_\E(\xi_\eb,S_{\eb}(n)B_\eb)\leq M_1\left(\tfrac{3}{4}\right)^n.
\end{equation}
By {the identity $\Pi_\eb B_\eb = B_0$ (assumption \emph{(3)} of Theorem \ref{th.expAttr.E}) }and iterations of \eqref{ap.9'} we  estimate the second term on the right hand side of \eqref{ap.14'}:
\begin{equation}
\label{ap.16'}
{\dist_\E(S_\eb(n)B_\eb,\textsc{T}_\eb S_0(n) \Pi_\eb B_\eb) = \dist_\E(S_\eb(n)B_\eb,\textsc{T}_\eb S_0(n)  B_0)}\leq s_0\tfrac{L^n-1}{L-1}\leq \ \tilde d\tfrac{L}{L-1}L^n.
\end{equation}

The last term on the right hand side of \eqref{ap.14'} can be estimated using { $\Pi_\eb B_\eb = B_0$ and the property of $\textsc{T}_\eb$ (assumption \emph{(4)} of Theorem \ref{th.expAttr.E}) and the exponential attraction property of $\Cal M^0$:
\begin{equation}
	\label{ap.17'}
	\begin{aligned}
	\dist_\E(\textsc{T}_\eb S_0(n) \Pi_\eb B_\eb,\textsc{T}_\eb \Cal M^0) & = 	\dist_\E(\textsc{T}_\eb S_0(n) B_0,\textsc{T}_\eb \Cal M^0) \\
& \leq L_{\rm cor}{\dist_\E (S_0(n)B_0,\Cal M^0)}+m(\eb)
	\leq L_{\rm cor} \tfrac{1}{K}\left(\tfrac{3}{4}\right)^n+m(\eb).
	\end{aligned}
\end{equation}
Hence \eqref{ap.13''} follows from \eqref{ap.14'}-\eqref{ap.17'} and the theorem is proved.}

{
	\section{On the refinement of inequality \eqref{Dttestimate1}}
	\label{app.improvement}
	Let us begin by noting that in Section \ref{s.gaE}  we were actually in the position to prove the following improvement of inequality \eqref{Dttestimate1} (in Theorem \ref{th.Dterror}).
	\begin{prop}
		For every  $\xi \in B_{\Cal E^2_\eb}(0,R)$ the inequality
		\begin{align}
		\label{e.imp1} 
		\Vert \Dt S_\eb(t) \xi - \Dt S_0(t) \Pi_\eb \xi \Vert_{\E^{-1}}\leq Me^{Kt} \Vert A^{-1}_\eb - A^{-1}_0 \Vert_{\Cal L(L^2(\Omega))}^{2/3},\quad t\geq 0,
		\end{align}
		holds for some non-decreasing functions $M=M(R,\|g\|)$ and $K=K(R,\|g\|)$ which are independent of $\eb>0$.
	\end{prop}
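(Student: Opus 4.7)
The plan is to refine the source-term bound at the end of the proof of Theorem~\ref{th.Dterror} by replacing the uniform estimate $\|\Dt q(t)\| \le M$ used in~\eqref{e.toimprove} by an interpolation inequality which expresses $\|\Dt q\|$ in terms of the Lyapunov functional itself. This will turn the linear Gronwall inequality of Theorem~\ref{th.Dterror} into a sub-linear one whose fixed-point balance delivers $\Lambda \sim \kappa^{4/3}$, and thus $\|\xi_q\|_{\E^{-1}} \sim \kappa^{2/3}$.

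I keep the notation $q = \Dt u^\eb - \Dt u^0$, $\kappa = \|A_\eb^{-1} - A_0^{-1}\|_{\Cal L(L^2(\Omega))}$, and the functional $\Lambda(t) = \tfrac12(\Dt q, A_0^{-1}\Dt q) + \tfrac12 \|q\|^2$ exactly as in the proof of Theorem~\ref{th.Dterror}, together with the energy identity derived therein. Since $\xi \in \Cal E^2_\eb$ with $\|\xi\|_{\Cal E^2_\eb}\le R$ and since $\Pi_\eb$ preserves this norm (Lemma~\ref{le.xih}), Theorem~\ref{th.disD} gives the uniform-in-$\eb$ bound $\|\nabla \Dt^2 u^\eb(t)\| + \|\nabla\Dt^2 u^0(t)\| \le M$, hence $\|\Dt q(t)\|_{H^1_0(\Omega)} \le M$ for all $t\ge0$. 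Combined with the $H^{-1}$-equivalence~\eqref{errgron1}, viz.\ $\|\Dt q\|_{H^{-1}(\Omega)}^2 \le \nu^{-1}(\Dt q, A_0^{-1}\Dt q)\le 2\nu^{-1}\Lambda$, the standard interpolation $\|\psi\|_{L^2(\Omega)}^2 \le \|\psi\|_{H^{-1}(\Omega)} \|\psi\|_{H^1_0(\Omega)}$ delivers the key inequality
\[
\|\Dt q(t)\| \le C\,\Lambda(t)^{1/4}.
\]
Inserting this in place of the uniform bound used in~\eqref{e.toimprove} sharpens the problematic source-term estimate to $|(A_\eb \Dt u^\eb, (A_\eb^{-1} - A_0^{-1})\Dt q)| \le C\kappa\, \Lambda^{1/4}$; the remaining terms (both nonlinear and dissipative) are treated exactly as in the proof of Theorem~\ref{th.Dterror} and are controlled by $C_1\Lambda + C_2 e^{Kt}\kappa^2$, using the already-proven $\E^{-1}$ estimate~\eqref{dttreasy}. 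The resulting differential inequality reads
\[
\frac{d}{dt}\Lambda \le C_1\Lambda + C_2 \kappa\,\Lambda^{1/4} + C_3 e^{Kt}\kappa^2, \qquad \Lambda(0) \le C\kappa^2.
\]

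I propose to resolve this nonlinear Gronwall inequality via the ansatz $\Lambda(t) = e^{K_0 t}\kappa^{4/3}\psi(t)$ with $K_0$ chosen sufficiently large. Substituting and dividing by $e^{K_0 t}\kappa^{4/3}$, and using $\psi^{1/4}\le 1+\psi$, the inequality reduces to a linear estimate of the form $\psi' \le C(1+\psi) + C\kappa^{2/3}e^{(K-K_0)t}$ with initial datum $\psi(0)\le C\kappa^{2/3}$; the classical linear Gronwall inequality then yields $\psi(t) \le M e^{Ct}$ uniformly in $\eb$, whence $\Lambda(t) \le M e^{K't}\kappa^{4/3}$. Since $\|\xi_q(t)\|_{\E^{-1}}^2 = \|q\|^2 + \|\Dt q\|_{H^{-1}(\Omega)}^2 \le C\Lambda(t)$, the desired estimate~\eqref{e.imp1} follows. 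The main delicate point is the correct calibration of the ansatz: the exponent $2/3$ arises precisely as the unique fixed point of the iteration $\alpha \mapsto \tfrac12 + \tfrac\alpha4$ starting from the baseline $\alpha_0 = 1/2$ of Theorem~\ref{th.Dterror}, and the choice $\Lambda \sim \kappa^{4/3}$ is designed to capture this limit in a single sweep; compatibility of the initial-data bound $\Lambda(0) = O(\kappa^2)$ with the ansatz is automatic since $\kappa^2 = o(\kappa^{4/3})$ as $\kappa\to 0$.
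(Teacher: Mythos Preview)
Your proposal is correct and follows essentially the same approach as the paper: the key idea in both is to exploit the uniform bound $\|\Dt^2 u^\eb\|_{H^1_0}+\|\Dt^2 u^0\|_{H^1_0}\le M$ from Theorem~\ref{th.disD} and interpolate to obtain $\|\Dt q\|\le C\Lambda^{1/4}$, thereby sharpening~\eqref{e.toimprove}. The only cosmetic difference is that the paper linearises the resulting term $\kappa\Lambda^{1/4}$ in one step via Young's inequality $\kappa\Lambda^{1/4}\le \tfrac34\kappa^{4/3}+\tfrac14\Lambda$ (yielding directly $\tfrac{d}{dt}\Lambda\le M e^{Kt}\kappa^{4/3}+M\Lambda$), whereas you arrive at the same conclusion through the ansatz $\Lambda=e^{K_0 t}\kappa^{4/3}\psi$ followed by $\psi^{1/4}\le 1+\psi$; these are equivalent manoeuvres.
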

	\begin{proof}
		The proof of this result follows along the same lines as in the proof of Theorem \ref{th.Dterror} except for the following minor alterations:
		\begin{enumerate}
			\item{In the uniform bounds \eqref{Eerror.e1} (due to Theorem \ref{th.disD}) we actually have
				\[
				\| \Dt^2 u^\eb \|_{H^1_0(\Omega)}^2 + 		\| \Dt^2 u^0 \|_{H^1_0(\Omega)}^2 \le M.
				\]
			}
			\item{From (1) we can see that $q^\eb = \Dt u^\eb - \Dt u^0$ satisfies the bound
				\[
				\| \Dt q^\eb \| \le \| \Dt q^\eb \|_{H^{-1}(\Omega)}^{1/2} \| \Dt q^\eb \|_{H^1_0(\Omega)}^{1/2} \le M \| \Dt q^\eb \|_{H^{-1}(\Omega)}^{1/2},
				\]
				and so we can improve \eqref{e.toimprove} as follows:
				\begin{equation*}
				\begin{aligned}
				| \big( A_0 \Dt u^\eb-A_\eb \Dt u^\eb,A_{0}^{-1}\Dt \q \big) | & = | \big( A_\eb \Dt u^\eb,(A^{-1}_\eb -A_{0}^{-1})\Dt \q \big) | \le \Vert A_\eb \Dt \u \Vert \Vert A^{-1}_\eb - A^{-1}_0 \Vert_{\Cal L(L^2(\Omega))} \Vert \Dt \q \Vert  \\ & \le M\Vert A^{-1}_\eb - A^{-1}_0 \Vert_{\Cal L(L^2(\Omega))}\| \Dt q^\eb \|_{H^{-1}(\Omega)}^{1/2} \\
				& \le M \big( \tfrac{3}{4} \Vert A^{-1}_\eb - A^{-1}_0 \Vert_{\Cal L(L^2(\Omega))}^{4/3} + \tfrac{1}{4}\| \Dt q^\eb \|_{H^{-1}(\Omega)}^{2}  \big).
				\end{aligned} 
				\end{equation*}
			}
			\item{From (2) we can replace \eqref{dttrgron} with 
				\begin{equation*}
				\frac{d}{dt} \Lambda \le M_1 e^{Kt} \Vert A^{-1}_\eb - A^{-1}_0 \Vert_{\Cal L(L^2(\Omega))}^{4/3} + M_2 \Lambda, \quad \Lambda : = \frac{1}{2} 
				{ \big( \Dt\q, A_{0}^{-1}\Dt\q\big) }+\frac{1}{2} \|\q\|^2,
				\end{equation*}	
				which then leads to the desired result.
			}
		\end{enumerate}
	\end{proof}
	
	In order to further improve \eqref{Dttestimate1} (or rather \eqref{e.imp1}), and achieve the optimal bound with power one, we intend to argue as in the proof of Theorem \ref{th.|ue-uh|E-1}. For this reason, we require additional regularity on the initial data $\xi$. In particular, we shall show that it is sufficient for $\xi \in \Cal E^2_\eb$ to be  such that the solution $u^\eb$ to \eqref{eq.dw} (with initial data $\xi$) satisfies
	\[
	\| A_\eb \Dt^2 u^\eb \| \le M, \quad t\ge 0.
	\]
	Then,  we shall demonstrate that this additional regularity is `natural' in the sense that the global attractor $\Cal A^\eb$ possesses such smoothness  under the additional mild assumption on the non-linearity $f$: 
	\begin{equation}\tag{H3}
	\label{H2}
	f \in C^3(\R),\quad |f'''(s)|\leq K_6,\quad s\in\R.
	\end{equation}

	Let us introduce the mapping
	\[
	A u : = -\di ( a \nabla u),
	\]
	recall
	\begin{equation*}
	\left\{\begin{aligned}
	& \Cal E^2 = \big\{ \xi \in (H^1_0(\Omega))^2 \, | \, \big( A \xi^1 - g \big) \in H^1_0(\Omega) \text{ and } A \xi^2 \in L^2(\Omega)  \big\}, \\
	& \Vert \xi \Vert_{\Cal E^2}^2 =  \Vert A \xi^1 - g \Vert^2_{H^1_0(\Omega)} +\| A \xi^1 \|^2 + \Vert A \xi^2  \Vert^2,
	\end{aligned} \right.
	\end{equation*}
	and introduce
	\begin{equation*}
	\left\{\begin{aligned}
	& \Cal E^3: = \big\{ \xi \in \Cal E^2 \, | \, A\big( A \xi^1 + f(\xi^1) - g \big) \in L^2(\Omega) \text{ and } A \xi^2 \in H^1_0(\Omega)  \big\}, \\
	& \Vert \xi \Vert_{\Cal E^3}^2 : =  \Vert A\big( A \xi^1 + f(\xi^1) - g \big) \Vert^2 +\| \Nx A \xi^2 \|^2 + \Vert \xi  \Vert^2_{\Cal E^2}.
	\end{aligned} \right.
	\end{equation*}
	Our first result is that a dissipative estimate holds in $\Cal E^3$.
	\begin{theorem}\label{th.disE2B}
		Assume \eqref{mainassumptions} and \eqref{H2}. Then for any initial data $\xi\in\Cal E^3$ the energy solution $u$ to problem \eqref{eq.generic} is such that $\xi_u\in L^\infty(\R_+;\Cal E^3)$ and the following dissipative estimate is valid:
		\begin{equation*}
		\|\Dt^4 u(t)\|+\|\Dt^3 u(t) \|_{H^1_0(\Omega)}+ \| A\Dt^2 u(t) \| + \|\xi_u(t)\|_{\Cal E^3}\leq M(\|\xi\|_{\Cal E^3})e^{-\beta t}+M(\|g\|),\quad t\geq 0,
		\end{equation*} 
		for some non-decreasing function $M$ and constant $\beta>0$ that depend only on $\nu>0$. 	
	\end{theorem}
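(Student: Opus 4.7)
\textbf{Plan for proving Theorem \ref{th.disE2B}.} The strategy is to iterate, one level further, the argument that underlies Theorem \ref{th.disD} (whose proof mirrors Lemma \ref{le.we2}). First, invoke Theorem \ref{th.disD} to secure the $\Cal E^2$-dissipative estimate on $\xi_u(t)$; in particular one obtains $\|Au(t)\|+\|A\Dt u(t)\|\le M$. Combined with elliptic regularity as in Remark \ref{rem.holder}, this yields $\|u(t)\|_{C^\alpha(\overline\Omega)}+\|\Dt u(t)\|_{C^\alpha(\overline\Omega)}\le M$ uniformly in $t$; in particular $p:=\Dt u$ is uniformly bounded in $L^\infty(\Omega)$, a fact that was not needed for Theorem \ref{th.disD} but becomes essential here.

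Next, differentiate \eqref{eq.generic} one more time in $t$ than was done for Theorem \ref{th.disD}. Setting $q:=\Dt p=\Dt^2 u$ and $r:=\Dt q=\Dt^3 u$, the function $r$ satisfies the linear damped wave equation
\[
\Dt^2 r+\gamma\Dt r+Ar=-f'''(u)\,p^3-3f''(u)\,p\,q-f'(u)\,r=:G_3,
\]
with initial data computed recursively: $q(0)=g-\gamma\xi^2-A\xi^1-f(\xi^1)$, $r(0)=-\gamma q(0)-A\xi^2-f'(\xi^1)\xi^2$, and $\Dt r(0)=-\gamma r(0)-Aq(0)-f''(\xi^1)|\xi^2|^2-f'(\xi^1)q(0)$. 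The two conditions that distinguish $\Cal E^3$ from $\Cal E^2$, namely $A\xi^2\in H^1_0(\Omega)$ and $A(A\xi^1+f(\xi^1)-g)\in L^2(\Omega)$ (the latter being exactly $Aq(0)+\gamma A\xi^2\in L^2$), are precisely what is required to ensure $\xi_r(0)\in\E$ with $\|\xi_r(0)\|_\E\le M(\|\xi\|_{\Cal E^3})$.

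One then verifies $G_3\in L^\infty(\R_+;L^2(\Omega))$ and applies the standard linear damped-wave dissipative estimate, absorbing the potential contribution $f'(u)r$ via a Gronwall-type argument. Here the new hypothesis \eqref{H2} enters decisively: the global bound on $f'''$ is what allows the cubic term $f'''(u)p^3$ to be controlled in $L^2$ through $p\in L^\infty(\R_+;H^1_0(\Omega))\subset L^\infty(\R_+;L^6(\Omega))$. The term $f''(u)pq$ is controlled by the $L^\infty$-bound on $u$ together with the $\Cal E^2$-bounds on $p$ and $q$ from Theorem \ref{th.disD}. This yields $\xi_r\in L^\infty(\R_+;\E)$ with the required dissipative structure, supplying the bounds on $\|\Dt^4 u(t)\|$ and $\|\Dt^3 u(t)\|_{H^1_0(\Omega)}$.

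Finally, recover the remaining pieces of the $\Cal E^3$-norm algebraically from the equations satisfied by $p$ and $q$. Solving the equation for $q$ as $Aq=-\Dt r-\gamma q-f''(u)p^2-f'(u)q$ yields $\|A\Dt^2 u(t)\|\le M$, and consequently $\|A(Au+f(u)-g)(t)\|=\|Aq+\gamma Ap\|\le M$. Solving the equation for $p$ as $Ap=-\Dt q-\gamma q-f'(u)p$ and taking the spatial gradient delivers $\|\nabla A\Dt u(t)\|\le M$. The delicate summand here is $f''(u)\nabla u\cdot p$ arising from $\nabla(f'(u)p)$, and I expect this to be the main obstacle: the estimate closes only because $p\in L^\infty(\R_+;L^\infty(\Omega))$ by Step~1, so that $f''(u)\nabla u\cdot p\in L^\infty\cdot L^2\cdot L^\infty\subset L^2$; without the $L^\infty$-control on $\Dt u$ (which is available precisely because $\|A\Dt u\|\le M$ lifts to $C^\alpha$) the product would sit only in $L^{3/2}$ and the argument would fail.
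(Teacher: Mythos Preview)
Your plan is correct and essentially the same as the paper's proof: invoke the $\Cal E^2$-dissipative estimate (Theorem \ref{th.disD}) to get $L^\infty$-control on $u$ and $\Dt u$, differentiate \eqref{eq.generic} three times to obtain a linear damped wave equation for $r=\Dt^3 u$ with $L^\infty(\R_+;L^2(\Omega))$ forcing (using \eqref{H2} for the $f'''(u)p^3$ term), verify $\xi_r(0)\in\E$ via the $\Cal E^3$ assumptions, and then read off the remaining $\Cal E^3$-pieces from the equations for $\Dt u$ and $\Dt^2 u$. Two small remarks: in your expression $Aq=-\Dt r-\gamma q-\ldots$ the damping term should be $-\gamma r$ (since $\Dt q=r$); and the paper avoids your Gronwall step by simply noting that $\|\Dt^3 u\|$ is already bounded by Theorem \ref{th.disD}, so the term $f'(u)r$ can be treated directly as part of the $L^2$ forcing.
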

	\begin{proof}
		We begin by noting that since $\xi \in \Cal E^2$ then, by the dissipative estimate in $\Cal E^2$ (Theorem \ref{th.disD}), $\xi_u(t) : = S(t) \xi$ satisfies
		\begin{equation}
		\label{disE2.e1}
		\|\Dt^3 u(t)\|+\|\nabla\Dt^2 u(t) \|+\|\xi_u(t)\|_{\Cal E^2}\leq M(\|\xi\|_{\Cal E^2})e^{-\beta t}+M(\|g\|),\quad t\geq 0.
		\end{equation}
		In particular, we have
		\begin{equation}
		\label{disE2.e2}
		\| u(t)\|_{C^\alpha(\overline{\Omega})} + \| \Dt u(t)\|_{C^\alpha(\overline{\Omega})}\leq M(\|\xi\|_{\Cal E^2})e^{-\beta t}+M(\|g\|),\quad t\geq 0,
		\end{equation}
		where $\alpha$ is given in Remark \ref{rem.holder}.
		
		Now upon differentiating \eqref{eq.generic}, in time, three times we deduce that $r(t) : = \Dt^3 u(t)$ solves the equation
		\[
		\Dt^2 r + \gamma \Dt r + A r = -f'''(u)(\Dt u)^3 - 3f''(u) \Dt u\, \Dt^2 u-f'(u)\Dt^3u =: F(t), \quad t\ge 0,
		\]
		with initial data
		\[
		r(0) = \gamma^2 \xi^2 + \gamma ( A \xi^1 + f(\xi^1) -g) - A \xi^2 - f'(\xi^1)\xi^2,
		\]
		and 
		\[
		\Dt r(0) = - \gamma r(0) + \gamma A \xi^2 +A\big( A\xi^1 + f(\xi^1) - g \big) - f''(\xi^1) \big(\xi^2\big)^2 + f'(\xi^1) \big( \gamma \xi^2 + A\xi^1 + f(\xi^1) - g \big).
		\]
		Now by \eqref{disE2.e1} and \eqref{disE2.e2} we readily deduce that $F\in L^\infty(\R_+;L^2(\Omega))$. Additionally, since $\xi \in \Cal E^3$ we see that $r(0) \in H^1_0(\Omega)$ and $\Dt r(0) \in L^2(\Omega)$, i.e. $\big(r(0),\Dt r(0) \big) \in \E$. Consequently, by standard linear dissipative estimates for $r$, we find
		\begin{equation}
		\label{disE2.e3}
		\|\Dt^4 u(t)\|+\|\Dt^3 u(t) \|_{H^1_0(\Omega)} \leq M(\|\xi\|_{\Cal E^3})e^{-\beta t}+M(\|g\|),\quad t\geq 0,
		\end{equation}
		for some $M$ that depends only on $\nu$.
		
		Now, the remaining claims are proven by differentiating \eqref{eq.generic} once to get
		\[
		\| A \Dt u(t) \|_{H^1_0(\Omega)} \le M(\|\xi\|_{\Cal E^3})e^{-\beta t}+M(\|g\|), \quad t\ge 0. 
		\]
		Then differentiating \eqref{eq.generic} one more time to get
		\[
		\| A \Dt^2 u(t) \| \le M(\|\xi\|_{\Cal E^3})e^{-\beta t}+M(\|g\|), \quad t\ge 0,
		\]
		and finally  re-arranging \eqref{eq.generic} to get
		\[
		\| A\big( Au(t) + f(u(t)) - g  \big) \| \le M(\|\xi\|_{\Cal E^3})e^{-\beta t}+M(\|g\|), \quad t\ge 0.
		\]
		
	\end{proof}
	Equipped with Theorem \ref{th.disE2} we are ready to prove the desired improvement of \eqref{e.imp1}. Namely, upon setting $\Cal E^3_\eb$ to be $\Cal E^3$ for the case $a = a(\tfrac{\cdot}{\eb})$ and $B_{\Cal E^3_\eb}(0,R) : = \{ \xi \in \Cal E^3_\eb \, | \, \| \xi \|_{\Cal E^3_\eb} \le R \}$, the following result holds. 
	\begin{theorem}\label{th.improvedinequality}	Assume \eqref{mainassumptions} and \eqref{H2}.
		Then, for every  $\xi \in B_{\Cal E^3_\eb}(0,R) $, the following inequality
		\begin{align*}
		\Vert \Dt S_\eb(t) \xi - \Dt S_0(t) \Pi_\eb \xi \Vert_{\E^{-1}}\leq Me^{Kt} \Vert A^{-1}_\eb - A^{-1}_0 \Vert_{\Cal L(L^2(\Omega))},\quad t\geq 0,
		\end{align*}
		holds for some non-decreasing functions $M=M(R,\|g\|)$ and $K=K(R,\|g\|)$ which are independent of $\eb>0$.
	\end{theorem}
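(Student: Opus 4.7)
The plan is to repeat the strategy of Theorem \ref{th.Dterror} verbatim at the level of $\q := \Dt \r = \Dt u^\eb - \Dt u^0$, but to handle the linear discrepancy term $\big((A_0-A_\eb)\Dt \u,\, A_0^{-1}\Dt\q\big)$ by integration by parts in time as in the proof of Theorem \ref{th.|ue-uh|E-1}, rather than by the brute-force bound used in \eqref{e.toimprove}. Concretely, after differentiating the equation for $\r$ in time, testing the resulting equation for $\q$ with $A_0^{-1}\Dt\q$, and using the identity
\[
\big((A_0-A_\eb)\Dt\u,\, A_0^{-1}\Dt\q\big) = \frac{d}{dt}\big(A_\eb \Dt\u,\, (A_\eb^{-1}-A_0^{-1})\q\big) - \big(A_\eb \Dt^2\u,\, (A_\eb^{-1}-A_0^{-1})\q\big),
\]
the boundary-in-time term is absorbed into a modified Lyapunov functional
\[
\Lambda(t) := \tfrac{1}{2}\big(\Dt\q,\, A_0^{-1}\Dt\q\big) + \tfrac{1}{2}\|\q\|^2 - \big(A_\eb \Dt\u(t),\, (A_\eb^{-1}-A_0^{-1})\q(t)\big),
\]
and the non-linear terms $I_1$, $I_2$ are estimated exactly as in the proof of Theorem \ref{th.Dterror}.

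The new ingredient making the argument close with linear order is the uniform-in-$\eb$ bound
\[
\|A_\eb \Dt\u(t)\| + \|A_\eb \Dt^2\u(t)\| + \|A_0 \Dt u^0(t)\| + \|A_0 \Dt^2 u^0(t)\| \le M, \qquad t \ge 0,
\]
which is precisely what the dissipative estimate in $\Cal E^3$ (Theorem \ref{th.disE2B}) delivers for $\xi \in B_{\Cal E^3_\eb}(0,R)$ with $a = a(\tfrac{\cdot}{\eb})$, and analogously for $u^0$ with $a = a^h$. With these bounds in hand, Cauchy--Schwarz gives
\[
\left|\big(A_\eb \Dt^2\u,\, (A_\eb^{-1}-A_0^{-1})\q\big)\right| \le M\|A_\eb^{-1}-A_0^{-1}\|_{\Cal L(L^2(\Omega))}^2 + \tfrac{1}{4}\|\q\|^2,
\]
and a similar estimate controls the correction term in $\Lambda$ itself. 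Combined with the standard cubic-growth handling of $I_1, I_2$ and the $\E^{-1}$-norm equivalence \eqref{errgron1}, this yields a differential inequality of the form
\[
\tfrac{d}{dt}\Lambda \le C\|A_\eb^{-1}-A_0^{-1}\|_{\Cal L(L^2(\Omega))}^2 + K\Lambda,
\]
with $C, K$ independent of $\eb$; Gronwall together with the initial-data bound $\|\xi_{\Dt\r}(0)\|_{\E^{-1}} \le C\|A_\eb^{-1}-A_0^{-1}\|_{\Cal L(L^2(\Omega))}$ inherited from the construction of $\Pi_\eb$ (cf.\ \eqref{DttrID}) then delivers the linear rate claimed.

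The main obstacle lies not in the energy identity itself but in verifying the \emph{homogenised} side of the uniform bounds: one needs $u^0 = S_0(t)\Pi_\eb\xi$ to lie in a bounded subset of $\Cal E^3_0$ uniformly in $\eb$, i.e.\ $\Pi_\eb B_{\Cal E^3_\eb}(0,R) \subset B_{\Cal E^3_0}(0,R')$ for some $R'$ independent of $\eb$. Since $A_0 (\Pi_\eb\xi)^i = A_\eb \xi^i$ by construction, the bounds $\|A_0(\Pi_\eb\xi)^2\|_{H^1_0(\Omega)}$ and $\|A_0(\Pi_\eb\xi)^1\|$ are immediate; what requires work is the control of $\|A_0(A_0(\Pi_\eb\xi)^1 + f((\Pi_\eb\xi)^1) - g)\|$, because $(\Pi_\eb\xi)^1 \ne \xi^1$ and the composition $f \circ (\Pi_\eb\xi)^1$ must be estimated in $H^2(\Omega) \cap H^1_0(\Omega)$. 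This is where the additional assumption \eqref{H2} on $f''' $ enters: together with elliptic regularity for $A_0$ (with constant coefficients), the fact that $(\Pi_\eb\xi)^1 \in H^1_0(\Omega)$ enjoys uniform $H^2$-bounds (hence uniform $C^\alpha$-bounds), and the $H^1$-multiplier structure of $f'((\Pi_\eb\xi)^1)$, one derives the required uniform bound on $\|A_0 f((\Pi_\eb\xi)^1)\|$. Once this $\Cal E^3_0$-boundedness of $\Pi_\eb\xi$ is established, Theorem \ref{th.disE2B} supplies the needed bounds on $u^0$ and the argument closes.
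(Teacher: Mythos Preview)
Your core strategy is exactly the paper's: differentiate in time, test with $A_0^{-1}\Dt\q$, and---crucially---absorb the linear discrepancy $\big(A_\eb\Dt\u,(A_\eb^{-1}-A_0^{-1})\Dt\q\big)$ into the Lyapunov functional via the time-integration-by-parts trick from Theorem~\ref{th.|ue-uh|E-1}, so that only $\|A_\eb\Dt^2\u\|$ (supplied by Theorem~\ref{th.disE2B}) is needed to close the estimate at linear order. The initial-data and non-linear bounds you quote from \eqref{DttrID} and the $I_1,I_2$ estimates of Theorem~\ref{th.Dterror} are also exactly what the paper invokes.

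However, your ``main obstacle'' is a phantom. You assert that one must show $\Pi_\eb B_{\Cal E^3_\eb}(0,R)\subset B_{\Cal E^3_0}(0,R')$ in order to secure $\|A_0\Dt^2 u^0\|\le M$, and you devote the final paragraph to sketching how \eqref{H2} would be used for this. In fact no $\Cal E^3_0$-regularity of $u^0$ is needed anywhere: the integration-by-parts identity involves \emph{only} $A_\eb\Dt\u$ and $A_\eb\Dt^2\u$, never their homogenised counterparts, and the non-linear terms $I_1,I_2$ require of $u^0$ only the $\Cal E^2_0$-bounds already present in Theorem~\ref{th.Dterror}. Since $\Cal E^3_\eb\subset\Cal E^2_\eb$ and $\Pi_\eb$ is a norm-preserving bijection $\Cal E^2_\eb\to\Cal E^2_0$ (Lemma~\ref{le.xih}), the inclusion $\Pi_\eb\xi\in B_{\Cal E^2_0}(0,R)$ is immediate, and Theorem~\ref{th.disD} then gives all the $u^0$-bounds required. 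The paper accordingly invokes only Theorem~\ref{th.disD} (not Theorem~\ref{th.disE2B}) for $u^0$. A minor related point: your final differential inequality should carry a factor $e^{Kt}$ in front of $\|A_\eb^{-1}-A_0^{-1}\|^2$, coming from the bound on $\|\r\|^2$ in $I_2$ via \eqref{dttreasy}; this does not affect the conclusion after Gronwall.
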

	\begin{proof}
		The argument is similar to that in Theorem \ref{th.|ue-uh|E-1} so we shall just outline the main ideas.

		Set $\xi_{u^\eb}(t):=S_\eb(t)\xi$, $\xi_{u^0}(t):=S_0(t)\Pi_\eb\xi$ and recall $\xi_0 = \Pi_\eb \xi$. Then by the dissipative estimates for $\xi_{u^\eb}$ in $\Cal E^3_\eb$ (Theorem \ref{th.disE2B}) and  $\xi_{u^0}$ in $\Cal E^2_0$ (Theorem \ref{th.disD}) we have the following uniform bounds in $t$ and $\eb$:
		\begin{equation*}
		\begin{aligned}
		\|  u^\eb \|_{H^1_0(\Omega)} + \| A_\eb  \Dt u^\eb \|  +  \| A_\eb  \Dt^2 u^\eb \|   +     \|  u^0 \|_{H^1_0(\Omega)}  \le M.
		\end{aligned}
		\end{equation*}
		
		The difference $q^\eb : = \Dt u^\eb - \Dt u^0$ solves
		\begin{equation*}
		\begin{cases}
		\Dt^2\q+\gamma\Dt\q + A_0 \q= A_0 \Dt  u^\eb - A_\eb \Dt u^\eb + f'(u^0) \Dt u^0  - f'(\u)\Dt \u,\quad x\in\Omega,\ t\geq0,\\
		\xi_{\q}|_{t=0}= \big(\xi^2 - \xi^2_0 , \gamma ( \xi^2_0 -  \xi^2) + f(\xi^1_{0}) - f(\xi^1) \big),\quad \q|_{\d\Omega}=0,
		\end{cases}
		\end{equation*}
		and we have
		\begin{equation*}
		\Vert \xi_{q^\eb}|_{t=0}  \Vert_{\E^{-1}} \le C \Vert A^{-1}_\eb - A^{-1}_0 \Vert_{\Cal L(L^2(\Omega))}.
		\end{equation*}

		After testing the first equation in the above problem  with $A_{0}^{-1}\Dt\q$ and some algebra (similar to that in Theorem \ref{th.|ue-uh|E-1}) we deduce that    
		\begin{multline*}
		\frac{d}{dt} \Lambda \le 
		-\big( A_\eb \Dt^2 u^\eb, (A^{-1}_\eb-A^{-1}_0) q^\eb \big)+\big(f'(u^0)\Dt u^0 -f'(\u)\Dt \u,A_{0}^{-1}\Dt\q\big), \\
		\text{ where }\Lambda : =\frac{1}{2} \|\q\|^2 + \frac{1}{2} { \big( \Dt\q, A_{0}^{-1}\Dt\q\big) } - \big(A_\eb \Dt u^\eb, (A^{-1}_\eb - A^{-1}_0) q^\eb \big).
		\end{multline*}
		Now in the proof of Theorem \ref{th.Dterror} we showed that
		\[
		\big| \big(f'(u^0)\Dt u^0 -f'(\u)\Dt \u,A_{0}^{-1}\Dt\q\big) \big| \le M_1 \big( e^{Kt} \Vert A^{-1}_\eb - A^{-1}_0 \Vert_{\Cal L(L^2(\Omega))}^2 + \tfrac{1}{2} \| q^\eb \|^2 + \tfrac{1}{2}( \Dt \q , A^{-1}_0 \Dt \q) \big).
		\]
		Therefore
		\begin{multline*}
		\frac{d}{dt} \Lambda \le \big( 2M_1 A_\eb \Dt u^\eb - A_\eb \Dt^2 u^\eb ,  (A^{-1}_\eb - A^{-1}_0) q^\eb \big)  - 2M_1  \big( A_\eb \Dt u^\eb  ,  (A^{-1}_\eb - A^{-1}_0) q^\eb \big)\\
		+ M_1 \big( e^{Kt} \Vert A^{-1}_\eb - A^{-1}_0 \Vert_{\Cal L(L^2(\Omega))}^2 + \tfrac{1}{2} \| q^\eb \|^2 + \tfrac{1}{2}( \Dt \q , A^{-1}_0 \Dt \q) \big),
		\end{multline*}
		and since
		\[
		\big|  \big( 2M_1 A_\eb \Dt u^\eb - A_\eb \Dt^2 u^\eb ,  (A^{-1}_\eb - A^{-1}_0) q^\eb \big)  \big| \le C \| A^{-1}_\eb - A^{-1}_0\|^2_{\Cal L(L^2(\Omega)} + M_1 \tfrac{1}{2} \| q^\eb\|^2, 
		\]we find
		\[
		\frac{d}{dt} \Lambda \le 2M_1 \Lambda +  C e^{Kt} \Vert A^{-1}_\eb - A^{-1}_0 \Vert_{\Cal L(L^2(\Omega))}^2,
		\]
		from which the desired result follows.
	\end{proof}
	
	We finish this section with the following result on the smoothness of the global attractor.
	\begin{theorem}\label{th.ExAtS2}
		Assume \eqref{mainassumptions} and \eqref{H2}, and let $\A$ be the global attractor of  the dynamical system $(\E,S(t))$ given by \eqref{ds.e}. Then
		\begin{equation*}
		\|\A \|_{\Cal E^3 }\leq M(\|g\|),
		\end{equation*}
		for some non-decreasing $M$ that depends only on $\nu$.
	\end{theorem}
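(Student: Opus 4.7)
The plan is to prove Theorem \ref{th.ExAtS2} by extending the transitivity argument of Theorem \ref{th.ExAttr} one regularity level higher. Concretely, I would: \emph{(i)} construct a `ball' $B_{\Cal E^3}(0,R_2) := \{\xi \in \Cal E^3 \,|\, \|\xi\|_{\Cal E^3} \le R_2\}$ that attracts $B_{\Cal E^2}(0,R_1)$ exponentially in the $\E$-topology; \emph{(ii)} combine this with Theorem \ref{cor.smooth1} and Corollary \ref{cor.smooth2} via the transitivity property (Theorem \ref{thm.trans}) to deduce that $B_{\Cal E^3}(0,R_2)$ exponentially attracts the invariant absorbing set $\B$ in $\E$; \emph{(iii)} exploit the strict invariance of $\A$ together with the $\E$-closedness of $B_{\Cal E^3}(0,R_2)$ to infer $\A \subset B_{\Cal E^3}(0,R_2)$, exactly as in the closing argument of Theorem \ref{th.ExAttr}.

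For step \emph{(i)}, I would employ the splitting $u = v + w$ introduced in \eqref{eq.ve2}-\eqref{eq.we2}. Namely, with $G \in H^1_0(\Omega)$ the solution of $AG = g$, set $\xi_v(0) := (\xi^1 - G, \xi^2)$ and $\xi_w(0) := (G, 0)$ for $\xi = (\xi^1,\xi^2) \in B_{\Cal E^2}(0,R_1)$. The homogeneous linear part $v$ decays exponentially in $\E$ as in \eqref{le.ve2}, so it suffices to show that $\xi_w(t)$ is bounded in $\Cal E^3$ uniformly in $t \ge 0$.

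To establish this $\Cal E^3$-bound on $w$, I would iterate the time-differentiation strategy of Lemma \ref{le.we2} by one more step. Introducing $p := \Dt w$, $q := \Dt^2 w$, $r := \Dt^3 w$, the function $r$ solves a linear wave equation with forcing
\[
F_r := -f'''(u)(\Dt u)^3 - 3 f''(u)\, \Dt u\, \Dt^2 u - f'(u)\Dt^3 u.
\]
The new assumption \eqref{H2} (which provides the boundedness of $f'''$) combined with the dissipative estimate in $\Cal E^2$ for $u$ (Theorem \ref{th.disD}, which controls $\|u\|_{C^\alpha}$, $\|\nabla \Dt u\|$, $\|\nabla\Dt^2 u\|$, and $\|\Dt^3 u\|$) shows $F_r \in L^\infty(\R_+; L^2(\Omega))$; a direct computation using $(w(0),\Dt w(0)) = (G,0)$ and the original equation for $u$ gives $(r(0), \Dt r(0)) \in \E$ with norm controlled by $R_1$ and $\|g\|$. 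The standard linear dissipative estimate in $\E$ then yields $\|\Dt^4 w(t)\| + \|\nabla \Dt^3 w(t)\| \le M$, and successively rewriting the equations for $q$, $p$ and $w$ produces the remaining bounds $\|A\Dt^2 w(t)\|$, $\|\nabla \Dt w(t)\|$, and $\|A(Aw + f(w) - g)(t)\|$, i.e. the full $\Cal E^3$-bound on $\xi_w(t)$.

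The principal obstacle is verifying that the initial data $(r(0), \Dt r(0))$ for the equation of $r$ actually belong to $\E$, since $\Dt r(0)$ formally involves two spatial derivatives of $f(\xi^1)$ and of $g$. This is kept under control using $\xi \in B_{\Cal E^2}(0,R_1)$ (in particular $A\xi^1 + g \in H^1_0$ and $A\xi^1 \in L^2$), the $H^2$-regularity of $G$ on the smooth domain, and the growth restriction on $f'''$ provided by \eqref{H2}; assumption \eqref{H2} is precisely what is required to close the highest-order energy estimate. Once this step has been carried out, the rest of the argument amounts to a routine repetition of the energy estimates already performed in Lemma \ref{le.we2}.
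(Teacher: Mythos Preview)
Your overall architecture is exactly what the paper does: a splitting $u=v+w$ with $\xi_w$ bounded in $\Cal E^3$, then transitivity (Theorem \ref{thm.trans}) stacked on Corollary \ref{cor.smooth2}, then the strict-invariance argument of Theorem \ref{th.ExAttr}. The gap is in the choice of splitting.

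With the splitting \eqref{eq.ve2}-\eqref{eq.we2} (i.e.\ $\xi_w(0)=(G,0)$, $AG=g$), one computes $\Dt^2 w(0)=-f(\xi^1)$, and hence
\[
\Dt r(0)=\Dt^4 w(0)= Af(\xi^1)-\gamma^2 f(\xi^1)+\gamma f'(\xi^1)\xi^2 - f''(\xi^1)(\xi^2)^2 - f'(\xi^1)\Dt^2 u(0).
\]
The term $Af(\xi^1)=-\di\big(a f'(\xi^1)\nabla\xi^1\big)=f'(\xi^1)A\xi^1 - f''(\xi^1)\,a\nabla\xi^1\cdot\nabla\xi^1$ contains $|\nabla\xi^1|^2$, and you would need $\nabla\xi^1\in L^4(\Omega)$ to place it in $L^2(\Omega)$. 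The hypotheses available for $\xi\in\Cal E^2$ are only $A\xi^1\in L^2$ and $A\xi^1+g\in H^1_0$; since $a$ is merely $L^\infty$, De Giorgi--Nash--Moser gives $\xi^1\in C^\alpha$ but \emph{no} higher gradient integrability, and there is no $H^2$-regularity for $G$ either. So the assertion that $(r(0),\Dt r(0))\in\E$ cannot be justified with this splitting; the very obstacle you flag is genuinely fatal here, and \eqref{H2} does not help because the offending term involves $f''$, not $f'''$.

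The paper fixes this by choosing the initial data for $w$ more carefully: it introduces $H\in H^1_0$ with $AH=-f'(\xi^1)\xi^2$ and $G\in H^1_0$ with $AG=g-f(\xi^1)-\gamma H$, and sets $\xi_w(0)=(G,H)$. With this choice one checks directly that $\Dt^2 w(0)=0$ and $\Dt^3 w(0)=0$, so $r(0)=0$ and $\Dt r(0)=-f''(\xi^1)(\xi^2)^2-f'(\xi^1)\Dt^2 u(0)\in L^2(\Omega)$; the dangerous term $A\Dt^2 w(0)$ simply vanishes. After this adjustment your remaining steps go through verbatim.
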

	Indeed this result can be proved by arguing as in Section \ref{s.smth} for the following splitting: for initial data $\xi \in B_{\Cal E^2}(0,R_1)$ we consider $H \in H^1_0(\Omega)$ that satisfies
	\[
	-\di(a\Nx H) = -f'(\xi^1) \xi^2 \in L^2(\Omega),
	\]
	and $G \in H^1_0(\Omega)$ that satisfies
	\[
	-\di(a\Nx G) = g - f(\xi^1) - \gamma H \in L^2(\Omega).
	\]
	Then, we decompose the solution $u$ to \eqref{eq.generic} as $u = v + w$ where
	\begin{equation*}
	\begin{cases}
	\Dt^2 v+\gamma\Dt v-\di (a \Nx v)=0,\quad x\in\Omega,\ t\geq 0,\\
	\xi_{v}|_{t=0}=(\xi^1 - G, \xi^2 - H),\quad v|_{\partial\Omega}=0,\\
	\end{cases}
	\end{equation*}
	and
	\begin{equation*}
	\begin{cases}
	\Dt^2 w+\gamma\Dt w-\di(a \Nx w)=-f(u) + g,\quad x\in\Omega,\ t\geq 0,\\
	\xi_{w}|_{t=0}=(G,H),\quad w|_{\d\Omega}=0.\\
	\end{cases}
	\end{equation*}
	
	The main points to highlight are that we can argue as in the proof of Theorem \ref{th.disE2} (to produce an analogue of Lemma \ref{le.we2}) and  establish that 
	\[
	\dist_{\E} \big( S(t) B_{\Cal E^2}(0,R_1) , B_{\Cal E^3}(0,R_2) \big) \le M e^{- \beta t}, \quad t\ge 0,
	\]
	holds for some positive constants $R_2$, $M$ and $\beta$ that depend only on $\nu$. Then, we use the transitivity of exponential attraction (Theorem \ref{thm.trans}) and Corollary \ref{cor.smooth2} to deduce that $B_{\Cal E^3}(0,R_2)$ attracts bounded sets in $\E$:
	\[
	\dist_{\E} \big( S(t) B, B_{\Cal E^3}(0,R_2) \big) \le M( \| B \|_{\E}) e^{-\beta t}, \quad t\ge 0.
	\]
	This finally allows us to argue as in the proof of Theorem \ref{th.ExAttr} to prove Theorem \ref{th.ExAtS2}.
}

Consequently, the improved regularity of the attractor (Theorem \ref{th.ExAtS2}) allows us to apply, when appropriate, the improved inequality (Theorem \ref{th.improvedinequality}) in obtaining error estimates in homogenisation (cf. Remark \ref{rem.optimalbdy}).

\section*{Acknowledgements}
The authors would like to  thank Sergey Zelik for helpful discussions related to this work. S. Cooper was supported by the EPSRC grant EP/M017281/1 (``Operator asymptotics, a new approach to length-scale interactions in metamaterials").

\end{document}